\newcommand{\PP}{{\mathbb{P}}}
\newcommand{\EE}{{\mathbb{E}}}
\newcommand{\LL}{{\mathcal{L}}}
\newcommand{\eps}{{\epsilon}}
\newcommand{\HH}{{\mathbb{H}}}
\newcommand{\Lj}{{\operatorname{Int}(L_j)}}
\newcommand{\Lk}{{\operatorname{Int}(L_k)}}
\newcommand{\CR}{{\operatorname{CR}}}
\newcommand{\Area}{{\operatorname{Area}}}
\newcommand{\der}{{\operatorname{d}}}
\newtheorem{theorem}{Theorem}[section]
\newtheorem{corollary}[theorem]{Corollary}
\newtheorem{lemma}[theorem]{Lemma}
\newtheorem{proposition}[theorem]{Proposition}
\newtheorem{claim}[theorem]{Claim}
\theoremstyle{definition}
\newtheorem{definition}[theorem]{Definition}
\theoremstyle{remark}
\newtheorem{remark}[theorem]{Remark}
\theoremstyle{remark}
\title{Massive SLE$_4$, massive CLE$_4$ and the massive planar GFF}
\author{L\'eonie Papon \thanks{Durham University}}
\date{\today}
\begin{document}

\maketitle

\begin{abstract}
We construct a coupling between a massive GFF and a random curve in which the curve can be interpreted as the level line of the field and has the law of massive SLE$_4$. This coupling is obtained by reweighting the law of the standard coupling GFF-SLE$_4$ and our result can be seen as a conditional version of the path-integral formulation of the massive GFF. We then show that by reweighting the law of the coupling GFF-CLE$_4$ in a similar way, one obtains a coupling between a massive GFF and a random countable collection of simple loops, that we call massive CLE$_4$. Using this coupling, we relate massive CLE$_4$ to the massive Brownian loop soup with intensity $1/2$, thus proving a conjecture of Camia. As the law of the massive GFF, the laws of massive SLE$_4$ and massive CLE$_4$ are conformally covariant.
\end{abstract}

\section{Introduction}

\subsection{Main results}

Chordal SLE$_{\kappa}$ curves are a one-parameter family of random planar curves that can be characterized by their conformal invariance and a certain Markov property \cite{Schramm_SLE}. They have a rich interplay with the Gaussian free field (GFF), a random Markovian field defined on the plane which is conformally invariant. The prime example of this interplay is the existence of a coupling between a GFF with appropriate boundary conditions and an SLE$_4$ curve \cite{contour_line, Dub_SLE, IG_1}. In this coupling, the curve is measurable with respect to the field and can be seen as its level line. This is perhaps surprising, since the GFF is too rough to be a pointwise defined function.

Recently, this coupling was shown to also hold for massive versions of these objects: a massive SLE$_4$ curve can be coupled with a massive Gaussian free field with appropriate boundary conditions as its level line \cite{mHE}. In \cite{mHE}, the massive SLE$_4$ curve is defined via its driving function and the proof relies on the explicit expression for this function. However, in a simply connected domain $D \subset \mathbb{C}$ and under suitable conditions on the mass, the massive GFF with mass $m$ is absolutely continuous with respect to the GFF with Radon-Nikodym derivative given by
\begin{equation} \label{RN_intro}
    \frac{\der \PP_{\operatorname{mGFF}}}{\der \PP_{\operatorname{GFF}}}(h)=\frac{1}{\mathcal{Z}} \exp \bigg( -\frac{1}{2} \int_{D} m^2(z) :h^2(z):dz \bigg)
\end{equation}
where $\mathcal{Z}$ is a normalization constant. In \eqref{RN_intro}, $:h^2:$ is the Wick square of the GFF, which is a renormalized version of the square of the GFF. With this renormalization, the change of measure \eqref{RN_intro} can be seen as a rigorous path-integral formulation of the massive GFF.

In view of \eqref{RN_intro}, it is natural to wonder whether the law of the coupling massive GFF-massive SLE$_4$ can be obtained by reweighting the law of the coupling GFF-SLE$_4$. This idea, albeit under a different form, was already suggested in the physics literature \cite[Section~4.6]{BBC} and the next theorem rigorously formalizes it. 

In everything that follows, we set $\lambda:=\sqrt{\pi/8}$, and if $D \subset \mathbb{C}$ is an open, bounded and simply connected domain, with $a,b \in \partial D$ (in the sense of prime ends), we denote by $\partial D^{+}$, respectively $\partial D^{-}$, the clockwise-oriented, respectively counterclockwise-oriented, boundary arc $(ab)$. We also define $F^{(D,a,b)}:\partial D\to \mathbb{R}$ to be equal to $\lambda$ on $\partial D^{+}$ and $-\lambda$ on $\partial D^{-}$.

\begin{theorem} \label{theorem_mSLE_4}
 Let $D,a,b$ be as above, and let $\phi: D \to \mathbb{R}$ be the unique harmonic function in $D$ with boundary values $F^{(D,a,b)}$. Denote by $\PP$ the law of the coupling $(h+\phi, \gamma)$ between a GFF $h+\phi$ with boundary conditions $\phi$ in $D$ and an SLE$_4$ curve $\gamma$ in $D$ from $a$ to $b$. Let $m:D \to \mathbb{R}_{+}$ be of the form $\vert \varphi' \vert \hat m \circ \varphi$, where $\varphi: D \to \hat D$ is a conformal isomorphism, $\hat D \subset \mathbb{C}$ is a bounded and simply connected domain and $\hat m:\hat D \to \mathbb{R}_{+}$ is a bounded and continuous function. Define a new probability measure $\tilde{\PP}$ by
\begin{equation} \label{RN_mSLE4}
    \frac{\der \tilde \PP}{\der \PP}((h+\phi,\gamma)) := \frac{1}{\mathcal{Z}} \exp \bigg( -\frac{1}{2} \int_{D} m^2(z) :(h+\phi)^2(z): dz \bigg)
\end{equation}
where $\mathcal{Z}$ is a normalization constant. Then, under $\tilde \PP$,
\begin{itemize}\setlength{\itemsep}{0em}
    \item the marginal law of $h+\phi$ is that of a massive GFF in $D$ with mass $m$ and  
    boundary conditions $F^{(D,a,b)}$;
    \item the marginal law of $\gamma$ is that of a massive SLE$_4$ curve with mass $m$ in $D$ from $a$ to $b$, as defined in Section \ref{sec_def_mSLE4}.
\end{itemize}
Moreover, let $t \in [0,\infty)$. Then, under $\tilde \PP$, conditionally on $\gamma([0,t])$, $h+\phi=h_t+\phi_t$ where $h_t+\phi_t$ has the law of a massive GFF in $D \setminus \gamma([0,t])$ with mass $m$ and with boundary conditions $F^{(D\setminus \gamma[0,t],\gamma(t),b)}$. The same result holds at $t=\infty$; with $h_t+\phi_t$ being a sum of independent massive GFFs with mass $m$ and boundary conditions $\pm \lambda$ on either side of the curve.
\end{theorem}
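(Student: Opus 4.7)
The plan is to isolate the behaviour of the Wick square $:(h+\phi)^2:$ under the domain Markov decomposition of the standard GFF-SLE$_4$ coupling, and to deduce all four assertions from that together with the definition \eqref{RN_intro} of the massive GFF. The first bullet is essentially immediate: under $\PP$ the marginal law of $h+\phi$ is that of a GFF with boundary data $\phi$ on $D$, so after integrating out $\gamma$, formula \eqref{RN_mSLE4} is exactly \eqref{RN_intro} applied to the field $h+\phi$, yielding a massive GFF with boundary data $F^{(D,a,b)}$. The standing hypothesis $m=|\varphi'|\hat m\circ\varphi$ with $\hat m$ bounded and continuous is what ensures that $\mathcal Z<\infty$ and that the massive GFF is absolutely continuous with respect to the GFF in the first place.

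For the ``Moreover'' sentence at finite time, fix $t\in[0,\infty)$. Under $\PP$, the standard coupling gives $h+\phi=h_t+\phi_t$ conditionally on $\gamma([0,t])$, where $\phi_t$ is harmonic on $D_t:=D\setminus\gamma([0,t])$ with boundary data $F^{(D\setminus \gamma[0,t],\gamma(t),b)}$ and $h_t$ is a zero-boundary GFF on $D_t$. The key analytic step is the Wick-square decomposition
\begin{equation*}
 :(h+\phi)^2:_{D}(z) \;=\; :(h_t+\phi_t)^2:_{D_t}(z) + C_t(z), \qquad z\in D_t,
\end{equation*}
where the two Wick squares are taken relative to the ambient GFF on $D$ and the conditional zero-boundary GFF on $D_t$ respectively, and $C_t(z)=G_D(z,z)-G_{D_t}(z,z)$, interpreted through the standard $\epsilon$-regularisation, is a deterministic continuous function on $D_t$ recording the change of Wick normalization between the two Gaussian structures. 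I would prove this identity by mollifying $h$ at scale $\epsilon$, expanding the squares of $(h_\epsilon+\phi)$ and $(h_{t,\epsilon}+\phi_t)$, subtracting the two variance diagonals, and passing to the limit $\epsilon\to 0$. Substituting into \eqref{RN_mSLE4} and conditioning on $\gamma([0,t])$ factorizes the reweighting into a curve-dependent factor $\exp(-\tfrac12\int_{D_t} m^2 C_t)$ and a field-dependent factor $\exp(-\tfrac12\int_{D_t} m^2 :(h_t+\phi_t)^2:_{D_t})$; applying \eqref{RN_intro} on $D_t$ to the latter identifies $h+\phi$, conditionally on $\gamma([0,t])$, as a massive GFF in $D_t$ with mass $m$ and boundary data $F^{(D_t,\gamma(t),b)}$, as required.

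The marginal of $\gamma$ under $\tilde\PP$ is extracted from the same decomposition: integrating out the field conditionally on $\gamma([0,t])$ produces an explicit deterministic functional of the curve, which by a Girsanov argument translates into a drift on the Loewner driving function of $\gamma$. I would match this drift with the explicit massive SLE$_4$ driving function from \cite{mHE}, identifying the $\tilde\PP$-marginal of $\gamma$ with the definition recalled in Section \ref{sec_def_mSLE4}. The $t=\infty$ case is obtained by passing to the limit $t\to\infty$: in the standard coupling, the two sides of $\gamma$ carry independent zero-boundary GFFs with boundary values $\pm\lambda$, so the reweighting factors across the two components and yields independent massive GFFs with mass $m$ on each side.

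I expect the main obstacle to be the rigorous Wick-square decomposition together with the integrability estimates needed to commute the conditional expectation over $h$ with the exponential in \eqref{RN_mSLE4}. The correction $C_t$ blows up near the curve, so controlling $\int_{D_t} m^2 C_t$ and justifying both the $t$-by-$t$ argument and the limit $t\to\infty$ rely on the conformal form $m=|\varphi'|\hat m\circ\varphi$ and the boundedness of $\hat m$ to transfer estimates to the reference domain $\hat D$. The Girsanov step that matches the reweighted driving function with \cite{mHE} is the second non-trivial check, but should be routine once the Wick decomposition is in hand.
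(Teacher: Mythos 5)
Your treatment of the field is essentially the paper's argument: decompose the Wick square under the domain Markov property of the GFF--SLE$_4$ coupling, identify the correction term as the (regularised) difference of Green functions on the diagonal, i.e.\ $\tfrac{1}{2\pi}\log\bigl(\CR(z,\partial D)/\CR(z,\partial D_t)\bigr)$, observe that this curve-measurable factor cancels against the conditional normalisation, and apply the massive-GFF change of measure in $D_t$ (this is Proposition \ref{prop_rep_SLE}, Claim \ref{claim_CR_SLE} and Proposition \ref{prop_law_hSLE} in the paper, with Proposition \ref{prop_rep_SLE_infty} and Proposition \ref{prop_law_hSLE_infty} for $t=\infty$). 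Two remarks on that part. First, your $C_t(z)=G_D(z,z)-G_{D_t}(z,z)$ has the wrong sign: since $:(h+\phi)^2:$ subtracts the larger variance $\EE[h_\eps(z)^2]$, one has $:(h+\phi)^2:_D=:(h_t+\phi_t)^2:_{D_t}-\bigl(G_D(z,z)-G_{D_t}(z,z)\bigr)$; this is harmless for the conditional law (the factor cancels anyway) but would flip the sign in the explicit Radon--Nikodym derivative. Second, the obstacles you name at the end are exactly where the paper spends its effort: the quantitative $L^2$ rate for the regularised Wick square (Lemma \ref{lemma_L2_cvg_Wick_boundary}), the $L^p$ convergence of the exponential needed to pass limits through conditional expectations (Lemma \ref{lemma_Lp_cvg_boundary}, which uses hypercontractivity), and the Hausdorff-dimension bound $\Area(D\setminus D_{t,\eps})=O(\eps^{1/2})$ to kill the contribution near the curve; you correctly identify these but do not supply them.

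Where you genuinely diverge is the marginal law of $\gamma$. You propose integrating out the field and running Girsanov on the Loewner driving function to match the SDE of \cite{mHE}. The paper explicitly notes this route is available but avoids it because the computations are heavy; instead it uses the martingale characterisation of massive SLE$_4$ (Proposition \ref{prop_mart_mSLE4}): one shows that the massive harmonic function $H_t^m(z)=\tfrac{1}{2\lambda}\phi_t^m(z)$ is an $(\mathcal F_t)$-martingale under $\tilde\PP$ by writing it as the conditional expectation of a massive circle average of the field, which is immediate once the conditional law of $h+\phi$ given $\gamma([0,t])$ is known. This buys a short, computation-free identification of the curve and is the reason the field result is proved first. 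Your Girsanov route is legitimate in principle, but calling it ``routine'' understates it: you would need to compute the conditional partition function $\mathcal Z_t$ explicitly as a functional of the curve (essentially Corollary \ref{cor_RN_mSLE4}, which requires Le Jan's isomorphism and Lemma \ref{lemma_Zf}), verify it is a $\PP$-martingale in $t$, and differentiate it in the Loewner flow to extract the drift. None of that is sketched, so as written the identification of the $\gamma$-marginal is the one substantive gap in your plan.
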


Theorem \ref{theorem_mSLE_4} implies that the law of massive SLE$_4$ is absolutely continuous with respect to that of ordinary SLE$_4$, and that in the massive GFF-massive SLE$_4$ coupling (i.e. the law of $(h+\phi,\gamma)$ under $\tilde{\mathbb{P}}$), the curve is a measurable function of the field.\footnote{If this measurability had already been proven in \cite{mHE}, then Theorem \ref{theorem_mSLE_4} would follow from the results of \cite{mHE}. However, this fact was not straightforward to prove in the set-up of \cite{mHE}.} 
Observe also that by conformal invariance of SLE$_4$ and conformal covariance of the random variable on the right-hand side of \eqref{RN_mSLE4}, we can deduce from Theorem \ref{theorem_mSLE_4} that massive SLE$_4$ is conformally covariant, in a sense that will be made precise below.

Finally, Theorem \ref{theorem_mSLE_4} combined with Le Jan's isomorphism theorem \cite{LeJan, LeJan_2} yield the following expression for the Radon-Nikodym derivative of the law $\PP_{\operatorname{mSLE}_4}^{(D,a,b)}$ of massive SLE$_4$ with respect to the law $\PP_{\operatorname{SLE}_4}^{(D,a,b)}$ of SLE$_4$.

\begin{corollary} \label{cor_RN_mSLE4}
Let $D \subset \mathbb{C}$, $a,b \in \partial D$ and $m: D \to \mathbb{R}_{+}$ be as in Theorem \ref{theorem_mSLE_4}. Then, for $t \geq 0$,
\begin{align*}
    \frac{\der \PP_{\operatorname{mSLE}_4}^{(D,a,b)}}{\der \PP_{\operatorname{SLE}_4}^{(D,a,b)}} (\gamma) \bigg \vert _{\sigma(\gamma(s), s \leq t)} = &\frac{1}{\mathcal{Z}}\exp \bigg( \frac{1}{2}\mu_t\big( e^{-\langle \ell, m^2 \rangle} + \langle \ell, m^2 \rangle -1 \big) -\frac{1}{2} \int_{D_t} m^2(z) \phi_t(z) \phi_t^m(z) dz \bigg)\\ 
    &\times \exp \bigg(\int_{D_t} \frac{m^2(z)}{4\pi} \log \frac{\CR(z,\partial D)}{\CR(z,\partial D_t)} dz \bigg)
\end{align*}
where $\mathcal{Z}$ is as in the definition \eqref{RN_mSLE4} of $\tilde \PP$ in Theorem \ref{theorem_mSLE_4}. Above, $D_t := D \setminus \gamma([0,t])$ and $\phi_t$, respectively $\phi_t^m$, is the harmonic, respectively massive harmonic, function in $D_t$ with boundary conditions $F^{(D_t,\gamma(t),b)}$.

Here $\mu_t$ denotes the Brownian loop measure in $D_t$, and for a loop $\ell$ , $\langle \ell, m^2 \rangle := \int_{0}^{\tau(\ell)} m^2(\ell(t)) dt$, where $\tau(\ell)$ is the lifetime of $\ell$. For $z \in D$, $\CR(z,\partial D)$, respectively $\CR(z,\partial D_t)$ is the conformal radius of $z$ in $D$, respectively $D_t$.
\end{corollary}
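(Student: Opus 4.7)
The plan is to take the conditional expectation of the joint Radon--Nikodym derivative from Theorem~\ref{theorem_mSLE_4} with respect to $\sigma(\gamma(s),s\leq t)$, and then to evaluate the resulting Gaussian expectation in two stages: a completion-of-squares that compares the massive and massless GFF partition functions on $D_t$ with common boundary data, followed by Le Jan's isomorphism applied to the zero boundary ratio.

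By the Markov property statement in Theorem~\ref{theorem_mSLE_4}, applied under $\PP$ (i.e.\ taking $m\equiv 0$), conditionally on $\gamma([0,t])$ the field $h+\phi$ has the law of a GFF $h_t+\phi_t$ in $D_t$ with boundary values $F^{(D_t,\gamma(t),b)}$. Taking the $\PP$-conditional expectation of \eqref{RN_mSLE4} given $\sigma(\gamma(s),s\leq t)$ therefore gives
\begin{equation*}
\frac{\der\PP_{\operatorname{mSLE}_4}^{(D,a,b)}}{\der\PP_{\operatorname{SLE}_4}^{(D,a,b)}}(\gamma)\bigg|_{\sigma(\gamma(s),s\leq t)}=\frac{1}{\mathcal{Z}}\EE\!\left[\exp\!\left(-\frac{1}{2}\int_{D_t} m^2(z):(h_t+\phi_t)^2(z):_{D}\,dz\right)\right],
\end{equation*}
the expectation being over the conditional law of $h_t+\phi_t$. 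The Wick square on the right is normalized with respect to the GFF in $D$, whereas $h_t+\phi_t$ is naturally a field in $D_t$; a direct mollification argument using the asymptotic $\EE[h_\eps(z)^2]=-\frac{1}{2\pi}\log\eps+\frac{1}{2\pi}\log\CR(z,\partial D)+o(1)$ for the zero boundary GFF produces
\begin{equation*}
:(h_t+\phi_t)^2:_{D}(z)=:(h_t+\phi_t)^2:_{D_t}(z)-\frac{1}{2\pi}\log\frac{\CR(z,\partial D)}{\CR(z,\partial D_t)},
\end{equation*}
which yields the conformal radius factor of the corollary and reduces the task to computing $\EE[\exp(-\frac{1}{2}\int_{D_t}m^2:(h_t+\phi_t)^2:_{D_t})]$.

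This remaining expectation is the ratio of partition functions of the massive GFF and the GFF on $D_t$ with common boundary data $F:=F^{(D_t,\gamma(t),b)}$. I would evaluate it by expanding each action around its minimizer --- $\phi_t$ in the massless case and the massive harmonic extension $\phi_t^m$ in the massive case --- so that the cross terms vanish by (massive) harmonicity, decoupling the boundary contribution from the Gaussian fluctuations:
\begin{equation*}
\EE\!\left[\exp\!\left(-\frac{1}{2}\int_{D_t}m^2:(h_t+\phi_t)^2:_{D_t}\right)\right]=e^{S_{\operatorname{GFF}}(\phi_t)-S_{\operatorname{mGFF}}(\phi_t^m)}\;\EE\!\left[\exp\!\left(-\frac{1}{2}\int_{D_t}m^2:\tilde h^2:_{D_t}\right)\right],
\end{equation*}
with $\tilde h$ a zero boundary GFF in $D_t$. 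A short Green's-identity calculation, using $\phi_t=\phi_t^m=F$ on $\partial D_t$ together with the (massive) harmonicity of $\phi_t$ and $\phi_t^m$, collapses the boundary integrals into $S_{\operatorname{GFF}}(\phi_t)-S_{\operatorname{mGFF}}(\phi_t^m)=-\frac{1}{2}\int_{D_t}m^2\phi_t\phi_t^m\,dz$, producing the $\phi_t\phi_t^m$ term in the corollary.

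The zero boundary expectation is then handled by Le Jan's isomorphism \cite{LeJan,LeJan_2}: $\frac{1}{2}:\tilde h^2:_{D_t}$ is the renormalized occupation field of a Brownian loop soup of intensity $1/2$ in $D_t$, and the Campbell--Laplace formula for the underlying Poisson point process with intensity $\frac{1}{2}\mu_t$ gives
\begin{equation*}
\EE\!\left[\exp\!\left(-\frac{1}{2}\int_{D_t} m^2:\tilde h^2:_{D_t}\right)\right]=\exp\!\left(\frac{1}{2}\mu_t\!\left(e^{-\langle\ell,m^2\rangle}+\langle\ell,m^2\rangle-1\right)\right),
\end{equation*}
the term $+\langle\ell,m^2\rangle$ being exactly the Wick counterterm that makes the integrand $\mu_t$-integrable near short loops. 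Multiplying the three factors yields the formula of the corollary. The main obstacle I anticipate is keeping the three renormalizations consistent (the Wick square in $D$, the Wick square in $D_t$, and the loop-soup renormalization in Le Jan's theorem) so that the logarithmic and Brownian loop measure divergences cancel exactly; once this bookkeeping is settled, each individual step is standard.
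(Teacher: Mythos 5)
Your proposal follows essentially the same route as the paper: condition the Radon--Nikodym derivative \eqref{RN_mSLE4} on $\gamma([0,t])$, peel off the conformal radius correction coming from re-normalizing the Wick square from $D$ to $D_t$ (this is Proposition \ref{prop_rep_SLE} with $f\equiv 0$, whose proof carries out your ``direct mollification argument,'' including the control near $\gamma([0,t])$ via its Hausdorff dimension), and then evaluate the remaining boundary-data partition function via a Gaussian completion of squares plus Le Jan's formula (this is Lemma \ref{lemma_Zf} combined with Proposition \ref{prop_RN_loopGFF}). The argument is correct; the ``bookkeeping'' you flag at the end is precisely the technical content of Claim \ref{claim_CR_SLE}, and the exact cancellation you anticipate is what the paper verifies there.
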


SLE$_{\kappa}$ curves for $\kappa \in (8/3,8)$ have loop versions, called conformal loop ensembles (CLE). CLE$_{\kappa}$ are a one-parameter family of random countable collections of planar loops characterized by their conformal invariance and a certain Markovian property \cite{CLE_Markovian}. There also exists a coupling between a CLE$_4$ and a GFF with Dirichlet boundary conditions \cite{slides_CLE, BTLS} in which the loops of CLE$_4$ are in a certain sense, similarly to SLE$_4$, level lines of the GFF. One may wonder whether a massive version of this coupling exists. To date, no massive version of CLE$_4$, or of CLE$_{\kappa}$ for other values of $\kappa$, has been defined. However, using the same ideas as those motivating Theorem \ref{theorem_mSLE_4}, we can construct a coupling between a massive GFF and a random collection of planar loops.

\begin{theorem} \label{theorem_mCLE_4}
Let $D \subset \mathbb{C}$ and $m:D \to \mathbb{R}_{+}$ be as in Theorem \ref{theorem_mSLE_4}. Denote by $\PP$ the law of the coupling $(h, \Gamma)$ between a GFF $h$ in $D$ with Dirichlet boundary conditions and a CLE$_4$ $\Gamma$ in $D$. Define a new probability measure $\tilde \PP$ by
\begin{equation} \label{RN_mCLE4}
    \frac{\der \tilde \PP}{\der \PP}((h,\Gamma)) := \frac{1}{\mathcal{Z}} \exp \bigg(-\frac{1}{2} \int_{D} m^2(z) :h^2(z): dz \bigg)
\end{equation}
where $\mathcal{Z}$ is a normalization constant. Then, under $\tilde \PP$, the marginal law of $h$ is that of a massive GFF in $D$ with mass $m$ and with Dirichlet boundary conditions. Moreover, under $\tilde \PP$, conditionally on $\Gamma$, 
\begin{equation*}
    h=\sum_j h_j+\xi_j
\end{equation*}
where the sum runs over the loops $(L_j)_j$ of $\Gamma$ and
\begin{itemize}\setlength{\itemsep}{0em}
    \item for each $j$, $h_{j}+\xi_j$ has the law of a massive GFF in the interior of the loop $L_j$ with mass $m$ and boundary conditions $\xi_j$;
    \item the fields $(h_j+\xi_j)_j$ are independent;
    \item the random variables $(\xi_j)_j$ are independent and
    $
        \tilde \PP(\xi_j=-2\lambda \vert \Gamma) = \tilde \PP(\xi_j=2\lambda \vert \Gamma)=1/2 \; \forall j;
   $ 
    \item the random variables $(\xi_j)_j$ are measurable with respect to the fields $(h_j+\xi_j)_j$.
\end{itemize}
\end{theorem}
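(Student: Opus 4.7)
The plan is to mirror the proof of Theorem~\ref{theorem_mSLE_4}: start from the standard GFF--CLE$_4$ coupling and show that the reweighting \eqref{RN_mCLE4} factorises over the loops of $\Gamma$ in such a way that each ordinary GFF contribution is turned into a massive one while the law of the labels is preserved. The marginal statement for $h$ is immediate from \eqref{RN_intro}: the Radon--Nikodym derivative in \eqref{RN_mCLE4} is $\sigma(h)$-measurable, and the marginal of $h$ under $\PP$ is a Dirichlet GFF in $D$.

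For the conditional structure I would first condition on the outermost loops $\Gamma^{\operatorname{out}}$ of $\Gamma$ and set $O_j := \operatorname{Int}(L_j^{\operatorname{out}})$. Under $\PP$, the standard GFF--CLE$_4$ coupling \cite{BTLS} furnishes i.i.d.\ uniform labels $\xi_j \in \{\pm 2\lambda\}$ such that, conditionally on $\Gamma^{\operatorname{out}}$ and $(\xi_j)$, the restrictions $h|_{O_j}$ are independent and each is distributed as $\xi_j$ plus a Dirichlet GFF in $O_j$. The key computation is the comparison of the Wick squares of $h$ in $D$ and in $O_j$: since the outer gasket has zero Lebesgue measure and the two Wick renormalisations differ on $O_j$ by the deterministic quantity $\tfrac{1}{2\pi}\log(\CR(z,\partial D)/\CR(z,\partial O_j))$, one obtains
\begin{equation*}
\int_D m^2(z) :h^2:(z)\, dz \;=\; \sum_j \int_{O_j} m^2(z) :(h|_{O_j})^2:(z)\, dz \;+\; A(\Gamma^{\operatorname{out}}),
\end{equation*}
where the Wick squares on the two sides are taken relative to $D$ and to $O_j$ respectively and $A$ is $\Gamma^{\operatorname{out}}$-measurable. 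Plugging this into \eqref{RN_mCLE4}, the reweighting conditional on $\Gamma^{\operatorname{out}}$ factorises into independent reweightings on each $O_j$; each is exactly the massive GFF reweighting \eqref{RN_intro} applied inside $O_j$, and therefore turns $h|_{O_j}$ into a massive GFF in $O_j$ with boundary value $\xi_j$. The corresponding normalising constant is invariant under the sign flip of the underlying Dirichlet GFF in $O_j$, hence depends on $\xi_j$ only through $\xi_j^2$; combined with the uniform prior this yields $\tilde\PP(\xi_j = \pm 2\lambda \mid \Gamma^{\operatorname{out}}) = 1/2$ and preserves independence across $j$. Measurability of $\xi_j$ with respect to $h|_{O_j}$ transfers from the $\PP$-case by absolute continuity.

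It remains to propagate the decomposition from outermost loops to every loop in $\Gamma$. Inside each $O_j$, the loops of $\Gamma$ contained in $L_j^{\operatorname{out}}$ form a CLE$_4$ in $O_j$ which, by the domain Markov property of CLE$_4$, is independent of $\Gamma^{\operatorname{out}}$, and we have just shown that $h|_{O_j}$ is a massive GFF in $O_j$ with constant boundary data $\xi_j$. One can then reapply the same argument inside each $O_j$, treating $\xi_j$ as a harmless additive constant. I expect the main obstacle to lie in making this iteration fully rigorous in the presence of infinitely many nesting levels: the cleanest route is to establish the claimed conditional structure for functionals depending only on loops of nesting depth at most $N$ and to pass to the limit $N \to \infty$, or to work throughout with the BTLS realisation of $(h,\Gamma)$ from \cite{BTLS}, which packages all levels simultaneously.
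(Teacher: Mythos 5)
Your overall strategy is the same as the paper's: factorise the Radon--Nikodym derivative \eqref{RN_mCLE4} over the loops of $\Gamma$ up to a $\Gamma$-measurable conformal-radius term, recognise each factor as the massive-GFF reweighting \eqref{RN_mGFF_phi} in $\operatorname{Int}(L_j)$ with boundary value $\xi_j$, and get the label law from the joint sign symmetry $(h_j,\xi_j)\mapsto(-h_j,-\xi_j)$ of the Wick square. Two remarks before the main point. First, your closing paragraph about iterating over nesting levels addresses a non-issue: in this paper a CLE$_4$ is by definition a collection of \emph{non-nested} loops, so the theorem is exactly the outermost decomposition and no limit over nesting depth is needed (the iteration into the interiors is only a remark after the theorem, not part of it). Second, the symmetry argument for the labels and the transfer of measurability by absolute continuity are both fine and match the paper.

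The genuine gap is in the sentence where you assert
\begin{equation*}
\int_D m^2(z)\, :h^2:(z)\, dz \;=\; \sum_j \int_{O_j} m^2(z)\, :(h|_{O_j})^2:(z)\, dz \;+\; A(\Gamma)
\end{equation*}
``since the outer gasket has zero Lebesgue measure and the two Wick renormalisations differ on $O_j$ by $\tfrac{1}{2\pi}\log(\CR(z,\partial D)/\CR(z,O_j))$.'' The zero-measure of the gasket only lets you split the $dz$-integral over the $O_j$; it does not give the identity, because the Wick square of $h$ is defined through circle averages $h_\eps(z)$ of the \emph{full} field, and for $z$ within distance $\eps$ of a loop the circle $\partial B(z,\eps)$ intersects several $O_k$, so $h_\eps(z)\neq h_{j,\eps}(z)+\xi_{j,\eps}(z)$ there. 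Expanding the square produces cross-terms $h_{j,\eps}h_{k,\eps}$, $h_{j,\eps}\xi_{k,\eps}$ and $\xi_{j,\eps}\xi_{k,\eps}$ with $j\neq k$, plus boundary corrections to the diagonal terms, and one must prove that their integrals vanish in $L^2$ of the conditional law as $\eps\to0$. This is the bulk of the paper's work (its Claims on the cross-terms), and it is not soft: the variances of circle averages grow like $\log(1/\eps)$, so one needs the quantitative estimate $\Area(\{z:\operatorname{dist}(z,L(z))\le\eps\})=O(\eps^{1/8})$, coming from the Hausdorff dimension $15/8$ of the CLE$_4$ gasket, to beat the logarithms; likewise the comparison of the counterterms $\EE[h_\eps(z)^2]-\EE[h_{j,\eps}(z)^2]$ only equals the conformal-radius logarithm away from an $\eps$-neighbourhood of the gasket, and the contribution of that neighbourhood must be controlled by the same area estimate. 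Without these steps the claimed factorisation, and hence the whole conditional-law identification, is unproved.
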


\begin{definition}
We define massive CLE$_4$ to be the law of the collection of loops $\Gamma$ under $\tilde{\mathbb{P}}$, and we denote this law by $\PP_{\operatorname{mCLE}_4}^{D}$.
\end{definition}

Again it follows from the above theorem and definition that the law of massive CLE$_4$ is absolutely continuous with respect to the law of CLE$_4$ in $D$. In particular, massive CLE$_4$ defines a countable collection of planar loops in $D$ that are almost surely simple and almost surely do not touch each other or the boundary of $D$. Moreover, we can deduce from Theorem \ref{theorem_mCLE_4} that the law of massive CLE$_4$ is conformally covariant.

\begin{corollary} \label{cor_conformal_cov_mCLE4}
Let $D \subset \mathbb{C}$ and $m:D \to \mathbb{R}_{+}$ be as in Theorem \ref{theorem_mCLE_4}. Let $\tilde \varphi: D \to \tilde D$ be a conformal map. If $\Gamma$ has the law of massive CLE$_4$ in $D$ with mass $m$, then $\tilde \varphi(\Gamma)$ has the law of massive CLE$_4$ in $\tilde D$ with mass given by, for $w \in \tilde D$,
\begin{equation*}
    \tilde m^2(w) = \vert (\tilde \varphi^{-1})'(w) \vert^2 m^2(\tilde \varphi^{-1}(w)).
\end{equation*}
\end{corollary}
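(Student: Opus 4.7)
The plan is to deduce the corollary from Theorem~\ref{theorem_mCLE_4} by combining the conformal invariance of the coupling GFF--CLE$_4$ with the conformal covariance of the Wick-square integral appearing in~\eqref{RN_mCLE4}. Concretely, I sample $(h,\Gamma)$ under the law $\PP$ of the GFF--CLE$_4$ coupling in $D$ and set $\tilde h := h \circ \tilde\varphi^{-1}$, $\tilde \Gamma := \tilde\varphi(\Gamma)$. By the conformal invariance of the Dirichlet GFF, of CLE$_4$, and of their joint coupling, $(\tilde h, \tilde \Gamma)$ under $\PP$ has the law of the GFF--CLE$_4$ coupling in $\tilde D$.

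The key analytic step is to establish the identity
\begin{equation*}
\int_D m^2(z) :h^2(z):_D \, dz \;=\; \int_{\tilde D} \tilde m^2(w) :\tilde h^2(w):_{\tilde D} \, dw,
\end{equation*}
with $\tilde m$ as in the statement. I would prove it by unwinding the Wick renormalization: with the natural Green's-function renormalization, $:h^2(z):_D = :\tilde h^2(\tilde\varphi(z)):_{\tilde D}$ as distributions, because the two Green's functions $G_D$ and $G_{\tilde D}$ differ on the diagonal only by a smooth function via the conformal map. Combined with the change of variables $dz = |(\tilde\varphi^{-1})'(w)|^2 dw$, this produces exactly the weight $|(\tilde\varphi^{-1})'(w)|^2 m^2(\tilde\varphi^{-1}(w)) = \tilde m^2(w)$ prescribed in the statement.

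Once this identity is in hand, the Radon--Nikodym derivative in \eqref{RN_mCLE4} transfers directly under $\tilde\varphi$: the law of $(\tilde h, \tilde \Gamma)$ under the reweighted measure $\tilde\PP$ is the reweighting of the GFF--CLE$_4$ coupling in $\tilde D$ by exactly the factor appearing in Theorem~\ref{theorem_mCLE_4} applied in $\tilde D$ with mass $\tilde m$; the normalizing constants $\mathcal Z$ on either side agree automatically since the integrals of the exponential coincide. A second application of Theorem~\ref{theorem_mCLE_4} in $\tilde D$ identifies this reweighted coupling as the massive GFF--massive CLE$_4$ coupling in $\tilde D$ with mass $\tilde m$. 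Taking the marginal in $\tilde\Gamma$ yields the corollary.

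The main obstacle I expect is the careful verification of the conformal pullback of the Wick-square integral: the hypothesis on $m$ (of the form $|\varphi'|\hat m \circ \varphi$) is tailored precisely so that $\tilde m$ remains admissible in the sense required by Theorem~\ref{theorem_mCLE_4}, and one needs to check this compatibility together with the scalar transformation law of $:h^2:$. Everything else is bookkeeping, provided the Wick-square integral has been defined (earlier in the paper) via the conformally covariant renormalization; otherwise one would insert a short lemma comparing the circle-average regularizations in $D$ and in $\tilde D$, whose discrepancy is a smooth function of $z$ and integrates out without consequence.
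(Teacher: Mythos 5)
Your argument is correct, but it takes a different route from the paper. The paper proves this corollary by invoking the same strategy as Lemma~\ref{lemma_conformal_cov_mSLE}: it starts from the explicit Radon--Nikodym derivative of $\PP_{\operatorname{mCLE}_4}^{D}$ with respect to $\PP_{\operatorname{CLE}_4}^{D}$ given in Corollary~\ref{cor_RN_mCLE4} and checks term by term that the Brownian loop measure contribution (by conformal invariance of $\mu_D$), the massive harmonic measure contribution (by conformal covariance of massive harmonic functions), and the conformal radius contribution each transform so as to reproduce the same formula in $\tilde D$ with mass $\tilde m$; combined with conformal invariance of CLE$_4$ this gives the claim. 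You instead work upstream of that formula, directly at the level of the reweighting \eqref{RN_mCLE4}: you push the whole coupling $(h,\Gamma)$ forward by $\tilde\varphi$, use conformal invariance of the GFF--CLE$_4$ coupling, and establish that the Wick-square integral itself is conformally covariant, $\int_D m^2 :h^2: \,dz = \int_{\tilde D}\tilde m^2 :\tilde h^2:\,dw$. This is precisely the ``change of variables at scale $\eps$, then take the limit'' alternative that the paper mentions and declines in the proof of Lemma~\ref{lemma_conformal_cov_mSLE}. Your route is more conceptual and bypasses Corollary~\ref{cor_RN_mCLE4} (hence also Le Jan's isomorphism and Lemma~\ref{lemma_Zf}), but it shifts the burden onto the covariance of the renormalized square: since circles are not mapped to circles, one must genuinely compare the circle average of $h$ at $z$ with radius $\eps$ to the circle average of $\tilde h$ at $\tilde\varphi(z)$ with radius $|\tilde\varphi'(z)|\eps$ and show the discrepancy vanishes in $L^2$ after integrating against $m^2$; note the counterterms then match exactly (not merely up to a smooth error) because the conformal radius and the regularization scale rescale by the same factor $|\tilde\varphi'(z)|$. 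Also be aware that when $\tilde D$ is unbounded the paper \emph{defines} the Wick-square integral by pulling back to a bounded domain (end of Section~\ref{sec_Wick}), so in that case your key identity is definitional and the substantive case is that of two bounded domains. Finally, your check that $\tilde m$ remains admissible (writing $\tilde m = |\psi'|\,\hat m\circ\psi$ with $\psi=\varphi\circ\tilde\varphi^{-1}$) is exactly what is needed, and the last appeal to Theorem~\ref{theorem_mCLE_4} is not strictly necessary since massive CLE$_4$ in $\tilde D$ is by definition the $\Gamma$-marginal of the reweighted coupling there.
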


Observe also that the coupling of Theorem \ref{theorem_mCLE_4} can be iterated in the interiors of the loops to obtain a loop decomposition of the massive GFF similar to that of the GFF \cite{BTLS}. However, in this decomposition, the loops are not identically distributed: this is because massive CLE$_4$ is only conformally covariant and not conformally invariant.

As in the case of massive SLE$_4$, a consequence of Theorem \ref{theorem_mCLE_4} is an explicit expression for the Radon-Nikodym derivative of $\PP_{\operatorname{mCLE}_4}^{D}$ with respect to the law $\PP_{\operatorname{CLE}_4}^{D}$ of CLE$_4$ in $D$.

\begin{corollary} \label{cor_RN_mCLE4}
Let $D \subset \mathbb{C}$ and $m:D \to \mathbb{R}_{+}$ be as in Theorem \ref{theorem_mCLE_4}. Then
\begin{align*}
    \frac{\der \PP_{\operatorname{mCLE}_4}^{D}}{\der \PP_{\operatorname{CLE}_4}^{D}}(\Gamma) = \frac{1}{\mathcal{Z}} \prod_j &\exp \bigg(\frac{1}{2}\mu_j \big( e^{-\langle \ell, m^2 \rangle} + \langle \ell, m^2 \rangle -1\big) -2\lambda^2 \int_{\Lj} m^2(z) H_j^{(m)}(z, L_j) dz \bigg) \\
    &\times \exp \bigg( \int_{\Lj} \frac{m^2(z)}{4\pi} \log \frac{\CR(z,\partial D)}{\CR(z, L_j)} dz \bigg)
\end{align*}
where $\mathcal{Z}$ is as in the definition \eqref{RN_mCLE4} of $\tilde \PP$ in Theorem \ref{theorem_mCLE_4} and where the product runs over the loops $(L_j)_j$ of $\Gamma$. For each $j$, $\mu_j$ is the Brownian loop measure in the interior $\Lj$ of the loop $L_j$ while, for $z \in \Lj$, $H_j^{(m)}(z, L_j)$ is the massive harmonic measure of $L_j$ seen from $z$.
\end{corollary}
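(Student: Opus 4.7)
My plan is to obtain the Radon--Nikodym derivative by conditioning the density \eqref{RN_mCLE4} on $\Gamma$: from Theorem \ref{theorem_mCLE_4},
$$
\frac{\der \PP_{\operatorname{mCLE}_4}^D}{\der \PP_{\operatorname{CLE}_4}^D}(\Gamma) \;=\; \frac{1}{\mathcal{Z}}\,\EE\!\left[\exp\!\Big(-\tfrac12\!\int_D m^2(z) {:}h^2(z){:}\, dz\Big)\,\Big|\,\Gamma\right].
$$
I would then evaluate the conditional expectation using the BTLS loop decomposition of the CLE$_4$--GFF coupling \cite{BTLS}: conditional on $\Gamma$, the restriction $h|_{\Lj}$ equals $h_j+\xi_j$, where the Dirichlet GFFs $h_j$ in $\Lj$ and the i.i.d. signs $\xi_j \in \{\pm 2\lambda\}$ are jointly independent. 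Since the CLE$_4$ carpet has zero Lebesgue measure, the integral over $D$ splits loopwise, and the expectation factors as a product over the loops of $\Gamma$.

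Before treating each loop I would reconcile two Wick orderings: the one in \eqref{RN_mCLE4}, defined via the variance of the GFF in $D$, and the natural one relative to the Dirichlet GFF in $\Lj$. Writing the former as ${:}h^2{:}_D$ and the latter as ${:}(h_j+\xi_j)^2{:}_{L_j}$, comparing circle-average variances gives
$$
{:}h^2{:}_D(z) \;=\; {:}(h_j+\xi_j)^2{:}_{L_j}(z) + \tfrac{1}{2\pi}\log \tfrac{\CR(z,L_j)}{\CR(z,\partial D)} \qquad (z\in \Lj),
$$
and substituting this into the exponential contributes exactly the deterministic conformal-radius factor in the second line of the corollary. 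What remains is to compute the per-loop factor $\EE\!\left[\exp\!\big(-\tfrac12\!\int_{\Lj} m^2 {:}(h_j+\xi_j)^2{:}_{L_j}\, dz\big)\right]$.

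For each $j$ I would expand ${:}(h_j+\xi_j)^2{:}_{L_j} = {:}h_j^2{:}_{L_j} + 2\xi_j h_j + \xi_j^2$, factor out the deterministic piece $e^{-2\lambda^2 \int m^2}$, and evaluate the rest in two steps. Le Jan's isomorphism in $\Lj$ gives $\EE[\exp(-\tfrac12 \int m^2 {:}h_j^2{:}_{L_j})] = \exp(\tfrac12 \mu_j(e^{-\langle \ell,m^2\rangle}+\langle \ell,m^2\rangle-1))$, which produces the first factor on the first line. Under this weighting $h_j$ becomes a massive GFF in $\Lj$ with covariance the massive Green function $G_j^{(m)}$; the linear term is handled by the Gaussian moment $\EE_{\mathrm{mGFF}}[e^{-\xi_j \int h_j m^2}] = \exp\!\big(\tfrac{\xi_j^2}{2}\iint m^2(z)G_j^{(m)}(z,w)m^2(w)\, dz\, dw\big)$. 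Invoking the identity $\int G_j^{(m)}(z,w) m^2(w) dw = 1 - H_j^{(m)}(z,L_j)$---valid since both sides vanish on $L_j$ and solve $(-\Delta+m^2)u = m^2$ in $\Lj$---cancels the $e^{-2\lambda^2 \int m^2}$ prefactor and leaves exactly $e^{-2\lambda^2 \int m^2 H_j^{(m)}(\cdot,L_j)}$, the second factor.

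The main point of care is the Wick-ordering domain change: without reducing to ${:}(h_j+\xi_j)^2{:}_{L_j}$ one cannot apply Le Jan's isomorphism inside each $\Lj$ separately, and the conformal-radius correction would not appear as a deterministic factor. Once this is handled, the rest is a direct Gaussian computation closely parallel to the one used for Corollary \ref{cor_RN_mSLE4}.
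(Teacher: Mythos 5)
Your proposal is correct and follows essentially the same route as the paper: write the Radon--Nikodym derivative as $\mathcal{Z}(\Gamma)/\mathcal{Z}$, use the loopwise factorization with the conformal-radius correction (this is exactly Proposition \ref{prop_rep_CLE} applied with $f\equiv 0$, whose proof handles the cross-terms between loops that your ``the integral splits loopwise'' step glosses over), and then evaluate each per-loop partition function. Your per-loop Gaussian computation, via Le Jan's isomorphism plus the identity $\int G_j^{(m)}(z,w)m^2(w)\,dw = 1-H_j^{(m)}(z,L_j)$, is precisely Lemma \ref{lemma_Zf} specialized to constant boundary data $\xi_j$, where $\phi_f^m=\xi_j H_j^{(m)}(\cdot,L_j)$ and $\xi_j^2=4\lambda^2$.
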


\begin{remark}
In the massless case, the partition function of SLE$_{\kappa}$ can be related to the $\zeta$-regularized determinant of the Laplacian \cite{Dub_SLE, LDP_SLE_1}. One may wonder if massive SLE$_4$ is related in some way to the $\zeta$-regularized determinant of the massive Laplacian. Using the computations of \cite[Appendix~A]{BBC}, one can relate the exponential of the loop measure term of Corollary \ref{cor_RN_mSLE4} and Corollary \ref{cor_RN_mCLE4} to
\begin{equation*}
    \bigg(\frac{\det_{\zeta}(-\Delta_t+m^2)}{\det_{\zeta}(-\Delta_t)} \bigg)^{-1/2}, \quad \bigg(\frac{\det_{\zeta}(-\Delta_j+m^2)}{\det_{\zeta}(-\Delta_j)} \bigg)^{-1/2}
\end{equation*}
where $\det_{\zeta}$ denotes the $\zeta$-regularized determinant and $\Delta_t$, respectively $\Delta_j$, is the Laplacian in $D_t$, respectively the interior of the loop $L_j$, with Dirichlet boundary conditions.
\end{remark}

Although Corollary \ref{cor_RN_mCLE4} gives an explicit expression for the Radon-Nikodym derivative of $\PP_{\operatorname{mCLE_4}}^{D}$ with respect to $\PP_{\operatorname{CLE}_4}^{D}$, this does not provide a direct construction of massive CLE$_4$. Such a construction exists in the massless case \cite{CLE_Markovian}: if one first samples a Brownian loop soup with intensity $1/2$ in $D$ and keeps only the outer boundaries of the outermost clusters of loops of the loop soup, then one obtains a collection of loops in $D$ that has the law of CLE$_4$ in $D$. In \cite{Camia_BLS}, Camia introduced a massive version of the Brownian loop soup and conjectured that the outer boundaries of its outermost clusters at intensity $1/2$ are distributed as the level lines of the massive GFF. Thanks to Theorem \ref{theorem_mCLE_4} which shows that the loops of massive CLE$_4$ are the level lines of the massive GFF and to the coupling of \cite[Proposition~5]{Qian_GFF_BLS}, we can prove this conjecture and give an explicit construction of massive CLE$_4$.

\begin{theorem} \label{prop_mCLE4_mBLS}
Let  $D \subset \mathbb{C}$ and $m:D \to \mathbb{R}_{+}$ be as in Theorem \ref{theorem_mSLE_4}. Then a massive CLE$_4$ $\Gamma$ in $D$ with mass $m$ and a massive Brownian loop soup $\mathcal{L}$ in $D$ with mass $m$ and intensity $1/2$ can be coupled together in such way that the outer boundaries of the outermost clusters of $\LL$ are the loops of $\Gamma$. 
\end{theorem}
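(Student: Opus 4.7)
The plan is to derive Theorem \ref{prop_mCLE4_mBLS} by reweighting Qian's coupling \cite[Proposition~5]{Qian_GFF_BLS}. Under a base law $\mathbb{Q}$ in $D$, I would take a triple $(h,\Gamma,\mathcal{L})$ where $h$ is a Dirichlet GFF, $\mathcal{L}$ is a Brownian loop soup at intensity $1/2$, and $\Gamma$ is the collection of outer boundaries of the outermost clusters of $\mathcal{L}$. By Sheffield--Werner, $\Gamma$ has the law of CLE$_4$, and Qian's coupling ensures that $(h,\mathcal{L})$ realize the strong form of Le Jan's isomorphism while simultaneously being compatible with the $(h,\Gamma)$ coupling of \cite{BTLS}. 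I then define a tilted law by the same Radon--Nikodym derivative as in \eqref{RN_mCLE4}:
\begin{equation*}
\frac{\der \tilde{\mathbb{Q}}}{\der \mathbb{Q}}\bigl((h,\Gamma,\mathcal{L})\bigr) := \frac{1}{\mathcal{Z}}\exp\biggl(-\frac{1}{2}\int_D m^2(z):h^2(z):dz\biggr).
\end{equation*}

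The first observation is that, since this tilt depends only on $h$, the $(h,\Gamma)$-marginal under $\tilde{\mathbb{Q}}$ is exactly the one produced in Theorem \ref{theorem_mCLE_4}; in particular the marginal of $\Gamma$ is $\PP_{\operatorname{mCLE}_4}^{D}$. Moreover the $\mathbb{Q}$-a.s.\ identity "$\Gamma$ equals the outer boundaries of the outermost clusters of $\mathcal{L}$" is preserved under $\tilde{\mathbb{Q}}$ because $\tilde{\mathbb{Q}}\ll \mathbb{Q}$. It therefore suffices to show that the $\tilde{\mathbb{Q}}$-marginal of $\mathcal{L}$ is a massive Brownian loop soup at intensity $1/2$ with mass $m$, in the sense of \cite{Camia_BLS}.

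For this I would invoke Le Jan's isomorphism in the strong form available in Qian's coupling: the renormalized occupation field of $\mathcal{L}$ equals $\tfrac12 :h^2:$, so that pairing with the bounded continuous function $m^2$ of Theorem \ref{theorem_mSLE_4} gives
\begin{equation*}
\frac{1}{2}\int_D m^2(z):h^2(z):dz \;=\; \sum_{\ell \in \mathcal{L}} \langle \ell, m^2 \rangle \;-\; C(m),
\end{equation*}
for a deterministic Wick counterterm $C(m)$. The tilt is therefore proportional to $\prod_{\ell\in\mathcal{L}} e^{-\langle \ell,m^2\rangle}$, and the exponential formula for Poisson point processes identifies the $\tilde{\mathbb{Q}}$-law of $\mathcal{L}$ as a PPP with intensity measure $\tfrac12 e^{-\langle \ell,m^2\rangle}\mu(d\ell)$, i.e.\ a massive Brownian loop soup at intensity $1/2$. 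The normalization constant that appears is exactly $\exp\bigl(\tfrac12 \mu(e^{-\langle\ell,m^2\rangle}-1)\bigr)$, consistent with the factor appearing in Corollary \ref{cor_RN_mCLE4}.

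The main obstacle is the rigorous identification of $\tfrac12 \int m^2 :h^2:dz$ with the weighted loop sum above: Le Jan's isomorphism is usually phrased as an equality of random Schwartz distributions, so testing against $m^2$ requires controlling the contribution of small loops and justifying that the sum $\sum_\ell \langle \ell, m^2\rangle$ converges after subtracting the Wick counterterm. This is precisely where the hypotheses that $\hat m$ is bounded and continuous, and hence $m^2$ is bounded (on the image of $\varphi$), are used. Once this identification is in place, all remaining steps are standard Poisson point process manipulations and the asserted coupling follows.
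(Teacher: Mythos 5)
Your architecture coincides with the paper's: start from the Qian--Werner triple $(h,\Gamma,\LL)$ of Proposition \ref{prop_coupling_GFF_BLS_CLE}, tilt by the Wick-square density (which, by the third item of that coupling, equals $\frac{1}{\mathcal Z}\exp(-\int_D m^2(z):L(z):dz)$), read off the law of $(h,\Gamma)$ from Theorem \ref{theorem_mCLE_4}, and note that the cluster-boundary identity survives because $\tilde{\mathbb Q}\ll\mathbb Q$. The step you defer, however, is the only one requiring real work, and the way you propose to carry it out would fail. The decomposition
\begin{equation*}
\frac{1}{2}\int_D m^2(z):h^2(z):dz=\sum_{\ell\in\LL}\langle\ell,m^2\rangle-C(m)
\end{equation*}
with a finite deterministic $C(m)$ does not exist: the short-lifetime singularity of the loop measure gives $\mu_D\big(\langle\ell,m^2\rangle\wedge1\big)=\infty$, so $\sum_{\ell\in\LL}\langle\ell,m^2\rangle=\infty$ almost surely and the centering is likewise infinite; only the difference, defined through the lifetime cutoff $\tau(\ell)\geq\eps$ in the construction of $:L:$, is finite. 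For the same reason your claimed normalization $\exp(\frac12\mu_D(e^{-\langle\ell,m^2\rangle}-1))$ is infinite; the correct constant, as in \eqref{explicit_Z} and Corollary \ref{cor_RN_mCLE4}, is $\exp(\frac12\mu_D(e^{-\langle\ell,m^2\rangle}+\langle\ell,m^2\rangle-1))$, where the extra linear term is exactly the renormalization you are missing. Consequently the tilt is not proportional to $\prod_{\ell}e^{-\langle\ell,m^2\rangle}$ and the exponential formula for Poisson point processes cannot be applied directly.

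What replaces it is the paper's Lemma \ref{lemma_abscont_mBLS}: one truncates to loops of lifetime at least $2^{-n}$, shows that the correctly centered truncated exponentials $\exp(Y_n(\LL))$ form a nonnegative martingale with expectation $1$ (hence converge in $L^1(\PP_\alpha)$), identifies the almost sure limit with $\frac{1}{\mathcal Z}\exp(-\int_D m^2:L:)$ using the a.s.\ convergence in the definition of $:L:$ together with dominated convergence for $\mu_D\big(\mathbb{I}_{B_n}(1-e^{-\langle\ell,m^2\rangle}-\langle\ell,m^2\rangle)\big)$ (finite since $e^{-x}+x-1\leq x^2/2$ and $\mu_D(\langle\ell,m^2\rangle^2)<\infty$), and matches Laplace functionals on compactly supported test functions via a hemicompactness argument for the loop space. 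Your plan correctly locates the difficulty at the interface between the occupation field and the loop decomposition, but the formula you build the identification on is unavailable, and the ``standard Poisson point process manipulation'' you invoke is precisely the content of that lemma.
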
 

Let us now give an overview of the proof of Thereom \ref{theorem_mSLE_4}, Theorem \ref{theorem_mCLE_4} and Theorem \ref{prop_mCLE4_mBLS}. We will then discuss questions that arise from these results and their proof.

\subsection{Outline of the proof of Theorem \ref{theorem_mSLE_4} and Theorem \ref{theorem_mCLE_4}}

The proof of Theorem \ref{theorem_mSLE_4}, given in Section \ref{sec_mSLE4}, is composed of two main steps. Note that by definition, under $\tilde \PP$ as defined via \eqref{RN_mSLE4}, the marginal law of $h+\phi$ is that of a massive GFF in $D$ with mass $m$ and boundary conditions $F^{(D,a,b)}$. In the first step, we identify the conditional law under $\tilde \PP$ of $h+\phi$ given $\gamma([0,t])$, for $t \geq 0$. This involves some technical arguments but the main idea is simple. It can be seen as a ``conditional version" of the path-integral formalism for the massive GFF, as we now explain. Let $f$ be a smooth function with compact support and let $t \geq 0$. It suffices to compute the characteristic function of $(h+\phi, f)$ given $\gamma([0,t])$ under $\tilde \PP$. If the field $h+\phi$ were a pointwise defined function and thus no renormalization were needed to define its square, we would have that
\begin{align} \label{informal_cond_law}
    \tilde \EE [\exp(i(h+\phi, f)) \vert \gamma([0,t])] 
    &= \frac{1}{\mathcal{Z}_t} \EE [ \exp(i(h+\phi, f)) \exp ( -\frac{1}{2} \int_{D} m^2(z) (h+\phi)^2(z) dz )\vert \gamma([0,t]) ] \nonumber \\
    &= \frac{1}{\mathcal{Z}_t} \EE [ \exp(i(h_t+\phi_t, f)) \exp ( -\frac{1}{2} \int_{D_t} m^2(z) (h_t+\phi_t)^2(z) dz ) \vert \gamma([0,t]) 
    ],
\end{align}
using the fact that under $\PP$, $h+\phi$ and $\gamma$ are coupled in such way that, conditionally on $\gamma([0,t])$, $h+\phi=h_t+\phi_t$ where $h_t+\phi_t$ is a GFF in $D_t := D \setminus \gamma([0,t])$ with boundary conditions $F^{(D_t,\gamma(t),b)}$. This coupling will be recalled more precisely in Section \ref{sec_GFF}. Similarly, we would have that
\begin{align} \label{informal_Zt}
    \mathcal{Z}_t &= \EE [ \exp ( -\frac{1}{2} \int_{D} m^2(z) (h+\phi)^2(z) dz )\vert \gamma([0,t])]
    = \EE [ \exp ( -\frac{1}{2} \int_{D_t} m^2(z) (h_t+\phi_t)^2(z) dz ) \vert \gamma([0,t])].
\end{align}
Now, observe that, since conditionally on $\gamma([0,t])$, $h_t+\phi_t$ is a GFF in $D_t$ with boundary conditions $F^{D_t,\gamma(t),b}$, according to \eqref{RN_intro}, the random variable
\begin{equation*}
    \frac{1}{\mathcal{Z}_t}\exp \bigg( -\frac{1}{2} \int_{D_t} m^2(z) (h_t+\phi_t)^2(z) dz \bigg)
\end{equation*}
is the Radon-Nikodym derivative of the law of the massive GFF in $D_t$ with mass $m$ and boundary conditions $F^{(D_t,\gamma(t),b)}$, with respect to the law of the GFF in $D_t$ with the same boundary boundary conditions. Going back to \eqref{informal_cond_law}, this would complete the proof. 

Of course, these computations are only formal since $h$ is not a pointwise defined function. Nevertheless, when introducing the renormalization of $h^2$ mentioned in the discussion below \eqref{RN_intro}, they can be made rigorous. The equalities \eqref{informal_cond_law} and \eqref{informal_Zt} actually hold true when considering the renormalized squares $:(h+\phi)^2:$ and $:(h_t+\phi_t)^2:$ of $h+\phi$ and $h_t+\phi_t$, but up to a correction term due to boundary effects. This term is the same in both equalities and thus is cancelled out.

The second step of the proof of Theorem \ref{theorem_mSLE_4} is devoted to the identification of the marginal law of $\gamma$ under $\tilde \PP$. This relies on a martingale characterization of massive SLE$_4$ shown in \cite{mHE} and on the fact that under $\tilde \PP$, $h+\phi$ and $\gamma$ are coupled as described in the statement of Theorem \ref{theorem_mSLE_4}. This fact is used to express the martingale characterizing massive SLE$_4$ as an observable of the field $h+\phi$ conditioned on $\gamma([0,t])$, for $t \geq 0$. This is possible thanks to the Markov property of the massive GFF and the explicit expression for its covariance.

The strategy for the proof of Theorem \ref{theorem_mCLE_4} is the same as that of Theorem \ref{theorem_mSLE_4}, except that we must deal with more technicalities. This is due to the fact that under $\PP$ and when regularizing the GFF $h$ to define its square, conditionally on $\Gamma$, the regularized GFFs $(h_{j,\eps})_j$ obtained via the decomposition of $h$ are not supported in the interior of the loops of $\Gamma$. This creates correlations and we must ensure that they disappear when we remove the regularization. To identify the conditional law of the variables $(\xi_j)_j$ given $\Gamma$ under $\tilde \PP$, we simply use the symmetry of the Wick square of the GFF and of the conditional laws of $(h_j)_j$ and $(\xi_j)_j$ given $\Gamma$ under $\PP$.

Finally, to prove Theorem \ref{prop_mCLE4_mBLS}, we first compute the Radon-Nikodym derivative of the law of the massive Brownian loop soup with respect to that of the Brownian loop soup. We then use this Radon-Nikodym derivative to reweight the coupling of \cite{Qian_GFF_BLS} between a CLE$_4$, a GFF with Dirichlet boundary conditions and a Brownian loop soup with intensity $1/2$ in a similar way as in Theorem \ref{theorem_mCLE_4}. We show that we thus obtain a coupling between a massive CLE$_4$, a massive GFF with Dirichlet boundary conditions and a massive Brownian loop soup with intensity $1/2$, which yields Theorem \ref{prop_mCLE4_mBLS}.

We will prove Theorem \ref{theorem_mSLE_4}, Theorem \ref{theorem_mCLE_4} and Theorem \ref{prop_mCLE4_mBLS} under the assumptions that the domain $D$ is bounded and that the mass $m:D \to \mathbb{R}_{+}$ is a bounded and continuous function. Conformal covariance then extends the results to the stated generality. However, our set of allowable masses is probably not the optimal one. This will be discussed in Section \ref{sec_assumptions}.

\subsection{Open problems}

\begin{enumerate}[leftmargin=*]

    \item The path-integral formalism can be used to define other field theories than the massive GFF, called interacting quantum field theories (QFT). In this approach, an interacting QFT with interaction potential $V$ is formally defined via 
    \begin{equation} \label{RN_PV}
        \frac{\der \PP_{\operatorname{V}}}{\der \PP_{\operatorname{GFF}}}(h)=\frac{1}{\mathcal{Z}_V} \exp \bigg( -\int  V(h(z)) dz \bigg).
    \end{equation}
    The above definition of $\PP_{\operatorname{V}}$ is only formal since the GFF is not defined pointwise and thus, in order to make sense of $V(h(z))$, one must introduce a renormalization procedure. However, note the analogy between \eqref{RN_intro} and \eqref{RN_PV}. This raises the following question: can the same ideas as those behind Theorem \ref{theorem_mSLE_4} and Theorem \ref{theorem_mCLE_4} be used to construct the level lines of interacting QFTs, provided that \eqref{RN_PV} can be defined rigorously by renormalization? Examples include the sine-Gordon field in the subcritical regime $\beta < 4\pi$ \cite{sG_1}, the $\Phi_{2}^4$-theory \cite{SimonEQFT} and the $\exp(\Phi)_2$-theory \cite{QFT}.

    \item Some fields of interest, such as the sine-Gordon field with $\beta < 6\pi$ \cite{max_sG} or those constructed via stochastic quantization in the regime where the Da Prato--Debussche trick applies \cite{DaPrato_SQ}, can be decomposed as a masssive GFF plus a more regular field, at least when these fields are defined on the torus. This more regular field is typically non-Gaussian and coupled to the massive GFF. Can we extend the massive GFF level line coupling to such regular perturbations?

    \item In the massless case, an SLE$_{\kappa}$ curve can be coupled to a GFF with appropriate boundary conditions for $\kappa \in (0,8)$ \cite{Dub_SLE, IG_1}. Can one construct a family of massive SLE$_{\kappa}$, $\kappa \in (0,8)$, that can be coupled to a massive GFF with appropriate boundary conditions? At a formal level, for each $\kappa$, it is not difficult to obtain an expression for the SDE satisfied by the driving function of this variant of SLE$_{\kappa}$ but one must still show that this SDE has a unique (weak) solution. If this can be achieved, then one can ask: does this family of massive SLE$_{\kappa}$ satisfy a large deviation principle similar to that of SLE$_{\kappa}$, $\kappa \in (0,8)$ \cite{LDP_SLE_1,LDP_SLE_2}?

    \item In a similar spirit, what are the massive CLE$_{\kappa}$ for $\kappa \neq 4$? Outermost boundaries of outermost clusters of a massive Brownian loop soup with intensity with $c(\kappa) \in (0,1/2]$ give a natural candidate family for $\kappa \in (8/3,4]$. Alternatively, in analogy to the non-massive case \cite{CLE_Markovian}, do a conformal covariance property and a restriction property characterize a family of probability measures on loops that can be seen as massive CLE$_{\kappa}$, for $\kappa \in (8/3,8)$? This family should be large: this is related to the fact that there is more than one way to define the massive version of an SLE$_{\kappa}$ curve \cite{off_SLE}.
\end{enumerate}

\paragraph{Acknowledgements.} The author is indebted to Avelio Sep\'ulveda for pointing out that Theorem \ref{theorem_mCLE_4} should yield a proof of Camia's conjecture. The author is grateful to Ellen Powell for her guidance and her careful reading of an earlier version of the paper. The author also thanks Tyler Helmuth for discussions at various stages of this project.

\section{Background} \label{sec_background}

\subsection{Schramm-Loewner evolutions} \label{sec_SLE}

\subsubsection{The Loewner equation and Schramm-Loewner evolutions}

Denote by $\HH$ the complex upper-half plane $\{ z \in \mathbb{C}: \Im(z) > 0 \}$ and let $\gamma: [0,\infty) \to \overline{\mathbb{H}}$ be a non-self-crossing curve targeting $\infty$ and such that $\gamma(0)=0$. For $t \geq 0$, let $K_t$ be the hull generated by $\gamma([0,t])$, that is $\HH \setminus K_t$ is the unbounded connected component of $\HH \setminus \gamma([0,t])$. In the case where $\gamma([0,t])$ is non-self-touching, $K_t$ is simply given by $\gamma([0,t])$. For each $t \geq 0$, it is easy to see that there exists a unique conformal $g_t: \HH \setminus K_t \to \HH$ satisfying the normalization $g_t(\infty) = \infty$ and such that $\lim_{z \to \infty} (g_t(z) - z) = 0$. It can then be proved that $g_t$ satisfies the asymptotic
\begin{equation*}
    g_t(z) = z + \frac{a_1(t)}{z} + O(\vert z \vert^{-2}) \quad \text{as } \vert z \vert \to \infty.
\end{equation*}
The coefficient $a_1(t)$ is equal to $\text{hcap}(K_t)$, the half-plane capacity of $K_t$, which, roughly speaking, is a measure of the size of $K_t$ seen from $\infty$. Moreover, one can show that $a_1(0)=0$ and that $t \mapsto a_1(t)$ is continuous and strictly increasing. Therefore, the curve $\gamma$ can be reparametrized in such a way that at each time $t$, $a_1(t) = 2t$. $\gamma$ is then said to be parameterized by half-plane capacity.

In this time-reparametrization and with the normalization of $g_t$ just described, it is known that there exists a unique real-valued function $t \mapsto W_t$, called the driving function, such that the following equation, called the Loewner equation, is satisfied:
\begin{equation} \label{eq_Loewner}
    \partial_t g_t(z) = \frac{2}{g_t(z) - W_t},  \quad g_0(z)=z, \quad \text{for all $z \in \HH \setminus K_t$}.
\end{equation}
Indeed, it can be shown that $g_t$ extends continuously to $\gamma(t)$ and setting $W_t = g_t(\gamma(t))$ yields the above equation, see e.g. \cite[Chapter~4]{book_Lawler} and \cite[Chapter~4]{book_SLE}.

Conversely, given a continuous and real-valued function $t \mapsto W_t$, one can construct a locally growing family of hulls $(K_t)_t$ by solving the equation \eqref{eq_Loewner}. Under additional assumptions on the function $t \mapsto W_t$, the family of hulls obtained using \eqref{eq_Loewner} is generated by a curve, in the sense explained above \cite{slit_Loewner}.

Schramm-Loewner evolutions, or SLE for short, are random Loewner chains introduced by Schramm \cite{Schramm_SLE}. For $\kappa \geq 0$, SLE$_\kappa$ is the Loewner chain obtained by considering the Loewner equation \eqref{eq_Loewner} with driving function $W_t = \sqrt{\kappa}B_t$, where $(B_t, t \geq 0)$ is a standard one-dimensional Brownian motion. As such, SLE$_{\kappa}$ is defined in $\HH$ but, thanks to the conformal invariance of the Loewner equation, SLE$_{\kappa}$ can be defined in any simply connected domain $D \subset \mathbb{C}$ with two marked boundary points $a, b \in \partial D$ by considering a conformal map $\varphi: D \to \HH$ with $\varphi(a)=0$ and $\varphi(b) = \infty$ and taking the image of SLE$_{\kappa}$ in $\HH$ by $\varphi^{-1}$. In particular, SLE$_{\kappa}$ is conformally invariant and it turns out that this conformal invariance property together with a certain domain Markov property characterize the family (SLE$_{\kappa}, \kappa \geq 0)$. In what follows, we will be interested in the special case $\kappa =4$. SLE$_4$ can be shown to be almost surely generated by a simple and continuous curve that is transient and whose Hausdorff dimension is $3/2$. For a proof of these facts, we refer the reader to \cite[Chapter~5]{book_SLE} and references therein.

\subsubsection{Massive SLE$_4$} \label{sec_def_mSLE4}

Massive SLE$_4$ is a massive version of SLE$_4$ originally introduced in \cite{off_SLE}. There are many possible ways to define massive versions of SLE$_4$ but we will use the name massive SLE$_4$ for this particular one, as in \cite{off_SLE}.

When mapped to the upper-half plane via a conformal map, a random curve whose law is massive SLE$_4$ can be described via its driving function as follows. Let $D \subset \mathbb{C}$ be a bounded, open and simply connected domain and let $a,b \in \partial D$. Denote by $\partial D^{+}$, respectively $\partial D^{-}$, the clockwise, respectively counterclockwise, oriented boundary arc from $a$ to $b$. Let $m: D \to \mathbb{R}_{+}$ be a bounded and continuous function. Let $\varphi: D \to \HH$ be a conformal map such that $\varphi(a)=0$ and $\varphi(b)=\infty$. Then, a random curve $\gamma$ has the law of massive SLE$_4$ in $D$ from $a$ to $b$ with mass $m$ if the driving function $(W_t, t \geq 0)$ of $\varphi(\gamma)$, when parametrized by half-plane capacity, satisfies the SDE
\begin{equation} \label{SDE_mSLE4}
    dW_t  = 2dB_t - 2\pi \bigg( \int_{D_t} m^2(z)P_t^m(z)h_t(z) dz \bigg)dt, \quad W_0=0,
\end{equation}
where $(B_t, t \geq 0)$ is a one-dimensional standard Brownian motion. Above, for $z \in D_t$, we have set
\begin{equation*}
    P_t^m(z) := \frac{1}{\pi}\Im\bigg( \frac{-1}{g_t(\varphi(z))-W_t}\bigg) - \int_{D_t} m^2(w) \frac{1}{\pi}\Im\bigg( \frac{-1}{g_t(\varphi(w))-W_t}\bigg) G_t^m(z,w) dw
\end{equation*}
where $G_t^m$ is the massive Green function with mass $m$ in $D_t$. In \eqref{SDE_mSLE4}, the function $h_t$ is the unique harmonic function with boundary values $1/2$ on $\partial D^{-}$ and the left side of $\gamma([0,t])$ and $-1/2$ on $\partial D^{-}$ and the right side of $\gamma([0,t])$. It was shown in \cite{mHE} that the SDE \eqref{SDE_mSLE4} has a unique weak solution whose law is absolutely continuous with respect to that of $(2B_t, t \geq 0)$. This implies that the law of massive SLE$_4$ in $D$ from $a$ to $b$ with mass $m$ is absolutely continuous with respect to that of SLE$_4$ in $D$ from $a$ to $b$.

Moreover, using the conformal covariance of the drift term in the SDE \eqref{SDE_mSLE4}, one can extend the definition of massive SLE$_4$ to pairs $(\tilde D, \tilde m)$ where $\tilde D \subset \mathbb{C}$ is the image under a conformal map $\varphi$ of a bounded, open and simply connected domain $D \subset \mathbb{C}$ and, for $w \in \tilde D$, $\tilde m(w)^2=\vert (\varphi^{-1})'(w)\vert ^2 m(\varphi^{-1}(w))^2$, where $m: D \to \mathbb{R}$ is a bounded and continuous function.

In the proof of Theorem \ref{theorem_mSLE_4}, we will not use this explicit definition of massive SLE$_4$ via its driving function but rather the fact that massive SLE$_4$ can be characterized as the unique random curve for which a certain observable is a martingale. This characterization will be recalled in the course of the proof of Theorem \ref{theorem_mSLE_4}, see Section \ref{sec_marginal_law_gamma}.

\subsection{Conformal loop ensembles} \label{sec_CLE}

Conformal loop ensembles (CLE) are a family of probability distributions on countable ensembles of non-nested loops (closed curves) in open and simply connected domains of the complex plane \cite{CLE_She, CLE_Markovian}. As in \cite[Section~2.1]{CLE_Markovian}, we define a simple loop in the plane to be the image of the unit circle under a continuous and injective map. With this definition, a loop $\ell$ with time-duration $\tau(\ell)$ is equivalent to the loop $\tilde \ell$ with time-duration $\tau(\tilde \ell)$ if there exists a bijective map $\psi: [0,\tau(\ell)] \to [0,\tau(\tilde \ell)]$ such that for any $t \in [0,\tau(\ell)]$, $\ell(\psi(t)) = \tilde \ell(t)$. This defines a space of loops, that we denote by $\operatorname{Loop}(\mathbb{C})$, and we endow it with the $\sigma$-field $\Sigma$ generated by all the events of the form $\{ O \subset \operatorname{Int}(\ell) \}$, where $O$ spans the set of open subsets of $\mathbb{C}$ and for a loop $\ell$, $\operatorname{Int}(\ell)$ denotes its interior. Note that $\operatorname{Loop}(\mathbb{C})$ can also be turned into a metric space, for example by equipping it with the distance induced by the supremum norm. A countable collection $\Gamma=(\ell_j)_{j \in J}$ of loops in $\operatorname{Loop}(\mathbb{C})$ can be identified with the point-measure
\begin{equation*}
    \mu_{\Gamma} = \sum_{j \in J} \delta_{\ell_j}.
\end{equation*}
The space of countable collections of loops is then naturally equipped with the $\sigma$-field generated by the sets $\{ \Gamma: \mu_{\Gamma}(A) = k \}$ where $A \in \Sigma$ and $k \geq 0$.

CLE$_{\kappa}$ are random countable collections of loops indexed by a parameter $\kappa \in (8/3,8)$. $\operatorname{CLE}_{\kappa}$ is connected to $\operatorname{SLE}_{\kappa}$ via the so-called branching tree construction \cite{CLE_She}. The geometry of the loops of a $\operatorname{CLE}_{\kappa}$ in an open and simply connected domain $D$ depends on the value of $\kappa$: when $\kappa \in (8/3,4]$, these loops are almost surely simple loops that do not intersect each other or the boundary of $D$; on the contrary, when $\kappa \in (4,8)$, they are almost surely non-simple but non-self-crossing and they may touch (but not cross) the boundary of $D$ and each other. Another important property of CLE$_\kappa$ is their conformal invariance in law: if $\varphi: D \to \tilde D$ is a conformal map between two open and simply connected domains of $\mathbb{C}$ and $\Gamma$ is a CLE$_{\kappa}$ in $D$, then $\varphi(\Gamma)$ has the law of a CLE$_\kappa$ in $\tilde D$.

\subsection{Level lines of the Gaussian free field} \label{sec_GFF}

\subsubsection{Definition of the GFF} \label{sec_def_GFF}

Let $D \subset \mathbb{C}$ be an open and simply connected domain. The Gaussian free field (GFF) in $D$ with Dirichlet boundary conditions is a centered Gaussian process $h$ indexed by smooth functions with compact support in $D$ whose covariance is given by, for $f$ and $g$ two such functions,
\begin{equation*}
    \EE [(h,f)(h,g)] = \int_{D \times D} f(z)G_{D}(z,w)g(w) dzdw.
\end{equation*}
Above, $G_{D}$ is the Green function in $D$: this is the inverse (in the sense of distributions) of the operator $-\Delta$ in $D$ with Dirichlet boundary conditions. As the Green function blows up on the diagonal, the GFF is not defined pointwise but is instead a generalized function.

One can also define a GFF with specified boundary conditions. Let $D \subset \mathbb{C}$ be an open and simply connected domain. Let $f : \partial D \to \mathbb{R}$ be a bounded function and let $\phi_{f}$ be its unique harmonic extension in $D$. That is, $\phi_{f}$ is the unique solution to the boundary value problem
\begin{equation*}
    \begin{cases}
        &-\Delta u(z)=0, \quad z \in D \\
        &u(z)=f(z), \quad z \in \partial D.
    \end{cases}
\end{equation*}
Then, a GFF in $D$ is said to have boundary conditions given by $f$ if it has the same law as $h+\phi_{f}$ where $h$ has the law of a GFF in $D$ with Dirichlet boundary conditions.

\subsubsection{Coupling between an SLE$_4$ curve and a GFF} \label{sec_coupling_SLE}

Set $\lambda := \sqrt{\pi/8}$. Let $D \subset \mathbb{C}$ be an open and simply connected domain and let $a,b \in \partial D$. Denote by $\partial D^{+}$, respectively $\partial D^{-}$, the clockwise-oriented, respectively counterclockwise-oriented, boundary arc $(ab)$. Let $\phi: D \to \mathbb{R}$ be the unique harmonic function in $D$ with boundary conditions $\lambda$ on $\partial D^{+}$ and $-\lambda$ on $\partial D^{-}$. Then, there exists a coupling $(h+\phi, \gamma)$ between a GFF $h+\phi$ in $D$ with boundary conditions $\phi$ and an SLE$_4$ curve in $D$ from $a$ to $b$ \cite{Dub_SLE, IG_1}. In this coupling, for any $t \geq 0$, conditionally on $\gamma([0,t])$, $h+\phi=h_t+\phi_t$ where $h_t$ is a GFF with Dirichlet boundary conditions in $D_t:=D \setminus \gamma([0,t])$ and $\phi_t$ is the unique harmonic function in $D_t$ with boundary conditions $\lambda$ on $\partial D^{+}$ and the left side of $\gamma([0,t])$ and $-\lambda$ on $\partial D^{-}$ and the right side of $\gamma([0,t])$. A similar decomposition also holds when conditioning on $\gamma([0,\tau])$, where $\tau$ is a stopping time for the filtration generated by $\gamma$. 

When $t=\infty$, since $\gamma([0,\infty))$ is almost surely a simple curve which does intersect the boundary of $D$ except at its endpoints, $D \setminus \gamma([0,\infty))$ is almost surely composed of two simply connected components. Denote by $D_1$, respectively $D_2$, the component on the left, respectively right, of $\gamma([0,\infty))$. Then, conditionally on $\gamma([0,\infty))$, $h=h_1+\phi_1+h_{2}+\phi_2$ where $h_1$ and $h_2$ are independent GFFs with Dirichlet boundary conditions in $D_1$ and $D_2$ and $\phi_1$, respectively $\phi_2$ is almost surely equal to $\lambda$, respectively $0$, in $D_1$ and to $0$, respectively $-\lambda$, in $D_2$.

These decompositions for $t<\infty$ and $t=\infty$ explain why in this coupling, the curve $\gamma$ is sometimes called the level line of the field $h+\phi$. In this coupling, it can also be shown that the curve $\gamma$ is in fact measurable with respect to the GFF $h$, see \cite{IG_1}.

\subsubsection{Coupling between an CLE$_4$ and a GFF} \label{sec_coupling_CLE}

As before, set $\lambda := \sqrt{\pi/8}$. Let $D \subset \mathbb{C}$ be an open and simply connected domain. Then, there exists a coupling $(h,\Gamma)$ between a GFF $h$ in $D$ with Dirichlet boundary conditions and a CLE$_4$ $\Gamma$ in $D$ \cite{BTLS}. In this coupling, conditionally on $\Gamma$, $h=\sum_j h_j + \xi_j$ where the sum runs over the loops $(L_j)_j$ of $\Gamma$ and
\begin{itemize}
    \item $(h_j)_j$ are independent GFFs with Dirichlet boundary conditions in the interiors of  $(L_j)_j$;
    \item $(\xi_j)_j$ are independent random variables with $\PP(\xi_j=2\lambda \vert \Gamma)=\PP(\xi_j=-2\lambda \vert \Gamma)=1/2 \; \forall j$;
    \item $(h_j)_j$ and $(\xi_j)_j$ are independent.
\end{itemize}
This decomposition of $h$ given $\Gamma$ is the reason why the loops of $\Gamma$ are sometimes called the level lines of the field $h$. In this coupling, it can also be shown that the CLE$_4$ $\Gamma$, the fields $(h_j)_j$ and the labels $(\xi_j)_j$ are in fact measurable with respect to the GFF $h$ \cite{slides_CLE, BTLS}.

\subsection{The Brownian loop measure and Le Jan's isomorphism theorem} \label{sec_BLS_LeJan}

\subsubsection{The Brownian loop measure and the Brownian loop soup} \label{sec_BLS}

Let $D \subset \mathbb{C}$ be a bounded, open and simply connected domain. We denote by $\PP_t^{z,w}$ the bridge probability from $z$ to $w$ of length $t$ for a two-dimensional standard Brownian motion $(B_s, s \geq 0)$. We also let $\tau_{\partial D}$ denote the first hitting time of $\partial D$ by $(B_s, s \geq 0)$. The rooted Brownian loop measure $\mu_{R,D}$ in $D$, introduced in \cite{Werner_BLS},  is defined as
\begin{equation*}
    \mu_{R,D}(\cdot) := \int_{0}^{\infty} \int_{D} \frac{1}{t} \frac{1}{2\pi t} \PP_t^{z,z}(\cdot, \tau_{\partial D} > t) dz dt.
\end{equation*}
This is a measure on rooted loops. One can define an equivalence relation between such loops as follows. For $\ell$ a rooted loop, denote by $\tau(\ell)$ its lifetime. Then, for $u \in \mathbb{R}$, $\theta_u\ell: t \mapsto \ell(u+t \, \operatorname{mod} \tau(\ell))$ is again a loop. We then say that two loops $\ell$ and $\tilde \ell$ are equivalent if there exists $u \in \mathbb{R}$ such that $\ell = \theta_u \tilde \ell$. The quotient of the rooted loop measure $\mu_{R, D}$ under this equivalence relation is the Brownian loop measure $\mu_{D}$ in $D$. This measure satisfies two important properties:
\begin{itemize}
    \item the restriction property: if $\tilde D \subset D$, then $\mu_{\tilde D}(\cdot) = \mu_{D}(\cdot \mathbb{I}_{\ell \subset \tilde D})$;
    \item conformal invariance: if $f: \Omega \to D$ is a conformal map, then $f \circ \mu_{\Omega} = \mu_{D}$.
\end{itemize}
For $\alpha >0$, one can then sample a Poisson point process with intensity $\alpha \mu_{D}$ which gives rise to a countable collection of unrooted loops in $D$. This process, introduced in \cite{Werner_BLS}, is called a Brownian loop soup in $D$ with intensity $\alpha$. The Brownian loop soup turns out to be intimately connected to CLE$_{\kappa}$ for $\kappa \in (8/3,4]$: a CLE$_\kappa$ can be constructed out a Brownian loop soup with the correct intensity \cite{CLE_Markovian}. In particular, a CLE$_4$ $\Gamma$ can be coupled to a Brownian loop soup $\LL$ with intensity $1/2$ in such way that the outer boundaries of the outermost clusters of loops of $\LL$ are the loops of $\Gamma$. Here, we say that two loops $\ell$ and $\tilde \ell$ belong to the same cluster of loops of $\LL$ if there exists a sequence $(\ell_k)_{k=0}^{n}$ of loops in $\LL$ such that $\ell_0=\ell$, $\ell_n=\tilde \ell$ and for any $k$, $\ell_k \cap \ell_{k+1}\neq \emptyset$.

An important observable of a Brownian loop soup $\LL$ in $D$ with intensity $\alpha > 0$ is its renormalized occupation-time field $:L:$ which, in some sense, fully characterizes $\LL$. For $z \in D$, $:L(z):$ should be thought of as the total amount of time that loops in $\LL$ spend at $z$. However, since this quantity is in fact almost surely infinite, $:L:$ must be constructed via renormalization and is not defined pointwise. The construction of $:L:$ goes as follows. Let $\eps > 0$ and set, for a bounded function $f: D \to \mathbb{R}$,
\begin{equation*}
	\int_{D} :L_{\eps}(z): f(z) dz := \sum_{\ell \in \LL: \tau(\ell) \geq \eps} \int_{0}^{\tau(\ell)}f(\ell(t)) dt - \alpha\mu_{D}\bigg(\mathbb{I}_{\tau(\ell) \geq \eps} \int_{0}^{\tau(\ell)}f(\ell(t)) dt\bigg) 
\end{equation*}
where we note that $\alpha\mu_{D}(\mathbb{I}_{\tau(\ell) \geq \eps}\int_{0}^{\tau(\ell)}f(\ell(t)) dt) = \EE[\sum_{\ell \in \LL: \tau(\ell) \geq \eps} \int_{0}^{\tau(\ell)}f(\ell(t)) dt]$. Then, by \cite[Section~10.2]{LeJan}, as $\eps$ tends to $0$, $\langle :L_{\eps}(\LL):, f \rangle$ converges in $L^2(\PP)$ and almost surely to a random variable that we denote by $\int_{D} f(z) :L(z): dz$. One can then make sense of $:L:$ as a random field indexed by bounded functions in $D$ and this field is what we call the renormalized ocupation-time field of $\LL$.

\subsubsection{Wick square of the Gaussian free field} \label{sec_Wick}

As explained in Section \ref{sec_def_GFF}, the GFF is only a generalized function and its square is thus a priori ill-defined. However, an interesting object that corresponds in some sense to its square can still be obtained by an appropriate regularization and renormalization procedure. To describe it, let us first introduce the circle-average approximation of the GFF. Let $D \subset \mathbb{C}$ be an open, bounded and simply connected domain and let $h$ be a GFF in $D$ with Dirichlet boundary conditions. For $z \in D$ and $\eps > 0$, denote by $B(z,\eps)$ the ball of radius $\eps$ centered at $z$ and let $\rho_{\eps}^z$ be the uniform measure on $\partial B(z,\eps)$ seen from $z$. We define the random variable $h_{\eps}(z)$ by
\begin{equation*}
    h_{\eps}(z) := (h, \rho_{\eps}^z).
\end{equation*}
Even though $\rho_{\eps}^z$ is not a smooth function, this random variable is always well-defined and the process $(h_{\eps}(z), z \in D, \eps > 0)$ is in fact jointly H\"{o}lder continuous in $(z,\eps)$, see \cite[Section~3.3.4]{book_GFF}.

A natural way to define the square of $h$ would be to take the limit of $h_{\eps}(z)^2$ as $\eps$ tends to $0$. However, $h_{\eps}(z)^2$ has expectation of order $-\log \eps$, which blows up as $\eps$ tends to $0$. Therefore, to obtain an interesting object when taking the limit, a divergent counterterm must be introduced. This counterterm is chosen such that the limit field, denoted by $:h^2:$, is a centered field. Here, we will not need to construct the field $:h^2:$ itself. Instead, we must only be able to make sense of the random variable $(:h^2:,m^2)$, or more generally of $(:(h+\phi)^2:, m^2)$, where $\phi$ will correspond to the boundary conditions of $h$ and $m^2$ will be the mass of the massive GFF. This is achieved thanks to the following lemma. Below, for $\phi:D \to \mathbb{R}$ a bounded harmonic function, $z \in D$ and $\eps > 0$, we use the notation
\begin{equation} \label{notation_h+phi_square}
    :(h_{\eps}(z)+\phi_{\eps}(z))^2: \; \overset{\operatorname{not}}{=} h_{\eps}(z)^2 + 2h_{\eps}(z)\phi_{\eps}(z) +\phi_{\eps}(z)^2 - \EE[h_{\eps}(z)^2]
\end{equation}
where $\phi_{\eps}(z)$ the circle average approximation of $\phi(z)$, that is
\begin{equation} \label{def_phi_eps}
    \phi_{\eps}(z) := \int \phi(w) \rho_{\eps}^z(dw).
\end{equation}
Notice that if $B(z,\eps) \subset D$, then, by harmonicity, $\phi_{\eps}(z)=\phi(z)$. We also denote by $\operatorname{dim}_H(A)$ the Hausdorff dimension of the set $A$.

\begin{lemma} \label{lemma_L2_cvg_Wick_boundary}
Let $D \subset \mathbb{C}$ be a bounded, open and simply connected domain and let $m: D \to \mathbb{R}_{+}$ be a bounded and continuous function. Let $h$ be a GFF with Dirichlet boundary conditions in $D$ and $\phi: D \to \mathbb{R}$ be a harmonic function bounded by some constant $b_{\phi} > 0$. Then
\begin{equation*}
    \lim_{\eps \to 0} \int_{D} m^2(z) :(h_{\eps}(z)+\phi_{\eps}(z))^2:dz = \int_{D} m^2(z):h(z)^2:dz + 2(h,m^2\phi) + \int_{D} m^2(z)\phi(z)^2dz
\end{equation*}
where the limit is in $\operatorname{L}^2(\PP)$. Above, $(h,m^2\phi)$ denotes the GFF $h$ tested against the function $z \mapsto m^2(z)\phi(z)$ and
\begin{equation*}
    \int_{D} m^2(z):h(z)^2:dz = \lim_{\eps \to 0} \int_{D} m^2(z) :h_{\eps}(z)^2: dz
\end{equation*}
where the limit is in $\operatorname{L}^2(\PP)$. Moreover, for any $b \in (0,2-\operatorname{dim}_{H}(\partial D))$, there exists $C>0$ such that, for any $0<\eps < 1/2$,
\begin{equation} \label{rate_cvg_L2_Wick_boundary}
    \bigg \| \int_{D} m^2(z) :(h+\phi)^2(z): dz - \int_{D} m^2(z) :(h_{\eps}(z)+\phi_{\eps}(z))^2: dz \bigg\|_{\operatorname{L}^2(\PP)} \leq C\eps^{b}.
\end{equation}
\end{lemma}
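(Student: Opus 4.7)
My plan is to use the defining identity \eqref{notation_h+phi_square} to split
\begin{equation*}
\int_D m^2(z):(h_\eps(z)+\phi_\eps(z))^2:\,dz \;=\; \int_D m^2(z):h_\eps(z)^2:\,dz \;+\; 2\int_D m^2(z)\,h_\eps(z)\,\phi_\eps(z)\,dz \;+\; \int_D m^2(z)\,\phi_\eps(z)^2\,dz
\end{equation*}
and to prove $L^2(\PP)$-convergence of each summand to the corresponding piece of the identity stated in the lemma, with quantitative rate $\eps^b$. The central geometric input is a Minkowski-type bound $|\mathcal{N}_\eps|\le C_b\,\eps^b$ for any $b<2-\dim_H(\partial D)$, where $\mathcal{N}_\eps:=\{z\in D:\mathrm{dist}(z,\partial D)<\eps\}$; this is standard given that Hausdorff dimension does not exceed the upper Minkowski dimension.

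For the first summand I would use the Gaussian identity $\EE[:X^2::Y^2:]=2\EE[XY]^2$ to express the $L^2(\PP)$-norm of $\int_D m^2(z)(:h_\eps(z)^2: - :h_{\eps'}(z)^2:)\,dz$ in terms of the smoothed Green functions $G_{\eps_1,\eps_2}(z,w):=\EE[h_{\eps_1}(z)h_{\eps_2}(w)]$; dominated convergence together with $L^2(D\times D)$-integrability of $G_D$ then gives a Cauchy sequence and defines $\int_D m^2:h^2:\,dz$ as the $L^2(\PP)$-limit. For the cross term, harmonicity of $\phi$ and the mean-value property give $\phi_\eps(z)=\phi(z)$ as soon as $B(z,\eps)\subset D$, so the difference $\int_D m^2 h_\eps\phi_\eps\,dz-(h,m^2\phi)$ splits as
\begin{equation*}
\int_{\mathcal{N}_\eps} m^2(z)\,h_\eps(z)\bigl(\phi_\eps(z)-\phi(z)\bigr)\,dz \;+\; (h_\eps-h,\,m^2\phi);
\end{equation*}
the second term is centered Gaussian with variance expressible through $G_{\eps,\eps}-2G_{\eps,0}+G_D$ tested against $(m^2\phi)\otimes(m^2\phi)$, and the first is estimated via $|\phi_\eps|,|\phi|\le b_\phi$ and $\EE[h_\eps(z)^2]\le C|\log\eps|$. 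The deterministic third summand is dominated by $2b_\phi^2\|m\|_\infty^2\,|\mathcal{N}_\eps|$, since $\phi_\eps^2-\phi^2$ is supported in $\mathcal{N}_\eps$ and bounded by $2b_\phi^2$ pointwise.

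The main obstacle will be obtaining the rate $\eps^b$ for the Wick-square summand. There, near-diagonal contributions to the variance come from the explicit logarithmic asymptotics of $G_D$ and of the circle-average covariances, and decay at any polynomial rate in $\eps$ at the cost of a polylogarithmic prefactor; boundary-layer contributions are controlled by $|\mathcal{N}_\eps|$ times such logarithmic factors. Choosing any $b'\in(b,2-\dim_H(\partial D))$ and applying the Minkowski bound to $|\mathcal{N}_\eps|$ allows these $|\log\eps|$-factors to be absorbed into $\eps^{b'-b}$, yielding the claimed bound \eqref{rate_cvg_L2_Wick_boundary}. The same Minkowski estimate, combined with $\EE[h_\eps(z)^2]\le C|\log\eps|$, handles the boundary contribution of the cross term, and the bound on the deterministic term is immediate.
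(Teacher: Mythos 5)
Your proposal follows essentially the same route as the paper's proof: the same three-term expansion of $:(h_\eps+\phi_\eps)^2:$, the same use of the Gaussian identity $\EE[:X^2::Y^2:]=2\EE[XY]^2$ to reduce everything to smoothed Green functions, the same observation that the increments are uncorrelated unless $z$ and $w$ are within $O(\eps)$ of each other or of $\partial D$ (the paper phrases this via the domain Markov property in its Lemma A.1; your formulation gets it from harmonicity of $G_D(\cdot,w)$ away from $w$, which is the same fact), and the same handling of the cross term and the deterministic term. The only structural difference is in extracting the rate: the paper's Cauchy bound between scales $\eps$ and $\delta$ carries a $(\log\delta)^2$ factor, so it telescopes over dyadic scales $2^{-n}$, whereas you compare $\eps$ directly with the limit (whose second-chaos covariance is $2G_D^2$, which is integrable) and absorb the logarithms into a slightly smaller exponent; both work, and yours is arguably a little cleaner. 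A minor imprecision: the near-diagonal contribution decays like $\eps^{2}$ times polylogarithms (area of the strip $\{|z-w|\le 2\eps\}$ times the $(\log\eps)^2$ bound on the integrand), not ``at any polynomial rate'', but since $b<2-\operatorname{dim}_{H}(\partial D)\le 1$ this is more than enough.

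One genuine caveat concerns what you call the central geometric input, $|\mathcal{N}_\eps|\le C_b\,\eps^{b}$ for $b<2-\operatorname{dim}_{H}(\partial D)$. Your justification is inverted: the inequality ``Hausdorff dimension does not exceed upper Minkowski dimension'' bounds $\operatorname{dim}_H$ \emph{by} the Minkowski dimension, so a bound on $\operatorname{dim}_H(\partial D)$ alone gives no control on the Lebesgue measure of the $\eps$-neighbourhood of $\partial D$; there are bounded simply connected domains with $\operatorname{dim}_H(\partial D)=1$ whose boundary has upper Minkowski dimension arbitrarily close to $2$. What is actually needed is a Minkowski-content (upper box dimension) bound, i.e.\ the exponent should really be read as $2$ minus the upper Minkowski dimension of $\partial D$. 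To be fair, the paper relies on exactly the same estimate without comment when it bounds the integral over $(D\setminus D_\eps)\times(D\setminus D_\eps)$ by $O(\eps^{2\alpha}(\log\delta)^2)$, so this is a shared leap rather than a defect specific to your argument; but as written, your stated justification does not prove the bound you need.
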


The proof of Lemma \ref{lemma_L2_cvg_Wick_boundary} is given in Appendix \ref{sec_app_1}. The assumption that $\phi$ is a harmonic function is not really needed but it simplifies the proof and will always be satisfied in the cases in which we will be interested. Let us also stress that in the above statement, the notation $\int_{D} m^2(z) :h(z)^2: dz$ is only formal since $:h^2:$ is not defined pointwise. 

For convenience, as in \eqref{rate_cvg_L2_Wick_boundary}, we will sometimes use the formal notation
\begin{equation} \label{notation_Wick_boundary}
    \int_{D} m^2(z) :(h+\phi)^2(z): dz := \int_{D} m^2(z):h(z)^2:dz + 2(h,m^2\phi) + \int_{D} m^2(z)\phi(z)^2dz.
\end{equation}
Also, if the function $m: D \to \mathbb{R}_{+}$ is of the form $\vert \varphi' \vert (\hat m \circ \varphi)$ where $\varphi: D \to \hat D$ is a conformal isomorphism, $\hat D \subset \mathbb{C}$ is an open, bounded and simply connected domain and $\hat m: \hat D \to \mathbb{R}_{+}$ is a bounded and continuous function, we set
\begin{equation*}
    \int_{D} m^2(z) :(h+\phi)^2(z): dz = \int_{\hat D} \hat m^2(w) :(\hat h+ \hat \phi)^2(w): dw
\end{equation*}
with $h+\phi= (\hat h + \hat \phi) \circ \varphi^{-1}$.

\begin{remark} \label{remark_variance_Wick}
Thanks to the $L^2(\PP)$ convergence established in Lemma \ref{lemma_L2_cvg_Wick_boundary}, the variance of $(:h^2:,m^2)$ can be explicitly computed: as shown in Lemma \ref{lemma_variance_Wick} in Appendix \ref{sec_app_1},
\begin{equation} \label{EE_square_GFF}
    \EE[(:h^2:,m^2)^2] = 2\int_{D} m^2(z)G_{D}(z,w)^2m^2(w) dzdw
\end{equation}
where, as in Section \ref{sec_GFF}, $G_{D}$ is the Green function in $D$.
\end{remark}

\subsubsection{Le Jan's isomorphism theorem} \label{sec_LeJan}

The Brownian loop soup in $D$ is related to the Gaussian free field in $D$ with Dirichlet boundary conditions via Le Jan's isomorphism theorem \cite{LeJan, LeJan_2}, which relates the renormalized occupation-time field $:L:$ of the loop soup (see Section \ref{sec_BLS}) to the Wick square of the Gaussian free field in $D$.

\begin{proposition}[Le Jan's isomorphism] \label{prop_LeJanIso}
The renormalized occupation-time field $:L:$ of a Brownian loop soup in $D$ with intensity $1/2$ has the same law as half the Wick square $1/2 :h^2:$ of a Gaussian free field $h$ in $D$ with Dirichlet boundary conditions.
\end{proposition}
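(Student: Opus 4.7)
The plan is to identify the two random fields through their Laplace functionals. Both $:L:$ and $\tfrac{1}{2}:h^2:$ are random fields indexed by bounded compactly supported functions on $D$, so it suffices to show that
\begin{equation*}
    \EE\bigl[\exp\bigl(-\langle :L:, f\rangle\bigr)\bigr]=\EE\bigl[\exp\bigl(-\tfrac{1}{2}\langle :h^2:, f\rangle\bigr)\bigr]
\end{equation*}
for every nonnegative, bounded, continuous $f$ with compact support in $D$. A monotone class/uniqueness-of-Laplace-transform argument then promotes this to equality of the two laws as random fields indexed by bounded functions.

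For the loop soup side I would first work with the truncated occupation field $:L_\eps:$ coming from loops of lifetime $\geq\eps$, which involves only finitely many atoms of $\LL$. Campbell's formula for Poisson integrals yields
\begin{equation*}
    \EE\bigl[\exp\bigl(-\langle :L_\eps:, f\rangle\bigr)\bigr]=\exp\!\Bigl(\tfrac{1}{2}\int \mathbb{I}_{\tau(\ell)\geq\eps}\bigl(e^{-\langle \ell,f\rangle}-1+\langle \ell,f\rangle\bigr)\,d\mu_D(\ell)\Bigr),
\end{equation*}
where the linear counterterm inside the exponent is precisely the one produced by the renormalization in the definition of $:L_\eps:$ recalled in Section~\ref{sec_BLS}. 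As $\eps\to 0$, the left-hand side converges by the $L^2$ convergence of $\langle :L_\eps:, f\rangle$, and the right-hand side converges by monotone convergence since $e^{-x}-1+x\geq 0$ for $x\geq 0$. Expanding in powers of $f$, using the bridge representation of $\mu_D$ together with the identity $\int_0^\infty p_t^D(z,w)\,dt=G_D(z,w)$, one recognizes $\int \langle \ell,f\rangle^n\,d\mu_D(\ell)=(n-1)!\,\operatorname{tr}\bigl((G_D M_f)^n\bigr)$ (with $M_f$ multiplication by $f$), the factor $1/n$ arising from the cyclic quotient turning rooted loops into unrooted ones. Summing the resulting series gives
\begin{equation*}
    \EE\bigl[\exp\bigl(-\langle :L:, f\rangle\bigr)\bigr]={\det}_2(I+G_D M_f)^{-1/2},
\end{equation*}
where $\det_2$ is the Carleman-Fredholm regularized determinant, well-defined because $G_D M_f$ is Hilbert-Schmidt on $L^2(D)$ for $f$ bounded with compact support in $D$.

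For the GFF side I would diagonalize $h$ in an orthonormal eigenbasis of $-\Delta$ with Dirichlet boundary conditions and apply an elementary Gaussian computation to the circle-average regularization $:h_\eps^2:$. A direct eigenvalue calculation shows that $\EE[\exp(-\tfrac{1}{2}\langle :h_\eps^2:, f\rangle)]$ equals an inverse square root of a regularized Fredholm determinant, and the Wick counterterm appearing in \eqref{notation_h+phi_square} is exactly what is needed to subtract the divergent trace $\operatorname{tr}(G_D^{(\eps)}M_f)$ as $\eps\to 0$, producing the same $\det_2(I+G_D M_f)^{-1/2}$ in the limit. Passing to the limit is justified by Lemma~\ref{lemma_L2_cvg_Wick_boundary}. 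Since both Laplace functionals coincide with ${\det}_2(I+G_D M_f)^{-1/2}$, the proposition follows.

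The main obstacle is the precise bookkeeping of the two renormalizations: the linear counterterm hidden in $:L_\eps:$ and the Wick counterterm in $:h_\eps^2:$ must each produce exactly the divergent trace of $G_D M_f$, so that the two sides assemble into the same $\det_2$. A cleaner alternative I would seriously consider is to prove the isomorphism first on a discrete lattice approximation $D_N=N^{-1}\mathbb{Z}^2\cap D$, where both $:L:$ and $:h^2:$ are finite-dimensional and the identity reduces to the elementary Gaussian formula $\EE[\exp(-\tfrac{1}{2}x^\top M_f x)]=\det(I+G_{D_N}M_f)^{-1/2}$ together with a direct identification of discrete loop soup moments with traces of powers of the discrete Green matrix, and then pass to the continuum limit using known scaling-limit results for the discrete GFF and the discrete Brownian loop soup.
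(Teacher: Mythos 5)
The paper does not prove this proposition: it is quoted as background from \cite{LeJan, LeJan_2}, so there is no internal proof to compare against. Your proposal is essentially Le Jan's original argument — compute both Laplace functionals and identify each with ${\det}_2(I+G_DM_f)^{-1/2}$ — and the overall strategy is sound. In fact, within this paper the Laplace-transform identity for the GFF side is already recorded as Proposition \ref{prop_RN_loopGFF}, so your argument can be shortened: the Campbell-formula computation for $:L_\eps:$ plus monotone convergence gives $\EE[\exp(-\langle :L:,\lambda f\rangle)]=\exp\bigl(\tfrac12\mu_D(e^{-\lambda\langle\ell,f\rangle}+\lambda\langle\ell,f\rangle-1)\bigr)$, which is exactly the right-hand side of Proposition \ref{prop_RN_loopGFF} with $m^2=\lambda f$; injectivity of the Laplace transform (finite and analytic for $\lambda$ in a neighbourhood of $0$) then yields the claim without ever writing down the determinant.

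Two technical points in your sketch deserve care. First, the term-by-term expansion $\mu_D(e^{-\langle\ell,f\rangle}-1+\langle\ell,f\rangle)=\sum_{n\geq2}\tfrac{(-1)^n}{n}(n-1)!\,\mathrm{tr}((G_DM_f)^n)/(n-1)!$ is only absolutely convergent when the spectral radius of $M_f^{1/2}G_DM_f^{1/2}$ is below $1$; for general bounded $f$ you should either replace $f$ by $\lambda f$ with $\lambda$ small and extend by analyticity, or avoid the power series altogether via a resolvent identity. Second, on the GFF side, $\operatorname{L}^2(\PP)$ convergence of the exponent (Lemma \ref{lemma_L2_cvg_Wick_boundary}) only gives convergence in probability of $\exp(-\tfrac12\langle :h_\eps^2:,f\rangle)$; to conclude convergence of the expectations you need uniform integrability, which is precisely what Lemma \ref{lemma_Lp_cvg_boundary} supplies. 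With those repairs the proof is complete; the lattice-approximation alternative you mention also works but requires scaling-limit input that is heavier than the direct computation.
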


We also recall the following result, stated as Theorem 8 in \cite{LeJan}, that will be needed for the proof of Corollary \ref{cor_RN_mSLE4} and Corollary \ref{cor_RN_mCLE4}.

\begin{proposition} \label{prop_RN_loopGFF}
For a bounded and continuous function $m: D \to \mathbb{R}_{+}$ and a loop $\ell: [0,\tau(\ell)] \to D$, set $\langle \ell, m^2 \rangle := \int_{0}^{\tau(\ell)} m^2(\ell(t)) dt$. Then
\begin{equation*}
    \EE_{\operatorname{GFF}} \bigg[ \exp\bigg( -\frac{1}{2} \int_{D} m^2(z) :h^2(z):dz \bigg) \bigg] = \exp \bigg(\frac{1}{2} \mu_{D}(e^{-\langle \ell, m^2 \rangle} + \langle \ell, m^2 \rangle -1) \bigg) < \infty
\end{equation*}
where under $\EE_{\operatorname{GFF}}$, $h$ has the law of a GFF in $D$ with Dirichlet boundary conditions.
\end{proposition}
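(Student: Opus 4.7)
The plan is to combine Le Jan's isomorphism (Proposition \ref{prop_LeJanIso}) with Campbell's formula for Poisson point processes. By the isomorphism, $\tfrac{1}{2}\int_D m^2(z):h^2(z):dz$ has the same distribution as $\int_D m^2(z):L(z):dz$, where $:L:$ is the renormalized occupation-time field of a Brownian loop soup $\LL$ in $D$ at intensity $1/2$. So the problem reduces to computing the Laplace functional of $:L:$ tested against $m^2$.

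Working at the regularized level, the definition of $:L_\eps:$ recalled in Section \ref{sec_BLS} gives
\begin{equation*}
\int_D m^2(z):L_\eps(z):dz = \sum_{\ell \in \LL,\, \tau(\ell)\geq \eps} \langle\ell,m^2\rangle - \tfrac{1}{2}\mu_D\big(\mathbb{I}_{\tau(\ell)\geq\eps}\langle\ell,m^2\rangle\big).
\end{equation*}
Since $\mu_D(\cdot \cap \{\tau\geq\eps\})$ is a finite measure (its total mass is at most $\Area(D)/(2\pi\eps)$), the restricted PPP $\LL\cap \{\tau\geq\eps\}$ has finitely many points almost surely, and Campbell's formula applied to the sum, combined with multiplication by the exponential of the deterministic compensator, yields
\begin{equation*}
\EE\Big[\exp\Big(-\int_D m^2(z):L_\eps(z):dz\Big)\Big] = \exp\Big(\tfrac{1}{2}\mu_D\big(\mathbb{I}_{\tau(\ell)\geq\eps}(e^{-\langle\ell,m^2\rangle}+\langle\ell,m^2\rangle-1)\big)\Big).
\end{equation*}

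The remaining task is to pass to the limit $\eps\to 0$ on both sides. On the right, the integrand $e^{-x}+x-1$ is nonnegative and the indicator is monotone in $\eps$, so monotone convergence delivers the claimed limit, provided the limit is finite. On the left, $(:L_\eps:,m^2) \to (:L:,m^2)$ in $L^2(\PP)$ (Section \ref{sec_BLS}), hence $\exp(-(:L_\eps:,m^2)) \to \exp(-(:L:,m^2))$ in probability; uniform integrability, obtained by running the same Campbell calculation with $2m^2$ in place of $m^2$ to get a uniform $L^2$ bound on $\exp(-(:L_\eps:,m^2))$, upgrades this to convergence of expectations.

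The main obstacle is the finiteness of $\mu_D(e^{-\langle\ell,m^2\rangle}+\langle\ell,m^2\rangle-1)$; this is genuinely non-trivial because $\mu_D$ has infinite total mass concentrated on short loops. Using the elementary inequalities $0\leq e^{-x}+x-1\leq x^2/2$ for $x\geq 0$ and $\langle\ell,m^2\rangle\leq \|m\|_\infty^2\tau(\ell)$, it suffices to show $\mu_D(\tau^2) < \infty$. Since $\tau$ is invariant under rerooting, $\mu_D(\tau^2) = \mu_{R,D}(\tau^2)$, which by the definition of the rooted loop measure equals
\begin{equation*}
\int_0^\infty\int_D \tfrac{1}{2\pi}\,\PP_t^{z,z}(\tau_{\partial D}>t)\,dz\,dt,
\end{equation*}
and this integral is finite since the integrand is bounded by $\Area(D)/(2\pi)$ on $(0,1)$ and decays exponentially at infinity by boundedness of $D$. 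This finiteness, propagated through the two limits above, completes the proof.
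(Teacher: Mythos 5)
Your proof is correct. Note that the paper does not actually prove this proposition: it cites it as Theorem~8 of \cite{LeJan}, with a remark that the proof there (for smooth, compactly supported $m$) extends to bounded $m$ on a bounded domain. What you have written is essentially a self-contained reconstruction of that argument, and it is sound at every step: Le Jan's isomorphism (Proposition~\ref{prop_LeJanIso}) legitimately transfers the computation from $\tfrac12(:h^2:,m^2)$ to $(:L:,m^2)$; the exponential (Campbell) formula applies cleanly at scale $\eps$ because $\mu_D(\tau\ge\eps)\le \Area(D)/(2\pi\eps)<\infty$; monotone convergence handles the right-hand side since $e^{-x}+x-1\ge 0$; and the uniform $L^2(\PP)$ bound obtained by rerunning Campbell with $2m^2$ gives the uniform integrability needed to pass the limit through the expectation on the left. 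The one genuinely non-trivial input, $\mu_D\big(e^{-\langle\ell,m^2\rangle}+\langle\ell,m^2\rangle-1\big)<\infty$, you reduce correctly via $0\le e^{-x}+x-1\le x^2/2$ and $\langle\ell,m^2\rangle\le\|m\|_\infty^2\tau(\ell)$ to $\mu_D(\tau^2)<\infty$, and the computation $\mu_{R,D}(\tau^2)=\int_0^\infty\int_D\tfrac{1}{2\pi}\PP_t^{z,z}(\tau_{\partial D}>t)\,dz\,dt$ is exactly where the boundedness of both $D$ and $m$ enters (the $dz$-integral equals $t\int_D p_t^D(z,z)\,dz$, which is bounded near $0$ and decays exponentially in $t$). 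This in particular substantiates the paper's unproved remark about the extension to bounded $m$ and bounded $D$, which is a nice bonus of writing the argument out.
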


\begin{remark}
In \cite{LeJan}, Proposition \ref{prop_RN_loopGFF} is stated under the additional assumption that the function $m$ is smooth and has compact support in $D$ but the proof extends to the case where $m$ is a bounded function and $D$ a bounded domain.
\end{remark}

\subsubsection{Coupling between a GFF, a CLE$_4$ and a Brownian loop soup}
In view of Le Jan's isomorphism, the construction of CLE$_4$ using a Brownian loop soup with intensity $1/2$ explained in Section \ref{sec_BLS} and the coupling between a GFF and a CLE$_4$ described in Section \ref{sec_coupling_CLE}, it is natural to wonder whether there exists a coupling between a GFF, a CLE$_4$ and a Brownian loop soup with intensity $1/2$ such that all these relations hold. A positive answer to this question was given in \cite[Proposition~5]{Qian_GFF_BLS}. As this result is the starting point of the proof of Theorem \ref{prop_mCLE4_mBLS}, let us state it.

\begin{proposition}[Qian-Werner] \label{prop_coupling_GFF_BLS_CLE}
One can couple a Brownian loop soup $\LL$ in $D$ with intensity $1/2$, a CLE$_4$ $\Gamma$ in $D$ and a GFF $h$ in $D$ with Dirichlet boundary conditions in such a way that:
\begin{itemize}
	\item the loops of $\Gamma$ are the level lines of $h$ as in Section \ref{sec_coupling_CLE};
	\item the loops of $\Gamma$ are the outer boundaries of the outermost clusters of $\LL$;
	\item the renormalized occupation-time field $:L:$ of $\LL$ is exactly $\frac{1}{2}:h^2:$.
\end{itemize}
\end{proposition}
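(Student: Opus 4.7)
The plan is to take the Brownian loop soup as the primitive object and reconstruct both the CLE$_4$ and the GFF from it in a consistent way. Sample a Poisson point process $\LL$ of intensity $\tfrac{1}{2}\mu_D$. By the Sheffield--Werner construction recalled in Section \ref{sec_BLS}, the collection $\Gamma$ of outer boundaries of the outermost clusters of $\LL$ has the law of CLE$_4$ in $D$; this already delivers the second bullet of the proposition. The task is then to produce a GFF $h$ in $D$, jointly measurable with $\LL$ and an additional source of independent randomness, that simultaneously plays the role of the field in both the GFF--CLE$_4$ coupling of Section \ref{sec_coupling_CLE} and Le Jan's isomorphism (Proposition \ref{prop_LeJanIso}).

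The first step is a pocket-by-pocket application of Le Jan. For each loop $L_j$ of $\Gamma$, the restriction property of $\mu_D$ implies that the loops of $\LL$ entirely contained in $\Lj$ form, conditionally on $\Gamma$, an independent Brownian loop soup $\LL_j$ in $\Lj$ at intensity $\tfrac{1}{2}$. Applying Proposition \ref{prop_LeJanIso} in each pocket, I can realise on a common probability space a family of independent Dirichlet GFFs $(h_j)_j$, where $h_j$ is a GFF in $\Lj$ whose Wick square $\tfrac{1}{2}{:}h_j^2{:}$ equals the renormalized occupation-time field of $\LL_j$. Independently of everything else, sample i.i.d.\ signs $\xi_j\in\{\pm 2\lambda\}$ with equal probability, and declare
\[ h \;:=\; \sum_{j}\bigl(h_j+\xi_j\bigr)\mathbf{1}_{\Lj}. \]
By the very structure of the GFF--CLE$_4$ coupling recalled in Section \ref{sec_coupling_CLE}, this $h$ has the law of a GFF in $D$ with Dirichlet boundary conditions, and the loops $\Gamma$ are its CLE$_4$ level lines; this takes care of the first bullet.

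The substantive step is the third bullet: showing that $\tfrac{1}{2}{:}h^2{:}$ on $D$ equals the renormalized occupation field $:\!L\!:$ of the \emph{whole} loop soup $\LL$. Decompose $\LL$ into the loops lying in some pocket $\Lj$ (contributing $:\!L_{\LL_j}\!: = \tfrac{1}{2}{:}h_j^2{:}$ inside $\Lj$ by construction) and the remaining ``boundary loops'' -- those touching the CLE$_4$ loops and forming the outermost clusters themselves. Expanding formally,
\[ \tfrac{1}{2}{:}h^2{:}\big|_{\Lj} \;=\; \tfrac{1}{2}{:}h_j^2{:} \;+\; \xi_j h_j \;+\; 2\lambda^2, \]
so the required identity reduces to matching the renormalized occupation of the boundary loops on each $\Lj$ with $\xi_j h_j + 2\lambda^2$. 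The main obstacle is precisely this identification: the sign $\xi_j$ is chosen independently of $\LL$ whereas the cluster-loop occupation is not, so the matching can only hold after a careful conditional analysis. The natural tool is the theory of two-valued local sets of the GFF: the outermost CLE$_4$ is, up to the sign labels, the two-valued local set $\mathbb{A}_{-2\lambda,2\lambda}$, and its associated harmonic function is $\sum_j \xi_j \mathbf{1}_{\Lj}$. On the loop-soup side, the boundary-loop clusters enjoy an analogous description via first-passage decompositions of the occupation field (cf.\ the work of Aru--Lupu--Sep\'ulveda). Matching these two descriptions, together with the Le Jan identity applied to each pocket, yields the desired equality of fields.

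Once the three items hold on the chosen probability space, the proposition is established. The whole argument is essentially a bookkeeping of the Markovian and restriction properties of the loop soup combined with the characterization of CLE$_4$ as a local set of the GFF; the only genuinely delicate point is the cluster/label identification in the last step, which is where I would expect to spend most of the work.
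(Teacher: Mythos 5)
This proposition is not proved in the paper at all: it is quoted verbatim from \cite[Proposition~5]{Qian_GFF_BLS} and used as a black box, so there is no in-paper argument to compare against. Judged on its own terms, your proposal has a genuine gap, and in fact two related ones. First, the ``pocket-by-pocket Le Jan'' step rests on a misidentification of the domains. Conditionally on $\Gamma$, the loops of $\LL$ that are entirely contained in $\Lj$ do \emph{not} form a free Brownian loop soup in $\Lj$: the cluster $C_j$ whose outer boundary is $L_j$ is itself contained in $\overline{\Lj}$, and many of its loops lie in the open set $\Lj$; conditioning on $\Gamma$ heavily constrains those loops (they must percolate into a single cluster with outer boundary exactly $L_j$). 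The correct conditional restriction statement from the Sheffield--Werner construction is that, given the outermost clusters, the \emph{remaining} loops form a loop soup in the complement of the closure of the union of the clusters --- i.e.\ in the holes of $C_j$, not in $\Lj$. So the fields $h_j$ you build by applying Proposition \ref{prop_LeJanIso} ``in each pocket'' do not exist on the domains you need, and the occupation field of $\LL$ on $\Lj$ is not the occupation field of a free soup in $\Lj$ plus a cleanly separated boundary contribution.

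Second, the step you flag as ``delicate'' is in fact the entire content of the Qian--Werner theorem, and your setup makes it impossible rather than merely hard. The third bullet demands the \emph{pathwise} identity $:L:\;=\tfrac12:h^2:$, so $\tfrac12:h^2:$ must be measurable with respect to $\LL$. Expanding $\tfrac12:(h_j+\xi_j)^2:\;=\tfrac12:h_j^2:+\,\xi_j h_j+2\lambda^2$, the cross term $\xi_j h_j$ carries the independently sampled sign $\xi_j$, which by construction is not a function of $\LL$; the occupation field of the cluster loops, which is a function of $\LL$, cannot a.s.\ equal a quantity involving an exogenous coin flip. Any correct construction must produce the pair $(h_j,\xi_j)$ (or at least the product $\xi_j h_j$ and $:h_j^2:$) jointly as measurable functions of $\LL$ together with some auxiliary randomness arranged compatibly --- this is precisely what Qian and Werner's decomposition of loop-soup clusters achieves, via a first-passage/excursion analysis of the cluster occupation field near its outer boundary, and it is the part your argument gestures at (``matching these two descriptions \dots yields the desired equality'') without supplying. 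As it stands the proposal correctly reduces the statement to its hard core but does not prove it.
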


\subsection{The massive Gaussian free field} \label{sec_mGFF}

\subsubsection{Definition of the massive GFF} \label{sec_def_mGFF}

Let $D \subset \mathbb{C}$ be an open, bounded and simply connected domain and let $m: D \to \mathbb{R}_{+}$ be a bounded and continuous function. The massive GFF in $D$ with mass $m$ and Dirichlet boundary conditions is a centered Gaussian process $h$ indexed by smooth functions with compact support in $D$ whose covariance is given by, for $f$ and $g$ two such functions,
\begin{equation*}
    \EE [(h,f)(h,g)] = \int_{D \times D} f(z)G_{D}^m(z,w)g(w) dzdw.
\end{equation*}
Above, $G_{D}^m$ is the massive Green function in $D$ with mass $m$: this is the inverse (in the sense of distributions) of the operator $-\Delta+m^2$ in $D$ with Dirichlet boundary conditions. Like the GFF, the massive GFF is not defined pointwise but is instead a random generalized function. 

From a statistical mechanics point of view, the massive GFF can be seen as a GFF ``perturbed by a mass". The presence of this mass breaks the conformal invariance of the field, which is one of the main features of the GFF: the massive GFF is not conformally invariant but only conformally covariant, in the following sense. Let $\varphi: D \to \tilde D$ be conformal map. If $h$ is a massive GFF in $D$ with mass $m$ and Dirichlet boundary conditions, then the pushforward of $h$ via $\varphi$ is a massive GFF in $\tilde D$ with Dirichlet boundary conditions and mass $\tilde m$ given by, for $z \in \tilde D$,
\begin{equation} \label{mass_conformal}
    \tilde m^2(z) = \vert (\varphi^{-1})'(z) \vert^2 m^2(\varphi^{-1}(z)).
\end{equation}
This simply follows from the conformal covariance of the massive Green function $G_D^m$.

One can also define a massive GFF with specified boundary conditions. Let $D \subset \mathbb{C}$ be an open, bounded and simply connected domain and let $m: D \to \mathbb{R}_{+}$ be a bounded and continuous function. Let $f : \partial D \to \mathbb{R}$ be a bounded function and let $\phi_{f}^m$ be its unique massive harmonic extension in $D$. That is, $\phi_{f}^m$ is the unique solution to the boundary value problem
\begin{equation*}
    \begin{cases}
        &(-\Delta + m^2(z))u(z)=0, \quad z \in D \\
        &u(z)=f(z), \quad z \in \partial D.
    \end{cases}
\end{equation*}
Then, a massive GFF in $D$ with mass $m$ is said to have boundary conditions given by $f$ if it has the same law as $h+\phi_{f}^m$ where $h$ has the law of a massive GFF in $D$ with mass $m$ and Dirichlet boundary conditions. Note that since massive harmonic functions are conformally covariant, with the mass changing as in \eqref{mass_conformal} when conformally mapping the domain to another one, a massive GFF with specified boundary conditions is also conformally covariant, in the same sense as for a massive GFF with Dirichlet boundary conditions.

\subsubsection{Absolute continuity with respect to the GFF} \label{sec_abscont_mGFF}

Let $D \subset \mathbb{C}$ be an open, bounded and simply connected domain and let $m: D \to \mathbb{R}_{+}$ be a bounded and continuous function. Then the massive GFF in $D$ with mass $m$ and Dirichlet boundary conditions is absolutely continuous with respect to the GFF in $D$ with Dirichlet boundary conditions. The corresponding Radon-Nikodym derivative has an explicit expression, as we now detail. Let $\PP_{\operatorname{GFF}}^D$, respectively $\PP_{\operatorname{mGFF}}^D$, denote the law of the GFF in $D$ with Dirichlet boundary conditions, respectively of the massive GFF in $D$ with mass $m$ and Dirichlet boundary conditions. The Radon-Nikodym derivative of $\PP_{\operatorname{mGFF}}^D$ with respect to $\PP_{\operatorname{GFF}}^D$ is given by
\begin{equation} \label{RN_mGFF_Dirichlet}
    \frac{\der \PP_{\operatorname{mGFF}}^D}{\der \PP_{\operatorname{GFF}}^D}(h) = \frac{1}{\mathcal{Z}} \exp \bigg( -\frac{1}{2} \int_{D} m^2(z) :h^2(z): dz \bigg)
\end{equation}
where $\mathcal{Z}$ is a normalization constant and $\int_{D} m^2(z) :h^2(z): dz$ is as defined in Section \ref{sec_Wick}. See \cite[Lemma~3.10]{mGFF_RN} for a detailed proof of this fact. Let us nevertheless point out that the fact that the random variable on the right-hand side is $\sigma(h)$-measurable and almost strictly positive can be justified using Lemma \ref{lemma_L2_cvg_Wick} (see Lemma \ref{lemma_non0} in Appendix \ref{sec_app_1}). Moreover, by Proposition \ref{prop_RN_loopGFF}, this random variable is in $L^1(\PP_{\operatorname{GFF}}^D)$, so $\mathcal{Z}$ is finite and $\PP_{mGFF}^D$ is absolutely continuous with respect to $\PP_{GFF}^D$.

Note also that, by Proposition \ref{prop_RN_loopGFF} applied with $f=m^2$,
\begin{equation} \label{explicit_Z}
    \mathcal{Z} = \EE_{\operatorname{GFF}}^{D} \bigg[ \exp \bigg( -\frac{1}{2} \int_{D} m^2(z) :h^2(z): dz \bigg) \bigg] = \exp \bigg(\frac{1}{2} \mu_{D}\big(e^{-\langle \ell, m^2 \rangle} + \langle \ell, m^2 \rangle -1\big) \bigg)
\end{equation}
where, as in Section \ref{sec_BLS_LeJan}, $\mu_D$ is the Brownian loop measure in $D$.

Similarly, a massive GFF in $D$ with mass $m$ and specified boundary conditions is absolutely continuous with respect to the GFF in $D$ with the same boundary conditions. More precisely, let $f: \partial D \to \mathbb{R}$ be a bounded function and let $\phi_{f}$ be its unique harmonic extension in $D$. Then, if we define
\begin{equation} \label{RN_mGFF_phi}
    \frac{\der \PP_{\operatorname{mGFF}}^D}{\der \PP_{\operatorname{GFF}}^{D,f}}(h+\phi_{f}) = \frac{1}{\mathcal{Z}_f} \exp \bigg( -\frac{1}{2} \int_{D} m^2(z) :(h+\phi_{f})^2(z): dz \bigg)
\end{equation}
where $\mathcal{Z}_f$ is a normalization constant, the law of $h+\phi_f$ under $\PP_{mGFF}^{D,f}$ is that of a massive GFF in $D$ with mass $m$ and boundary conditions $f$. As above, the fact that the random variable on the right-hand side of \eqref{RN_mGFF_phi} is almost surely strictly positive and measurable with respect to $h+\phi$ follows from Lemma \ref{lemma_L2_cvg_Wick_boundary} and Lemma \ref{lemma_non0} in Appendix \ref{sec_app_1}.

To prove that under the measure $\PP_{\operatorname{mGFF}}^D$ defined via the change of measure \eqref{RN_mGFF_phi}, $h+\phi_{f}$ has the same law as $h+\phi_f^m$ where $h$ has the law of a massive GFF in $D$ with mass $m$ and Dirichlet boundary conditions and $\phi_f^m$ is the unique massive harmonic extension of $f$ in $D$, one can compute the Laplace transform of $h+\phi_{f}$ under $\PP_{\operatorname{mGFF}}^D$. Although this computation is rather straightforward, let us detail it as we could not find any reference in the existing literature.

\begin{proof}[Proof of \eqref{RN_mGFF_phi}]
Let $g$ be a smooth function with compact support in $D$. Then, we have that
\begin{align} \label{Laplace_mGFF_phi}
    & \EE_{\operatorname{mGFF}}^{D,f} \big[ \exp((h+\phi_f, g))\big] 
    = \frac{1}{\mathcal{Z}_f} \EE_{\operatorname{GFF}}^{D} \bigg[ \exp\bigg((h+\phi_f, g)-\frac{1}{2} \int_{D} m^2(z) :(h+\phi_{f})^2(z): dz \bigg) \bigg] \nonumber \\
    &= \frac{1}{\mathcal{Z}_f} \exp\bigg(\int_{D}\phi_{f}(z)g(z)-\frac{1}{2}m^2(z)\phi(z)^2 dz \bigg)   \EE_{\operatorname{GFF}}^{D} \bigg[ \exp\bigg((h, g-m^2\phi_{f}) -\frac{1}{2} \int_{D} m^2(z) :h^2(z): dz\bigg) \bigg] \nonumber \\
    &= \frac{\mathcal{Z}}{\mathcal{Z}_f} \exp\bigg(\int_{D}\phi_{f}(z)g(z)-\frac{1}{2}m^2(z)\phi(z)^2 dz \bigg) \EE_{\operatorname{mGFF}}^{D} \big[ \exp((h, g-m^2\phi_{f})) \big].
\end{align}
The fact that the two integrals above are finite follows from the fact that $D$ is a bounded domain and that $\phi_{f}$ and $m$ are bounded functions in $D$. Similarly, we have that
\begin{equation} \label{expression_Zf}
   \mathcal{Z}_f =
   \mathcal{Z} \exp \bigg( -\frac{1}{2} \int_{D} m^2(z)\phi_{f}(z)^2 dz \bigg) \EE_{\operatorname{mGFF}}^{D} \big[ \exp((h, m^2\phi_{f})) \big].
\end{equation}
Going back to \eqref{Laplace_mGFF_phi}, this yields that
\begin{align*}
    & \EE_{\operatorname{mGFF}}^{D,f} \big[ \exp((h+\phi_f, g))\big] \\
    & = \exp \bigg( \int_{D}g(z) \bigg( \phi_{f}(z) - \int_{D} m^2(w)\phi_f(w)G_D^m(z,w) dw \bigg)dz + \frac{1}{2} \int_{D \times D} g(z)G_D^m(z,w)g(w) dz dw \bigg) \\
    &= \exp \bigg( \int_{D} g(z)\phi_f^m(z) dz + \frac{1}{2} \int_{D \times D} g(z)G_D^m(z,w)g(w) dz dw \bigg)
\end{align*}
where the last equality uses Fubini theorem. This exactly the Laplace transform of $(h+\phi_f^m,g)$ when $h$ has the law of a massive GFF in $D$ with mass $m$ and Dirichlet boundary conditions.
\end{proof}

For later reference, let us record the computation of the normalization constant $\mathcal{Z}_{f}$ in the following lemma. This result uses the explicit computation of the normalization constant $\mathcal{Z}$ given in \eqref{explicit_Z}.

\begin{lemma} \label{lemma_Zf}
Let $D \subset \mathbb{C}$ be an open, bounded and simply connected domain. Let $\phi: D \to \mathbb{R}$ be a bounded harmonic function in $D$ and let $m:D \to \mathbb{R}_{+}$ be a bounded and continuous function. Then
\begin{align*}
    \mathcal{Z}_{f}&= \EE_{\operatorname{GFF}}^{D} \bigg[ \exp \bigg( -\frac{1}{2} \int_{D} m^2(z) :(h+\phi_{f})^2(z): dz \bigg) \bigg]\\
    &= \exp \bigg( -\frac{1}{2}\int_{D} m^2(z)\phi_f(z)\phi_f^m(z) dz + \frac{1}{2}\mu_{D}\
    \big(e^{-\langle \ell, m^2 \rangle}+\langle \ell, m^2 \rangle-1 \big) \bigg)
\end{align*}
where the integral is finite.
\end{lemma}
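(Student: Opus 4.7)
The plan is to combine the identity \eqref{expression_Zf} derived just above (in the proof of \eqref{RN_mGFF_phi}) with the explicit formula \eqref{explicit_Z} for $\mathcal{Z}$, and then to compute the Laplace transform of $(h,m^2\phi_f)$ under $\PP_{\operatorname{mGFF}}^{D}$ using that under this law $h$ is a centered Gaussian process with covariance kernel $G_D^m$. Thus the main task is to evaluate
\begin{equation*}
    \EE_{\operatorname{mGFF}}^{D}\bigl[\exp((h,m^2\phi_f))\bigr] = \exp\bigg(\frac{1}{2}\int_{D\times D}m^2(z)\phi_f(z)\,G_D^m(z,w)\,m^2(w)\phi_f(w)\,dz\,dw\bigg),
\end{equation*}
where finiteness of the double integral follows from boundedness of $\phi_f$, $m$, and of $D$, together with the standard $L^1_{\operatorname{loc}}$ integrability of the massive Green function (which is dominated by $G_D$).

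Next I would use the key identity relating the harmonic and massive harmonic extensions of $f$: since $\phi_f$ is harmonic and $\phi_f^m$ is massive harmonic in $D$ with the same boundary values, $u:=\phi_f-\phi_f^m$ solves $(-\Delta+m^2)u=m^2\phi_f$ with zero Dirichlet boundary conditions, so
\begin{equation*}
    \phi_f(z)-\phi_f^m(z) = \int_D G_D^m(z,w)\,m^2(w)\phi_f(w)\,dw, \qquad z\in D.
\end{equation*}
Multiplying both sides by $m^2(z)\phi_f(z)$ and integrating over $D$, Fubini's theorem (which applies by the integrability mentioned above) then gives
\begin{equation*}
    \int_{D\times D}m^2(z)\phi_f(z)\,G_D^m(z,w)\,m^2(w)\phi_f(w)\,dz\,dw = \int_D m^2(z)\phi_f(z)\bigl(\phi_f(z)-\phi_f^m(z)\bigr)\,dz.
\end{equation*}

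Plugging this into the Laplace transform and then into \eqref{expression_Zf}, the two $\int_D m^2\phi_f^2$ contributions combine with opposite signs, leaving precisely $-\tfrac12\int_D m^2(z)\phi_f(z)\phi_f^m(z)\,dz$ in the exponent. Finally, substituting the value of $\mathcal{Z}$ from \eqref{explicit_Z} (obtained from Proposition \ref{prop_RN_loopGFF} applied with the function $m^2$) produces the Brownian loop-measure term $\tfrac{1}{2}\mu_D(e^{-\langle \ell, m^2\rangle}+\langle \ell, m^2\rangle-1)$ and concludes the proof. There is no real obstacle here; the only mildly delicate point is ensuring that the Fubini step and the Gaussian Laplace computation are legitimate, which reduces to checking that $z\mapsto m^2(z)\phi_f(z)$ is bounded with compact support in the closure of $D$ and that $G_D^m\in L^1_{\operatorname{loc}}(D\times D)$, both of which are immediate under the hypotheses on $D$, $m$ and $\phi_f$.
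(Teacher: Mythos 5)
Your proposal is correct and follows essentially the same route as the paper: start from \eqref{expression_Zf} and \eqref{explicit_Z}, evaluate the Gaussian Laplace transform $\EE_{\operatorname{mGFF}}^{D}[\exp((h,m^2\phi_f))]$ via the covariance $G_D^m$, and convert the resulting double integral using $\phi_f-\phi_f^m=\int_D G_D^m(\cdot,w)m^2(w)\phi_f(w)\,dw$ so that the $\int_D m^2\phi_f^2$ terms cancel. The only difference is that you spell out the justification of that resolvent identity (via the boundary value problem solved by $\phi_f-\phi_f^m$), which the paper uses implicitly.
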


\begin{proof}
By \eqref{expression_Zf} and using the explicit expression \eqref{explicit_Z} for $\mathcal{Z}$, we have that
\begin{equation*}
    \mathcal{Z}_{f} = \exp \bigg( -\frac{1}{2}\int_{D} m^2(z)\phi_f(z)^2 dz + \frac{1}{2}\mu_{D}\
    \big(e^{-\langle \ell, m^2 \rangle}+\langle \ell, m^2 \rangle-1 \big) \bigg) \EE_{\operatorname{mGFF}}^{D} \big[ \exp((h, m^2\phi_{f})) \big]
\end{equation*}
where
\begin{align} \label{equality_phi_mass}
    \EE_{\operatorname{mGFF}}^{D} \big[ \exp((h, m^2\phi_{f})) \big] &= \exp \bigg( \frac{1}{2} \int_{D \times D} m^2(z)\phi_{f}(z)G_D^m(z,w)m^2(w)\phi_f(w) dwdz \bigg) \nonumber \\
    &=  \exp \bigg( -\frac{1}{2}\int_{D} m^2(z)\phi_f(z)\phi_f^m(z) dz + \frac{1}{2} \int_{D} m^2(z)\phi_f(z)^2 dz \bigg).
\end{align}
Above, we used Fubini theorem whose application can easily be justified. Indeed, $\phi_{f}$ is a bounded function in $D$ and the massive Green function $G_D^m$ is integrable in $D \times D$ since $D$ is a bounded domain. This yields the expression of Lemma \ref{lemma_Zf} for $\mathcal{Z}_f$. The finiteness of the two integrals in \eqref{equality_phi_mass} simply follows from the fact that $m^2$, $\phi_f$ and $\phi_f^m$ are bounded functions and that $D$ is a bounded domain.
\end{proof}

Note that the definition of the massive GFF and the changes of measure \eqref{RN_mGFF_Dirichlet} and \eqref{RN_mGFF_phi} can be extended to unbounded domains, provided that in such a domain $D$, the mass function $m: D \to \mathbb{R}_{+}$ is of the form $\vert \varphi' \vert (\hat m \circ \varphi)$ where $\varphi: D \to \hat D$ is a conformal isomorphism, $\hat D$ is a bounded domain and $\hat m: \hat D \to \mathbb{R}_{+}$ is a bounded and continuous function.

\subsubsection{Modes of convergence of the Radon-Nikodym derivative of the massive GFF with respect to the GFF} \label{sec_cvg_RN}

In this section, we collect some technical results on the mode of convergence as $\eps$ goes to $0$ of the random variables
\begin{align*}
    &\exp \bigg( -\frac{1}{2} \int_{D} m^2(z)\big(h_{\eps}(z)^2 - \EE[h_{\eps}(z)^2]\big) dz \bigg) \quad
    \text{and} \quad \exp \bigg( -\frac{1}{2} \int_{D} m^2(z)\big((h_{\eps}(z)+\phi_{\eps}(z))^2 - \EE[h_{\eps}(z)^2]\big) dz \bigg)
\end{align*}
where $h$ is a GFF with Dirichlet boundary conditions in $D$ and $\phi: D \to \mathbb{R}$ is a bounded harmonic function. These random variables approximate the Radon-Nikodym derivatives \eqref{RN_mGFF_Dirichlet} and \eqref{RN_mGFF_phi} of the massive GFF with respect to the GFF. Their convergence in an $\operatorname{L}^p(\PP)$ sense, which is shown below, will be useful in the proof of Theorem \ref{theorem_mSLE_4} and Theorem \ref{theorem_mCLE_4}. Indeed, there, we will need to take conditional expectations of the Radon-Nikodym derivatives \eqref{RN_mGFF_Dirichlet} and \eqref{RN_mGFF_phi}, which will be well behaved since conditioning is a contraction in $L^p$.

In the statement below, for $\eps>0$, the random variable $\int_{D} m^2(z) :(h_{\eps}(z)+\phi_{\eps}(z))^2:dz$ is defined as in \eqref{notation_h+phi_square} and we use the notation \eqref{notation_Wick_boundary} for the random variable $\int_{D} m^2(z) :(h+\phi)^2(z): dz$.

\begin{lemma} \label{lemma_Lp_cvg_boundary}
Let $D \subset \mathbb{C}$ be a bounded, open and simply connected domain and let $m: D \to \mathbb{R}_{+}$ be a bounded and continuous function. Let $h$ be a GFF with Dirichlet boundary conditions in $D$ and let $\phi: D \to \mathbb{R}$ be a harmonic function bounded by some constant $b_{\phi}>0$. Then, for any $p \in [1,\infty)$,
\begin{equation*}
    \lim_{\eps \to 0} \exp \bigg(-\frac{1}{2} \int_{D} m^2(z) :(h_{\eps}(z)+\phi_{\eps}(z))^2:dz \bigg) = \exp \bigg(-\frac{1}{2} \int_{D} m^2(z) :(h+\phi)^2(z): dz \bigg)
\end{equation*}
where the limit is in $\operatorname{L}^{p}(\PP)$.
\end{lemma}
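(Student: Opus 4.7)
The plan is to apply Vitali's convergence theorem. Lemma \ref{lemma_L2_cvg_Wick_boundary} gives that $X_\eps := \int_D m^2(z) :(h_\eps(z)+\phi_\eps(z))^2:\,dz$ converges to $X := \int_D m^2(z) :(h+\phi)^2(z):\,dz$ in $\operatorname{L}^2(\PP)$, hence in probability; by continuity of the exponential, $e^{-X_\eps/2} \to e^{-X/2}$ in probability. To upgrade this to $\operatorname{L}^p$ convergence for every $p \in [1,\infty)$, it suffices by Vitali to show that the family $\{e^{-X_\eps/2}\}_\eps$ has uniformly bounded $\operatorname{L}^q$-norm for some $q > p$; I will establish the stronger statement
\[
\sup_{\eps > 0} \EE\bigl[e^{-q X_\eps/2}\bigr] < \infty \qquad \text{for every } q > 0.
\]

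The plan is to decompose $X_\eps = Y_\eps + 2 Z_\eps + W_\eps$ with $Y_\eps := \int_D m^2(z):h_\eps^2(z):\,dz$, $Z_\eps := (h_\eps, m^2 \phi_\eps)$ and $W_\eps := \int_D m^2(z)\phi_\eps(z)^2\,dz$, and to treat the three pieces separately. The deterministic term $W_\eps$ is bounded by $\|m\|_\infty^2 b_\phi^2 |D|$ uniformly in $\eps$. The centered Gaussian $Z_\eps$ has variance $\iint m^2(z)\phi_\eps(z) G_D^\eps(z,w) m^2(w)\phi_\eps(w)\,dz\,dw$, where $G_D^\eps(z,w) := \EE[h_\eps(z) h_\eps(w)]$; this is uniformly bounded by $\|m\|_\infty^4 b_\phi^2 \|G_D\|_{\operatorname{L}^1(D \times D)}$ since circle-averaging does not increase $\operatorname{L}^1$-norms, giving $\EE[e^{-q Z_\eps}] \leq e^{C q^2}$ uniformly. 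A Hölder inequality then reduces everything to the uniform bound $\sup_\eps \EE[e^{-q Y_\eps}] < \infty$ for every $q > 0$.

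For the second-chaos piece $Y_\eps$, I plan to diagonalize the nonnegative Hilbert-Schmidt operator $T_\eps$ on $\operatorname{L}^2(D)$ with kernel $K_\eps(z,w) := m(z) G_D^\eps(z,w) m(w)$. A Karhunen-Lo\`eve expansion of $z \mapsto m(z) h_\eps(z)$ then yields $Y_\eps \stackrel{\text{law}}{=} \sum_k \mu_k^\eps (\eta_k^2 - 1)$ for iid standard Gaussians $(\eta_k)_k$ and nonnegative eigenvalues $(\mu_k^\eps)_k$ of $T_\eps$, leading to the explicit identity
\[
\log \EE\bigl[e^{-q Y_\eps}\bigr] \;=\; \sum_k \Bigl(q\mu_k^\eps - \tfrac{1}{2}\log(1 + 2q\mu_k^\eps)\Bigr) \;\leq\; q^2 \sum_k (\mu_k^\eps)^2 \;=\; q^2 \|T_\eps\|_{\mathrm{HS}}^2,
\]
where the inequality uses the elementary bound $\log(1+x) \geq x - x^2/2$ valid for $x \geq 0$.

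The crucial remaining step, and the heart of the argument, is showing that $\|T_\eps\|_{\mathrm{HS}}$ is bounded uniformly in $\eps$ by $\|m\|_\infty^2 \|G_D\|_{\operatorname{L}^2(D\times D)}$. The latter norm is finite because $G_D(z,w)$ has only a logarithmic singularity on the diagonal, which is locally square integrable in $\mathbb{R}^2$. For the uniform part, I would argue that $G_D^\eps$ is obtained from $G_D$ by applying a circle-average convolution in each of its two variables, and each such convolution is an $\operatorname{L}^2$-contraction by Jensen's inequality (and Fubini); hence $\|G_D^\eps\|_{\operatorname{L}^2(D \times D)} \leq \|G_D\|_{\operatorname{L}^2(D \times D)}$ for all $\eps$. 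Combining these estimates gives $\EE[e^{-q X_\eps/2}] \leq C(q, m, \phi, D)$ uniformly in $\eps$, and Vitali's theorem then delivers the claimed $\operatorname{L}^p$ convergence for every $p \in [1,\infty)$.
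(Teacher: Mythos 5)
Your proposal is correct, but the heart of it --- the uniform integrability of the approximating exponentials --- takes a genuinely different route from the paper's. The paper runs a Nelson-type argument: it combines the almost sure lower bound $\int_D m^2 :(h_\eps+\phi_\eps)^2:\,dz \geq -K\log(\eps^{-1})$ (coming from $h_\eps^2+2h_\eps\phi_\eps\geq -b_\phi^2$ and the variance estimate \eqref{estimate_sup_variance}) with the polynomial rate of $\operatorname{L}^2$-convergence \eqref{rate_cvg_L2_Wick_boundary} and the hypercontractive inequality $\|\cdot\|_{L^q}\leq(q-1)\|\cdot\|_{L^2}$ on the second chaos, applied with the $\eps$-dependent exponent $q=\eps^{-b/3}$, so that $\PP(\vert Y_\eps-Y\vert\geq 1)$ decays like $\exp(-\eps^{-b/3})$ and beats the polynomially divergent prefactor; finiteness of $\EE[e^{-Y}]$ for the limiting variable is imported from Le Jan's identity (Proposition \ref{prop_RN_loopGFF}). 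You instead diagonalize the covariance operator, compute the Laplace transform of the quadratic functional exactly, bound it by $\exp(q^2\|T_\eps\|_{\mathrm{HS}}^2)$ via $\log(1+x)\geq x-x^2/2$, and control the Hilbert--Schmidt norm uniformly in $\eps$ through the $L^2$-contraction property of circle averaging --- all of which checks out. Your route is more self-contained and quantitative: it needs neither the convergence rate from Lemma \ref{lemma_L2_cvg_Wick_boundary} nor hypercontractivity nor Le Jan's theorem (Fatou even gives $\EE[e^{-qX}]<\infty$ as a by-product), but it exploits the fact that the perturbation is exactly quadratic; the paper's argument is the one that survives for higher-degree interactions (cf. open problem 1 in the introduction), which is presumably why the author sets it up that way and why the rate \eqref{rate_cvg_L2_Wick_boundary} is established at all. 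In a full write-up you should spell out that $T_\eps$ is trace class for each fixed $\eps$ (so the Karhunen--Lo\`eve expansion and the infinite-product formula for the Laplace transform are justified; this follows from $\operatorname{tr}(T_\eps)=\int_D m^2(z)\EE[h_\eps(z)^2]\,dz\lesssim \log(\eps^{-1})$) and that $\|G_D\|_{L^2(D\times D)}<\infty$, both of which are immediate.
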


The proof of Lemma \ref{lemma_Lp_cvg_boundary} is given in Appendix \ref{sec_app_1}. This is for the proof of this lemma that the rate of convergence \eqref{rate_cvg_L2_Wick_boundary} obtained in Lemma \ref{lemma_L2_cvg_Wick_boundary} is important.

Note that by taking $\phi \equiv 0$ in Lemma \ref{lemma_Lp_cvg_boundary}, we obtain that the random variables approximating the Radon-Nikodym derivative \eqref{RN_mGFF_Dirichlet} converge in $\operatorname{L}^{p}(\PP)$, for any $p \in [1,\infty)$.

\subsubsection{Assumptions on the mass and on the domain of definition of the massive GFF} \label{sec_assumptions}

So far, in our discussion about the massive GFF, the domain $D \subset \mathbb{C}$ and the mass $m: D \to \mathbb{R}_{+}$ have always been chosen to satisfy the assumptions of Theorem \ref{theorem_mSLE_4} and Theorem \ref{theorem_mCLE_4}. However, these assumptions are not optimal, as we now explain.

In Theorem \ref{theorem_mSLE_4}, we will show that a massive GFF can be coupled to a random curve which has the law of massive SLE$_4$. Massive SLE$_4$ is in fact well-defined and absolutely continuous with respect to SLE$_4$ provided that the domain $D$ is bounded and that the mass $m: D \to \mathbb{R}_{+}$ satisfies
\begin{align}
    &\int_{D} m^2(z) dz < \infty \label{m_L2} \\
    &\int_{D \times D} m^2(z)m^2(w) G_{D}(z,w) dz dw < \infty \label{m_Green},
\end{align}
where $G_D$ is the Green function in $D$. For the massive GFF to be absolutely continuous with respect to the GFF with Radon-Nikodym derivative \eqref{RN_mGFF_Dirichlet}, the mass $m$ must also be such that
\begin{equation}
    \int_{D \times D} m^2(z)m^2(w) G_{D}(z,w)^2 dz dw < \infty \label{m_Green_square}.
\end{equation}
Indeed, the condition \eqref{m_Green_square} ensures that the random variable $\int_{D} m^2(z) :h(z)^2: dz$ can be constructed as the limit in $L^2(\PP)$ of $\int_{D} m^2(z) :h_{\eps}(z)^2: dz$ as $\eps \to 0$. This is because if this condition is not satisfied, then the variance of $\int_{D} m^2(z) :h_{\eps}(z)^2: dz$ blows up as $\eps \to 0$.

Observe that the conditions \eqref{m_L2}, \eqref{m_Green} and \eqref{m_Green_square} on $m$ are conformally invariant in the following sense. If $\varphi: D \to \tilde D$ is a conformal map and that $m$ satisfies \eqref{m_L2}, \eqref{m_Green} and \eqref{m_Green_square}, then the mass $\tilde m: \tilde D \to \mathbb{R}$ defined via, for $w \in \tilde D$, $\tilde m(w)^2 = \vert (\varphi^{-1})'(w) \vert ^2 m(\varphi^{-1}(w))^2$ also satisfies \eqref{m_L2}, \eqref{m_Green} and \eqref{m_Green_square}. However, in their current version, the proofs of Theorem \ref{theorem_mSLE_4} and Theorem \ref{theorem_mCLE_4} require stronger assumptions on the domain $D \subset \mathbb{C}$ and the function $m: D \to \mathbb{R}_{+}$. These assumptions are helpful to prove Lemma \ref{lemma_Lp_cvg_boundary}.

We remark that the question of what happens when, for example, $D=\HH$ and the mass $m^2$ is a constant $m^2>0$ is interesting. The massive GFF in $\HH$ with mass $m^2$ does exist, since the massive Green function in $\HH$ with mass $m^2$ is symmetric and positive-definite. However, this field is only locally absolutely continuous with respect to the GFF in $\HH$ and thus we cannot define its level lines via a change of measure similar to \eqref{RN_mSLE4} or \eqref{RN_mCLE4}.

\section{Massive SLE$_4$ via change of measure} \label{sec_mSLE4}

In this section, we prove Theorem \ref{theorem_mSLE_4} and its corollary, Corollary \ref{cor_RN_mSLE4}. The proof of Theorem \ref{theorem_mSLE_4} is given in Section \ref{sec_mSLE4_GFF}. There, we first identify the conditional law of $h+\phi$ given $\gamma([0,t])$ under $\tilde \PP$ and then use the knowledge of this conditional law to show that the marginal law of $\gamma$ under $\tilde \PP$ is that of massive SLE$_4$. We will explain how to use these results to show Theorem \ref{theorem_mSLE_4}.  In the proof, we will assume that the domain $D$ is bounded and that the mass $m: D \to \mathbb{R}_{+}$ is a bounded and continuous function. Conformal covariance then enables us to extend the results to pairs $(D,m)$ satisfying the more general assumptions of Theorem \ref{theorem_mSLE_4}.

Finally, Section \ref{sec_cor_SLE4} is devoted to the proof of Corollary \ref{cor_RN_mSLE4}. Conformal covariance of massive SLE$_4$ is also established there.

\subsection{A level line of the massive GFF} \label{sec_mSLE4_GFF}

In this section, we seek to better understand the probability measure $\tilde \PP$ defined via the change of measure \eqref{RN_mSLE4} in order to prove Theorem \ref{theorem_mSLE_4}. Let us start with some preliminary remarks. Observe first that the change of measure \eqref{RN_mSLE4} of Theorem \ref{theorem_mSLE_4} is a valid change of measure. Indeed, this is just the change of measure \eqref{RN_mGFF_phi}, which extends to a change of measure on $(h+\phi,\gamma)$ since $\gamma$ is measurable with respect to $h$ under $\PP_{\operatorname{GFF}}$. This implies in particular that the marginal law of $h+\phi$ under $\tilde \PP$ is that of a massive GFF in $D$ with mass $m$ and boundary conditions $\lambda$ on $\partial D^{+}$ and $-\lambda$ on $\partial D^{-}$.

To prove Theorem \ref{theorem_mSLE_4}, we must now understand what happens to the field $h+\phi$ under $\tilde \PP$ when conditioning on $\gamma([0,t])$, for $t \geq 0$. Denote by $\mathcal{S}'(\mathbb{R}^2)$ the space of Schwartz distributions on $\mathbb{R}^2$ and observe that the field $h+\phi$ can be seen as a random element of $\mathcal{S}'(\mathbb{R}^2)$ by extending it to $0$ outside of $D$. By definition of $\tilde \PP$ via the change of measure \eqref{RN_mSLE4}, the conditional law of $h+\phi$ under $\tilde \PP$ given $\gamma$ is such that, for $t \geq 0$ and $F: \mathcal{S}'(\mathbb{R}^2) \to \mathbb{R}$ a bounded and continuous function, almost surely,
\begin{equation} \label{RN_gamma_t}
    \tilde \EE[F(h+\phi) \,\vert\, \gamma([0,t])] = \frac{1}{\mathcal{Z}_t}  \EE \bigg[ F(h+\phi) \exp \bigg( -\frac{1}{2}\int_{D} m^{2}(z):(h+\phi)^2(z): dz \bigg) \, \vert \, \gamma([0,t]) \bigg]
\end{equation}
where we have set
\begin{equation} \label{def_Ztm}
    \mathcal{Z}_t := \EE \bigg[\exp \bigg( -\frac{1}{2}\int_{D} m^{2}(z) :(h+\phi)^2(z): dz \bigg) \,\vert\, \gamma([0,t]) \bigg].
\end{equation}
Observe that in \eqref{RN_gamma_t}, the random partition function $\mathcal{Z}_t$ is almost surely strictly positive: this follows from the fact that if $X$ is an almost surely strictly positive random variable and $\mathcal{F}$ is a $\sigma$-algebra, then $\EE[X \vert \mathcal{F}]$ is almost surely strictly positive. This implies in particular that the right-hand side of \eqref{RN_gamma_t} is well-defined and thus that also the conditional law of $h+\phi$ given $\gamma([0,t])$ under $\tilde \PP$ is well-defined.

Since under $\PP$, conditionally on $\gamma([0,t])$, $h+\phi=h_t+\phi_t$ where $h_t$ and $\phi_t$ are as described in Section \ref{sec_coupling_SLE}, one expects the random partition function $\mathcal{Z}_t$, or more generally weighted conditional expectations of the form
\begin{equation} \label{weighted_cond_EE}
    \EE \bigg[\exp(i(h,f)) \exp \bigg( -\frac{1}{2}\int_{D} m^{2}(z) :(h+\phi)^2(z): dz \bigg) \,\vert\, \gamma([0,t]) \bigg]
\end{equation}
where $f:D \to \mathbb{R}$ is a bounded and continuous function, to have an expression in terms of the field $h_t$ and the harmonic function $\phi_t$. As a first step toward the identification of the conditional law of $h+\phi$ given $\gamma([0,t])$ under $\tilde \PP$, let us show that this is indeed the case.

\subsubsection{The weighted conditional expectation} \label{sec_rdm_partition_SLE}

Here and in the rest of Section \ref{sec_mSLE4}, if $\gamma : [0,\infty) \to D$ is a simple and continuous curve in $D$ that does not touch the boundary of $D$ except at its endpoints, we set, for $t \geq 0$, $D_t := D \setminus \gamma([0,t])$. We also denote by $\partial D_{t}^{+}$, respectively $\partial D_{t}^{-}$, the boundary arc of $D_t$ formed by the union of $\partial D^{+}$ and the left side of $\gamma([0,t])$, respectively the union of $\partial D^{-}$ and the right side of $\gamma([0,t])$. For $t \geq 0$, we also let $\phi_t$, respectively $\phi_{t}^m$, be the unique harmonic function, respectively massive harmonic function with mass $m$, with boundary conditions $\lambda$ on $\partial D_t^{+}$ and $-\lambda$ on $\partial D_t^{-}$.

Observe that by absolute continuity of $\tilde \PP$ with respect to $\PP$, under $\tilde \PP$, the marginal law of $\gamma$ is such that $\gamma$ is almost surely a simple and continuous curve that does not touch the boundary of $D$ except at its endpoints. Indeed, under $\PP$, the marginal law of $\gamma$ is that of SLE$_4$ in $D$ from $a$ to $b$, so that these properties are satisfied by $\gamma$ under $\PP$.

With these preliminary remarks made, let us now state and prove how to express weighted conditional expectations of the form \eqref{weighted_cond_EE} in terms of the field $h_t$ and the harmonic function $\phi_t$. We first start with the case $t \in (0,\infty)$, that is when $D_t$ is made of a single simply connected component.

\begin{proposition} \label{prop_rep_SLE}
Under the same assumptions as in Theorem \ref{theorem_mSLE_4}, let $f: D \to \mathbb{R}$ be a bounded and continuous function. Then, for any $t \in (0,\infty)$, almost surely,
\begin{align*}
    &\EE \bigg[\exp(i(h+\phi,f)) \exp \bigg( -\frac{1}{2}\int_{D} m^{2}(z) :(h+\phi)^2(z): dz \bigg) \,\vert\, \gamma([0,t]) \bigg] \\ 
    &= \begin{aligned}[t]&\EE \bigg[ \exp(i(h_t+\phi_t,f)) \exp \bigg(-\frac{1}{2} \int_{D_t} m^2(z) :(h_t+\phi_t)^2(z): dz \bigg) \vert \gamma([0,t])\bigg]\\
    &\times \exp \bigg(\int_{D_t} \frac{m^2(z)}{4\pi} \log \frac{\CR(z,\partial D)}{\CR(z,\partial D_t)} dz \bigg)\end{aligned}
\end{align*}
where $\CR(z,\partial D)$, respectively $\CR(z,\partial D_t)$, is the conformal radius of $z$ in $D$, respectively $D_t$. Above, the random variable $\int_{D_t} m^2(z) :(h_t+\phi_t)^2(z): dz$ is as defined via the notation \eqref{notation_Wick_boundary}.
\end{proposition}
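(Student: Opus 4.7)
The approach is to compare the Wick-renormalized squares $:(h+\phi)^2:$ (using the GFF law in $D$) and $:(h_t+\phi_t)^2:$ (using the GFF law in $D_t$) via circle-average regularization. The key observation is that Wick ordering is domain-dependent: for $B(z,\eps)\subset D$ and a GFF $h$ in $D$, $\EE[h_\eps(z)^2] = \frac{1}{2\pi}\log(\CR(z,\partial D)/\eps)$; the analogous quantity in $D_t$ is $\frac{1}{2\pi}\log(\CR(z,\partial D_t)/\eps)$. Their difference is precisely what produces the conformal-radius factor in the statement.

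The plan is as follows. Using the coupling of Section~\ref{sec_coupling_SLE}, under $\PP$ and conditionally on $\gamma([0,t])$, the distribution $h+\phi$ restricted to $D_t$ coincides a.s.\ with $h_t+\phi_t$; in particular, for $z\in D_t^{(\eps)} := \{z\in D_t : B(z,\eps)\subset D_t\}$, the circle averages satisfy $(h+\phi)_\eps(z) = (h_t+\phi_t)_\eps(z)$. Subtracting the two Wick corrections then yields the pointwise identity
\begin{equation*}
    :(h_\eps+\phi_\eps)^2(z): \, - \, :(h_{t,\eps}+\phi_{t,\eps})^2(z):_{D_t} \; = \; -\, \frac{1}{2\pi}\log\frac{\CR(z,\partial D)}{\CR(z,\partial D_t)},
\end{equation*}
where the subscript $D_t$ indicates the domain used for Wick renormalization, and its absence refers to $D$. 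Integrating against $m^2(z)$ over $D_t^{(\eps)}$ and exponentiating $-\tfrac{1}{2}$ times this identity isolates the conformal-radius factor as a multiplier that is measurable with respect to $\gamma([0,t])$.

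To pass to $\eps\to 0$, I would apply Lemma~\ref{lemma_Lp_cvg_boundary} in $D$ and, conditionally on $\gamma([0,t])$, the same lemma in $D_t$ applied to $h_t+\phi_t$ (which has GFF law with harmonic boundary data bounded by $\lambda$); both give $L^p(\PP)$-convergence of the exponentiated Wick integrals. The main obstacle is controlling the thin boundary layer $D\setminus D_t^{(\eps)}$: one must show that its contribution to the $D$-side Wick integral vanishes in $L^2(\PP)$ as $\eps\to 0$. Since $\gamma([0,t])\cup\partial D$ has Hausdorff dimension strictly below $2$ (the SLE$_4$ curve has dimension $3/2$ a.s.), a Minkowski-content estimate gives $\mathrm{Leb}(D\setminus D_t^{(\eps)}) = O(\eps^\alpha)$ for some $\alpha>0$, while the pointwise $L^2$-norm of the Wick-regularized square grows only polylogarithmically in $\eps^{-1}$; a Cauchy--Schwarz bound then closes the argument. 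The conformal-radius integrand is non-negative (since $D_t\subset D$) and integrable on $D_t$, so dominated convergence handles its limit.

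Assembling these steps produces the almost-sure identity
\begin{equation*}
    \exp\Bigl(-\tfrac{1}{2}\int_D m^2(z) :(h+\phi)^2(z): dz\Bigr) \; = \; \exp\Bigl(-\tfrac{1}{2}\int_{D_t} m^2(z) :(h_t+\phi_t)^2(z): dz\Bigr) \cdot \exp\Bigl(\int_{D_t}\tfrac{m^2(z)}{4\pi}\log\tfrac{\CR(z,\partial D)}{\CR(z,\partial D_t)}dz\Bigr).
\end{equation*}
The proposition then follows by multiplying by the bounded factor $\exp(i(h+\phi,f)) = \exp(i(h_t+\phi_t,f))$ (the equality being a.s.\ under $\PP$ given $\gamma([0,t])$), taking $\EE[\,\cdot\mid\gamma([0,t])]$, and pulling the $\sigma(\gamma([0,t]))$-measurable conformal-radius factor out of the conditional expectation.
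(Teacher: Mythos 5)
Your proposal is correct and follows essentially the same route as the paper: the core identity is that the two Wick subtractions differ by $\EE[h_\eps(z)^2]-\EE_t[h_{t,\eps}(z)^2]=\frac{1}{2\pi}\log\frac{\CR(z,\partial D)}{\CR(z,\partial D_t)}$ away from $\gamma([0,t])\cup\partial D$, the thin layer is killed by the $O(\eps^{1/2})$ area bound coming from the dimension-$3/2$ curve against the $O(\log\eps^{-1})$ growth of the variance, and the limits are taken via Lemma \ref{lemma_Lp_cvg_boundary} in $D$ and conditionally in $D_t$. The only cosmetic difference is that you establish the almost-sure pathwise identity first and then condition, whereas the paper conditions first and passes to the limit inside the conditional expectation; the paper itself records your pathwise version as equation \eqref{as_SLE_t} in the remark following its proof.
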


\begin{proof}
Let $f: D \to \mathbb{R}$ be a bounded and continuous function and let $t \in (0,\infty)$. By Lemma \ref{lemma_Lp_cvg_boundary} and since $\vert \exp(i(h+\phi, f)) \vert \leq 1$ almost surely, we have that
\begin{align*}
    &\exp(i(h+\phi,f))\exp \bigg( -\frac{1}{2} \int_{D} m^2(z) :(h+\phi)^2(z): dz \bigg) \\
    &= \lim_{\eps \to 0} \exp(i(h+\phi,f)) \exp \bigg( -\frac{1}{2} \int_{D} m^2(z) \big( (h_{\eps}(z) + \phi_{\eps}(z))^2 - \EE[h_{\eps}(z)^2] \big) dz \bigg)
\end{align*}
where the limit in $\operatorname{L}^1(\PP)$. This yields that, almost surely,
\begin{align*}
    &\EE \bigg[\exp(i(h+\phi,f)) \exp \bigg( -\frac{1}{2}\int_{D} m^{2}(z) :(h+\phi)^2(z): dz \bigg) \,\vert\, \gamma([0,t]) \bigg] \\
    &= \lim_{\eps \to 0} \EE \bigg[ \exp(i(h+\phi,f)) \exp \bigg( -\frac{1}{2} \int_{D} m^2(z) \big( (h_{\eps}(z) + \phi_{\eps}(z))^2 - \EE[h_{\eps}(z)^2] \big) dz \bigg) \vert \gamma([0,t])\bigg]
\end{align*}
where the limit is in $\operatorname{L}^1(\PP)$. We recall that here, $h_{\eps}(z)$ denotes the circle average of $h$ as defined in Section \ref{sec_Wick} and $\phi_{\eps}(z)$ is the circle average approximation of $\phi(z)$ in the sense of \eqref{def_phi_eps}. Let us set $\EE_t[\cdot] := \EE[\cdot \vert \gamma([0,t])]$. As explained in Section \ref{sec_coupling_SLE}, under the (random) conditional law induced by $\EE_t$, $h+\phi= h_t + \phi_t$, where $h_t$ is a GFF with Dirichlet boundary conditions in $D_t$ and $\phi_{t}$ is the unique harmonic function with boundary conditions $-\lambda$ on $\partial D_t^{-}$ and $\lambda$ on $\partial D_t^{+}$. Therefore, almost surely,
\begin{align} \label{dec_EE_SLE}
    &\EE_t \bigg[ \exp(i(h+\phi,f)) \exp \bigg( -\frac{1}{2} \int_{D} m^2(z) :(h+\phi)^2(z): dz \bigg) \bigg] \nonumber \\
    &= \lim_{\eps \to 0} \EE_t \bigg[ \exp(i(h_t+\phi_t,f)) \exp \bigg( -\frac{1}{2} \int_{D} m^2(z) \big( (h_{t,\eps}(z) + \phi_{t,\eps}(z))^2 - \EE[h_{\eps}(z)^2] \big) dz \bigg)\bigg] \nonumber \\
    &= \lim_{\eps \to 0} \EE_t \bigg[ \exp(i(h_t+\phi_t,f)) \exp \bigg( -\frac{1}{2} \int_{D} m^2(z) \big( (h_{t,\eps}(z) + \phi_{t,\eps}(z))^2 - \EE_t[h_{t,\eps}(z)^2] \big) dz \bigg) \bigg] \nonumber \\ 
    &\times \exp \bigg( \frac{1}{2}\int_{D} m^2(z) \big( \EE[h_{\eps}(z)^2] - \EE_t[h_{t,\eps}(z)^2]\big) dz \bigg)
\end{align}
where the limit is in $\operatorname{L}^1(\PP)$. Since $\gamma([0,t])$ almost surely has Lebesgue measure $0$, we can rewrite the last equality as
\begin{align} \label{lim_with_int_over_Dt}
    &\EE_t \bigg[ \exp(i(h+\phi,f)) \exp \bigg( -\frac{1}{2} \int_{D} m^2(z) :(h+\phi)^2(z): dz \bigg)] \\
    &= \lim_{\eps \to 0} \begin{aligned}[t]&\EE_t \bigg[ \exp(i(h_t+\phi_t,f)) \exp \bigg( -\frac{1}{2} \int_{D_t} m^2(z) \big( (h_{t,\eps}(z) + \phi_{t,\eps}(z))^2 - \EE_t[h_{t,\eps}(z)^2] \big) dz \bigg) \bigg] \\ 
    &\times \exp \bigg( \frac{1}{2}\int_{D_t} m^2(z) \big( \EE[h_{\eps}(z)^2] - \EE_t[h_{t,\eps}(z)^2]\big) dz \bigg) \nonumber \end{aligned}
\end{align}
where the limit is in $\operatorname{L}^1(\PP)$. Since conditionally on $\gamma([0,t])$, $h_t$ is a GFF with Dirichlet boundary conditions and $\phi_t$ is almost surely a bounded harmonic function in $D_t$, by Lemma \ref{lemma_L2_cvg_Wick_boundary}, we have that, almost surely,
\begin{align} \label{lim_cond_EE_t_only}
    &\lim_{\eps \to 0} \EE_t \bigg[ \exp(i(h_t+\phi_t,f)) \exp \bigg( -\frac{1}{2} \int_{D_t} m^2(z) \big( (h_{t,\eps}(z) + \phi_{t,\eps}(z))^2 - \EE_t[h_{t,\eps}(z)^2] \big) dz \bigg) \bigg] \nonumber \\
    &= \EE_t \bigg[ \exp(i(h_t+\phi_t,f)) \exp \bigg( -\frac{1}{2} \int_{D_t} m^2(z) :(h_t+\phi_t)^2(z): dz \bigg) \bigg].
\end{align}
We thus see that to prove the proposition, it remains to show the following claim.

\begin{claim} \label{claim_CR_SLE}
Almost surely,
\begin{equation*}
    \lim_{\eps \to 0} \exp \bigg( \frac{1}{2}\int_{D_t} m^2(z) \big( \EE[h_{\eps}(z)^2] - \EE_t[h_{t,\eps}(z)^2]\big) dz \bigg) = \exp \bigg( \int_{D_t} \frac{m^2(z)}{4\pi} \log \frac{\CR(z, \partial D)}{\CR(z, \partial D_t)} dz \bigg).
\end{equation*}
\end{claim}

We postpone the proof of this claim to the end and first conclude the proof of Proposition \ref{prop_rep_SLE} based on it. From \eqref{lim_with_int_over_Dt}, we deduce that Claim \ref{claim_CR_SLE} together with \eqref{lim_cond_EE_t_only} imply that, almost surely,
\begin{align*}
     &\EE_t \bigg[ \exp(i(h+\phi,f)) \exp \bigg( -\frac{1}{2} \int_{D} m^2(z) :(h+\phi)^2(z): dz \bigg)\bigg] \nonumber \\
     &= \EE_t \bigg[ \exp(i(h_t+\phi_t,f)) \exp \bigg( -\frac{1}{2} \int_{D_t} m^2(z) :(h_t+\phi_t)^2(z): dz \bigg) \bigg] 
    \exp \bigg( \int_{D_t} \frac{m^2(z)}{4\pi} \log \frac{\CR(z, \partial D)}{\CR(z, \partial D_t)} dz \bigg).
\end{align*}
This is exactly the statement of Proposition \ref{prop_rep_SLE} and thus completes the proof.
\end{proof}

It now remains to prove Claim \ref{claim_CR_SLE}.

\begin{proof}[Proof of Claim \ref{claim_CR_SLE}]
Let $\eps > 0$ and set $D_{t,\eps} := \{ z \in D_t: \operatorname{dist}(z,\gamma([0,t])) > \eps \}$. Observe that, almost surely, if $z \in D_{t,\eps}$ is such that $\operatorname{dist}(z, \partial D) > \eps$, then
\begin{equation} \label{bound_diff_bulk}
    \EE[h_{\eps}(z)^2] - \EE_t[h_{t,\eps}(z)^2] = \frac{1}{2\pi} \log \frac{\CR(z, \partial D)}{\CR(z,\partial D_t)}.
\end{equation}
On the other hand, it is easy to see that there exists $K > 0$ such that for any $\eps > 0$, almost surely, for any $z \in D_{t,\eps}$ with $\operatorname{dist}(z, \partial D) \leq \eps$,
\begin{equation} \label{bound_diff_boundary}
    0 \leq \EE[h_{\eps}(z)^2] - \EE_t[h_{t,\eps}(z)^2] \leq \EE[h_{\eps}(z)^2] \leq K.
\end{equation}
Since almost surely for any $z \in D_t$, $\CR(z, \partial D) \geq \CR(z, \partial D_t)$ and since the function $m$ is bounded, we obtain from \eqref{bound_diff_bulk} and \eqref{bound_diff_boundary} that there exists $C > 0$ such that, almost surely, for any $z \in D_{t,\eps}$,
\begin{equation} \label{bound_diff_EE}
    m^2(z) \vert \EE[h_{\eps}(z)^2] - \EE_t[h_{t,\eps}(z)^2] \vert \leq C \bigg( \log \frac{\CR(z, \partial D)}{\CR(z,\partial D_t)} +1 \bigg).
\end{equation}
Observe also that, almost surely,
\begin{equation*}
    \lim_{\eps \to 0} m^2(z)(\EE[h_{\eps}(z)^2] - \EE_t[h_{t,\eps}(z)^2])\mathbb{I}_{D_{t,\eps}}(z) = \frac{m^2(z)}{2\pi}  \log \frac{\CR(z, \partial D)}{\CR(z,\partial D_t)}
\end{equation*}
pointwise in $D_t$. Moreover, the function $z \mapsto \log(\CR(z, \partial D)/\CR(z,\partial D_t))$ is almost surely integrable in $D_t$. Therefore, using the dominated convergence theorem with the bound \eqref{bound_diff_EE}, we obtain that, almost surely,
\begin{equation*}
    \lim_{\eps \to 0} \int_{D_{t,\eps}} m^2(z) (\EE[h_{\eps}(z)^2] - \EE_t[h_{t,\eps}(z)^2]) dz  = \int_{D_t} \frac{m^2(z)}{2\pi}  \log \frac{\CR(z, \partial D)}{\CR(z,\partial D_t)} dz.
\end{equation*}
To prove Claim \ref{claim_CR_SLE}, it remains to show that, almost surely,
\begin{equation} \label{cvg_0_not_Dt_eps}
    \lim_{\eps \to 0} \int_{D_t \setminus D_{t,\eps}} m^2(z) (\EE[h_{\eps}(z)^2] - \EE_t[h_{t,\eps}(z)^2]) dz =0.
\end{equation}
For this, we simply observe that the estimate \eqref{estimate_sup_variance} shows that there exists $C>0$ such that for any $0<\eps<1/2$ and any $z \in D_t$,
\begin{equation*}
    0 \leq \EE[h_{\eps}(z)^2] - \EE_t[h_{t,\eps}(z)^2] \leq \sup_{z \in D} \EE[h_{\eps}(z)^2] \leq C\log \eps^{-1}.
\end{equation*}
Together with the boundedness of $m$, this yields that, almost surely,
\begin{equation} \label{ineq_not_Dt_eps}
    \bigg \vert \int_{D_t \setminus D_{t,\eps}} m^2(z) (\EE[h_{\eps}(z)^2] - \EE_t[h_{t,\eps}(z)^2]) dz \bigg \vert \leq \overline{m}^2C (\log \eps^{-1})\Area(D \setminus D_{t,\eps}).
\end{equation}
Since $\gamma([0,t])$ almost surely has Hausdorff dimension $3/2$ \cite{dim_SLE}, we have that, almost surely, $\Area(D \setminus D_{t,\eps}) = O(\eps^{1/2})$. Therefore, the inequality \eqref{ineq_not_Dt_eps} implies the almost sure convergence \eqref{cvg_0_not_Dt_eps}, which completes the proof of the claim.
\end{proof}

\begin{remark}
The proof of Proposition \ref{prop_rep_SLE} actually shows that, $\PP$-almost surely,
\begin{align} 
	&\exp \bigg(-\frac{1}{2}\int_{D} m^2(z) :(h(z)+\phi(z))^2: dz \bigg) \nonumber \\
	&= \exp \bigg(-\frac{1}{2}\int_{D_t} m^2(z) :(h_t(z)+\phi_t(z))^2: dz + \int_{D_t} \frac{m^2(z)}{4\pi}\log \frac{\CR(z,\partial D)}{\CR(z,\partial D_t)}dz \bigg). \label{as_SLE_t}
\end{align}
Indeed, fix $\delta > 0$. Then, denoting by $X_{t,\eps}$ the $\eps$-approximation of the Wick square on the left-hand side of \eqref{lim_cond_EE_t_only} and $X_{t,0}$ its limit, we have that, for $\PP$-almost every $\gamma([0,t])$, $\PP_t(\vert e^{X_{t,\eps}} -e^{ X_{t,0}}\vert > \delta)$ converges to $0$ as $\eps$ tends to $0$. Therefore, by dominated convergence, $ e^{X_{t,\eps}}$ converges to $e^{ X_{t,0}}$ in $\PP$-probability as $\eps$ tends to $0$. Defining $(Y_{t,\eps})_{\eps > 0}$ as the random variables of Claim \ref{claim_CR_SLE} and $Y_{t,0}$ as their limit as $\eps$ tends to $0$ given by this claim, we can deduce from this and Claim \ref{claim_CR_SLE} that $e^{X_{t,\eps}}e^{Y_{t,\eps}}$ converges to $e^{X_t}e^{Y_t}$ in $\PP$-probability as $\eps$ tends to $0$. Since $e^{X_{t,\eps}}e^{Y_{t,\eps}}$ also converges to $\exp(-\frac{1}{2}\int_{D} m^2(z) :(h(z)+\phi(z))^2: dz)$ in $\PP$-probability as $\eps$ tends to $0$, we obtain the almost sure equality \eqref{as_SLE_t}. 
\end{remark}

Let us now give an expression for weighted conditional expectations of the form \eqref{weighted_EE_Gamma} when $t=\infty$. In this case, $D_{\infty}:=D \setminus \gamma([0,\infty))$ is composed of two simply connected components. Recall that we denote by $D_1$, respectively $D_2$, the component on the left, respectively right, of $\gamma([0,\infty))$. Moreover, under $\PP$, conditionally on $\gamma([0,\infty])$, $h+\phi=h_1+\phi_1+h_2+\phi_2$, where $h_1$, $h_2$, $\phi_1$ and $\phi_2$ are as described in Section \ref{sec_coupling_SLE}.

\begin{proposition} \label{prop_rep_SLE_infty}
Under the same assumptions as in Theorem \ref{theorem_mSLE_4}, let $f: D \to \mathbb{R}$ be a bounded and continuous function. Then, almost surely,
\begin{align*}
    &\EE \bigg[\exp(i(h+\phi,f)) \exp \bigg( -\frac{1}{2}\int_{D} m^{2}(z) :(h+\phi)^2(z): dz \bigg) \,\vert\, \gamma([0,\infty)) \bigg] \\ 
    &=\EE \bigg[ \exp(i(h_1+\phi_1,f)) \exp \bigg(-\frac{1}{2} \int_{D_1} m^2(z) :(h_1+\phi_1)^2(z): dz \bigg) \vert \gamma([0,\infty))\bigg] \\
    & \times \EE \bigg[ \exp(i(h_2+\phi_2,f)) \exp \bigg(-\frac{1}{2} \int_{D_2} m^2(z) :(h_2+\phi_2)^2(z): dz \bigg) \vert \gamma([0,\infty))\bigg] \\
    &\times \exp \bigg(\int_{D_1} \frac{m^2(z)}{4\pi} \log \frac{\CR(z,\partial D)}{\CR(z,\partial D_1)} dz + \int_{D_2} \frac{m^2(z)}{4\pi} \log \frac{\CR(z,\partial D)}{\CR(z,\partial D_2)} \bigg)
\end{align*}
where $\CR(z,\partial D)$, respectively $\CR(z,\partial D_j)$, $j=1,2$, is the conformal radius of $z$ in $D$, respectively $D_j$. Above, for $j=1,2$, the random variable $\int_{D_j} m^2(z) :(h_j+\phi_j)^2(z): dz$ is as defined via the notation \eqref{notation_Wick_boundary}.
\end{proposition}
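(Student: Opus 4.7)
The plan is to run the same regularization argument as in Proposition \ref{prop_rep_SLE}, but now exploit the fact that, conditionally on $\gamma([0,\infty))$, the field $h+\phi$ splits as an independent sum $h_1+\phi_1+h_2+\phi_2$, with $h_j$ a GFF in $D_j$ extended by $0$ and $\phi_1\equiv\lambda$ on $D_1$, $\phi_2\equiv-\lambda$ on $D_2$ (and the other identically $0$). First I would apply Lemma \ref{lemma_Lp_cvg_boundary} together with the bound $|\exp(i(h+\phi,f))|\le 1$ to pull the conditional expectation inside the $\eps\to 0$ limit, replacing the Wick square by its circle-average approximation
\[
\int_D m^2(z)\bigl((h_\eps(z)+\phi_\eps(z))^2-\EE[h_\eps(z)^2]\bigr)dz.
\]

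Next, set $\EE_\infty[\cdot]:=\EE[\cdot\,\vert\,\gamma([0,\infty))]$ and use the conditional decomposition $h_\eps(z)=h_{1,\eps}(z)+h_{2,\eps}(z)$ (understood by extending each $h_j$ by $0$ off $D_j$) to rewrite the regularized integrand. I would then replace the global Wick constant $\EE[h_\eps^2]$ by the conditional one $\EE_\infty[h_{1,\eps}^2]+\EE_\infty[h_{2,\eps}^2]$, at the cost of an explicit correction factor
\[
\exp\Bigl(\tfrac12\int_D m^2(z)\bigl(\EE[h_\eps(z)^2]-\EE_\infty[h_{1,\eps}(z)^2]-\EE_\infty[h_{2,\eps}(z)^2]\bigr)dz\Bigr).
\]
Since $\gamma([0,\infty))$ has zero Lebesgue measure, the domain of integration may be replaced by $D_1\cup D_2$. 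On $D_1$ we have $h_{2,\eps}\equiv 0$ (for $\eps$ small enough that the averaging circle stays inside $D_1$) and $\phi_\eps=\phi_{1,\eps}$, and symmetrically on $D_2$, so the regularized Wick integral splits cleanly as a sum of an integral over $D_1$ involving only $(h_{1,\eps}+\phi_{1,\eps})^2-\EE_\infty[h_{1,\eps}^2]$ and one over $D_2$ involving only $(h_{2,\eps}+\phi_{2,\eps})^2-\EE_\infty[h_{2,\eps}^2]$.

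At this point the key structural input kicks in: under $\EE_\infty$, the fields $h_1$ and $h_2$ are independent. Hence the exponential factors through the conditional expectation as a product, and taking the $\eps\to 0$ limit of each factor separately via Lemma \ref{lemma_L2_cvg_Wick_boundary} (applied in the simply connected domains $D_1$ and $D_2$ with the bounded harmonic extensions $\phi_1, \phi_2$) gives the two conditional expectations written in the statement. The main obstacle is then the analysis of the correction factor, which is the analogue of Claim \ref{claim_CR_SLE}. I would handle it exactly as before: for $z$ at distance $>\eps$ from $\partial D\cup\gamma([0,\infty))$ lying in $D_j$, the difference $\EE[h_\eps(z)^2]-\EE_\infty[h_{j,\eps}(z)^2]$ equals $(2\pi)^{-1}\log(\CR(z,\partial D)/\CR(z,\partial D_j))$; near the boundary the difference is uniformly bounded, while the logarithmic term is integrable on $D_j$. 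For the thin neighbourhood of $\gamma([0,\infty))\cup\partial D$ where the identity breaks down, the estimate $\sup_{z\in D}\EE[h_\eps(z)^2]=O(\log\eps^{-1})$ combined with the almost sure Hausdorff-dimension bound $\dim_H(\gamma([0,\infty)))=3/2$ (so that the Lebesgue measure of the $\eps$-neighbourhood is $O(\eps^{1/2})$) makes the contribution vanish. Dominated convergence then yields, almost surely,
\[
\lim_{\eps\to 0}\tfrac12\int_{D\setminus\gamma}m^2(z)\bigl(\EE[h_\eps(z)^2]-\EE_\infty[h_{1,\eps}(z)^2]-\EE_\infty[h_{2,\eps}(z)^2]\bigr)dz=\sum_{j=1}^{2}\int_{D_j}\frac{m^2(z)}{4\pi}\log\frac{\CR(z,\partial D)}{\CR(z,\partial D_j)}dz,
\]
which is precisely the deterministic factor appearing in the proposition. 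Combining the factored conditional expectation with this correction finishes the argument.
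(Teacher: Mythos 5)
Your overall strategy is the right one and is essentially the route the paper takes: the paper's own proof of this proposition simply defers to the argument for Proposition \ref{prop_rep_CLE}, of which this is the two-component special case. The regularization via Lemma \ref{lemma_Lp_cvg_boundary}, the recentering by $\EE_\infty[h_{1,\eps}^2]+\EE_\infty[h_{2,\eps}^2]$ at the cost of the conformal-radius correction factor, the use of the conditional independence of $h_1$ and $h_2$ to factor the conditional expectation into a product, and the treatment of the correction term via the $O(\log\eps^{-1})$ variance bound together with the $O(\eps^{1/2})$ area bound coming from $\operatorname{dim}_{H}(\gamma([0,\infty)))=3/2$ are all exactly as in the paper.

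The one step you state too casually is the claim that the regularized Wick integral ``splits cleanly'' into a $D_1$-part and a $D_2$-part. For fixed $\eps>0$ this is false: the set of $z\in D_1$ with $\operatorname{dist}(z,\gamma([0,\infty)))\le\eps$ has positive Lebesgue measure, and on it $h_{2,\eps}(z)$ and $\phi_{2,\eps}(z)$ do not vanish. Expanding $(h_{1,\eps}+h_{2,\eps}+\phi_{1,\eps}+\phi_{2,\eps})^2$ therefore produces cross terms $2h_{1,\eps}h_{2,\eps}$, $2h_{j,\eps}\phi_{k,\eps}$ for $j\neq k$, and $2\phi_{1,\eps}\phi_{2,\eps}$, all supported in that $\eps$-neighbourhood, and one must show that their integrals against $m^2$ tend to $0$ under the conditional law as $\eps\to 0$. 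This is precisely the content of Claims \ref{claim_sumhj_neq}, \ref{claim_neq_sumhjxi} and \ref{claim_productxi_neq} in the proof of Proposition \ref{prop_rep_CLE}, and it is where most of the technical work of that proof lies. The fix uses exactly the estimates you already invoke for the correction factor (conditional independence and centering of $h_1,h_2$, the bound $\sup_{z\in D}\EE[h_\eps(z)^2]=O(\log\eps^{-1})$, and the fact that the area of the $\eps$-neighbourhood of $\gamma$ is $O(\eps^{1/2})$), so the gap is fillable; but as written the splitting step is asserted rather than proved, and it is the crux of the argument.
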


\begin{proof}
This is left to the reader, since the proof is identical to but simpler than the proof of Proposition \ref{prop_rep_CLE}, or alternatively may be obtained by taking $t$ to $\infty$ in Proposition \ref{prop_rep_SLE}.
\end{proof}

\subsubsection{The conditional law of $h+\phi$ under $\tilde \PP$}

With Proposition \ref{prop_rep_SLE} in hands, let us now identify the conditional law of $h+\phi$ given $\gamma([0,t])$ under $\tilde \PP$, for $t \geq 0$. In this section, we keep the same notations as those adopted in Section \ref{sec_rdm_partition_SLE} for the slit domains $(D_t)_{t \geq 0}$, their boundary arcs $(\partial D_t^{+})_{t \geq 0}$ and $(\partial D_t^{-})_{t \geq 0}$ and the functions $(\phi_t)_{t \geq 0}$ and $(\phi_{t}^m)_{t \geq 0}$.

\begin{proposition} \label{prop_law_hSLE}
With the same assumptions as in Theorem \ref{theorem_mSLE_4}, let $t \geq 0$. Then, under $\tilde \PP$, conditionally on $\gamma([0,t])$, $h+\phi=h_t+\phi_t$ where $h_t+\phi_t$ has the law of a massive GFF in $D_t$ with mass $m$ and with boundary conditions $\lambda$ on $\partial D_t^{+}$ and $-\lambda$ on $\partial D_t^{-}$. In other words, under $\tilde \PP$, the conditional law of $h+\phi$ given $\gamma([0,t])$ is that of $h_t^m+\phi_t^m$ where $h_t^m$ has the law of a massive GFF in $D_t$ with mass $m$ and with Dirichlet boundary conditions and $\phi_t^m$ is the massive harmonic function in $D_t$ with boundary values $\lambda$ on $\partial D_t^{+}$ and $-\lambda$ on $\partial D_t^{-}$.
\end{proposition}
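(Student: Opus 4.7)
My plan is to compute the conditional characteristic function of $h+\phi$ given $\gamma([0,t])$ under $\tilde{\PP}$ and to identify it as the characteristic function of the massive GFF on the slit domain $D_t$ with the required boundary values. Fix a smooth compactly supported $f:D\to\mathbb{R}$. By the definition \eqref{RN_gamma_t} of $\tilde{\PP}$ and the definition \eqref{def_Ztm} of $\mathcal{Z}_t$, I have
\[
\tilde{\EE}[\exp(i(h+\phi,f))\mid \gamma([0,t])]=\frac{\EE[\exp(i(h+\phi,f))\,e^{-\tfrac12\int_D m^2:(h+\phi)^2:dz}\mid\gamma([0,t])]}{\EE[e^{-\tfrac12\int_D m^2:(h+\phi)^2:dz}\mid\gamma([0,t])]}.
\]

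Next I would apply Proposition~\ref{prop_rep_SLE} to the numerator and (with the test function taken to be zero) to the denominator. The crucial observation is that the boundary-effect term $\exp\bigl(\int_{D_t}\tfrac{m^2(z)}{4\pi}\log(\CR(z,\partial D)/\CR(z,\partial D_t))\,dz\bigr)$ appears identically in both and therefore cancels. Writing $\EE_t[\cdot]:=\EE[\cdot\mid\gamma([0,t])]$, I am left with
\[
\tilde{\EE}[\exp(i(h+\phi,f))\mid\gamma([0,t])]=\frac{\EE_t\!\left[\exp(i(h_t+\phi_t,f))\,e^{-\tfrac12\int_{D_t} m^2:(h_t+\phi_t)^2:dz}\right]}{\EE_t\!\left[e^{-\tfrac12\int_{D_t} m^2:(h_t+\phi_t)^2:dz}\right]}.
\]

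Now under $\EE_t$, the field $h_t+\phi_t$ has the law of a GFF on the (random) bounded simply connected domain $D_t$ with the (random) bounded harmonic boundary correction $\phi_t$, so I apply the Radon--Nikodym identification \eqref{RN_mGFF_phi} pointwise in $\gamma([0,t])$. This says exactly that the right-hand side above equals $\EE^{D_t,F^{(D_t,\gamma(t),b)}}_{\operatorname{mGFF}}[\exp(i(h_t^m+\phi_t^m,f))]$, i.e.\ the characteristic function of a massive GFF in $D_t$ with mass $m$ and boundary values $F^{(D_t,\gamma(t),b)}$. Since $f$ was arbitrary, this identifies the conditional law of $h+\phi$ given $\gamma([0,t])$ under $\tilde{\PP}$. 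For $t=\infty$ I would repeat the argument using Proposition~\ref{prop_rep_SLE_infty}; the right-hand side then factors into independent contributions from $D_1$ and $D_2$, and \eqref{RN_mGFF_phi} applied in each component yields a sum of two independent massive GFFs with boundary values $\pm\lambda$.

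The main (mild) obstacle is the measurable application of \eqref{RN_mGFF_phi} inside the conditional expectation: $D_t$ and $\phi_t$ are both random, so I should check that the identity \eqref{RN_mGFF_phi} holds jointly in $\gamma([0,t])$, not merely for a fixed domain. This is not serious because Lemma~\ref{lemma_L2_cvg_Wick_boundary} and Lemma~\ref{lemma_Lp_cvg_boundary} give $L^p$ convergence statements whose bounds depend only on $\|m\|_\infty$, $\|\phi_t\|_\infty\le\lambda$, and $\mathrm{diam}(D_t)\le\mathrm{diam}(D)$, which are uniform in $\gamma([0,t])$; so the derivation of \eqref{RN_mGFF_phi} goes through verbatim for a.e.\ realization of the curve. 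Once this measurability point is disposed of, the proposition follows immediately from Proposition~\ref{prop_rep_SLE} and \eqref{RN_mGFF_phi}.
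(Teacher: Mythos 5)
Your proposal is correct and follows essentially the same route as the paper: compute the conditional characteristic function under $\tilde\PP$, apply Proposition \ref{prop_rep_SLE} to both the numerator and the denominator $\mathcal{Z}_t$ so that the conformal-radius correction cancels, and then recognize the remaining ratio as the Radon--Nikodym reweighting \eqref{RN_mGFF_phi} of the GFF in $D_t$ with boundary data $\phi_t$, applied for almost every realization of $\gamma([0,t])$. The measurability point you raise is handled the same way in the paper (implicitly, via the almost-sure statements of Proposition \ref{prop_rep_SLE} and the uniform bounds in Lemmas \ref{lemma_L2_cvg_Wick_boundary} and \ref{lemma_Lp_cvg_boundary}), and the $t=\infty$ case is treated separately there exactly as you sketch.
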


To prove Proposition \ref{prop_law_hSLE}, as mentioned in the introduction, we show a rigorous conditional version of the path-integral formalism for the massive GFF. Most of the technical work has already been done in the proof of Proposition \ref{prop_rep_SLE} and it now remains to apply the strategy developed there.

\begin{proof}[Proof of Proposition \ref{prop_law_hSLE}]
Let $t \geq 0$. To establish the proposition, we are going to compute the characteristic function of $(h+\phi, f)$ under $\tilde \PP$ given $\gamma([0,t])$, where $f$ is a smooth function with compact support in $D$. We have that, almost surely,
\begin{align} \label{charac_GFFgamma_t}
    \tilde \EE \big[ \exp(i(h+\phi,f)) \vert \gamma([0,t]) \big]
    = \frac{1}{\mathcal{Z}_t} \EE \bigg[ \exp(i(h+\phi,f)) \exp \bigg( -\frac{1}{2} \int_{D} m^2(z):(h+\phi)^2(z):dz \bigg) \vert \gamma([0,t]) \bigg]
\end{align}
where, as before, we have set
\begin{equation*}
    \mathcal{Z}_t := \EE \bigg[ \exp \bigg( -\frac{1}{2} \int_{D} m^2(z):(h+\phi)^2(z):dz \bigg) \vert \gamma([0,t]) \bigg].
\end{equation*}
By Proposition \ref{prop_rep_SLE}, we have that, almost surely,
\begin{align} \label{weighted_charac_GFFgamma_t}
    &\EE \bigg[ \exp(i(h+\phi,f)) \exp \bigg( -\frac{1}{2} \int_{D} m^2(z):(h+\phi)^2(z):dz \bigg) \vert \gamma([0,t]) \bigg] \nonumber \\
    &= \EE \bigg[ \exp(i(h_t+\phi_t,f)) \exp \bigg( -\frac{1}{2} \int_{D_t} m^2(z):(h_t+\phi_t)^2(z):dz \bigg) \vert \gamma([0,t]) \bigg]
    \exp \bigg( \int_{D_t} \frac{m^2(z)}{4\pi} \log \frac{\CR(z,\partial D)}{\CR(z,\partial D_t)}\bigg).
\end{align}
On the other hand, Proposition \ref{prop_rep_SLE} applied with $f \equiv 0$ yields that, almost surely,
\begin{equation} \label{dec_Zmt}
    \mathcal{Z}_t = \exp \bigg( \int_{D_t} \frac{m^2(z)}{4\pi} \log \frac{\CR(z,\partial D)}{\CR(z,\partial D_t)}\bigg)\mathcal{Z}_{\phi,t}
\end{equation}
where we have set
\begin{equation*}
    \mathcal{Z}_{\phi, t} := \EE \bigg[\exp \bigg(-\frac{1}{2} \int_{D_t} m^2(z) :(h_t+\phi_t)^2(z): dz \bigg)\vert \gamma([0,t]) \bigg].
\end{equation*}
Going back to \eqref{charac_GFFgamma_t}, we obtain from this that, almost surely,
\begin{align} \label{cond_EE_without_CR}
    &\tilde \EE \big[ \exp(i(h+\phi,f))\vert \gamma([0,t]) \big] \nonumber \\
    &= \frac{1}{\mathcal{Z}_{\phi, t}} \EE \bigg[ \exp(i(h_t+\phi_t,f)) \exp \bigg(-\frac{1}{2} \int_{D_t} m^2(z) :(h_t+\phi_t)^2(z): dz \bigg)\vert \gamma([0,t]) \bigg].
\end{align}
Indeed, the term involving the conformal radii in \eqref{weighted_charac_GFFgamma_t} cancels with the one stemming from the equality \eqref{dec_Zmt} for $\mathcal{Z}_t$. Observe now that by \eqref{RN_mGFF_phi}, conditionally on $\gamma([0,t])$,
\begin{equation*}
    \frac{1}{\mathcal{Z}_{\phi, t}} \exp \bigg(-\frac{1}{2} \int_{D_t} m^2(z) :(h_t+\phi_t)^2(z): dz \bigg)
\end{equation*}
is almost surely equal to the Radon-Nikodym derivative of the massive GFF in $D_t$ with mass $m$ and boundary conditions $\lambda$ on $\partial D_t^{+}$ and $-\lambda$ on $\partial D_t^{-}$ with respect to the GFF in $D_t$ with the same boundary conditions. We deduce from this and the equality \eqref{cond_EE_without_CR} that under $\tilde \PP$, conditionally on $\gamma([0,t])$, $h+\phi=h_t+\phi_t$ where $h_t+\phi_t$ has the law of a massive GFF with mass $m$ in $D_t$ and boundary conditions $\lambda$ on $\partial D_t^{+}$ and $-\lambda$ on $\partial D_t^{-}$.
\end{proof}

Let us now establish the conditional law of $h+\phi$ under $\tilde \PP$ conditionally on $\gamma([0,\infty))$. Below, the domains $D_j$ and the functions $\phi_j$ for $j=1,2$ are as described in Section \ref{sec_coupling_SLE}.

\begin{proposition} \label{prop_law_hSLE_infty}
With the same assumptions as in Theorem \ref{theorem_mSLE_4}, under $\tilde \PP$, conditionally on $\gamma([0,\infty))$, $h+\phi=h_1+\phi_1+h_2+\phi_2$ where for $j=1,2$, $h_j+\phi_j$ has the law of a massive GFF in $D_j$ with mass $m$ and boundary conditions $\phi_j$. Moreover, under $\tilde \PP$, conditionally on $\gamma([0,\infty))$, $h_1+\phi_1$ and $h_2+\phi_2$ are independent.
\end{proposition}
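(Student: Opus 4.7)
The plan is to mirror the proof of Proposition~\ref{prop_law_hSLE}, substituting Proposition~\ref{prop_rep_SLE_infty} for Proposition~\ref{prop_rep_SLE}. The new feature is that $D \setminus \gamma([0,\infty))$ has two connected components $D_1, D_2$, so the key weighted conditional expectation factorizes; this single factorization will simultaneously deliver the two massive marginal laws and the conditional independence.

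First I would fix smooth compactly supported test functions $f_1 \in C_c^\infty(D_1)$ and $f_2 \in C_c^\infty(D_2)$ and set $f = f_1 + f_2$. By definition of $\tilde{\PP}$,
\begin{equation*}
\tilde\EE[\exp(i(h+\phi,f)) \mid \gamma([0,\infty))] = \frac{N_\infty(f)}{\mathcal{Z}_\infty},
\end{equation*}
where $N_\infty(f)$ is the weighted conditional expectation with insertion $\exp(i(h+\phi,f))$ and $\mathcal{Z}_\infty$ is the conditional partition function. I then apply Proposition~\ref{prop_rep_SLE_infty} to $N_\infty(f)$ and, taking $f \equiv 0$, to $\mathcal{Z}_\infty$. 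The conformal-radius correction factor $\exp(\int_{D_1} \cdots + \int_{D_2} \cdots)$ is the same in both and thus cancels. Moreover, since $h_j + \phi_j$ is supported in $\overline{D_j}$ and $f_{3-j}$ vanishes on $\overline{D_j}$ (using that $\phi_j \equiv 0$ on $D_{3-j}$), we have $(h_j+\phi_j, f) = (h_j+\phi_j, f_j)$.

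After cancellation, the conditional characteristic function reads
\begin{equation*}
\prod_{j=1}^{2} \frac{\EE\bigl[\exp(i(h_j+\phi_j, f_j)) \exp\bigl(-\tfrac12 \int_{D_j} m^2(z) :(h_j+\phi_j)^2(z):dz\bigr) \mid \gamma([0,\infty))\bigr]}{\EE\bigl[\exp\bigl(-\tfrac12 \int_{D_j} m^2(z) :(h_j+\phi_j)^2(z):dz\bigr) \mid \gamma([0,\infty))\bigr]}.
\end{equation*}
By Section~\ref{sec_coupling_SLE}, conditionally on $\gamma([0,\infty))$ each $h_j + \phi_j$ is a GFF in $D_j$ with the appropriate $\pm\lambda/0$ boundary values, and independent from the other. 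Hence, by \eqref{RN_mGFF_phi}, each ratio is exactly the characteristic function, evaluated at $f_j$, of a massive GFF in $D_j$ with mass $m$ and boundary conditions $\phi_j$. Since the joint characteristic function is a product of a factor depending only on $f_1$ and one depending only on $f_2$, we obtain both the claimed marginal laws and the conditional independence of $h_1+\phi_1$ and $h_2+\phi_2$ given $\gamma([0,\infty))$.

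The main subtlety is justifying the factorization of $\mathcal{Z}_\infty$: each factor in the product representation of $\mathcal{Z}_\infty$ must be almost surely strictly positive for the division to be legitimate. This follows from the strict positivity of each integrand (analogous to Lemma~\ref{lemma_non0}) combined with the conditional $L^1$ finiteness already implicit in Proposition~\ref{prop_rep_SLE_infty}. A milder, standard point is extending the identification from test functions of the form $f = f_1 + f_2$ with $f_j \in C_c^\infty(D_j)$ to arbitrary $f \in C_c^\infty(D)$: because $\gamma([0,\infty))$ almost surely has Hausdorff dimension $3/2 < 2$, hence Lebesgue measure zero, any $f \in C_c^\infty(D)$ can be truncated to $D_1 \cup D_2$ without changing the distributional action of $h+\phi$, which together with the coupling $h+\phi = h_1+\phi_1+h_2+\phi_2$ inherited from $\PP$ completes the identification.
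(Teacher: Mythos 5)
Your proposal is correct and follows essentially the same route as the paper: apply Proposition \ref{prop_rep_SLE_infty} to both the weighted conditional expectation and the conditional partition function $\mathcal{Z}_\infty$, cancel the conformal-radius factors, and identify each remaining ratio via \eqref{RN_mGFF_phi} as the characteristic function of a massive GFF in $D_j$ with boundary conditions $\phi_j$, the product structure giving both the marginals and the conditional independence. The only cosmetic difference is that the paper works with a general $f\in C_c^\infty(D)$ throughout (using that $(h_j+\phi_j,f)$ only sees $f|_{D_j}$), whereas you first specialize to $f=f_1+f_2$ and then extend; both are fine.
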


\begin{proof}
As in the proof of Proposition \ref{prop_rep_SLE_infty}, we set $\EE_{\infty}[\cdot] := \EE[\cdot \vert \gamma([0,\infty)])]$ and denote by $\PP_{\infty}$ the (random) conditional law induced by $\EE_{\infty}[\cdot]$. Let $f: D \to \mathbb{R}$ be a smooth function with compact support in $D$. To show Proposition \ref{prop_law_hSLE_infty}, we are going to compute the characteristic function of $(h+\phi, f)$ under $\tilde \PP$ given $\gamma([0,\infty))$. We have that, almost surely,
\begin{align} \label{EE_tilde_intro}
    \tilde \EE[\exp(i(h+\phi,f)) \vert \gamma([0,\infty))]
    =\frac{1}{\mathcal{Z}_{\infty}} \EE_{\infty} \bigg[ \exp(i(h+\phi, f)) \exp\bigg(-\frac{1}{2} \int_{D} m^2(z) :(h+\phi)^2(z): dz \bigg) \bigg]
\end{align}
where we have set
\begin{equation*}
    \mathcal{Z}_{\infty} := \EE_{\infty} \bigg[ \exp\bigg(-\frac{1}{2} \int_{D} m^2(z) :(h+\phi)^2(z): dz \bigg) \bigg].
\end{equation*}
By Proposition \ref{prop_rep_SLE_infty}, we have that, almost surely,
\begin{align} \label{weighted_infty}
    &\EE_{\infty} \bigg[ \exp(i(h+\phi, f)) \exp\bigg(-\frac{1}{2} \int_{D} m^2(z) :(h+\phi)^2(z): dz \bigg) \bigg] \nonumber \\
    &= \prod_{j=1}^2 \EE_{\infty} \bigg[ \exp(i(h_j+\phi_j,f)) \exp \bigg(-\frac{1}{2} \int_{D_j} m^2(z) :(h_j+\phi_j)^2(z): dz \bigg) \bigg] \exp \bigg( \int_{D_j} \frac{m^2(z)}{4\pi} \log \frac{\CR(z,\partial D)}{\CR(z,\partial D_j)} dz\bigg).
\end{align}
On the other hand, Proposition \ref{prop_rep_SLE_infty} applied with $f \equiv 0$ yields that, almost surely,
\begin{equation} \label{prod_dec_infty}
    \mathcal{Z}_{\infty} = \mathcal{Z}_{1,\infty}\mathcal{Z}_{2,\infty} \exp \bigg( \int_{D_1} \frac{m^2(z)}{4\pi} \log \frac{\CR(z,\partial D)}{\CR(z,\partial D_1)} dz + \int_{D_2} \frac{m^2(z)}{4\pi} \log \frac{\CR(z,\partial D)}{\CR(z,\partial D_2)} dz \bigg)
\end{equation}
where we have set, for $j=1,2$,
\begin{equation*}
    \mathcal{Z}_{j,\infty} = \EE_{\infty} \bigg[ \exp \bigg(-\frac{1}{2} \int_{D_j} m^2(z) :(h_j+\phi_j)^2(z): dz \bigg) \bigg].
\end{equation*}
Going back to \eqref{EE_tilde_intro}, we obtain that, almost surely,
\begin{align*}
    \tilde \EE[\exp(i(h+\phi,f)) \vert \gamma([0,\infty))]
    =\prod_{j=1}^2 \frac{1}{\mathcal{Z}_{j,\infty}} \EE_{\infty} \bigg[ \exp(i(h_j+\phi_j,f)) \exp \bigg(-\frac{1}{2} \int_{D_j} m^2(z) :(h_j+\phi_j)^2(z): dz \bigg) \bigg].
\end{align*}
This is because the term involving the conformal radii in \eqref{weighted_infty} cancels with that of the product decomposition \eqref{prod_dec_infty} of $\mathcal{Z}_{\infty}$. Observe now that by \eqref{RN_mGFF_phi}, conditionally on $\gamma([0,\infty))$, for $j=1,2$,
\begin{equation*}
    \frac{1}{\mathcal{Z}_{j,\infty}} \exp \bigg(-\frac{1}{2} \int_{D_j} m^2(z) :(h_j+\phi_j)^2(z): dz \bigg)
\end{equation*}
is almost surely equal to the Radon-Nikodym derivative of the massive GFF in $D_j$ with mass $m$ and boundary conditions $\phi_j$ with respect to the GFF in $D_j$ with boundary conditions $\phi_j$. We deduce from this that under $\tilde \PP$, conditionally on $\gamma([0,\infty))$, $h+\phi=h_1+\phi_1+h_2+\phi_2$, where $h_1+\phi_1$ and $h_2+\phi_2$ are as described in the statement of Proposition \ref{prop_law_hSLE_infty}. This completes the proof.
\end{proof}

\subsubsection{The marginal law of $\gamma$ under $\tilde \PP$} \label{sec_marginal_law_gamma}

To conclude the proof of Theorem \ref{theorem_mSLE_4}, we must identify the marginal of $\gamma$ under $\tilde \PP$. This is the content of the next proposition. 

\begin{proposition} \label{prop_law_gamma}
With the same assumptions as in Theorem \ref{theorem_mSLE_4}, under $\tilde \PP$, the marginal law of $\gamma$ is that of massive SLE$_4$ with mass $m$ in $D$ from $a$ to $b$.
\end{proposition}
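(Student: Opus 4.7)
The plan is to use the martingale characterization of massive SLE$_4$ proved in \cite{mHE} and mentioned at the end of Section \ref{sec_def_mSLE4}, combined with the conditional law of the field under $\tilde{\PP}$ already identified in Proposition \ref{prop_law_hSLE}. Roughly, \cite{mHE} characterizes massive SLE$_4$ as the unique law on continuous simple curves $\gamma$ in $D$ from $a$ to $b$ (absolutely continuous with respect to SLE$_4$) under which a certain observable built from the massive harmonic function $\phi_t^m$ (the massive harmonic extension of $\pm\lambda$ on $\partial D_t^{\pm}$) is a (local) martingale in the filtration $\mathcal{F}_t := \sigma(\gamma(s), s\le t)$. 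The task is therefore to verify this martingale property for $\gamma$ under $\tilde{\PP}$.

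First, I would set up a natural field-theoretic martingale. By Proposition \ref{prop_law_hSLE}, under $\tilde{\PP}$, conditionally on $\mathcal{F}_t$, the field $h+\phi$ has the law of a massive GFF in $D_t$ with mass $m$ and boundary values $\pm\lambda$ on $\partial D_t^{\pm}$, so its conditional mean is $\phi_t^m$. In particular, for any smooth, compactly supported test function $f$ in $D$, the random variable $(h+\phi,f)$ is integrable under $\tilde{\PP}$ (the Radon--Nikodym derivative in \eqref{RN_mSLE4} is bounded in $L^p$ for all $p$ by Lemma \ref{lemma_Lp_cvg_boundary}, and $(h+\phi,f)$ is Gaussian under $\PP$), and almost surely
\begin{equation*}
    M_t(f) := \tilde{\EE}[(h+\phi,f) \mid \mathcal{F}_t] = \int_{D_t} \phi_t^m(z)\,f(z)\,dz,
\end{equation*}
where we extend $\phi_t^m$ arbitrarily to the Lebesgue-null set $\gamma([0,t])$. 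The tower property immediately gives that $t\mapsto M_t(f)$ is an $(\mathcal{F}_t)$-martingale under $\tilde{\PP}$ for every such $f$. In other words, the distribution-valued process $t\mapsto \phi_t^m$ is a martingale under $\tilde{\PP}$.

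Second, I would match this martingale with the observable characterizing massive SLE$_4$ in \cite{mHE}. The drift in \eqref{SDE_mSLE4} is built from the massive Poisson kernel $P_t^m$ and the harmonic $\pm 1/2$-function $h_t$, and the associated characterizing martingale can be rewritten in terms of integrals of the massive harmonic function $\phi_t^m$ against suitable test functions: indeed, $\phi_t^m$ is precisely $2\lambda$ times the signed massive harmonic measure difference between $\partial D_t^{+}$ and $\partial D_t^{-}$, and the drift of \eqref{SDE_mSLE4} records exactly the infinitesimal evolution of this object in $t$. Thus, by choosing $f$ appropriately (or, more cleanly, applying $M_t(f)$ to a family of approximations of point evaluations and arguing that the process at a fixed interior point is a continuous local martingale), the martingale property of $M_t(f)$ implies the martingale property of the characterizing observable of \cite{mHE}.

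Finally, I would conclude using absolute continuity. By the change of measure \eqref{RN_mSLE4} and Proposition \ref{prop_RN_loopGFF}, the marginal law of $\gamma$ under $\tilde{\PP}$ is absolutely continuous with respect to SLE$_4$ in $D$ from $a$ to $b$, so $\gamma$ is almost surely simple, continuous and transient under $\tilde{\PP}$. Together with the martingale property established above, the uniqueness part of the characterization from \cite{mHE} forces the marginal law of $\gamma$ under $\tilde{\PP}$ to be massive SLE$_4$ in $D$ from $a$ to $b$ with mass $m$. The main obstacle is the cleanly identifying the characterizing observable of \cite{mHE} with a bounded-test-function version of $M_t(f)$: once $\phi_t^m$ is known to be a martingale in the distributional sense under $\tilde{\PP}$, transferring this to the specific observable of \cite{mHE} is a matter of choosing the right test-function approximation and using continuity/regularity of $\phi_t^m$ inside $D_t$ away from $\gamma([0,t])$; everything else follows by tower property and Proposition \ref{prop_law_hSLE}.
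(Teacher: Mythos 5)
Your proposal is correct and follows essentially the same route as the paper: absolute continuity gives the qualitative curve properties, Proposition \ref{prop_law_hSLE} plus the tower property gives the martingale property of the characterizing observable $H_t^m(z)=\frac{1}{2\lambda}\phi_t^m(z)$, and the martingale characterization of \cite{mHE} concludes. The one technical point you flag as the "main obstacle" — passing from smooth test functions to point evaluations — is resolved in the paper by testing the field against the \emph{massive harmonic measure} $\rho_{z,\eps}^m$ on $\partial B(z,\eps)$, for which the massive mean-value property gives $\tilde\EE[(h+\phi,\rho_{z,\eps}^m)\mid\gamma([0,t])]=\phi_t^m(z)$ exactly on $\{t<T_\eps(z)\}$, so no approximation-of-the-identity limit in the test function is needed.
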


The proof of Proposition \ref{prop_law_gamma} relies on the martingale characterization of massive SLE$_4$ established in \cite[Section~5]{mHE} and recalled just below. This characterization is similar in spirit to that of SLE$_4$ via a certain martingale observable and its proof uses the same kind of arguments. The characterization of massive SLE$_4$ stated just below is slightly weaker than that in \cite[Section~5]{mHE}, in the sense that the statement actually holds if the curve is only non-self-touching, and not necessarily simple. However, since this stronger assumption is satisfied here (by absolute continuity of $\tilde \PP$ with respect to $\PP$), this does not really matter.

\begin{proposition} \label{prop_mart_mSLE4}
Let $D \subset \mathbb{C}$ be an open, bounded and simply connected domain and let $a, b \in \partial D$. Let $\gamma: [0,\infty) \to D$ be a random simple curve from $a$ to $b$ which does not hit $\partial D$ except at its endpoints and denote by $(\mathcal{F}_t)_t$ the filtration generated by $\gamma$. Let $m: D \to \mathbb{R}_{+}$ be a bounded and continuous function. For $z \in D$, define the stopping time $\tau_{z}$ for $(\mathcal{F}_t)_t$ by $\tau_{z}:= \inf \{t \geq 0: z \in \gamma([0,t]) \}$. For $t \geq 0$, denote also by $H_t^m$ the unique massive harmonic function in $D_t:=D\setminus \gamma([0,t])$ with boundary conditions $1/2$ on $\partial D_t^{+}$ and $-1/2$ on $\partial D_t^{-}$. Assume that, for any $z \in D$, $(H_t^m(z), 0 \leq t \leq \tau_z)$ is a martingale with respect to the filtration $(\mathcal{F}_t)_t$. Then, when appropriately time-parametrized, $\gamma$ has the law of a massive SLE$_4$ curve in $D$ from $a$ to $b$ with mass $m$.
\end{proposition}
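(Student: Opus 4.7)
The strategy is a classical SLE martingale-characterization argument, adapted to the massive setting. I map everything to the upper half-plane by fixing a conformal isomorphism $\varphi : D \to \HH$ with $\varphi(a) = 0$, $\varphi(b) = \infty$, reparametrize $\varphi(\gamma)$ by half-plane capacity, and obtain a continuous Loewner driving function $(W_t)_t$ together with the uniformizing conformal maps $g_t : \HH \setminus \varphi(\gamma([0,t])) \to \HH$. The goal is to show that $W_t$ satisfies the SDE \eqref{SDE_mSLE4}: since that SDE admits a unique weak solution by \cite{mHE}, this will identify $\gamma$ in law with massive SLE$_4$ from $a$ to $b$ with mass $m$.

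The first step is to establish that $W_t$ is a continuous semimartingale. This is done by choosing a specific observable of $H_t^m$---typically, by extracting the coefficient of the leading-order $1/w$ term in the asymptotic expansion of $\tilde H_t^m(w) - 1/2$ at $w = \infty$, where $\tilde H_t^m$ denotes the $\HH$-pullback of $H_t^m$---which expresses $W_t$ (up to a smooth function of $t$) as a continuous functional of the processes $(H_t^m(z_n))_t$ for a well-chosen sequence $z_n \to b$. Since each of these processes is an $(\mathcal{F}_t)$-martingale by hypothesis, $W_t$ inherits the semimartingale structure.

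The second step is Itô calculus. By the conformal covariance of the massive Helmholtz equation, $H_t^m(z) = \tilde H_t^m(g_t(\varphi(z)))$, where $\tilde H_t^m$ is massive harmonic in $\HH$ with boundary values $\pm 1/2$ changing sign at $W_t$ and with mass obtained from $m$ via the covariance formula \eqref{mass_conformal}. Writing $dW_t = b_t dt + \sigma_t dB_t$ and combining Itô's formula with the Loewner equation $\partial_t g_t(z) = 2/(g_t(z) - W_t)$ yields
\begin{equation*}
    dH_t^m(z) = \bigl( \text{explicit drift in } b_t,\, \sigma_t^2,\, \partial_t g_t,\, m \bigr) dt + \sigma_t\, (\partial_w \tilde H_t^m)(g_t(\varphi(z)))\, dB_t.
\end{equation*}
The martingale hypothesis forces this drift to vanish for every $z \in D$. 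Taking $z$ along a sequence approaching the tip of the curve and isolating the coefficient of the $\sigma_t^2$-term gives $\sigma_t^2 = 4$; the remaining constraint, after decomposing $H_t^m$ via the massive Green function $G_t^m$ and recognizing $\partial_w \tilde H_t^m$ as the harmonic-measure kernel, matches exactly the $P_t^m$-kernel of \eqref{SDE_mSLE4} and forces $b_t = -2\pi \int_{D_t} m^2(z) P_t^m(z) h_t(z) dz$.

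By L\'evy's characterization, the martingale part of $W_t$ is then $2B_t$ for a standard Brownian motion; together with the identified drift, this shows that $W_t$ solves \eqref{SDE_mSLE4}, and weak uniqueness of that SDE concludes. The main obstacle is the drift identification: because the massive correction to $H_t^m$ is nonlocal through $G_t^m$, one must carefully track how it contributes to the first- and second-order terms in the Loewner expansion and match them against the Poisson-like kernel $P_t^m$, which is itself built from the non-massive Poisson kernel plus a massive correction through $G_t^m$. Ensuring the asymptotic expansions near the tip and at infinity are uniform enough to justify isolating $b_t$ and $\sigma_t^2$ term by term is the delicate point of the argument.
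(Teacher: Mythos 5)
The paper does not actually prove Proposition \ref{prop_mart_mSLE4}: it is imported verbatim (in a slightly weakened form) from \cite[Section~5]{mHE}, and the text only remarks that the proof there is ``similar in spirit'' to the massless martingale characterization of SLE$_4$. So there is no in-paper argument to compare yours against; what you have written is precisely the strategy that the cited reference is expected to follow (uniformize to $\HH$, show $W$ is a semimartingale, apply It\^o and the Loewner equation to the observable, read off $\sigma_t^2=4$ and the drift, conclude by weak uniqueness of \eqref{SDE_mSLE4}, which \cite{mHE} does establish). At the level of strategy your proposal is correct.

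However, as written it is a program rather than a proof: the two load-bearing steps are asserted, not executed. First, the semimartingale step: the coefficient of $\Im(1/w)$ in the expansion of the $\HH$-pullback of $H_t^m$ at infinity is not $W_t$ alone but $W_t$ plus a nonlocal correction coming from the term $\int_{D_t} G_t^m(\cdot,w)\,\tilde m^2(w)\,\tilde h_t(w)\,dw$, which contributes at the same order because the Green function in $\HH$ decays like $\Im(z)\Im(w)/|z|^2$; you must show this correction is an adapted process of finite variation (your ``smooth function of $t$'' hedge) before you can conclude that $W$ is a semimartingale, and this is exactly where the mass makes the argument non-routine. Second, the identification step: vanishing of a single scalar drift identity for every $z$ must be shown to determine \emph{both} $\sigma_t^2$ and $b_t$; this requires proving that the coefficient multiplying $(\sigma_t^2-4)$ (a second derivative of the observable) and the coefficient multiplying $b_t$ (a first derivative, which after the Green-function decomposition must be matched to the kernel $P_t^m$ of \eqref{SDE_mSLE4}) are not proportional as functions of $z$ --- typically by comparing blow-up rates at the tip --- and here too the nonlocal $G_t^m$ contributions must be tracked through the It\^o computation. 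You correctly flag this as ``the delicate point,'' but flagging it is not the same as doing it, and without it the proposal does not yet constitute a proof. A further minor point: the martingale hypothesis only holds up to the $z$-dependent stopping time $\tau_z$, so the drift identity is only available on $\{t<\tau_z\}$; this is harmless for the local analysis but should be stated when you differentiate.
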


Before turning to the proof of Proposition \ref{prop_law_gamma} via the above characterization of massive SLE$_4$, we remark that Proposition \ref{prop_law_gamma} could also be proved by using Girsanov's theorem to compute the the driving function under $\tilde \PP$ of the curve $\gamma$ conformally mapped to the complex upper-half plane $\HH$. This would show that under $\tilde \PP$, $\gamma$ conformally mapped to $\HH$ has the same driving function as a massive SLE$_4$ curve conformally mapped to $\HH$, thus proving Theorem \ref{theorem_mSLE_4}. However, a lot of computations are necessary for this strategy to be successful and this is why we use instead the martingale characterization of massive SLE$_4$ given by Proposition \ref{prop_mart_mSLE4}.

\begin{proof}[Proof of Proposition \ref{prop_law_gamma}]
To show Proposition \ref{prop_law_gamma}, we wish to use the martingale characterization of massive SLE$_4$ given by Proposition \ref{prop_mart_mSLE4}. Observe first that since $\tilde \PP$ is absolutely continuous with respect to $\PP$, under $\tilde \PP$, the marginal law of $\gamma$ is such that $\gamma$ is almost surely a simple and continuous curve that almost surely does not touch $\partial D$ except at its endpoints. This follows from the fact that the marginal law of $\gamma$ under $\PP$ is that of SLE$_4$ in $D$ from $a$ to $b$ and that these properties are satisfied by SLE$_4$. Thus, the marginal law of $\gamma$ under $\tilde \PP$ is such that $\gamma$ is a random curve satisfying the first part of the assumptions of Proposition \ref{prop_mart_mSLE4}. To apply this proposition, it remains show that for any $z \in D$, $(H_t^m(z), 0 \leq t \leq \tau_z)$ is a martingale under $\tilde \PP$, where $H_t^m$ is defined as in Proposition \ref{prop_mart_mSLE4}. To establish this, we are going to use the fact stated above as Proposition \ref{prop_law_hSLE} that $\tilde \PP$ gives rise to a coupling between $\gamma$ and $h+\phi$, where, under $\tilde \PP$, the marginal law of $h+\phi$ is that of a massive GFF in $D$ with boundary conditions $\lambda$ on $\partial D^{+}$ and $-\lambda$ on $\partial D^{-}$. For $z \in D$ and $\eps > 0$, set
\begin{equation*}
    T_{\eps}(z) := \inf \{ t \geq 0: \operatorname{dist}(z, \gamma([0,t])) \leq \eps \}.
\end{equation*}
$T_{\eps}(z)$ is a stopping time for the filtration generated by $\gamma$. We denote by $\rho_{z,\eps}^m$ the massive harmonic measure with mass $m$ of $\partial B(z,\eps)$ seen from $z$. That is, for any $\tilde \partial \subset \partial B(z,\eps)$,
\begin{equation*}
    \int_{\tilde \partial} \rho_{z,\eps}^m(dw)  = \EE_{z}^{(m)} [\mathbb{I}_{\tau^{*} > \tau_{\eps}} \mathbb{I}_{B_{\tau_{\eps}} \in \tilde \partial}]
\end{equation*}
where under $\EE_{z}^{(m)}$, $B$ has the law of a massive (killed) Brownian motion with mass (killing rate) $m$ started at $z$, $\tau^{*}$ denotes its killing time and $\tau_{\eps}$ its first hitting time of $\partial B(z,\eps)$. Moreover, by definition, if a function $f: D \to \mathbb{R}$ is a bounded massive harmonic function in $D$, then for any $\eps > 0$ and any $z \in D$ such that $\operatorname{dist}(z,\partial D) > \eps$,
\begin{equation*}
    \int f(w) \rho_{z,\eps}^{m}(dw) = f(z).
\end{equation*}
Going back to the setting of Proposition \ref{prop_law_gamma}, observe that, almost surely, for any $t \geq 0$ and any $z \in D_t$,
\begin{equation*}
    H_t^m(z) = \frac{1}{2\lambda} \phi_t^m(z).
\end{equation*}
Indeed, the functions on each side of the equality are massive harmonic in $D_t$ with the same boundary values and therefore, they are equal. Now, let $z \in D$ and $\eps > 0$. To show that $(H_t^m(z), 0 \leq t \leq \tau_z)$ is a martingale under $\tilde \PP$, we are going to look at the "massive circle average" of the massive GFF $h + \phi$. This will allow us to express $H_t^m(z)$ as an observable of the field $h+\phi$ conditioned on $\gamma([0,t])$ under $\tilde \PP$.

This massive circle average is defined in a similar way as the circle average of the GFF (see Section \ref{sec_Wick}): this is the random variable $(h+\phi, \rho_{z,\eps}^m)$. Even though $\rho_{z,\eps}^m$ is not a smooth function, this is a well-defined random variable for the same reasons as in the case of the circle average of the GFF. Note also that $(h+\phi, \rho_{z,\eps}^m)$ has expectation $(\phi^m, \rho_{z,\eps}^m)$ with respect to $\tilde \PP$ since, under $\tilde \PP$, by Proposition \ref{prop_law_hSLE}, the marginal law of $h+\phi$ is that of a massive GFF in $D$ with boundary conditions $\lambda$ on $\partial D^{+}$ and $-\lambda$ on $\partial D^{-}$. Moreover, $(\phi^m, \rho_{z,\eps}^m)$ is equal to $\phi^m(z)$ if $\operatorname{dist}(z,\partial D) > \eps$ since $\phi^m$ is a massive harmonic function. 

We can then deduce from the above discussion and Proposition \ref{prop_law_hSLE} that, almost surely,
\begin{align*}
    \tilde \EE \big[ (h+\phi, \rho_{z,\eps}^m) \vert \gamma([0,t]) \big] \mathbb{I}_{t < T_{\eps}(z)} &= \tilde \EE \big[ (h_t+\phi_t, \rho_{z,\eps}^m) \vert \gamma([0,t]) \big] \mathbb{I}_{t < T_{\eps}(z)} = (\phi_t^m, \rho_{z,\eps}^m)\mathbb{I}_{t < T_{\eps}(z)} = \phi_t^m(z) \mathbb{I}_{t < T_{\eps}(z)}.
\end{align*}
The second equality follows from Proposition \ref{prop_law_hSLE} which shows that under $\tilde \PP$, conditionally on $\gamma([0,t])$, $h+\phi=h_t^m+\phi_t^m$ where $h_t^m$ has the law of a massive GFF in $D_t$ with mass $m$ and Dirichlet boundary conditions. This implies that, almost surely,
\begin{equation*}
    H_t^m(z) \mathbb{I}_{\{t < T_{\eps}(z)\}} = \frac{1}{2\lambda} \tilde \EE \big[ (h+\phi, \rho_{z,\eps}^m) \vert \gamma([0,t]) \big]\mathbb{I}_{\{t < T_{\eps}(z)\}}.
\end{equation*}
Taking $\eps \to 0$, it follows that $(H_t^m(z), 0 \leq t \leq \tau_z)$ is a martingale with respect to $(\mathcal{F}_t)_t$ under $\tilde \PP$, where $(\mathcal{F}_t)_t$ is the filtration generated by $\gamma$. By Proposition \ref{prop_mart_mSLE4}, this shows that under $\tilde \PP$, the marginal law of $\gamma$ is that of a massive SLE$_4$ with mass $m$ in $D$ from $a$ to $b$.
\end{proof}

We can now conclude the proof of Theorem \ref{theorem_mSLE_4}.

\begin{proof}[Proof of Theorem \ref{theorem_mSLE_4}]
$\tilde \PP$ is a well-defined probability measure which is absolutely continuous with respect to $\PP$. Moreover, by definition, the marginal law of $h$ under $\tilde \PP$ is that of a massive GFF in $D$ with mass $m$ and boundary conditions $\lambda$ on $\partial D^{+}$ and $-\lambda$ on $\partial D^{-}$. Proposition \ref{prop_law_hSLE} establishes that the conditional law under $\tilde \PP$ of $h+\phi$ given $\gamma([0,t])$, for $t \in (0,\infty)$, is as claimed in the statement of Theorem \ref{theorem_mSLE_4}. Proposition \ref{prop_law_hSLE_infty} shows the part of the statement about the conditional law of $h+\phi$ given $\gamma([0,\infty))$ under $\tilde \PP$. Finally, the fact that, under $\tilde \PP$, the marginal law of $\gamma$ is that of massive SLE$_4$ with mass $m$ in $D$ from $a$ to $b$ follows from Proposition \ref{prop_law_gamma}. This concludes the proof of Theorem \ref{theorem_mSLE_4}.
\end{proof}

\subsection{Some properties of massive SLE$_4$} \label{sec_cor_SLE4}

In this section, we use Theorem \ref{theorem_mSLE_4} to show two properties of massive SLE$_4$. The first one, stated as Corollary \ref{cor_RN_mSLE4} in the introduction, is an expression for the Radon-Nikodym derivative of the law of massive SLE$_4$ with respect to that of SLE$_4$. We keep here the same notations as in the introduction and Section \ref{sec_SLE}.

\begin{proof}[Proof of Corollary \ref{cor_RN_mSLE4}]
By definition of $\tilde \PP$, we have that, for $t \geq 0$,
\begin{equation*}
    \frac{\der \tilde \PP}{\der \PP} ((h+\phi, \gamma)) \bigg \vert_{\sigma(\gamma(s),s\leq t)} = \frac{1}{\mathcal{Z}} \EE \bigg[ \exp \bigg( -\frac{1}{2} \int_{D} m^2(z) :(h+\phi)^2(z): dz \bigg) \vert \gamma([0,t]) \bigg] = \frac{\mathcal{Z}_t}{\mathcal{Z}}
\end{equation*}
where $\mathcal{Z}$ is as in the statement of Theorem \ref{theorem_mSLE_4} and where, as before, we have set
\begin{equation*}
    \mathcal{Z}_t := \EE \bigg[ \exp \bigg( -\frac{1}{2} \int_{D} m^2(z) :(h+\phi)^2(z): dz \bigg) \vert \gamma([0,t]) \bigg].
\end{equation*}
In other words, for any random variable $Y$ which is $\sigma(\gamma(s),s\leq t)$-measurable, $\tilde \EE [Y] = \EE[\mathcal{Z}_t/\mathcal{Z}]$. Since under $\PP$, the marginal law of $\gamma$ is that of SLE$_4$ in $D$ from $a$ to $b$, see Section \ref{sec_coupling_SLE}, while, by Theorem \ref{theorem_mSLE_4}, under $\tilde \PP$, the marginal law of $\gamma$ is that of massive SLE$_4$ in $D$ from $a$ to $b$ with mass $m$, this implies that, almost surely,
\begin{equation} \label{RN_mSLE4_SLE4}
    \frac{\der \PP_{\operatorname{mSLE_4}}^{(D,a,b)}}{\der \PP_{\operatorname{SLE_4}}^{(D,a,b)}} \bigg \vert_{\sigma(\gamma(s), s \leq t)} = \frac{\mathcal{Z}_t}{\mathcal{Z}}.
\end{equation}
By Proposition \ref{prop_rep_SLE} applied with $f \equiv 0$, we have that, almost surely,
\begin{equation*}
    \mathcal{Z}_t = \exp \bigg( \int_{D_t} \frac{m^2(z)}{4\pi} \log \frac{\CR(z,\partial D)}{\CR(z,\partial D_t)} dz \bigg) \EE \bigg[ \exp\bigg( -\frac{1}{2} \int_{D_t} m^2(z) :(h_t+\phi_t)^2(z): dz \bigg) \vert \gamma([0,t]) \bigg].
\end{equation*}
To compute the conditional expectation in the above equality, since under $\PP$, conditionally on $\gamma([0,t])$, $h_t+\phi_t$ is a GFF with boundary conditions $\phi_t$ in $D_t$, we can use Lemma \ref{lemma_Zf} in the domain $D_t$. This yields that, almost surely,
\begin{align*}
    \mathcal{Z}_t = &\exp \bigg( -\frac{1}{2}\int_{D_t} m^2(z)\phi_t(z)\phi_t^m(z) dz + \int_{D_t} \frac{m^2(z)}{4\pi} \log \frac{\CR(z,\partial D)}{\CR(z,\partial D_t)} dz \bigg)\exp \bigg( \frac{1}{2} \mu_t(e^{-\langle \ell, m^2 \rangle}+\langle \ell, m^2 \rangle -1) \bigg).
\end{align*}
The statement of the corollary then follows from this equality and \eqref{RN_mSLE4_SLE4}.
\end{proof}

The second property of massive SLE$_4$ which follows from Theorem \ref{theorem_mSLE_4} is its conformal covariance. This property was already shown in \cite{mHE} by using the explicit expression of the driving function of massive SLE$_4$, when the curve is conformally mapped to the complex upper-half plane.

\begin{lemma} \label{lemma_conformal_cov_mSLE}
Let $D \subset \mathbb{C}$ be a bounded, open and simply connected domain and let $a, b \in \partial D$. Let $m: D \to \mathbb{R}_{+}$ be a bounded and continuous function. Let $\varphi: D \to \tilde D$ be a conformal map. If a curve $\gamma$ has the law of massive SLE$_4$ in $D$ from $a$ to $b$ with mass $m$, then $\varphi(\gamma)$ has the law of massive SLE$_4$ in $\tilde D$ from $\varphi(a)$ to $\varphi(b)$ with mass $m$ given by, for $w \in \tilde D$,
\begin{equation*}
    \tilde m^2(w) = \vert (\varphi^{-1})'(w) \vert ^2 m^2(\varphi^{-1}(w)). 
\end{equation*}
\end{lemma}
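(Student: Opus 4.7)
The plan is to deduce the conformal covariance of massive SLE$_4$ directly from Theorem \ref{theorem_mSLE_4}, using the conformal invariance of the base coupling GFF-SLE$_4$ and the built-in conformal covariance of the reweighting factor $\exp(-\tfrac{1}{2}\int_D m^2 :(h+\phi)^2: dz)$. By Theorem \ref{theorem_mSLE_4}, the law of a massive SLE$_4$ in $D$ from $a$ to $b$ with mass $m$ is, by definition, the $\gamma$-marginal of the probability measure $\tilde \PP$ on $(h+\phi,\gamma)$ obtained by reweighting the GFF-SLE$_4$ coupling $\PP^{(D,a,b)}$ by
\begin{equation*}
    \frac{1}{\mathcal{Z}}\exp\bigg(-\frac{1}{2}\int_D m^2(z) :(h+\phi)^2(z):dz\bigg).
\end{equation*}
It therefore suffices to show that the pushforward of $\tilde \PP$ by $\varphi$ equals the analogous reweighted measure in $\tilde D$ with mass $\tilde m$.

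First I would note that the GFF-SLE$_4$ coupling is conformally invariant: under the pushforward by $\varphi$, the pair $(h+\phi,\gamma)$ in $D$ has the same law as $((h+\phi)\circ\varphi^{-1},\varphi(\gamma))$ in $\tilde D$, which is the GFF-SLE$_4$ coupling in $\tilde D$ from $\varphi(a)$ to $\varphi(b)$, with boundary values $F^{(\tilde D,\varphi(a),\varphi(b))}$ (the values $\pm\lambda$ are preserved since the orientation of the boundary arcs is preserved by $\varphi$). This uses the conformal invariance of SLE$_{\kappa}$ from Section \ref{sec_SLE} together with the conformal invariance of the (massless) GFF with piecewise-constant boundary data.

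Next I would invoke the formal identity set up at the end of Section \ref{sec_Wick}: writing $\tilde h + \tilde \phi = (h+\phi)\circ\varphi^{-1}$, by convention
\begin{equation*}
    \int_D m^2(z) :(h+\phi)^2(z):dz = \int_{\tilde D}\tilde m^2(w):(\tilde h+\tilde\phi)^2(w):dw,
\end{equation*}
where $\tilde m^2(w)=|(\varphi^{-1})'(w)|^2 m^2(\varphi^{-1}(w))$. Combined with the previous step, this shows that the pushforward of $\tilde \PP$ by $\varphi$ coincides, up to the normalization constant, with the reweighted GFF-SLE$_4$ coupling in $\tilde D$ with mass $\tilde m$, which is precisely $\tilde \PP^{(\tilde D,\varphi(a),\varphi(b))}$. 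A second application of Theorem \ref{theorem_mSLE_4}, this time in $\tilde D$ with mass $\tilde m$, identifies the $\varphi(\gamma)$-marginal as massive SLE$_4$ in $\tilde D$ from $\varphi(a)$ to $\varphi(b)$ with mass $\tilde m$.

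The one technical point I would have to verify is that $\tilde m$ satisfies the hypothesis of Theorem \ref{theorem_mSLE_4}, i.e.\ is of the form $|\psi'|\hat m\circ\psi$ for some conformal isomorphism $\psi$ from $\tilde D$ to a bounded domain with $\hat m$ bounded and continuous. Since $m=|\chi'|\,\hat m\circ\chi$ for some conformal map $\chi:D\to\hat D$ with $\hat D$ bounded, the chain rule gives $\tilde m=|(\chi\circ\varphi^{-1})'|\,\hat m\circ(\chi\circ\varphi^{-1})$, so $\psi:=\chi\circ\varphi^{-1}:\tilde D\to\hat D$ does the job. This is the only mildly delicate bookkeeping; the bulk of the argument is the transport of the Radon-Nikodym derivative under $\varphi$ described above, for which all the nontrivial work (in particular, the convergence of the Wick square and the definition of the integral on unbounded domains) has already been done in Section \ref{sec_Wick}.
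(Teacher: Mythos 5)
Your argument is correct in outline, but it is a genuinely different route from the one the paper takes -- in fact it is precisely the ``first strategy'' that the paper's proof mentions and then declines to carry out. The paper instead proves the lemma via Corollary \ref{cor_RN_mSLE4}: it writes down the explicit Radon--Nikodym derivative of $\PP_{\operatorname{mSLE}_4}^{(D,a,b)}$ with respect to $\PP_{\operatorname{SLE}_4}^{(D,a,b)}$ and checks term by term (the Brownian loop measure term via conformal invariance of $\mu_D$, the $\int m^2\phi_t\phi_t^m$ term via conformal invariance/covariance of harmonic and massive harmonic functions, and the conformal radius term via the change of variables) that the whole density is invariant when the mass transforms as claimed; combined with conformal invariance of SLE$_4$ this gives the result. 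Your approach buys a more conceptual statement -- the entire reweighted coupling $\tilde\PP$ is pushed forward to the analogous coupling in $\tilde D$, so you get covariance of the field--curve pair, not just of the curve -- at the cost of one point you are glossing over: the identity $\int_D m^2:(h+\phi)^2:\,dz=\int_{\tilde D}\tilde m^2:(\tilde h+\tilde\phi)^2:\,dw$ is stated in Section \ref{sec_Wick} as a \emph{convention} used to define the left-hand side when $D$ or $m$ is not admissible intrinsically. In your setting both $D$ and $\tilde D$ are bounded and both $m$ and $\tilde m$ are bounded and continuous, so both sides already have intrinsic definitions as $L^2$ limits of circle-average regularizations in their own domains, and you must \emph{prove} they agree. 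This is exactly the ``change of variables at scale $\eps>0$ followed by a limit'' that the paper flags as the more laborious alternative; it is true (the counterterms $\EE[h_\eps(z)^2]$ and $\EE[\tilde h_{\tilde\eps}(w)^2]$ match because the Green function is conformally invariant and circle averages of radius $\eps$ map approximately to circle averages of radius $|\varphi'(z)|\eps$), but it is not free, and as written your proof borrows it without acknowledgment. The rest of your argument -- conformal invariance of the massless coupling, the normalization constants taking care of themselves because both measures are probabilities, and the verification that $\tilde m$ is of the admissible form -- is sound.
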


\begin{proof}
This follows from a change of variables. One can either make this change of variables at scale $\eps > 0$ in
\begin{equation*}
    \frac{1}{\mathcal{Z}} \EE \bigg[ \exp \bigg( -\frac{1}{2} \int_{D} m^2(z) \big( (h_{\eps}(z)+\phi_{\eps}(z))^2 -\EE[h_{\eps}(z)^2] \big) dz \bigg) \vert \gamma([0,t]) \bigg]
\end{equation*}
and then take a limit as $\eps$ to $0$ or use the expression for the Radon-Nikodym derivative of $\PP_{\operatorname{mSLE}_4}^{(D,a,b)}$ with respect to $\PP_{\operatorname{SLE}_4}^{(D,a,b)}$ given by Corollary \ref{cor_RN_mSLE4}. Since SLE$_4$ is conformally invariant, if this Radon-Nikodym transforms as claimed in the statement of the lemma under the action of a conformal map, then the conformal covariance of massive SLE$_4$ in the sense stated in the lemma follows. We choose this second strategy here as it is more direct. By conformal invariance of the Brownian loop measure, see Section \ref{sec_BLS}, it is easy to see that, for $t \geq 0$, almost surely,
\begin{equation*}
    \mu_{t} \big( e^{-\langle \ell, m^2 \rangle} + \langle \ell, m^2 \rangle -1 \big) = \tilde \mu_{\tilde D_t} \big(  e^{-\langle \ell, \tilde m^2 \rangle} + \langle \ell, \tilde m^2 \rangle -1 \big)
\end{equation*}
where we have set $\tilde D_t = \varphi(D_t)$ and $\tilde m$ is as in the statement of Lemma \ref{lemma_conformal_cov_mSLE}. By conformal invariance of harmonic functions and conformal covariance of massive harmonic functions, we also have that, for $t \geq 0$, almost surely,
\begin{align*}
    \int_{D_t} m^2(z) \phi_t(z) \phi_t^m(z) dz = \int_{\tilde D_t} \vert (\varphi^{-1})'(w) \vert^2 m^2(\varphi^{-1}(w)) \tilde \phi_t(w) \tilde \phi_t^{\tilde m}(w) dw 
    = \int_{\tilde D_t} \tilde m^2(w) \tilde \phi_t(w) \tilde \phi_t^{\tilde m}(w) dw
\end{align*}
where $\tilde \phi_t$, respectively $\tilde \phi_t^{\tilde m}$, the harmonic function, respectively massive harmonic function with mass $\tilde m$, with boundary values $\lambda$ on $\partial \tilde D_t^{+}$ and $-\lambda$ on $\partial \tilde D_t^{-}$. Moreover, we have that, for $t \geq 0$, almost surely,
\begin{equation*}
    \int_{D_t} m^2(z) \log \frac{\CR(z, \partial D)}{\CR(z, \partial D_t)} dz = \int_{\tilde D_t} \tilde m^2(w) \log \frac{\CR(w, \partial \tilde D)}{\CR(w, \partial \tilde D_t)} dw.
\end{equation*}
Note that taking $t=0$ in the above equalities gives the behavior of the partition function $\mathcal{Z}$ of Corollary \ref{cor_RN_mSLE4} under the action of the conformal map $\varphi$. By Corollary \ref{cor_RN_mSLE4} and conformal invariance of SLE$_4$, this completes the proof of the lemma.
\end{proof}

\section{Massive CLE$_4$ via change of measure} \label{sec_mCLE4}

In this section, we prove Theorem \ref{theorem_mCLE_4} and its corollaries, Corollary \ref{cor_RN_mCLE4} and Corollary \ref{cor_conformal_cov_mCLE4}. Theorem \ref{theorem_mCLE_4} is established in Section \ref{sec_ThCLE}. There, we first identify the conditional law under $\tilde \PP$ of the random variables $(\xi_j)_j$ given $\Gamma$. Then, we turn to the identification of the conditional law under $\tilde \PP$ of the fields $(h_{j}+\xi_{j})_j$ given $\Gamma$. We will explain how to use these results to prove Theorem \ref{theorem_mCLE_4}. In the proof, we will assume that the domain $D$ is bounded and that the mass $m: D \to \mathbb{R}_{+}$ is a bounded and continuous function. Conformal covariance then enables us to extend the results to pairs $(D,m)$ satisfying the more general assumptions of Theorem \ref{theorem_mCLE_4}. Finally, in Section \ref{sec_cor_CLE4}, we establish Corollary \ref{cor_RN_mCLE4} and Corollary \ref{cor_conformal_cov_mCLE4}.

\subsection{Level lines of the massive GFF} \label{sec_ThCLE}

In this section, to prove Theorem \ref{theorem_mCLE_4}, we investigate how the change of measure \eqref{RN_mCLE4} defining the probability measure $\tilde \PP$ in Theorem \ref{theorem_mCLE_4} affects the law of the field $h$. We first observe that the change of measure \eqref{RN_mCLE4} is well-defined. Indeed, this is just the change of measure \eqref{RN_mGFF_Dirichlet}, which extends to a change of measure on $(h,\Gamma)$ since $\Gamma$ is measurable with respect to $h$ under $\PP_{\operatorname{GFF}}$. This implies in particular that the marginal law of $h$ under $\tilde \PP$ is that of a massive GFF in $D$ with mass $m$ and Dirichlet boundary conditions.

The next step to prove Theorem \ref{theorem_mCLE_4} is to understand how the field $h$ behaves under $\tilde \PP$ when conditioning on $\Gamma$. By definition of $\tilde \PP$ via the change of measure \eqref{RN_mCLE4}, the conditional law of $h$ under $\tilde \PP$ given $\Gamma$ is such that, for $F: \mathcal{S}'(\mathbb{R}^2) \to \mathbb{R}$ a bounded and continuous function, almost surely, 
\begin{equation} \label{RN_Gamma}
    \tilde \EE[F(h) \vert \Gamma] = \frac{1}{\mathcal{Z}(\Gamma)} \EE \bigg[ F(h) \exp \bigg( -\frac{1}{2} \int_{D} m^2(z) :h^2(z): dz \bigg) \vert \Gamma \bigg]
\end{equation}
where we have set
\begin{equation} \label{def_rdm_partCLE}
    \mathcal{Z}(\Gamma) := \EE \bigg[ \exp \bigg( -\frac{1}{2}\int_{D} m^2(z) :h^2(z): dz \bigg) \vert \Gamma \bigg].
\end{equation}
Note that the random partition function $\mathcal{Z}(\Gamma)$ is almost surely strictly positive: this again follows from the fact that if $X$ is an almost surely strictly positive random variable and $\mathcal{F}$ is a $\sigma$-algebra, then $\EE[X \vert \mathcal{F}]$ is almost surely strictly positive. This implies that the right-hand side of \eqref{RN_Gamma} is well-defined and thus that also the conditional law of $h$ given $\Gamma$ under $\tilde \PP$ is well-defined.

Since under $\PP$, conditionally on $\Gamma$, $h=\sum_j h_j+\xi_j$ where $(h_j)_j$ and $(\xi_j)_j$ are as described in Section \ref{sec_coupling_CLE}, the random partition function $\mathcal{Z}(\Gamma)$, or more generally weighted conditional expectations of the form
\begin{equation} \label{weighted_EE_Gamma}
    \EE \bigg[ \exp(i(h,f)) \exp \bigg( -\frac{1}{2}\int_{D} m^2(z) :h^2(z): dz \bigg) \vert \Gamma \bigg]
\end{equation}
where $f: D \to \mathbb{R}$ is a bounded and continuous function, should have an expression in terms of the fields $(h_j)_j$ and the labels $(\xi_j)_j$. As a first step toward the identification of the conditional law of $h$ given $\Gamma$ under $\tilde \PP$, let us establish such an expression.

\subsubsection{The weighted conditional expectation}

Observe that by absolute continuity of $\tilde \PP$ with respect to $\PP$, under $\tilde \PP$, the marginal law of $\Gamma$ is such that $\Gamma$ is almost surely a countable collection of planar simple loops which do not touch each other or the boundary of $D$. Indeed, under $\PP$, the marginal law of $\Gamma$ is that of a CLE$_4$ in $D$, so that these properties are satisfied by $\Gamma$ under $\PP$.

With these preliminary remarks made, let us now show how to express the weighted conditional expectations of the form \eqref{weighted_EE_Gamma} in terms of the fields $(h_j)_j$ and the labels $(\xi_j)_j$.

\begin{proposition} \label{prop_rep_CLE}
Under the same assumptions as in Theorem \ref{theorem_mCLE_4} and with the same notations, let $f: D \to \mathbb{R}$ be a bounded and continuous function. Then, almost surely,
\begin{align*}
    &\EE \bigg[ \exp(i(h,f)) \exp \bigg( -\frac{1}{2}\int_{D} m^2(z) :h^2(z): dz \bigg) \vert \Gamma \bigg] \\
    &= \prod_{j} \begin{aligned}[t]&\EE \bigg[ \exp(i(h_j+\xi_j, f)) \exp \bigg( -\frac{1}{2} \int_{\Lj} m^2(z) :(h_j+\xi_j)^2(z): dz \bigg) \vert \Gamma \bigg] \\
    &\times \exp\bigg(\int_{\Lj} \frac{m^2(z)}{4\pi} \log \frac{\operatorname{CR}(z, \partial D)}{\operatorname{CR}(z, L_j)} dz \bigg)\end{aligned}
\end{align*}
where the product runs over the loops $(L_j)_j$ of $\Gamma$ and $\CR(z,\partial D)$, respectively $\CR(z,L_{j})$, denotes the conformal radius of $z$ in $D$, respectively in $\Lj$. Above, the fields $(h_j)_j$ and the random variables $(\xi_j)_j$ are as described in Section \ref{sec_coupling_CLE} and the random variables $(\int_{\Lj} m^2(z) :(h_j+\xi_j)^2(z): dz )_{j}$ are as defined via the notation \eqref{notation_Wick_boundary}.
\end{proposition}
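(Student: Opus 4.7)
The approach mirrors the proof of Proposition \ref{prop_rep_SLE}, but the new difficulty is that we are dealing with a countable family of loops whose interiors do not respect the circle-average regularization of $h$. The plan is to first invoke Lemma \ref{lemma_Lp_cvg_boundary} (with $\phi\equiv 0$) to write, in $L^1(\PP)$,
\begin{equation*}
\exp\!\bigg(-\tfrac{1}{2}\int_D m^2(z):h^2(z):dz\bigg) = \lim_{\eps\to 0}\exp\!\bigg(-\tfrac{1}{2}\int_D m^2(z)\big(h_\eps(z)^2-\EE[h_\eps(z)^2]\big)dz\bigg),
\end{equation*}
so that, since $|\exp(i(h,f))|\le 1$, it suffices to compute the $\eps\to 0$ limit of the regularized weighted conditional expectation given $\Gamma$.

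Recall that under $\PP$, conditionally on $\Gamma$, the GFF decomposes as $h=\sum_k h_k+\xi_k$, where the $(h_k)_k$ are independent Dirichlet GFFs in $(\operatorname{Int}(L_k))_k$ and the $(\xi_k)_k$ are i.i.d.\ $\{\pm 2\lambda\}$-valued labels independent of $(h_k)_k$. For each loop $L_j$ and each $\eps>0$, let $\Lj^{(\eps)}:=\{z\in\Lj:B(z,\eps)\subset\Lj\}$; on this set one has $h_\eps(z)=h_{j,\eps}(z)+\xi_j$ because the other fields $h_k$, $k\neq j$, are supported in disjoint loop interiors. On $\Lj^{(\eps)}$ I would then use the algebraic identity
\begin{equation*}
h_\eps(z)^2-\EE[h_\eps(z)^2] = \big((h_{j,\eps}(z)+\xi_j)^2-\EE[h_{j,\eps}(z)^2\mid\Gamma]\big) - \big(\EE[h_\eps(z)^2]-\EE[h_{j,\eps}(z)^2\mid\Gamma]\big).
\end{equation*}
The first bracket is the natural $\eps$-approximation of $:(h_j+\xi_j)^2(z):$ in $\Lj$ with bounded harmonic perturbation $\xi_j$; integrating against $m^2$ over $\Lj$ and exponentiating, a loop-wise application of Lemma \ref{lemma_Lp_cvg_boundary} gives $L^1$-convergence to $\exp(-\tfrac{1}{2}\int_{\Lj}m^2:(h_j+\xi_j)^2:dz)$. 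The second bracket, when $B(z,\eps)\subset\Lj$, equals exactly $\tfrac{1}{2\pi}\log(\operatorname{CR}(z,\partial D)/\operatorname{CR}(z,L_j))$ up to $o(1)$, supplying the conformal-radius factor in the statement.

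A CLE$_4$-analog of Claim \ref{claim_CR_SLE} is then needed to show that the integral over the bad region $N_\eps:=D\setminus\sqcup_j\Lj^{(\eps)}$ contributes negligibly as $\eps\to 0$. As in the SLE case, this reduces to the bound $0\le\EE[h_\eps(z)^2]-\EE[h_{j,\eps}(z)^2\mid\Gamma]\le C\log\eps^{-1}$ combined with $\operatorname{Area}(N_\eps)\to 0$, which follows because the CLE$_4$ loops and the gasket each have Hausdorff dimension strictly less than $2$. Once the bad region is controlled, the conditional independence under $\PP$ of the fields $(h_j+\xi_j)_j$ given $\Gamma$ factors the regularized weighted expectation into a product over $j$, and passing $\eps\to 0$ inside the product, loop by loop, yields the claimed factorized identity.

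The main obstacle, relative to Proposition \ref{prop_rep_SLE}, is justifying the simultaneous interchange of (i) the $\eps\to 0$ limit, (ii) the conditional expectation given $\Gamma$, and (iii) the countable product over loops. Convergence of the product requires all but finitely many factors to be close to $1$, which should follow by summing the $L^2(\PP)$-estimate of Lemma \ref{lemma_L2_cvg_Wick_boundary} over loops: the sum is controlled because $m$ is bounded, $D$ has finite area, and $\sum_j\int_{\Lj}m^2(z)|\log\operatorname{CR}(z,L_j)|dz$ is almost surely finite. The remainder of the argument is then a fairly direct, if notationally heavier, adaptation of the proof of Proposition \ref{prop_rep_SLE}.
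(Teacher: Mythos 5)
Your overall strategy matches the paper's: regularize via circle averages, use Lemma \ref{lemma_Lp_cvg_boundary} to pass the limit through the conditional expectation, extract the conformal-radius counterterm, and factorize using the conditional independence of the $(h_j+\xi_j)_j$ given $\Gamma$. Your organization is genuinely different, though, and in a way that is potentially cleaner: the paper expands $\big(\sum_j h_{j,\eps}(z)+\xi_{j,\eps}(z)\big)^2$ over all of $D$ and then disposes of six families of diagonal and off-diagonal terms one by one (Claims \ref{claim_sumhj_square}--\ref{claim_productxi_neq}), whereas you restrict to the good set $\bigsqcup_j\{z\in\Lj: B(z,\eps)\subset\Lj\}$, on which $h_\eps(z)=h_{j,\eps}(z)+\xi_j$ exactly and no cross terms ever appear, and push all the difficulty into a single bad-region estimate. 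If carried out, this buys a more unified argument at the price of one concentrated estimate.

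The gap is precisely in that concentrated estimate. You assert that the negligibility of the bad region $N_\eps=D\setminus\bigsqcup_j\{z\in\Lj:B(z,\eps)\subset\Lj\}$ ``reduces to'' the deterministic bound $0\le\EE[h_\eps(z)^2]-\EE[h_{j,\eps}(z)^2\mid\Gamma]\le C\log\eps^{-1}$ together with $\Area(N_\eps)\to 0$. That bound only controls the counterterm (the analogue of Claim \ref{claim_CR_SLE}); it says nothing about the \emph{random} contribution $\int_{N_\eps}m^2(z)\big(h_\eps(z)^2-\EE[h_\eps(z)^2]\big)\,dz$, which is exactly where all the inter-loop correlations — the paper's off-diagonal terms $h_{j,\eps}h_{k,\eps}$, $h_{j,\eps}\xi_{k,\eps}$, $\xi_{j,\eps}\xi_{k,\eps}$ for $j\ne k$ — live. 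This is the technical heart of the proposition and cannot be imported ``as in the SLE case,'' because in the SLE setting there is only one component field and no such term arises. What is needed is a conditional second-moment bound of the form
\begin{equation*}
    \EE\bigg[\bigg(\int_{N_\eps}m^2(z)\big(h_\eps(z)^2-\EE[h_\eps(z)^2]\big)\,dz\bigg)^2\;\Big|\;\Gamma\bigg]\le O\big((\log\eps)^2\big)\,\Area(N_\eps)^2,
\end{equation*}
obtained from $\EE_\Gamma$-covariance estimates of $h_\eps(z)^2$ (using the conditional decomposition of $h_\eps$ into a Gaussian part of variance $O(\log\eps^{-1})$ plus the bounded labels), combined with $\Area(N_\eps)=O(\eps^{1/8})$ from the dimension of the CLE$_4$ carpet. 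A companion estimate, summable over $j$, is also needed to replace the good-set integrals $\int_{\{z:B(z,\eps)\subset\Lj\}}$ by $\int_{\Lj}$ before invoking Lemma \ref{lemma_L2_cvg_Wick_boundary} loop by loop. With these two estimates supplied, your route closes and recovers the statement.
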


\begin{proof}
Let $f: D \to \mathbb{R}$ be a bounded and continuous function. By Lemma \ref{lemma_Lp_cvg_boundary} and since almost surely $\vert \exp(i(h,f))\vert \leq 1$, we have that
\begin{align*}
    \exp(i(h,f))\exp \bigg(-\frac{1}{2} \int_{D} m^2(z):h(z)^2:dz \bigg)
    =\lim_{\eps \to 0} \exp(i(h,f)) \exp \bigg( -\frac{1}{2} \int_{D} m^2(z) \big(h_{\eps}(z)^2-\EE[h_{\eps}(z)^2] \big) dz \bigg)
\end{align*}
where the limit is in $\operatorname{L}^1(\PP)$. This yields that, almost surely,
\begin{align*}
    &\EE \bigg[ \exp(i(h,f)) \exp \bigg(-\frac{1}{2} \int_{D} m^2(z):h(z)^2:dz \bigg) \vert \Gamma \bigg] \\
    &= \lim_{\eps \to 0} \EE \bigg[ \exp(i(h,f)) \exp \bigg( -\frac{1}{2} \int_{D} m^2(z) \big(h_{\eps}(z)^2-\EE[h_{\eps}(z)^2] \big) dz \bigg) \vert \Gamma \bigg]
\end{align*}
where again the limit is in $\operatorname{L}^1(\PP)$. Using the fact that, as explained in Section \ref{sec_coupling_CLE}, under $\PP$, conditionally on $\Gamma$, $h=\sum_j h_j+\xi_j$ with $(h_j)_j$ and $(\xi_j)_j$ as described there, we obtain that, almost surely,
\begin{align} \label{dec_condEE}
    &\EE \bigg[  \exp(i(h,f)) \exp \bigg( -\frac{1}{2}\int_{D} m^2(z) :h(z)^2: dz \bigg) \vert \Gamma \bigg] \nonumber \\
    &\begin{aligned} =\lim_{\eps \to 0} \EE \bigg[  &\exp(i(\sum_j h_j+\xi_j,f))
    \exp \bigg(-\frac{1}{2} \int_{D} m^2(z)  \bigg(\bigg(\sum_{j} h_{j,\eps}(z)+\xi_{j,\eps}(z) \bigg)^2 - \sum_j \EE[h_{j,\eps}(z)^2 \vert \Gamma] \bigg) dz \bigg) \vert \Gamma \bigg] \\
    &\times \exp \bigg( \frac{1}{2} \int_{D} m^2(z) \bigg( \EE[h_{\eps}(z)^2] - \sum_j \EE[h_{j,\eps}(z)^2 \vert \Gamma] \bigg) dz \bigg),\end{aligned}
\end{align}
where the limit is in $\operatorname{L}^1(\PP)$. Above, for $\eps >0$, $z \in D$ and any $j$, we have set
\begin{equation} \label{def_xij_eps}
    \xi_{j,\eps}(z) := \xi_{j}(\mathbb{I}_{\Lj}, \rho_{\eps}^z) = \frac{\xi_{j}}{2\pi \eps}\mathcal{L}\big(\partial B(z,\eps) \cap \Lj \big)
\end{equation}
where $\mathcal{L}$ denotes the length with respect to the arc-length measure. As before, for any $j$, $h_{j,\eps}(z)$ stands for the random variable $(h_{j}, \rho_{\eps}^z)$. Now, for $\eps > 0$, let us define
\begin{align*}
    &I_{\eps} := -\frac{1}{2} \int_{D} m^2(z)  \bigg(\bigg(\sum_{j} h_{j,\eps}(z)+\xi_{j,\eps}(z) \bigg)^2 - \sum_j \EE[h_{j,\eps}(z)^2 \vert \Gamma] \bigg) dz, \\
    &J_{\eps} := \frac{1}{2} \int_{D} m^2(z) \bigg( \EE[h_{\eps}(z)^2] - \sum_j \EE[h_{j,\eps}(z)^2 \vert \Gamma] \bigg) dz.
\end{align*}
To prove  Proposition \ref{prop_rep_CLE}, we must control the limit of $I_{\eps}$ and $J_{\eps}$ as $\eps$ tends to $0$. We claim the following. Below, we denote by $\PP_{\Gamma}$ the (random) conditional law induced by $\EE_{\Gamma}[\cdot]:=\EE[\cdot \vert \Gamma]$.

\begin{claim} \label{claim_cvg_Wick_term}
For $\PP$-almost every $\Gamma$,
\begin{equation*}
    \lim_{\eps \to 0} I_{\eps} = -\frac{1}{2} \sum_{j} \int_{\Lj} m^2(z) :(h_{j}+\xi_j)^2(z): dz
\end{equation*}
where the limit is in $L^2(\PP_{\Gamma})$.
\end{claim}

\begin{claim} \label{claim_CR}
Almost surely,
\begin{equation*}
    \lim_{\eps \to 0} J_{\eps} = \sum_j \int_{\Lj} \frac{m^2(z)}{4\pi} \log \frac{\CR(z, \partial D)}{\CR(z,L_j)} dz
\end{equation*}
where the sum on the right-hand side is almost surely finite.
\end{claim}

We postpone the proof of these claims to the end and show how to prove Proposition \ref{prop_rep_CLE} based on them. On the one hand, we know that $e^{I_{\eps}}e^{J_{\eps}}$ converges to $\exp(-\frac{1}{2}\int_{D}m^2(z):h(z)^2:dz)$ in $\PP$-probabilty as $\eps$ tends to $0$. On the other hand, for any fixed $\delta>0$ and for $\PP$-almost every $\Gamma$, $\PP_{\Gamma}(\vert e^{I_{\eps}} - e^{I_0} \vert > \delta)$ converges to $0$ as $\eps$ tends to $0$, where $I_{0}$ denotes the limiting random variable in Claim \ref{claim_cvg_Wick_term}. By dominated convergence, this implies that $e^{I_{\eps}}$ converges to $e^{I_0}$ in $\PP$-probability as $\eps$ tends to $0$. Together with Claim \ref{claim_CR}, this in turn yields that $e^{I_{\eps}}e^{J_{\eps}}$ converges to $e^{I_{0}}e^{J_{0}}$ in $\PP$-probability as $\eps$ tends to $0$, where $J_0$ is the limiting random variable in Claim \ref{claim_CR}. Combining this with our first observation, we obtain that, $\PP$-almost surely,
\begin{equation*}
	\exp\bigg(-\frac{1}{2}\int_{D} m^2(z) :h(z)^2: dz \bigg) = e^{I_{0}}e^{J_{0}}.
\end{equation*}
Proposition \ref{prop_rep_CLE} then follows from this and the independence of $(h_j)_j$ and $(\xi_{j})_j$ under $\PP_{\Gamma}$.
\end{proof}

It now remains to prove Claim \ref{claim_cvg_Wick_term} and Claim \ref{claim_CR}.

\begin{proof}[Proof of Claim \ref{claim_cvg_Wick_term}]
We first observe that, almost surely,
\begin{align*}
    I_{\eps}&=-\frac{1}{2} \bigg( \int_{D} m^2(z)\sum_{j}(h_{j,\eps}(z)^2- \EE[h_{j,\eps}(z)^2 \vert \Gamma]) dz + 2\int_{D} m^2(z)\sum_j h_{j,\eps}(z)\xi_{j,\eps}(z) dz \\
    &+ \int_{D} m^2(z) \sum_j \xi_{j,\eps}(z)^2 dz + 2\int_{D} m^2(z) \sum_{j} h_{j,\eps}(z)\sum_{k: k \neq j} h_{k,\eps}(z) dz \\
    &+ 2\int_{D} m^2(z) \sum_{j}h_{j,\eps}(z)\sum_{k:k\neq j}\xi_{j,\eps}(z) dz +\int_{D} m^2(z) \sum_{j} \sum_{k: k \neq j} \xi_{j,\eps}(z)\xi_{k,\eps}(z) dz \bigg).
\end{align*}
Conditionally on $\Gamma$, the first term should give rise to a collection of Wick squares of the GFFs $(h_j)_j$ in the interiors of the loops $(L_j)_j$ while the second term should yield a collection of GFFs tested against the function $z \mapsto m^2(z)\xi_j$ inside each loop. The third term should in turn give rise to a collection of random variables $(\xi_j^2(m^2, \mathbb{I}_{\Lj}))_j$, one for each loop. Moreover, intuitively, the three last terms should all converge to $0$ since in the limit as $\eps$ tends to $0$, we should obtain random functions/distributions supported on the interior of different loops. Making these rigorous is the content of the following claims.

\begin{claim} \label{claim_sumhj_square}
For $\PP$-almost every $\Gamma$,
\begin{equation*}
    \lim_{\eps \to 0} \int_{D} m^2(z) \big( \sum_{j} h_{j,\eps}(z)^2 - \EE_{\Gamma}[h_{j,\eps}(z)^2]\big) dz = \sum_j (:h_j^2:, m^2)
\end{equation*}
where the limit is in $L^2(\PP_{\Gamma})$.
\end{claim}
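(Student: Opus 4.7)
The strategy is to use conditional independence of $(h_j)_j$ under $\PP_\Gamma$ to reduce to a loop-by-loop application of Lemma \ref{lemma_L2_cvg_Wick_boundary}, with careful treatment of the boundary layer coming from the fact that the circle average $h_{j,\eps}(z)$ is nonzero whenever $B(z,\eps)$ meets $\Lj$, not only for $z \in \Lj$. Set
\[
 X_{j,\eps} := \int_{D} m^2(z)\bigl(h_{j,\eps}(z)^2 - \EE_\Gamma[h_{j,\eps}(z)^2]\bigr) dz, \qquad Y_j := \int_{\Lj} m^2(z) :h_j(z)^2: dz,
\]
the latter defined via Lemma \ref{lemma_L2_cvg_Wick_boundary} applied to the GFF $h_j$ in $\Lj$. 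Under $\PP_\Gamma$ the fields $(h_j)_j$ are independent and each $X_{j,\eps}-Y_j$ is a centered functional of $h_j$ alone, so orthogonality gives
\[
 \Bigl\|\textstyle\sum_j (X_{j,\eps}-Y_j)\Bigr\|_{L^2(\PP_\Gamma)}^2 = \sum_j \|X_{j,\eps}-Y_j\|_{L^2(\PP_\Gamma)}^2,
\]
and it suffices to make the right-hand side arbitrarily small for $\eps$ small enough.

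For each fixed $j$, I would decompose $X_{j,\eps} = X_{j,\eps}^{\operatorname{in}} + X_{j,\eps}^{\operatorname{out}}$ according to whether $z \in \Lj$ or $z \in D\setminus \Lj$. The interior piece $X_{j,\eps}^{\operatorname{in}}$ converges to $Y_j$ in $L^2(\PP_\Gamma)$ with quantitative rate $C_j\eps^{b_j}$ by a direct application of Lemma \ref{lemma_L2_cvg_Wick_boundary} inside $\Lj$ (with $\phi\equiv 0$ and mass $m\mathbb{I}_{\Lj}$). The exterior piece $X_{j,\eps}^{\operatorname{out}}$ is supported on the outer $\eps$-tube $A_{j,\eps}$ around $L_j$, whose Lebesgue area is $O(\eps^{1/2})$ since $\operatorname{dim}_H(L_j) = 3/2$. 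Combining the Gaussian identity
\[
 \|X_{j,\eps}^{\operatorname{out}}\|_{L^2(\PP_\Gamma)}^2 = 2\iint_{A_{j,\eps}\times A_{j,\eps}} m^2(z) m^2(w) \operatorname{Cov}_\Gamma\bigl(h_{j,\eps}(z),h_{j,\eps}(w)\bigr)^2 dz\, dw
\]
with the uniform bound $\operatorname{Var}_\Gamma(h_{j,\eps}(z)) = O(\log \eps^{-1})$, which follows from domain-monotonicity $G_{\Lj} \leq G_D$ and the standard bound on the Green function of a bounded domain, yields $\|X_{j,\eps}^{\operatorname{out}}\|_{L^2(\PP_\Gamma)}^2 = O(\eps\log^2 \eps^{-1})$ for each individual $j$.

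The main obstacle is summability over the countably many loops. I would handle this by a diameter truncation: fix $\delta>0$, use the estimates above for the finitely many loops with $\operatorname{diam}(L_j)\geq \delta$, and control the remaining small loops by the cruder inequality $\|X_{j,\eps}-Y_j\|^2 \leq 2\|X_{j,\eps}\|^2 + 2\|Y_j\|^2$, expressing both norms as double integrals of the squared Green function via Remark \ref{remark_variance_Wick} and bounding $G_{\Lj}$ in terms of $\operatorname{diam}(L_j)$. The almost sure finiteness of $\sum_j \operatorname{Area}(\Lj)$ for CLE$_4$ should then make the small-loop contribution vanish with $\delta$, uniformly in $\eps$. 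Sending $\eps\to 0$ for the finitely many large loops and then $\delta\to 0$ closes the argument. The hardest part will be producing Green-function estimates uniform enough over the typically irregular CLE$_4$ loops to make the small-loop summability work cleanly.
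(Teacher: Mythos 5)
Your proof follows the same overall route as the paper's: reduce to a loop-by-loop statement using the conditional independence of the $(h_j)_j$ (so that the centred errors are orthogonal across $j$), apply Lemma \ref{lemma_L2_cvg_Wick} / Lemma \ref{lemma_L2_cvg_Wick_boundary} with $\phi\equiv 0$ inside each $\Lj$ for the interior piece, and kill the exterior piece, supported on an $\eps$-collar of the loops, by combining the $O(\log \eps^{-1})$ variance bound with an area estimate for that collar. The one place where you genuinely diverge is the summability over the countably many loops, and it is worth comparing the two routes because the paper's is simpler and dissolves exactly the difficulty you flag at the end. Instead of a diameter truncation with per-loop Green-function estimates on the (irregular) domains $\Lj$, the paper applies dominated convergence to the series directly: each per-loop $L^2(\PP_\Gamma)$ error tends to $0$, and an $\eps$-uniform summable dominating sequence comes for free from the elementary domain monotonicity $G_{\Lj}\le G_D$ (giving $\tilde G_{j,\eps}(z,w)=2\EE_{\Gamma}[h_{j,\eps}(z)h_{j,\eps}(w)]^2\le 2G_D(z,w)^2+O(1)$ with a constant independent of $j$), the fact that $G_{\Lj}(z,w)=0$ unless $z$ and $w$ lie in the same loop, and the disjointness of the sets $\Lj\times\Lj$ inside $D\times D$; no geometric information about individual loops is needed. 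For the collar contribution the paper works with the single set $G_\eps$ of \eqref{def_Geps} and the gasket bound $\Area(G_\eps)=O(\eps^{1/8})$, controlling the sum over loops of the covariances by the comparison $\sum_k\EE_{\Gamma}[h_{k,\eps}(z)h_{k,\eps}(w)]\le\EE[h_{\eps}(z)h_{\eps}(w)]$ of \eqref{ineq_Green_sum}. That inequality is the ingredient your plan is missing: since the outer tubes $A_{j,\eps}$ of distinct loops overlap, your sum over $j$ of the double integrals over $A_{j,\eps}\times A_{j,\eps}$ is not obviously dominated by a single integral over $D\times D$, whereas \eqref{ineq_Green_sum} packages the whole sum at once. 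Your argument can be completed, but importing these two observations would let you drop the truncation and the loop-regularity estimates entirely.
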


\begin{claim} \label{claim_sumhjxi}
For $\PP$-almost every $\Gamma$,
\begin{equation*}
    \lim_{\eps \to 0} \int_{D} m^2(z) \sum_{j} h_{j,\eps}(z)\xi_{j,\eps}(z) dz = \sum_j \xi_{j}(h_{j}, m^2)
\end{equation*}
where the limit is in $L^2(\PP_{\Gamma})$.
\end{claim}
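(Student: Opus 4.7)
The plan is to exploit the conditional independence of the fields $(h_j)_j$ and labels $(\xi_j)_j$ given $\Gamma$ in order to reduce the claim to a sum of independent Gaussian contributions, and then to control each contribution through standard circle-average estimates, using a summable $\eps$-uniform bound to handle the infinite sum over loops.

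First I would introduce $\xi_{j,\eps}^{(0)}(z) := \xi_{j,\eps}(z)/\xi_j = \frac{1}{2\pi\eps}\mathcal{L}(\partial B(z,\eps) \cap \Lj)$, and rewrite the difference of interest as
\begin{equation*}
    X_\eps := \int_D m^2(z) \sum_j h_{j,\eps}(z) \xi_{j,\eps}(z)\, dz - \sum_j \xi_j (h_j, m^2) = \sum_j \xi_j Z_{j,\eps},
\end{equation*}
with $Z_{j,\eps} := \int_D m^2(z) h_{j,\eps}(z)\xi_{j,\eps}^{(0)}(z)\, dz - (h_j, m^2)$. Under $\PP_\Gamma$, the pairs $(h_j, \xi_j)_j$ are mutually independent, each $\xi_j$ is a symmetric $\pm 2\lambda$-valued random variable independent of $h_j$, and each $Z_{j,\eps}$ is a centered Gaussian depending only on $h_j$ (and on $\Gamma$). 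All cross-terms in $\EE[X_\eps^2 | \Gamma]$ therefore vanish, yielding
\begin{equation*}
    \EE[X_\eps^2 | \Gamma] = 4\lambda^2 \sum_j \EE[Z_{j,\eps}^2 | \Gamma].
\end{equation*}

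Next, for each fixed $j$, I would show that $\EE[Z_{j,\eps}^2 | \Gamma] \to 0$ as $\eps \to 0$. Applying Fubini to the circle-average representation $h_{j,\eps}(z) = (h_j, \rho_\eps^z)$, one identifies $\int_D m^2(z) h_{j,\eps}(z) \xi_{j,\eps}^{(0)}(z)\, dz = (h_j, g_{j,\eps})$, where $g_{j,\eps}(w)$ is the $\eps$-circle average of the bounded function $z \mapsto m^2(z)\xi_{j,\eps}^{(0)}(z)$. Since $h_j$ is a Dirichlet-boundary GFF in $\Lj$ extended by zero outside, only $g_{j,\eps}|_{\Lj}$ contributes and
\begin{equation*}
    \EE[Z_{j,\eps}^2 | \Gamma] = \int_{\Lj \times \Lj} (g_{j,\eps}(x) - m^2(x))(g_{j,\eps}(y) - m^2(y)) G_{\Lj}(x,y)\, dxdy.
\end{equation*}
For $w \in \Lj$ with $d(w,\partial \Lj) > 2\eps$, every $z \in \partial B(w,\eps)$ satisfies $B(z,\eps) \subset \Lj$, hence $\xi_{j,\eps}^{(0)}(z) = 1$, so $g_{j,\eps}(w)$ reduces to the usual $\eps$-circle average of $m^2$ at $w$, which tends to $m^2(w)$ by continuity. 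Combined with the uniform bound $|g_{j,\eps} - m^2| \leq 2\|m^2\|_\infty$ on $\Lj$ and the integrability of $G_\Lj$ on $\Lj \times \Lj$, dominated convergence gives $\EE[Z_{j,\eps}^2 | \Gamma] \to 0$ almost surely.

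The main obstacle is passing this limit through the infinite sum over loops, which requires a summable $\eps$-uniform bound. From $\|g_{j,\eps}\|_\infty \leq \|m^2\|_\infty$ I would extract
\begin{equation*}
    \EE[Z_{j,\eps}^2 | \Gamma] \leq 4\|m^2\|_\infty^2 \int_{\Lj \times \Lj} G_\Lj(x,y)\, dxdy,
\end{equation*}
and by the domain monotonicity $G_\Lj \leq G_D$ on $\Lj \times \Lj$, together with the boundedness of $\sup_{x \in D}\int_D G_D(x,y)\,dy$ and the pairwise disjointness of $(\Lj)_j$,
\begin{equation*}
    \sum_j \int_{\Lj \times \Lj} G_\Lj(x,y)\, dxdy \leq C \sum_j \Area(\Lj) \leq C\cdot \Area(D) < \infty.
\end{equation*}
Dominated convergence in the counting measure over $j$ then yields $\sum_j \EE[Z_{j,\eps}^2 | \Gamma] \to 0$ for $\PP$-almost every $\Gamma$, hence $X_\eps \to 0$ in $L^2(\PP_\Gamma)$. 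The same bound also guarantees that the intended limit $\sum_j \xi_j (h_j, m^2)$ actually belongs to $L^2(\PP_\Gamma)$, completing the proof.
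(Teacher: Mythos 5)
Your proof is correct, but it takes a genuinely different route from the paper's. The paper splits by \emph{integration region}, writing $\int_D=\sum_j\int_{\Lj}$ plus the leftover $\sum_j\int_{\Lj}m^2\sum_{k\neq j}h_{k,\eps}\xi_{k,\eps}$; the diagonal part is handled term by term with a summable Green-function majorant, while the cross part is supported on the $\eps$-neighbourhood $G_\eps$ of the loops and is estimated by $O(-\log\eps)\Area(G_\eps)^2$, which requires the CLE$_4$ gasket-dimension input $\Area(G_\eps)=O(\eps^{1/8})$. You instead group by \emph{loop index}, writing the error as $\sum_j\xi_jZ_{j,\eps}$ with the full integral over $D$ retained in each $Z_{j,\eps}$; the conditional independence of the pairs $(h_j,\xi_j)_j$ and the symmetry of $\xi_j$ then make all cross terms vanish identically, and the self-adjointness of the circle-average kernel identifies $Z_{j,\eps}=(h_j,g_{j,\eps}-m^2)$ with $0\le g_{j,\eps}\le\|m^2\|_\infty$ and $g_{j,\eps}\to m^2$ pointwise on $\Lj$ (since $\xi^{(0)}_{j,\eps}\equiv 1$ at distance $>\eps$ from $\partial\Lj$). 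Everything then reduces to dominated convergence against $G_{\Lj}\le G_D$ with the summable-in-$j$ majorant $C\Area(\Lj)$. Your argument yields an exact conditional variance identity rather than an upper bound, and it avoids the gasket-area estimate entirely, so it would apply to loop ensembles whose gasket dimension is unknown; the paper's region-splitting has the advantage that the same $G_\eps$ machinery is recycled across all the neighbouring claims in that proof. Two routine points you should still record: the interchange $\int_D\sum_j=\sum_j\int_D$ needed to write the error as $\sum_j\xi_jZ_{j,\eps}$ (justified by Tonelli, using $\sum_j\xi^{(0)}_{j,\eps}(z)\le1$ and $\sup_{z}\EE_{\Gamma}[h_{j,\eps}(z)^2]=O(\log\eps^{-1})$), and the stochastic Fubini identity $\int_Dm^2(z)h_{j,\eps}(z)\xi^{(0)}_{j,\eps}(z)\,dz=(h_j,g_{j,\eps})$ (checked by computing the conditional variance of the difference, both sides being conditionally centered Gaussians). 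Both are standard and do not affect the validity of the argument.
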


\begin{claim} \label{claim_neq_sumhjxi}
For $\PP$-almost every $\Gamma$,
\begin{equation*}
    \lim_{\eps \to 0} \int_{D} m^2(z)\sum_{j} h_{j,\eps}(z) \sum_{k:k \neq j}\xi_{k,\eps}(z) dz = 0
\end{equation*}
where the limit is in $L^2(\PP_{\Gamma})$.
\end{claim}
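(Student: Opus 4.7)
The plan is to compute the conditional second moment of $\int_D m^2(z) X_\eps(z) dz$, where $X_\eps(z) := \sum_j h_{j,\eps}(z) \sum_{k \neq j} \xi_{k,\eps}(z)$, by exploiting the conditional independence under $\PP_\Gamma$, and then to bound it by a geometric quantity that vanishes. Writing $f_k(z,\eps) := \mathcal{L}(\partial B(z,\eps) \cap \operatorname{Int}(L_k))/(2\pi\eps)$ so that $\xi_{k,\eps}(z) = \xi_k f_k(z,\eps)$, conditional independence of the families $(h_j)_j$ and $(\xi_k)_k$ together with $\EE_\Gamma[\xi_k \xi_{k'}] = 4\lambda^2 \delta_{k,k'}$ yields
\begin{equation*}
  \EE_\Gamma[X_\eps(z)^2] = 4\lambda^2 \sum_j \EE_\Gamma[h_{j,\eps}(z)^2] \sum_{k \neq j} f_k(z,\eps)^2.
\end{equation*}

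The key geometric observation is that this variance is supported on the set $\tilde S_\eps := \{z \in D : \operatorname{dist}(z, \operatorname{Int}(L_j)) < \eps \text{ for at least two indices } j\}$, since $h_{j,\eps}(z) \neq 0$ requires $z$ within $\eps$ of $\operatorname{Int}(L_j)$, while $f_k(z,\eps) \neq 0$ with $k \neq j$ requires $z$ within $\eps$ of the disjoint interior $\operatorname{Int}(L_k)$. Because CLE$_4$ loops have disjoint interiors and almost every $z \in D$ lies in the interior of a unique loop (the carpet having zero Lebesgue measure), $\mathbb{I}_{\tilde S_\eps}(z) \to 0$ for a.e.\ $z$, hence $\Area(\tilde S_\eps) \to 0$ almost surely by dominated convergence. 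For the pointwise variance bound, disjointness of the interiors gives $\sum_k f_k(z,\eps) = \mathcal{L}(\partial B(z,\eps) \cap \bigcup_k \operatorname{Int}(L_k))/(2\pi\eps) \leq 1$, hence $\sum_k f_k(z,\eps)^2 \leq 1$. Combined with $\sum_j \EE_\Gamma[h_{j,\eps}(z)^2] \leq \EE_\Gamma[h_\eps(z)^2] \leq -\log \eps + C$ (which uses the decomposition $h_\eps = \sum_j h_{j,\eps} + \sum_j \xi_{j,\eps}$ together with the domination $G_{\operatorname{Int}(L_j)} \leq G_D$), one gets $\EE_\Gamma[X_\eps(z)^2] \leq C(1 + \log \eps^{-1})$. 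Cauchy--Schwarz in the $dz$-integral then produces
\begin{equation*}
  \EE_\Gamma\!\left[\left(\int_D m^2(z) X_\eps(z)\, dz\right)^2\right] \leq \|m\|_\infty^4\, \Area(\tilde S_\eps) \int_{\tilde S_\eps} \EE_\Gamma[X_\eps(z)^2]\, dz \leq C\, \Area(\tilde S_\eps)^2 (1 + \log \eps^{-1}).
\end{equation*}

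The main obstacle is to upgrade the qualitative fact $\Area(\tilde S_\eps) \to 0$ to the quantitative estimate $\Area(\tilde S_\eps) = o((\log \eps^{-1})^{-1/2})$, needed to beat the logarithmic divergence of the GFF circle-average variances. This follows from the fact that the CLE$_4$ carpet has Hausdorff dimension strictly less than $2$ almost surely, which provides a polynomial bound $\Area(\tilde S_\eps) = O(\eps^\alpha)$ for some (random) $\alpha > 0$. A more self-contained alternative is to split $\Gamma$ into loops of diameter $\geq \delta$ and $< \delta$: the cross terms between two large loops vanish once $\eps$ is smaller than half the minimum pairwise distance among the finitely many large loops, and the remaining contributions are controlled by the area of the union of small-loop interiors, which tends to zero as $\delta \to 0$ after first letting $\eps \to 0$.
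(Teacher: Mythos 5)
Your argument is correct and follows essentially the same route as the paper: restrict to the $\eps$-neighbourhood of the gasket, use conditional independence and centering of $(h_j)_j$ and $(\xi_k)_k$ to reduce the second moment to $\sum_j\EE_\Gamma[h_{j,\eps}^2]$ (bounded by $O(\log\eps^{-1})$ via $\sum_j G_j\le G_D$) times $\sum_k f_k^2\le 1$, and then beat the logarithm with a polynomial bound on the area of that neighbourhood coming from the CLE$_4$ gasket dimension results. The only caveats are cosmetic: the needed area estimate is a Minkowski/expected-dimension statement rather than a consequence of Hausdorff dimension $<2$ alone (the paper cites it directly as $\Area(G_\eps)=O(\eps^{1/8})$), and your ``self-contained'' diameter-splitting alternative would not by itself defeat the $\log\eps^{-1}$ factor, so the quantitative area bound remains essential.
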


\begin{claim} \label{claim_sumhj_neq}
For $\PP$-almost every $\Gamma$,
\begin{equation*}
    \lim_{\eps \to 0} \int_{D} m^2(z) \sum_j h_{j,\eps}(z) \sum_{k: k\neq j} h_{k,\eps}(z) dz = 0
\end{equation*}
where the limit is in $L^{2}(\PP_{\Gamma})$.
\end{claim}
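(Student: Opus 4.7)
The strategy is to compute $\EE_\Gamma[X_\eps^2]$ (writing $X_\eps$ for the integral in the claim), then exploit the conditional independence of $(h_j)_j$ under $\PP_\Gamma$ together with the fact that distinct circle-averaged fields $h_{j,\eps}$ and $h_{k,\eps}$ have asymptotically disjoint supports. Trivially $\EE_\Gamma[X_\eps]=0$. Isserlis's theorem applied to the conditionally independent centered Gaussians $(h_j)_j$, combined with the algebraic identity $\sum_{j\neq k}a_j a_k=(\sum_j a_j)^2-\sum_j a_j^2$, yields $\EE_\Gamma[X_\eps^2]=2(A_\eps-B_\eps)$ where
\begin{equation*}
A_\eps:=\iint m^2(z)m^2(w)\Big(\sum_j G_j^\eps(z,w)\Big)^{\!2} dz\,dw, \qquad B_\eps:=\iint m^2(z)m^2(w)\sum_j G_j^\eps(z,w)^2\, dz\,dw,
\end{equation*}
and $G_j^\eps(z,w):=\EE_\Gamma[h_{j,\eps}(z)h_{j,\eps}(w)]=\iint G_{\Lj}(u,v)\rho_\eps^z(du)\rho_\eps^w(dv)\ge 0$. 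The goal is thus to show $A_\eps-B_\eps\to 0$.

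For $\PP$-almost every $\Gamma$ and Lebesgue-a.e.\ $(z,w)\in D\times D$, $\sum_j G_j^\eps(z,w)\to F_0(z,w):=\sum_j G_{\Lj}(z,w)\mathbb{I}_{\{z,w\in \Lj\}}$. Since CLE$_4$ loops have pairwise disjoint interiors, the events $\{z,w\in \Lj\}$ are pairwise disjoint in $(z,w)$, so $F_0(z,w)^2=\sum_j G_{\Lj}(z,w)^2\mathbb{I}_{\{z,w\in \Lj\}}$. This shows that the integrand $(\sum_j G_j^\eps)^2-\sum_j(G_j^\eps)^2$ converges pointwise a.e.\ to $0$. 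The convergence of $B_\eps$ to $L:=\sum_j\iint_{\Lj\times \Lj}m^2(z)m^2(w) G_{\Lj}(z,w)^2\,dz\,dw$ follows by applying Remark~\ref{remark_variance_Wick} inside each loop $\Lj$ and summing via Claim~\ref{claim_sumhj_square}.

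The main obstacle is handling $A_\eps$, since neither the summation nor the squaring is obviously dominated by an integrable function independent of $\eps$. The key input is the domain-monotonicity bound $\sum_j G_{\Lj}(u,v)\le G_D(u,v)$ (valid because the loop interiors are disjoint and $G_{\Lj}\le G_D$ on $\Lj\times \Lj$), which after integration against $\rho_\eps^z\otimes\rho_\eps^w$ yields the comparison $\sum_j G_j^\eps(z,w)\le G_{D,\eps}(z,w):=\EE[h_\eps(z)h_\eps(w)]$. By Remark~\ref{remark_variance_Wick} and Lemma~\ref{lemma_L2_cvg_Wick_boundary} applied to a GFF in $D$, $G_{D,\eps}\to G_D$ in $L^2(m^2(z)m^2(w)\,dz\,dw)$; then via the factorization $G_{D,\eps}^2-G_D^2=(G_{D,\eps}-G_D)(G_{D,\eps}+G_D)$ and Cauchy--Schwarz, $G_{D,\eps}^2\to G_D^2$ in $L^1(m^2(z)m^2(w)\,dz\,dw)$, and this family is therefore uniformly integrable. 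Since $(\sum_j G_j^\eps)^2\le G_{D,\eps}^2$, uniform integrability transfers, and Vitali's convergence theorem together with the pointwise limit $\sum_j G_j^\eps\to F_0$ gives $A_\eps\to\iint m^2(z)m^2(w) F_0(z,w)^2\,dz\,dw=L$. Hence $\EE_\Gamma[X_\eps^2]=2(A_\eps-B_\eps)\to 0$ for $\PP$-a.e.\ $\Gamma$, which combined with $\EE_\Gamma[X_\eps]=0$ yields the claimed $L^2(\PP_\Gamma)$ convergence.
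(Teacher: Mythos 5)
Your proof is correct, but it follows a genuinely different route from the paper's. The paper's argument is a localization-plus-crude-bounds argument: it observes that the cross-term integrand vanishes off the $\eps$-neighbourhood $G_{\eps}$ of the loops, bounds the resulting conditional second moment by $O(\log(\eps)^2)\Area(G_{\eps})^2$ using $\sum_{k}\EE_{\Gamma}[h_{k,\eps}(z)h_{k,\eps}(w)]\leq \EE[h_{\eps}(z)h_{\eps}(w)]=O(-\log\eps)$, and then invokes the Hausdorff dimension $15/8$ of the CLE$_4$ carpet to get $\Area(G_{\eps})=O(\eps^{1/8})$. You instead compute the conditional second moment exactly as $2(A_{\eps}-B_{\eps})$ via Isserlis and show both terms converge to the same limit $L$, using the domination $\sum_j G_j^{\eps}\leq G_{D,\eps}$ (your pointwise Green-function monotonicity argument is a clean justification of the paper's inequality \eqref{ineq_Green_sum}) together with uniform integrability of $G_{D,\eps}^2$ and Vitali's theorem. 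The trade-off: the paper's proof is shorter once the carpet-dimension estimate of \cite{dim_CLE, dim_CLE_2} is granted and yields a quantitative rate, which is reused in several sibling claims; yours dispenses with that nontrivial external input entirely, needing only that almost every point is surrounded by a loop and that the loop interiors are disjoint, at the cost of a softer, non-quantitative limit argument and a reliance on Claim \ref{claim_sumhj_square} (which is proved independently, so no circularity). The only places where you are slightly terse are standard: the Fubini interchange in the second-moment computation, and the step from almost-everywhere convergence plus convergence of $L^2$-norms to convergence of $G_{D,\eps}$ in $L^2(m^2\otimes m^2)$; both are routine to fill in.
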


\begin{claim} \label{claim_sumxi_square}
For $\PP$-almost every $\Gamma$,
\begin{equation*}
    \lim_{\eps \to 0} \int_{D} m^2(z) \sum_{j} \xi_{j,\eps}(z)^2 dz = \sum_{j} \xi_j^2 \int_{\Lj} m^2(z) dz
\end{equation*}
where the limit is in $L^{2}(\PP_{\Gamma})$.
\end{claim}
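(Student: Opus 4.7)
The plan is to exploit the fact that, conditional on $\Gamma$, the labels $\xi_j$ are deterministic in absolute value: $\xi_j^2 \equiv 4\lambda^2$, so the random variable to analyze is actually $\Gamma$-measurable. This reduces $L^2(\PP_\Gamma)$ convergence to deterministic (integral) convergence, and the proof becomes an application of dominated convergence together with a simple disjointness estimate.

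First, I would unpack the definition \eqref{def_xij_eps} to write
\begin{equation*}
    \int_{D} m^2(z)\,\xi_{j,\eps}(z)^2\,dz \;=\; 4\lambda^2 \int_{D} m^2(z)\, f_{j,\eps}(z)\,dz,\qquad f_{j,\eps}(z) := \bigg(\frac{\mathcal{L}(\partial B(z,\eps)\cap \Lj)}{2\pi\eps}\bigg)^2,
\end{equation*}
observing that $f_{j,\eps}(z) \in [0,1]$ a.e. and hence $f_{j,\eps}(z) \le \mathcal{L}(\partial B(z,\eps)\cap \Lj)/(2\pi\eps)$. Since the interiors $(\Lj)_j$ are disjoint by the Markovian property of CLE$_4$, summing over $j$ gives the key uniform bound
\begin{equation*}
    \sum_{j} f_{j,\eps}(z) \;\le\; \sum_{j} \frac{\mathcal{L}(\partial B(z,\eps)\cap \Lj)}{2\pi\eps} \;\le\; \frac{\mathcal{L}(\partial B(z,\eps))}{2\pi\eps} \;=\; 1,
\end{equation*}
so that $m^2(z)\sum_j f_{j,\eps}(z) \le \|m\|_\infty^2$, a fixed $L^1(D)$ dominating function.

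Next, I would show the pointwise a.e. convergence $\sum_j f_{j,\eps}(z) \to \mathbb{I}_{\cup_j \Lj}(z)$. For a.e. $z \in D$, $z$ lies in the interior of a unique loop $L_j$ of $\Gamma$: the loops themselves have Lebesgue measure zero (being curves) and the CLE$_4$ carpet has Lebesgue measure zero (Hausdorff dimension strictly less than $2$). For such a $z$ and any $\eps < \mathrm{dist}(z,\partial L_j)$, we have $B(z,\eps) \subset \Lj$ so $f_{j,\eps}(z)=1$, while $B(z,\eps) \cap \Lk = \emptyset$ for $k \ne j$. Combined with the uniform bound, dominated convergence yields
\begin{equation*}
    \int_{D} m^2(z) \sum_j f_{j,\eps}(z)\,dz \;\longrightarrow\; \int_{\cup_j \Lj} m^2(z)\,dz \;=\; \sum_{j} \int_{\Lj} m^2(z)\,dz,
\end{equation*}
which is almost surely finite. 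Multiplying by $4\lambda^2 = \xi_j^2$ gives the desired limit; since both sides are $\Gamma$-measurable (constant under $\PP_\Gamma$), the deterministic convergence upgrades trivially to $L^2(\PP_\Gamma)$-convergence.

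The only subtlety is the zero-Lebesgue-measure claim for the CLE$_4$ gasket, which I would cite from the literature on CLE$_4$; everything else is monotone bookkeeping plus dominated convergence. In particular, the fact that infinitely many loops can contribute at scale $\eps$ is neutralized entirely by the disjointness estimate $\sum_j f_{j,\eps}\le 1$, so no delicate control of small loops is needed.
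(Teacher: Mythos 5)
Your proof is correct, and it takes a cleaner route than the paper's. The key observation you make — that $\xi_j^2=4\lambda^2$ almost surely, so $\sum_j \xi_{j,\eps}(z)^2 = 4\lambda^2\sum_j f_{j,\eps}(z)$ is $\Gamma$-measurable and the claimed $L^2(\PP_\Gamma)$ convergence is really just deterministic convergence of numbers — is not exploited in the paper, which instead keeps the quantity inside $\EE_\Gamma[\cdot]$ and runs a quantitative estimate: it localizes the difference to the set $G_\eps$ of points within distance $\eps$ of their surrounding loop, applies H\"older, bounds $\EE_{\Gamma}[(\sum_j\xi_{j,\eps}(z)^2)^2]\le 16\lambda^4$ via the same disjointness inequality $\frac{1}{2\pi\eps}\sum_j\mathcal{L}(\partial B(z,\eps)\cap\Lj)\le 1$ that you use, and concludes from $\Area(G_\eps)=O(\eps^{1/8})$ (citing the dimension of the CLE$_4$ gasket). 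Your dominated-convergence argument needs only the qualitative input that the gasket and the loops have zero Lebesgue measure, which is weaker than the dimension estimate the paper invokes, and it avoids the H\"older step entirely; the paper's version has the minor advantage of producing an explicit rate $O(\eps^{1/8})$ for the error, though that rate is not used elsewhere for this claim. Both arguments rest on the same two facts — disjointness of the loop interiors and negligibility of the gasket — so the difference is one of packaging, but yours is the more economical packaging.
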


\begin{claim} \label{claim_productxi_neq}
For $\PP$-almost every $\Gamma$,
\begin{equation*}
    \lim_{\eps \to 0} \int_{D} m^2(z) \sum_{j} \xi_{j,\eps}(z)\sum_{k:k\neq j}\xi_{k,\eps}(z) dz = 0
\end{equation*}
where the limit is in $L^{2}(\PP_{\Gamma})$.
\end{claim}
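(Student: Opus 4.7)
The plan is to reduce the claim to a deterministic estimate depending only on $\Gamma$, and then to control that estimate via a circle-average identity. Writing $\xi_{j,\eps}(z) = \xi_j (\chi_j)_\eps(z)$ with $\chi_j := \mathbb{I}_{\Lj}$ and $(\chi_j)_\eps(z) := \int \chi_j\, d\rho_\eps^z \in [0,1]$ the circle average of $\chi_j$, the quantity to analyse becomes $A_\eps = \sum_{j \neq k} \xi_j \xi_k\, C_{jk}(\eps)$, where
\[ C_{jk}(\eps) := \int_D m^2(z) (\chi_j)_\eps(z) (\chi_k)_\eps(z)\, dz \geq 0. \]
Under $\PP_\Gamma$ the variables $(\xi_j)_j$ are independent, each with mean zero and $\xi_j^2 = 4\lambda^2$ almost surely, so $\EE_\Gamma[\xi_j \xi_k \xi_{j'}\xi_{k'}]$ vanishes unless $\{j,k\}=\{j',k'\}$ as sets, which yields
\[ \EE_\Gamma[A_\eps^2] = 32\lambda^4 \sum_{j\neq k} C_{jk}(\eps)^2 \leq 32\lambda^4 \|m^2\|_\infty \Area(D) \sum_{j\neq k} C_{jk}(\eps), \]
using the trivial bound $C_{jk}(\eps) \leq \|m^2\|_\infty \Area(D)$. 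It therefore suffices to show $\sum_{j\neq k} C_{jk}(\eps) \to 0$ for $\PP$-almost every $\Gamma$.

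To this end I expand the cross-sum by completing the square. Setting $\chi_\eps := \sum_j (\chi_j)_\eps$, which by monotone convergence is the circle average of $\chi := \sum_j \chi_j = \mathbb{I}_{\bigcup_j \Lj}$, one has the identity
\[ \sum_{j\neq k} C_{jk}(\eps) = \int_D m^2(z)\, \chi_\eps(z)^2\, dz - \sum_j \int_D m^2(z) (\chi_j)_\eps(z)^2\, dz. \]
Because circle averaging is an $L^2$-contraction and converges to the identity on bounded measurable functions, $\chi_\eps \to \chi$ and $(\chi_j)_\eps \to \chi_j$ in $L^2(D)$ as $\eps \to 0$; combined with $m^2 \in L^\infty$ this gives $\int_D m^2 \chi_\eps^2\, dz \to \int_D m^2 \chi\, dz$ and $\int_D m^2 (\chi_j)_\eps^2\, dz \to \int_{\Lj} m^2\, dz$ for each $j$. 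Each summand satisfies the bound $\int_D m^2 (\chi_j)_\eps^2\, dz \leq \|m^2\|_\infty \Area(\Lj)$, whose total $\sum_j \Area(\Lj) \leq \Area(D)$ is finite since the loop interiors are disjoint subsets of $D$. Dominated convergence over the $j$-sum then yields $\sum_j \int_D m^2 (\chi_j)_\eps^2\, dz \to \sum_j \int_{\Lj} m^2\, dz = \int_D m^2 \chi\, dz$, and the two terms cancel in the limit.

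I expect the main technical point to be the interchange of the $\eps$-limit with the (infinite) sum over $j$: this is handled precisely by the summable majorant $\|m^2\|_\infty \Area(\Lj)$ furnished by the disjoint-support structure of the CLE$_4$ loops. Beyond that, the argument is essentially a reorganization exploiting the conditional independence of the labels $(\xi_j)_j$ and the standard $L^2$-continuity of mollification by circle averages, both used without further subtlety.
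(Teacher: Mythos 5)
Your proof is correct, but it takes a genuinely different route from the paper's. The paper first observes that the integrand vanishes off the set $G_\eps$ of points within distance $\eps$ of their surrounding loop, bounds the conditional second moment of $\sum_{j}\xi_{j,\eps}(z)\sum_{k\neq j}\xi_{k,\eps}(z)$ pointwise by $(4\lambda^2)^2$ using $\tfrac{1}{2\pi\eps}\sum_k\mathcal{L}(\partial B(z,\eps)\cap\Lk)\leq 1$, and then invokes the quantitative carpet estimate $\Area(G_\eps)=O(\eps^{1/8})$, which rests on the Hausdorff dimension $15/8$ of the CLE$_4$ gasket. You instead reduce the whole claim to the deterministic (given $\Gamma$) quantity $\sum_{j\neq k}C_{jk}(\eps)$ and show it vanishes via the exact identity $\sum_{j\neq k}C_{jk}(\eps)=\int_D m^2\chi_\eps^2-\sum_j\int_D m^2(\chi_j)_\eps^2$ plus $L^2$ convergence of circle averages of indicators; the only geometric inputs are that the gasket has Lebesgue measure zero (needed for $\chi_\eps\to 1$ a.e., and already used elsewhere in the paper) and that $\sum_j\Area(\Lj)\leq\Area(D)$, which furnishes the summable majorant (via $\int_D(\chi_j)_\eps\,dz\leq\Area(\Lj)$ by Fubini). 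Your argument is therefore softer — it does not need the precise carpet dimension — at the price of giving no rate in $\eps$, whereas the paper's method yields $O(\eps^{1/8})$ (not actually needed for this claim). Two minor points: the second-moment computation in your first paragraph is superfluous, since $\vert A_\eps\vert\leq 4\lambda^2\sum_{j\neq k}C_{jk}(\eps)$ holds deterministically given $\Gamma$, so $\sum_{j\neq k}C_{jk}(\eps)\to 0$ already gives convergence in $L^\infty(\PP_\Gamma)$ and a fortiori in $L^2(\PP_\Gamma)$; and if you do keep the fourth-moment computation, the interchange of $\EE_\Gamma$ with the infinite double sum deserves a word of justification (it follows from the same absolute bound).
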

Claim \ref{claim_cvg_Wick_term} straightforwardly follows from these claims (recall the notation \eqref{notation_Wick_boundary}).
\end{proof}

\begin{proof}[Proof of Claim \ref{claim_sumhj_square}]
Let $\eps > 0$. Since $\Gamma$ almost surely has Lebesgue measure $0$, the integral over the domain $D$ can be decomposed as a sum integrals over the interiors $(\Lj)_j$ of the loops $(L_j)_j$ of $\Gamma$. This yields that, almost surely,
\begin{align*}
    &\int_{D} m^2(z) \big( \sum_{j} h_{j,\eps}(z)^2 - \EE_{\Gamma}[h_{j,\eps}(z)^2]\big) dz \\
    &= \sum_j \int_{\Lj} m^2(z) \big( h_{j,\eps}(z)^2-\EE_{\Gamma}[ h_{j,\eps}(z)^2]\big) dz + \int_{D} m^2(z) \sum_{k: k \neq j(z)} h_{k,\eps}(z)^2-\EE_{\Gamma}[ h_{k,\eps}(z)^2] dz
\end{align*}
where for $z \in D$, $j(z)$ denotes the index of the loop of $\Gamma$ surrounding $z$, which exists for almost every $z$. Therefore, using the inequality $(a+b)^2 \leq 2a^2 + 2b^2$, we have that, almost surely,
\begin{align} \label{ineq_Wick_eps}
    & \EE_{\Gamma} \bigg[ \bigg( \int_{D} m^2(z) \big( \sum_{j} h_{j,\eps}(z)^2 - \EE_{\Gamma}[h_{j,\eps}(z)^2]\big) dz - \sum_j (:h_j^2:, m^2) \bigg)^2 \bigg] \nonumber \\
    &\leq 2 \EE_{\Gamma} \bigg[ \bigg( \sum_j \int_{\Lj} m^2(z) \big( h_{j,\eps}(z)^2-\EE_{\Gamma}[ h_{j,\eps}(z)^2]\big) dz - \sum_j (:h_j^2:, m^2) \bigg)^2 \bigg] \nonumber \\
    &+ 2 \EE_{\Gamma} \bigg[ \bigg( \int_{D} m^2(z) \sum_{k: k \neq j(z)} h_{k,\eps}(z)^2-\EE_{\Gamma}[ h_{k,\eps}(z)^2] dz \bigg)^2\bigg].
\end{align}
Let us show that the two conditional expectations on the right-hand side of this inequality almost surely converge to $0$ as $\eps$ tends to $0$. We start with the first one. We have that, almost surely,
\begin{align} \label{EE_sum_square}
    &\EE_{\Gamma} \bigg[ \bigg( \sum_{j} \int_{\Lj} m^2(z) \big(h_{j,\eps}(z)^2 - \EE_{\Gamma}[h_{j,\eps}(z)^2]\big) dz - (:h_j^2:, m^2) \bigg)^2 \bigg] \nonumber \\
    &= \sum_{j} \EE_{\Gamma}\bigg[ \bigg( \int_{\Lj} m^2(z) \big(h_{j,\eps}(z)^2 - \EE_{\Gamma}[h_{j,\eps}(z)^2]\big) dz-(:h_j^2:, m^2) \bigg)^2 \bigg]
\end{align}
since the cross-terms vanish due to the conditional independence of the fields $(h_j)_j$. Now, observe that, almost surely, for any $j$,
\begin{equation*}
    \lim_{\eps \to 0} \EE_{\Gamma}\bigg[ \bigg( \int_{\Lj} m^2(z) \big(h_{j,\eps}(z)^2 - \EE_{\Gamma}[h_{j,\eps}(z)^2]\big) dz-(:h_j^2:, m^2) \bigg)^2\bigg] = 0.
\end{equation*}
This follows from the fact that, under $\PP_{\Gamma}$, $h_j$ is a GFF with Dirichlet boundary conditions in $\Lj$ and Lemma \ref{lemma_L2_cvg_Wick_boundary}. So, by the dominated convergence theorem, to prove that the conditional expectation on the last line of \eqref{EE_sum_square} almost surely converges to $0$ as $\eps \to 0$, it suffices to show that this conditional expectation is almost surely uniformly bounded in $\eps$. Using \eqref{EE_square_GFF} for the fields $(h_j)_j$ under $\PP_{\Gamma}$, we have that almost surely, for any $j$,
\begin{align*}
    &\EE_{\Gamma}\bigg[ \bigg( \int_{\Lj} m^2(z) \big(h_{j,\eps}(z)^2 - \EE_{\Gamma}[h_{j,\eps}(z)^2]\big) dz-(:h_j^2:, m^2) \bigg)^2 \bigg] \\
    &\leq 2\int_{\Lj \times \Lj} m^2(z)m^2(w) \tilde G_{j,\eps}(z,w) dzdw + 4 \int_{\Lj \times \Lj} m^2(z)m^2(w) G_j(z,w)^2 dzdw
\end{align*}
where $\tilde G_{j,\eps}$ denotes the covariance of the field $h_{j,\eps}^2 - \EE_{\Gamma}[h_{j,\eps}(\cdot)^2]$ under $\PP_{\Gamma}$ and $G_j$ is the Green function in $\Lj$. One can easily check that, almost surely, for any $z, w \in D$, $\tilde G_{j,\eps}(z,w) = 2\EE[h_{j,\eps}(z)h_{j,\eps}(w)]^2$. This implies that, almost surely, for any $z,w \in D$,
\begin{equation*}
    \tilde G_{j,\eps}(z,w) \leq 2G_{j}(z,w)^2 + O(1) \leq 2G_{D}(z,w)^2 + O(1),
\end{equation*}
where the implied constant is deterministic and independent of $j$. We deduce from this that, almost surely,
\begin{align} \label{ineq_EE_j_Gj}
    &\sum_j \EE_{\Gamma}\bigg[ \bigg( \int_{\Lj} m^2(z) \big(h_{j,\eps}(z)^2 - \EE_{\Gamma}[h_{j,\eps}(z)^2]\big) dz-(:h_j^2:, m^2) \bigg)^2\bigg] \nonumber \\
    &\leq 8\int_{D \times D} m^2(z)m^2(w) (2G_D(z,w)^2+O(1)) dzdw.
\end{align}
The final inequality follows from the fact that $G_{j}(z,w)=0$ if $j(z) \neq j(w)$ and $G_{D}(z,w)^2 \geq G_j(z,w)^2$ if $j(z)=j(w)$. We also used the fact that, almost surely, $\sum_j \Area(\Lj)^2 \leq \Area(D)^2$. As explained above, this shows that the first conditional expectation on the right-hand side of the inequality \eqref{ineq_Wick_eps} almost surely converges to $0$ as $\eps$ tends to $0$. Let us now show that the second conditional expectation on the right-hand side of \eqref{ineq_Wick_eps} also almost surely converges to $0$. For $\eps > 0$, define the set
\begin{equation} \label{def_Geps}
    G_{\eps} :=\{ z \in D: \operatorname{dist}(z, L(z)) \leq \eps \}
\end{equation}
where for $z \in D$, $L(z)$ denotes the loop of $\Gamma$ surrounding $z$, which almost surely exists. We then have that, almost surely,
\begin{align*}
    &\EE_{\Gamma} \bigg[ \bigg( \int_{D} m^2(z) \sum_{k: k \neq j(z)} h_{k,\eps}(z)^2-\EE_{\Gamma}[ h_{k,\eps}(z)^2] dz \bigg)^2 \bigg]
    = \EE_{\Gamma} \bigg[ \bigg( \int_{G_{\eps}} m^2(z) \sum_{k: k \neq j(z)} h_{k,\eps}(z)^2-\EE_{\Gamma}[ h_{k,\eps}(z)^2] dz \bigg)^2 \bigg].
\end{align*}
Indeed, for $z \in D \setminus G_{\eps}$, the integrand inside the conditional expectation is almost surely equal to $0$ as the fields $(h_{k})_{k \neq j(z)}$ are almost surely equal to $0$ in $\operatorname{Int}(L(z))$. Note also that the set of points not surrounded by any loop of $\Gamma$ almost surely has Lebesgue measure $0$, so we can ignore it when integrating. Pursuing our computations, we then have that, almost surely,
\begin{align*}
    &\EE_{\Gamma} \bigg[ \bigg( \int_{D} m^2(z) \sum_{k: k \neq j(z)} h_{k,\eps}(z)^2-\EE_{\Gamma}[ h_{k,\eps}(z)^2] dz \bigg)^2 \bigg] \\
    &= \EE_{\Gamma} \bigg[ \int_{G_{\eps} \times G_{\eps}} m^2(z) m^2(w) \sum_{\substack{k: k \neq j(z) \\ p: p \neq j(w)}} (h_{k,\eps}(z)^2-\EE_{\Gamma}[ h_{k,\eps}(z)^2]) (h_{p,\eps}(w)^2-\EE_{\Gamma}[ h_{p,\eps}(w)^2]) dz dw \bigg] \\
    &= \int_{G_{\eps} \times G_{\eps}} m^2(z) m^2(w) \sum_{\substack{k: k \neq j(z) \\ p: p \neq j(w)}} \EE_{\Gamma} \big[ (h_{k,\eps}(z)^2-\EE_{\Gamma}[ h_{k,\eps}(z)^2]) (h_{p,\eps}(w)^2-\EE_{\Gamma}[ h_{p,\eps}(w)^2])\big] dz dw,
\end{align*}
where the last equality follows from Fubini theorem applied term by term. In the sum above, by conditional independence of $h_k$ and $h_p$ given $\Gamma$, only the terms $k=p$ contribute. This yields that, almost surely,
\begin{align*}
    &\EE_{\Gamma} \bigg[ \bigg( \int_{D} m^2(z) \sum_{k: k \neq j(z)} h_{k,\eps}(z)^2-\EE_{\Gamma}[ h_{k,\eps}(z)^2] dz \bigg)^2 \bigg] \\
    &= \int_{G_{\eps} \times G_{\eps}} m^2(z) m^2(w) \sum_{k: k \neq j(z), j(w)} \EE_{\Gamma} \big[ (h_{k,\eps}(z)^2-\EE_{\Gamma}[ h_{k,\eps}(z)^2 ]) (h_{k,\eps}(w)^2-\EE_{\Gamma}[ h_{k,\eps}(w)^2]) \big] dz dw.
\end{align*}
Since conditionally on $\Gamma$, for each $k$, $h_k$ is a GFF with Dirichlet boundary conditions in $\Lk$, we obtain from this equality that, almost surely,
\begin{align*}
    &\EE_{\Gamma} \bigg[ \bigg( \int_{D} m^2(z) \sum_{k: k \neq j(z)} h_{k,\eps}(z)^2-\EE_{\Gamma}[ h_{k,\eps}(z)^2] dz \bigg)^2 \bigg]
    = \int_{G_{\eps} \times G_{\eps}} m^2(z) m^2(w) \sum_{k: k \neq j(z), k \neq j(w)} \tilde G_{k,\eps}(z,w) dz dw
\end{align*}
where, as before, for each $k$, $\tilde G_{k,\eps}$ denotes the covariance function of the field $h_{k,\eps}^2 - \EE_{\Gamma}[h_{k,\eps}(\cdot)^2]$ under $\PP_{\Gamma}$. For the same reasons as those in the first part of the proof, we get that, almost surely,
\begin{align*}
    \EE_{\Gamma} \bigg[ \bigg( \int_{D} m^2(z) \sum_{k: k \neq j(z)} h_{k,\eps}(z)^2-\EE_{\Gamma}[ h_{k,\eps}(z)^2] dz \bigg)^2\bigg]
    \leq \int_{G_{\eps} \times G_{\eps}} m^2(z) m^2(w) (2G_{D}(z,w)^2+O(1)) dz dw.
\end{align*}
Since the function $z, w \mapsto G_{D}(z,w)^2$ is integrable in $D \times D$ and $\Area(G_{\eps})$ almost surely converges to $0$ as $\eps$ tends to $0$, the right-hand side of the above inequality almost surely converges to $0$ as $\eps$ tends to $0$. Going back to \eqref{ineq_Wick_eps} and combining this with the first part of the proof, we deduce the claim.
\end{proof}

\begin{proof}[Proof of Claim \ref{claim_sumhjxi}]
Let $\eps > 0$. Since $\Gamma$ almost surely has Lebesgue measure $0$, the integral over $D$ can be written as a sum of integrals over the interiors of the loops of $\Gamma$. We then have that, almost surely,
\begin{align} \label{bound_sum_EE}
    &\EE_{\Gamma} \bigg[ \bigg( \int_{D} m^2(z) \sum_{j} h_{j,\eps}(z)\xi_{j,\eps}(z) dz- \sum_j \xi_{j}(h_{j}, m^2) \bigg)^2 \bigg] \nonumber \\
    &= \EE_{\Gamma}\bigg[  \bigg(\sum_{j} \int_{\Lj} m^2(z) h_{j,\eps}(z)\xi_{j,\eps}(z) dz- \sum_j \xi_{j}(h_{j}, m^2)
    + \sum_{j} \int_{\Lj} m^2(z)\sum_{k: k \neq j} h_{k,\eps}(z) \xi_{k,\eps}(z) dz \bigg)^2\bigg] \nonumber \\
    &\leq 2  \EE_{\Gamma}\bigg[  \bigg(\sum_{j} \int_{\Lj} m^2(z) h_{j,\eps}(z)\xi_{j,\eps}(z) dz- \sum_j \xi_{j}(h_{j}, m^2) \bigg)^2 \bigg] \nonumber \\
    &+2 \EE_{\Gamma} \bigg[ \bigg( \sum_{j} \int_{\Lj} m^2(z) \sum_{k: k \neq j} h_{k,\eps}(z) \xi_{k,\eps}(z) dz \bigg)^2 \bigg].
\end{align}
Moreover, we have that, almost surely,
\begin{align*}
    &\EE_{\Gamma}\bigg[  \bigg(\sum_{j} \int_{\Lj} m^2(z) h_{j,\eps}(z)\xi_{j,\eps}(z) dz- \sum_j \xi_{j}(h_{j}, m^2) \bigg)^2\bigg] \\
    &= \sum_j \EE_{\Gamma} \bigg[ \bigg( \int_{\Lj} m^2(z) h_{j,\eps}(z)\xi_{j,\eps}(z) dz -  \xi_{j}(h_{j}, m^2) \bigg)^2 \bigg].
\end{align*}
Indeed, by conditional independence of $(h_j)_j$ and $(\xi_j)_j$ given $\Gamma$, the cross-terms vanish. Going back to \eqref{bound_sum_EE}, this shows that, almost surely,
\begin{align} \label{sum_EE}
    &\EE_{\Gamma} \bigg[ \bigg( \int_{D} m^2(z) \sum_{j} h_{j,\eps}(z)\xi_{j,\eps}(z) dz- \sum_j \xi_{j}(h_{j}, m^2) \bigg)^2 \bigg] \nonumber \\
    &\leq 2 \sum_{j} \EE_{\Gamma} \bigg[ \bigg( \int_{\Lj} m^2(z)h_{j,\eps}(z)\xi_{j,\eps}(z) dz - \xi_j(h_j,m^2) \bigg)^2 \bigg] \nonumber \\
    &+2 \EE_{\Gamma} \bigg[ \bigg( \sum_{j} \int_{\Lj} m^2(z) \sum_{k: k \neq j} h_{k,\eps}(z) \xi_{k,\eps}(z) dz \bigg)^2 \bigg].
\end{align}
Let us now show that each of these conditional expectations almost surely converges to $0$ as $\eps \to 0$. For the first one, observe that for any $j$, almost surely,
\begin{equation*}
    \lim_{\eps \to 0} \EE_{\Gamma} \bigg[ \bigg( \int_{\Lj} m^2(z)h_{j,\eps}(z)\xi_{j,\eps}(z) dz - \xi_j(h_j,m^2) \bigg)^2 \bigg] = 0.
\end{equation*}
This follows from the fact that under $\PP$, conditionally on $\Gamma$, $h_j$ is a GFF with Dirichlet boundary conditions in the interior $\Lj$ of the loop $L_j$. Therefore, by the dominated convergence theorem, to prove that the first conditional expectation on the right-hand side of \eqref{sum_EE} almost surely converges to $0$ as $\eps \to 0$, it suffices to show that this conditional expectation is almost surely uniformly bounded in $\eps$. To this end, observe that, almost surely,
\begin{align} \label{ineq_hj_xij_eps}
    &\EE_{\Gamma} \bigg[ \bigg( \int_{\Lj} m^2(z)h_{j,\eps}(z)\xi_{j,\eps}(z) dz - \xi_j(h_j,m^2) \bigg)^2 \bigg] \nonumber \\
    &\leq 8\lambda^2\int_{\Lj} m^2(z)m^2(w) G_{j,\eps}(z,w) dz dw + 8\lambda^2 \int_{\Lj} m^2(z)m^2(w) G_{j}(z,w) dzdw
\end{align}
where $G_{j,\eps}$ denotes the covariance of the field $h_{j,\eps}$ when conditioning on $\Gamma$ (recall that under $\PP$, conditionally on $\Gamma$, $h_j$ is a GFF in $\Lj$ with Dirichlet boundary conditions) and $G_{j}$ is the Green function in $\Lj$. To obtain this inequality, we also used the fact that under $\PP$, almost surely, for any $j$, $\xi_{j}^2=4\lambda^2$, which implies that for any $z \in D$, almost surely, $\xi_{j,\eps}^2(z) \leq 4\lambda^2$. Moreover, almost surely, for any $z,w \in D$, we have that $G_{j,\eps}(z,w) \leq G_j(z,w) + O(1) \leq G_D(z,w) + O(1)$, where the implied constant is deterministic and independent of $j$. This yields that, almost surely,
\begin{align*}
    &\sum_j \EE_{\Gamma} \bigg[ \bigg( \int_{\Lj} m^2(z)h_{j,\eps}(z)\xi_{j,\eps}(z) dz - \xi_j(h_j,m^2) \bigg)^2 \bigg]
    \leq 16\lambda^2 \int_{D \times D} m^2(z)m^2(w)(G_{D}(z,w) + O(1)) dz dw,
\end{align*}
since, almost surely, $G_{j}(z,w)=0$ if $j(z) \neq j(w)$ and $G_{D}(z,w) \geq G_j(z,w)$ if $j(z)=j(w)$. This establishes the convergence to $0$ as $\eps \to 0$ of the first expectation on the right-hand side of \eqref{sum_EE}. Let us now turn to the second expectation on the right-hand side of \eqref{sum_EE}. Let us define, for $\eps > 0$, the set $G_{\eps}$ as in \eqref{def_Geps}. Observe that, almost surely,
\begin{align*}
    \EE_{\Gamma} \bigg[ \bigg( \sum_j \int_{\Lj} m^2(z) \sum_{k:k\neq j} h_{k,\eps}(z) \xi_{k,\eps}(z) dz\bigg)^2 \bigg]
    &= \EE_{\Gamma} \bigg[ \bigg( \int_{D} m^2(z) \sum_{j: z \notin \Lj} h_{j,\eps}(z)\xi_{j,\eps}(z) dz \bigg)^2  \bigg] \\
    &= \EE_{\Gamma} \bigg[ \bigg( \int_{G_{\eps}} m^2(z) \sum_{j: z \notin \Lj} h_{j,\eps}(z)\xi_{j,\eps}(z) dz \bigg)^2 \bigg].
\end{align*}
The last equality follows from the fact that if $z \in D \setminus G_{\eps}$, then almost surely $\sum_{j: z \notin \Lj} h_{j,\eps}(z)\xi_{j,\eps}(z) = 0$. This is because $h_{j}$ is almost surely $0$ outside of $\Lj$ and $\xi_{j,\eps}(z)$ is almost surely $0$ if $\operatorname{dist}(z, \Lj) > \eps$. Using the conditional independence of $(h_j)_j$ and $(\xi_j)_j$ together with the fact that conditionally on $\Gamma$, these random variables are centered, it then follows that, almost surely,
\begin{align*}
    &\EE_{\Gamma} \bigg[ \bigg( \int_{G_{\eps}} m^2(z) \sum_{j: z \notin \Lj} h_{j,\eps}(z)\xi_{j,\eps}(z) dz \bigg)^2 \bigg]\\
    &= \int_{G_{\eps} \times G_{\eps}} m^2(z)m^2(w) \sum_{j: z,w \notin \Lj} \EE_{\Gamma}[ h_{j,\eps}(z)h_{j,\eps}(w)] \EE_{\Gamma}[ \xi_{j,\eps}(z)\xi_{j,\eps}(w)] dz dw.
\end{align*}
Notice that almost surely, for any $z,w \in D$, $\EE_{\Gamma}[ \xi_{j,\eps}(z)\xi_{j,\eps}(w)] \leq 4\lambda^2$. This implies that, almost surely,
\begin{align} \label{ineq_zout}
    \EE_{\Gamma} \bigg[ \bigg( \int_{G_{\eps}} m^2(z) \sum_{j: z \notin \Lj} h_{j,\eps}(z)\xi_{j,\eps}(z) dz \bigg)^2 \bigg]  
    \leq 4\lambda^2\overline{m}^4 \int_{G_{\eps} \times G_{\eps}} \sum_{j: z,w \notin \Lj} \EE_{\Gamma}[ h_{j,\eps}(z)h_{j,\eps}(w)] dz dw.
\end{align}
Moreover, almost surely, for any $z, w \in D$,
\begin{equation} \label{ineq_Green_sum}
    \sum_{j: z,w \notin \Lj} \EE_{\Gamma}[ h_{j,\eps}(z)h_{j,\eps}(w)] \leq \EE[h_{\eps}(z)h_{\eps}(w)] \leq O(-\log \eps),
\end{equation}
where the right-most inequality follows from Cauchy-Schwarz inequality and the estimate \eqref{estimate_sup_variance}. The inequality \eqref{ineq_Green_sum} applied to the right-hand side of \eqref{ineq_zout} yields that, almost surely,
\begin{align} \label{bound_area_Geps}
    \EE_{\Gamma} \bigg[ \bigg( \sum_{j} \int_{\Lj} m^2(z) \sum_{k: k \neq j} h_{k,\eps}(z) \xi_{k,\eps}(z) dz \bigg)^2\ \bigg] \leq O(-\log \eps) \Area(G_{\eps})^2.
\end{align}
By \cite{dim_CLE, dim_CLE_2}, almost surely $\Area(G_\eps) = O(\eps^{2-\frac{15}{8}}) = O(\eps^{1/8})$, which implies that $O(-\log \eps) \Area(G_{\eps})^2$ almost surely converges to $0$ as $\eps \to 0$. This establishes the almost sure convergence to $0$ of the conditional expectation on the left-hand side of \eqref{bound_area_Geps} as $\eps$ tends to $0$ and completes the proof of the claim.
\end{proof}

\begin{proof}[Proof of Claim \ref{claim_neq_sumhjxi}]
Let $\eps > 0$. For $G_{\eps}$ defined as in \eqref{def_Geps}, notice that conditionally on $\Gamma$, if $z \in D \setminus G_{\eps}$, then, almost surely, $\sum_{j \neq k} h_{j,\eps}(z)\xi_{k,\eps}(z) = 0$. Indeed, since we are summing over indices $j \neq k$, conditionally on $\Gamma$, for $z \in D \setminus G_{\eps}$, if $z \in \Lj$, then $\sum_{k: k \neq j} \xi_{k,\eps}(z)$ and $\sum_{k: k \neq j} h_{k,\eps}(z)$ are almost surely equal to $0$. Therefore, it follows that, almost surely,
\begin{equation} \label{cvg_Geps}
    \EE_{\Gamma} \bigg[ \bigg( \int_{D} m^2(z)\sum_{j \neq k} h_{j,\eps}(z)\xi_{k,\eps}(z) dz \bigg)^2 \bigg] = \EE_{\Gamma} \bigg[ \bigg( \int_{G_{\eps}} m^2(z)\sum_{j \neq k} h_{j,\eps}(z)\xi_{k,\eps}(z) dz \bigg)^2\bigg].
\end{equation}
Let us now exhibit an almost sure upper bound for this conditional expectation. We have that, almost surely,
\begin{align*}
    &\EE_{\Gamma} \bigg[ \bigg( \int_{G_{\eps}} m^2(z)\sum_{j \neq k} h_{j,\eps}(z)\xi_{k,\eps}(z) dz \bigg)^2 \bigg]\\
    &=  \EE_{\Gamma} \bigg[ \int_{G_{\eps} \times G_{\eps}} m^2(z)m^2(w)\bigg(\sum_{j \neq k} h_{j,\eps}(z)\xi_{k,\eps}(z)\bigg) \bigg(\sum_{j \neq k} h_{j,\eps}(w)\xi_{k,\eps}(w)\bigg)  dz dw \bigg] \\
    &= \int_{G_{\eps} \times G_{\eps}} m^2(z)m^2(w) \EE_{\Gamma} \bigg[ \bigg(\sum_{j \neq k} h_{j,\eps}(z)\xi_{k,\eps}(z)\bigg) \bigg(\sum_{j \neq k} h_{j,\eps}(w)\xi_{k,\eps}(w)\bigg) \bigg] dz dw \\
    &= \binom{4}{2} \int_{G_{\eps} \times G_{\eps}} m^2(z)m^2(w) \EE_{\Gamma} \bigg[ \sum_{j \neq k} h_{j,\eps}(z)h_{j,\eps}(w)\xi_{k,\eps}(z) \xi_{k,\eps}(w)\bigg] dz dw.
\end{align*}
The last equality relies on the fact that the fields $(h_{j})_j$ and the labels $(\xi_j)_j$ are conditionally independent given $\Gamma$ and that all these random variables have mean $0$ conditionally on $\Gamma$. Using again this conditional independence property, we then obtain that, almost surely,
\begin{align} \label{ineq_Geps_h_xi_Gamma}
    \EE_{\Gamma} \bigg[ \bigg( \int_{G_{\eps}} m^2(z) \sum_{j \neq k} h_{j,\eps}(z)\xi_{k,\eps}(z) dz \bigg)^2 \bigg]
    \leq C \int_{G_{\eps} \times G_{\eps}} \sum_{j \neq k} \EE_{\Gamma}[ h_{j,\eps}(z)h_{j,\eps}(w)] \EE_{\Gamma}[ \xi_{k,\eps}(z) \xi_{k,\eps}(w)] dz dw,
\end{align}
where $C>0$ is a (non-random) constant depending only on $\overline{m}^2$. Observe that, for the same reasons as in \eqref{ineq_Green_sum}, for any $z,w \in G_{\eps}$, almost surely,
\begin{equation*}
    \sum_{j} \EE_{\Gamma}[ h_{j,\eps}(z)h_{j,\eps}(w)] \leq \EE[h_{\eps}(z)h_{\eps}(w)] = O(-\log \eps)
\end{equation*}
with constant independent of $z$ and $w$. Moreover, we have that, almost surely,
\begin{align} \label{EE_labels}
    \EE_{\Gamma}[ \xi_{k,\eps}(z) \xi_{k,\eps}(w)] &= 4\lambda^2 \EE_{\Gamma}[ (\mathbb{I}_{\operatorname{Int}(L_k)}, \rho_{\eps}^{z}) (\mathbb{I}_{\operatorname{Int}(L_k)}, \rho_{\eps}^{w})]
    = \frac{4\lambda^2}{4\pi^2\eps^2} \mathcal{L}(\partial B(z,\eps) \cap \operatorname{Int}(L_{k}))\mathcal{L}(\partial B(w,\eps) \cap \operatorname{Int}(L_{k})),
\end{align}
where as in \eqref{def_xij_eps}, $\mathcal{L}$ denote the arc-length measure. Observe also that almost surely, for any $k$ and any $w$, $(1/2\pi \eps) \operatorname{Length}(\partial B(w,\eps) \cap \operatorname{Int}(L_{k})) \leq 1 $. Moreover, it is easy to see that, since the loops of $\Gamma$ are almost surely disjoint, we have that, almost surely, $\frac{1}{2\pi\eps}\sum_{k} \mathcal{L}(\partial B(z,\eps) \cap \operatorname{Int}(L_{k})) \leq 1$. From \eqref{EE_labels} and these facts, we therefore obtain that, almost surely,
\begin{align*}
    \sum_{k} \EE_{\Gamma}[ \xi_{k,\eps}(z) \xi_{k,\eps}(w)] &= \frac{4\lambda^2}{4\pi^2\eps^2}\sum_k \mathcal{L}(\partial B(z,\eps) \cap \operatorname{Int}(L_{k}))\mathcal{L}(\partial B(w,\eps) \cap \operatorname{Int}(L_{k})) \\
    &\leq \frac{4\lambda^2}{2\pi\eps} \sum_{k} \mathcal{L}(\partial B(z,\eps) \cap \operatorname{Int}(L_{k})) \leq 4\lambda^2.
\end{align*}
In view of the inequality \eqref{ineq_Geps_h_xi_Gamma}, this yields that, almost surely,
\begin{equation*}
     \EE_{\Gamma} \bigg[ \bigg( \int_{G_{\eps}} m^2(z)\sum_{j \neq k} h_{j,\eps}(z)\xi_{k,\eps}(z) dz \bigg)^2\bigg] \leq \int_{G_{\eps} \times G_{\eps}} O(-\log \eps) dz dw.
\end{equation*}
By \cite{dim_CLE, dim_CLE_2}, $\Area(G_{\eps}) = O(\eps^{2-\frac{15}{8}})$ almost surely and therefore, the right-hand side of the above inequality almost surely converges to $0$ as $\eps \to 0$, which completes the proof of the claim.
\end{proof}

\begin{proof}[Proof of Claim \ref{claim_sumhj_neq}]
Let $\eps > 0$. With $G_{\eps}$ as in \eqref{def_Geps}, observe that if $z \in D \setminus G_{\eps}$, then $\sum_j h_{j,\eps}(z) \sum_{k: k\neq j} h_{k,\eps}(z) = 0$ almost surely. This is because for any $j$, the field $h_j$ is $0$ outside $\Lj$. Therefore, almost surely,
\begin{align} \label{cvg_neq_fields}
    &\EE_{\Gamma} \bigg[ \bigg( \int_{D} m^2(z) \sum_j h_{j,\eps}(z) \sum_{k: k\neq j} h_{k,\eps}(z) dz \bigg)^2 \bigg] \nonumber \\
    &= \EE_{\Gamma} \bigg[ \bigg( \int_{G_{\eps}} m^2(z) \sum_j h_{j,\eps}(z) \sum_{k: k\neq j} h_{k,\eps}(z) dz \bigg)^2\bigg] \nonumber \\
    &= \EE_{\Gamma} \bigg[ \int_{G_{\eps} \times G_{\eps}} m^2(z)m^2(w)\bigg( \sum_j h_{j,\eps}(z) \sum_{k: k\neq j} h_{k,\eps}(z) \bigg) \bigg( \sum_j h_{j,\eps}(w) \sum_{k: k\neq j} h_{k,\eps}(w)\bigg) dz dw \bigg] \nonumber \\
    &= \int_{G_{\eps}\times G_{\eps}} m^2(z)m^2(w)\EE_{\Gamma} \bigg[ \bigg( \sum_j h_{j,\eps}(z) \sum_{k: k\neq j} h_{k,\eps}(z) \bigg) \bigg( \sum_j h_{j,\eps}(w) \sum_{k: k\neq j} h_{k,\eps}(w)\bigg) \bigg] dz dw.
\end{align}
Observe that since the fields $(h_{j})_j$ are centered and conditionally independent given $\Gamma$, almost surely,
\begin{align*}
    &\EE_{\Gamma} \bigg[ \bigg( \sum_j h_{j,\eps}(z) \sum_{k: k\neq j} h_{k,\eps}(z) \bigg) \bigg( \sum_j h_{j,\eps}(w) \sum_{k: k\neq j} h_{k,\eps}(w)\bigg) \bigg]
    = \sum_{j \neq k} \EE_{\Gamma}[ h_{j,\eps}(z) h_{j,\eps}(w) ] \EE_{\Gamma}[ h_{k,\eps}(z) h_{k,\eps}(w)].
\end{align*}
Under $\PP_{\Gamma}$, for each $j$, $h_j$ is a GFF in $\Lj$ with Dirichlet boundary conditions and therefore, for the same reasons as in \eqref{ineq_Green_sum}, we have that, almost surely, for any $j$,
\begin{equation*}
    \sum_{k: k \neq j} \EE_{\Gamma}[ h_{k,\eps}(z) h_{k,\eps}(w)] \leq \EE[h_{\eps}(z)h_{\eps}(w)] \leq O(-\log \eps).
\end{equation*}
This yields that, almost surely,
\begin{equation} \label{EE_area_logsquare}
    \EE_{\Gamma} \bigg[ \bigg( \int_{D} m^2(z) \sum_j h_{j,\eps}(z) \sum_{k: k\neq j} h_{k,\eps}(z) dz \bigg)^2 \bigg] \leq O(\log(\eps)^2) \Area(G_{\eps})^2.
\end{equation}
By \cite{dim_CLE, dim_CLE_2}, $\Area(G_{\eps}) = O(\eps^{2-\frac{15}{8}})$ almost surely and therefore, $\log(\eps)^2\Area(G_{\eps})^2$ almost surely converges to $0$ as $\eps$ tends to $0$. By the inequality \eqref{EE_area_logsquare}, this completes the proof of the claim.
\end{proof}

\begin{proof}[Proof of Claim \ref{claim_sumxi_square}]
Observe first that since $D \setminus \Gamma$ almost surely has Lebesgue measure $0$, we have that, almost surely
\begin{align*}
    &\EE_{\Gamma} \bigg[ \bigg( \int_{D} m^2(z) \sum_{j} \xi_{j,\eps}(z)^2 dz - \sum_{j} \xi_j^2 \int_{\Lj} m^2(z) dz \bigg)^2 \bigg]\\
    &= \EE_{\Gamma} \bigg[ \bigg( \int_{D} m^2(z) \sum_{j} \xi_{j,\eps}(z)^2 dz - \int_{D} 4\lambda^2 m^2(z) dz \bigg)^2\bigg],
\end{align*}
where we have also used the fact that under $\PP_{\Gamma}$, for any $j$, $\xi_{j}^2=4\lambda^2$. Moreover, under $\PP_{\Gamma}$, for $z \in D \setminus G_{\eps}$, where $G_{\eps}$ is defined as in \eqref{def_Geps}, $\xi_{j,\eps}(z)^2 = 0$ unless $L_j=L(z)$, in which case $\xi_{j,\eps}(z)^2=4\lambda^2$. Here, as in the proof of Claim \ref{claim_sumhjxi}, for $z \in D$, we have denoted by $L(z)$ the loop of $\Gamma$ that surrounds $z$. Using Fubini-Tonelli theorem, this implies that, almost surely,
\begin{align*}
    \EE_{\Gamma} \bigg[ \bigg( \int_{D} m^2(z) \sum_{j} \xi_{j,\eps}(z)^2 dz - \sum_{j} \xi_j^2 \int_{\Lj} m^2(z) dz \bigg)^2 \bigg]
    = \EE_{\Gamma} \bigg[ \bigg( \int_{G_{\eps}} m^2(z) \sum_{j} \xi_{j,\eps}(z)^2 - 4\lambda^2 m^2(z) dz \bigg)^2 \bigg].
\end{align*}
By H\"older's inequality, we then have that, almost surely,
\begin{align*}
    \EE_{\Gamma} \bigg[ \bigg( \int_{D} m^2(z) \sum_{j} \xi_{j,\eps}(z)^2 dz - \sum_{j} \xi_j^2 \int_{\Lj} m^2(z) dz \bigg)^2 \bigg]
    \leq \Area(D) \EE_{\Gamma} \bigg[ \int_{G_{\eps}} m^4(z) \big( \sum_{j}  \xi_{j,\eps}(z)^2 - 4\lambda^2 \big)^2 dz \bigg].
\end{align*}
Using the inequality $(a-b)^2 \leq 2a^2 + 2b^2$ together with the boundedness of $m$, we then obtain that, almost surely,
\begin{align} \label{square_xi_gasket}
    &\EE_{\Gamma} \bigg[ \bigg( \int_{D} m^2(z) \sum_{j} \xi_{j,\eps}(z)^2 dz - \sum_{j} \xi_j^2 \int_{\Lj} m^2(z) dz \bigg)^2 \bigg] \nonumber \\
    &\leq \overline{m}^4\Area(D) \EE_{\Gamma} \bigg[ \int_{G_{\eps}} 2\bigg( \sum_{j}  \xi_{j,\eps}(z)^2 \bigg)^2 +32\lambda^4 dz \bigg] 
    = \overline{m}^4\Area(D) \int_{G_{\eps}} 2\EE_{\Gamma}\bigg[ \bigg( \sum_{j}  \xi_{j,\eps}(z)^2 \bigg)^2\bigg] +32\lambda^4 dz,
\end{align}
where the last equality follows from Fubini-Tonelli theorem. Notice that almost surely, for any $z \in D$,
\begin{equation*}
    \xi_{j,\eps}(z) = \xi_{j} (\mathbb{I}_{\Lj}, \rho_{\eps}^z) =\frac{\xi_j}{2\pi\eps}\mathcal{L}(\partial B(z,\eps) \cap \Lj),
\end{equation*}
where as in \eqref{def_xij_eps}, $\mathcal{L}$ denotes the length with respect to the arc-length measure. Moreover, as the loops are almost surely disjoint, we have that, almost surely, for any $z \in D$, 
\begin{equation} \label{sum_leq_1}
    \frac{1}{2\pi\eps}\sum_j \mathcal{L}(\partial B(z,\eps) \cap \Lj) \leq 1.
\end{equation}
This inequality implies that, almost surely, for any $z \in G_{\eps}$,
\begin{align*}
    \EE_{\Gamma}\bigg[ \bigg( \sum_{j}  \xi_{j,\eps}(z)^2 \bigg)^2\bigg] = 16\lambda^4 \EE_{\Gamma} \bigg[ \bigg( \frac{1}{2\pi \eps}\sum_{j} \mathcal{L}(\partial B(z,\eps) \cap \Lj) \bigg)^2 \bigg] \leq 16\lambda^4.
\end{align*}
Using this in \eqref{square_xi_gasket} yields that, almost surely,
\begin{align*}
    \EE_{\Gamma} \bigg[ \bigg( \int_{D} m^2(z) \sum_{j} \xi_{j,\eps}(z)^2 dz - \sum_{j} \xi_j^2 \int_{\Lj} m^2(z) dz \bigg)^2 \bigg] 
    \leq  C\Area(G_{\eps})
\end{align*}
where $C>0$ is a deterministic constant depending only on $D$, $\overline{m}^2$ and $\lambda^2$. By \cite{dim_CLE, dim_CLE_2}, $\Area(G_{\eps}) = O(\eps^{1/8})$. Therefore, we can deduce Claim \ref{claim_sumxi_square} from the above inequality.
\end{proof}

\begin{proof}[Proof of Claim \ref{claim_productxi_neq}]
Let $\eps > 0$. Observe that, under $\PP_{\Gamma}$, for $j\neq k$ and $z \in D \setminus G_{\eps}$, where $G_{\eps}$ is defined as in \eqref{def_Geps}, $\xi_{j,\eps}(z)\xi_{k,\eps}(z) = 0$. This implies that, almost surely,
\begin{align*}
    \EE_{\Gamma} \bigg[ \bigg( \int_{D} m^2(z) \sum_{j} \xi_{j,\eps}(z) \sum_{k: k \neq j} \xi_{j,\eps}(z) dz \bigg)^2 \bigg]
    = \EE_{\Gamma} \bigg[ \bigg( \int_{G_{\eps}} m^2(z) \sum_{j} \xi_{j,\eps}(z) \sum_{k: k \neq j} \xi_{j,\eps}(z) dz \bigg)^2 \bigg].
\end{align*}
Using H\"older inequality's and the boundedness of $m$, we then obtain that, almost surely,
\begin{align} \label{ineq_sum_square}
    \EE_{\Gamma} \bigg[ \bigg( \int_{D} m^2(z) \sum_{j} \xi_{j,\eps}(z) \sum_{k: k \neq j} \xi_{j,\eps}(z) dz \bigg)^2 \bigg]
    &\leq \overline{m}^4\Area(D) \EE_{\Gamma} \bigg[ \int_{G_{\eps}} \bigg( \sum_{j} \xi_{j,\eps}(z) \sum_{k: k \neq j} \xi_{j,\eps}(z) \bigg)^2 dz \bigg] \nonumber \\
    &= \overline{m}^4\Area(D) \int_{G_{\eps}} \EE_{\Gamma} \bigg[\bigg( \sum_{j} \xi_{j,\eps}(z) \sum_{k: k \neq j} \xi_{j,\eps}(z) \bigg)^2 \bigg] dz
\end{align}
where the last equality follows from Fubini-Tonelli theorem. Next, observe that, by the inequality \eqref{sum_leq_1} in the proof of Claim \ref{claim_sumxi_square}, almost surely, for any $z \in G_{\eps}$,
\begin{align*}
    \EE_{\Gamma} \bigg[ \bigg( \sum_{j,k} \xi_{j,\eps}(z) \xi_{k,\eps}(z) \bigg)^2\bigg]
    &= \EE_{\Gamma} \bigg[ \bigg( \sum_{j} \xi_{j,\eps}(z) \sum_{k} \frac{\xi_{k}}{2\pi\eps} \mathcal{L}\big( \partial B(z,\eps) \cap \Lk \big) \bigg)^2 \bigg] \\
    &\leq \EE_{\Gamma} \bigg[ \bigg( \sum_{j} 2\lambda \vert \xi_{j,\eps}(z) \vert \sum_{k} \frac{1}{2\pi\eps} \mathcal{L}\big( \partial B(z,\eps) \cap \Lk \big) \bigg)^2  \bigg] \\
    &\leq \EE_{\Gamma} \bigg[ \bigg( \sum_{j} 2\lambda \frac{\vert \xi_{j} \vert }{2\pi\eps} \mathcal{L}(\partial B(z,\eps) \cap \Lj) \bigg)^2 \bigg]
    \leq (4\lambda^2)^2.
\end{align*}
Using this in \eqref{ineq_sum_square} yields that, almost surely,
\begin{align*}
    \EE_{\Gamma} \bigg[ \bigg( \int_{D} m^2(z) \sum_{j} \xi_{j,\eps}(z) \sum_{k: k \neq j} \xi_{j,\eps}(z) dz \bigg)^2 \bigg] \leq 16\lambda^4\overline{m}^4\Area(D)\Area(G_{\eps}).
\end{align*}
By \cite{dim_CLE, dim_CLE_2}, $\Area(G_{\eps}) = O(\eps^{1/8})$ and therefore the claim follows from the above inequality.
\end{proof}

Let us now establish Claim \ref{claim_CR}.

\begin{proof}[Proof of Claim \ref{claim_CR}]
Let $\eps > 0$. We first observe that if $z \in D$ is such that $\operatorname{dist}(z,L(z))>\eps$ where as in the proof of Claim \ref{claim_sumhjxi}, $L(z)$ denotes the loop of $\Gamma$ surrounding $z$, then, almost surely,
\begin{equation*}
    \EE[h_{\eps}(z)^2] - \sum_{j} \EE_{\Gamma}[h_{j,\eps}(z)^2 ] = \EE[h_{\eps}(z)^2] - \EE_{\Gamma}[h_{j(z), \eps}(z)^2] = \frac{1}{2\pi} \log \frac{\CR(z, \partial D)}{\CR(z,L(z))},
\end{equation*}
where $h_{j(z)}$ denotes the field in the loop $L(z)$. This equality follows from the fact that under $\PP$, conditionally on $\Gamma$, for any $j\neq j(z)$, $h_{j,\eps}(z)=0$ almost surely since for $j \neq j(z)$, $h_j$ is almost surely equal to $0$ in $\operatorname{Int}(L(z))$. For $\eps > 0$, define the set $G_{\eps}$ as in \eqref{def_Geps}. Since the set $\{z \in D: z \text{ is not surrounded by a loop of $\Gamma$}\}$ almost surely has Lebesgue measure $0$, we then have that, almost surely,
\begin{align} \label{sum_CR}
    & \int_{D} m^2(z) (\EE[h_{\eps}(z)^2] - \sum_{j} \EE[h_{j,\eps}(z)^2 \vert \Gamma ]) dz \nonumber \\
    &=\int_{D \setminus G_{\eps}}  \frac{m^2(z)}{2\pi} \log \frac{\CR(z, \partial D)}{\CR(z,L(z))} dz
    +\int_{G_{\eps}} m^2(z) (\EE[h_{\eps}(z)^2] - \sum_{j} \EE_{\Gamma}[h_{j,\eps}(z)^2]) dz.
\end{align}
The second term in this sum almost surely converges to $0$ as $\eps$ tends to $0$. Indeed, almost surely,
\begin{align} \label{ineq_Geps_cov}
    0 \leq \int_{G_{\eps}}  m^2(z) (\EE[h_{\eps}(z)^2] - \sum_{j} \EE_{\Gamma}[h_{j,\eps}(z)^2]) dz \leq \overline{m}^2 \int_{G_{\eps}} \EE[h_{\eps}(z)^2] dz.
\end{align}
Above, the inequality on the left-hand side simply follows from the fact that, almost surely, for any $z \in D$, $\EE[h_{\eps}(z)^2] - \sum_{j} \EE_{\Gamma}[h_{j,\eps}(z)^2] \geq 0$. Moreover, the estimate \eqref{estimate_sup_variance} shows that there exists $C>0$ such that, for any $z \in D$, $\EE[h_{\eps}(z)^2] \leq C\log(\eps^{-1})$. Using this and the boundedness of $m$ in \eqref{ineq_Geps_cov} yields that, almost surely,
\begin{equation*}
    0 \leq  \int_{G_{\eps}}  m^2(z) (\EE[h_{\eps}(z)^2] - \sum_{j} \EE_{\Gamma}[h_{j,\eps}(z)^2]) dz \leq O(-\log\eps) \Area(G_{\eps}),
\end{equation*}
Since by \cite{dim_CLE, dim_CLE_2}, $\Area(G_{\eps})=O(\eps^{2-\frac{15}{8}})$ almost surely, we have that $(-\log\eps)\Area(G_{\eps})$ almost surely converges to $0$. This shows that the second term in the sum \eqref{sum_CR} indeed almost surely converges to $0$. For the first term in the sum \eqref{sum_CR}, by the monotone convergence theorem, almost surely,
\begin{equation*}
    \lim_{\eps \to 0} \int_{D \setminus G_{\eps}}  \frac{m^2(z)}{2\pi} \log \frac{\CR(z, \partial D)}{\CR(z,L(z))} dz = \int_{D}  \frac{m^2(z)}{2\pi} \log \frac{\CR(z, \partial D)}{\CR(z,L(z))} dz.
\end{equation*}
The random variable on the right-hand side is almost surely finite. Indeed, by \cite[Proposition~20]{BTLS}, for each $z \in D$, $\log \CR(z,\partial D) - \log \CR(z,L(z))$ has the law of $\tau_{-\pi, \pi}$, the first hitting time of $\{-\pi,\pi\}$ by a standard one-dimensional Brownian motion started at $0$. Moreover, for any $z \in D$, $\log \CR(z,\partial D) - \log \CR(z,L(z))$ is almost surely non-negative. Therefore, denoting by $T_{-\pi, \pi}$ the expectation of $\tau_{-\pi, \pi}$ and using Fubini-Tonelli theorem, we have that
\begin{equation*}
    0 \leq \EE \bigg[ \int_{D}  \frac{m^2(z)}{2\pi} \log \frac{\CR(z, \partial D)}{\CR(z,L(z))} dz \bigg] = \int_{D} \frac{m^2(z)}{2\pi} T_{-\pi, \pi} dz < \infty.
\end{equation*}
This implies that, almost surely,
\begin{equation*}
    \int_{D}  \frac{m^2(z)}{2\pi} \log \frac{\CR(z, \partial D)}{\CR(z,L(z))} dz < \infty.
\end{equation*}
Since $D \setminus \Gamma$ almost surely has Lebesgue measure $0$ and since the loops of $\Gamma$ are almost surely disjoint, the above integral over $D$ can be written as a sum of integrals over the interiors of the loops of $\Gamma$. This yields that, almost surely,
\begin{equation*}
    \int_{D}  \frac{m^2(z)}{2\pi} \log \frac{\CR(z, \partial D)}{\CR(z,L(z))} dz = \sum_{j} \int_{\Lj} \frac{m^2(z)}{2\pi} \log \frac{\CR(z,\partial D)}{\CR(z,L_j)} dz.
\end{equation*}
Going back to \eqref{sum_CR}, this completes the proof of the claim.    
\end{proof}

\subsubsection{The conditional law of $(\xi_j)_j$ under $\tilde \PP$}

Let us now show that under $\tilde \PP$, conditionally on $\Gamma$, the random variables $(\xi_j)_j$ are independent and Rademacher distributed on $\{-2\lambda, 2\lambda\}$.

\begin{proposition} \label{prop_xi_tildeP}
With the same assumptions as in Theorem \ref{theorem_mCLE_4}, under $\tilde \PP$, conditionally on $\Gamma$, the random variables $(\xi_j)_j$ are independent and almost surely, for any $j$,
\begin{equation*}
    \tilde \PP(\xi_{j} = -2\lambda \vert \Gamma) = \tilde \PP(\xi_{j} = 2\lambda \vert \Gamma) = \frac{1}{2}.
\end{equation*}
\end{proposition}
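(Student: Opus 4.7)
The plan is to exploit the almost-sure product factorization of the Radon-Nikodym derivative that is already established in the proof of Proposition \ref{prop_rep_CLE}, combined with the symmetry $h_j \overset{d}{=} -h_j$ of each Dirichlet GFF and the Rademacher symmetry of $\xi_j$ under $\PP$.

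Combining Claim \ref{claim_cvg_Wick_term} and Claim \ref{claim_CR}, the proof of Proposition \ref{prop_rep_CLE} actually yields the almost-sure identity
\begin{equation*}
\exp\!\bigg(-\frac{1}{2}\int_D m^2(z):h^2(z):dz\bigg) = R(\Gamma)\,\prod_j \exp\!\bigg(-\frac{1}{2}\int_{\Lj} m^2(z):(h_j+\xi_j)^2(z):dz\bigg),
\end{equation*}
where $R(\Gamma) := \prod_j \exp\bigl(\int_{\Lj} \tfrac{m^2(z)}{4\pi}\log(\CR(z,\partial D)/\CR(z,L_j))\,dz\bigr)$ is $\sigma(\Gamma)$-measurable and hence cancels in every ratio of conditional expectations given $\Gamma$. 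Because under $\PP$ the pairs $(h_j,\xi_j)_j$ are conditionally independent given $\Gamma$, each factor in the remaining product depends only on a single loop. A direct Fubini argument then gives, for any finite set of indices $j_1,\dots,j_n$ and bounded measurable $F_1,\dots,F_n:\{-2\lambda,2\lambda\}\to\mathbb{R}$,
\begin{equation*}
\tilde\EE\!\left[\prod_{k=1}^n F_k(\xi_{j_k})\,\bigg|\,\Gamma\right] = \prod_{k=1}^n \frac{\EE\!\left[F_k(\xi_{j_k})\, e^{-\frac{1}{2}\int_{\operatorname{Int}(L_{j_k})} m^2 :(h_{j_k}+\xi_{j_k})^2:}\,\bigg|\,\Gamma\right]}{\EE\!\left[e^{-\frac{1}{2}\int_{\operatorname{Int}(L_{j_k})} m^2 :(h_{j_k}+\xi_{j_k})^2:}\,\bigg|\,\Gamma\right]},
\end{equation*}
proving conditional independence of $(\xi_j)_j$ given $\Gamma$ under $\tilde\PP$.

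To compute the marginal conditional distribution of each $\xi_j$, I would expand via the notation \eqref{notation_Wick_boundary}:
\begin{equation*}
\int_{\Lj} m^2(z) :(h_j+\xi_j)^2(z):dz = \int_{\Lj} m^2(z):h_j^2(z):dz + 2\xi_j (h_j,m^2) + 4\lambda^2 \int_{\Lj} m^2(z)\,dz,
\end{equation*}
using that $\xi_j$ is a (random) constant on $\Lj$ with $\xi_j^2=4\lambda^2$. Under $\PP$, given $\Gamma$, the field $h_j$ is a Dirichlet GFF in $\Lj$ so $h_j \overset{d}{=}-h_j$; hence $(h_j,m^2)$ has a symmetric law while $\int_{\Lj} m^2:h_j^2:$ is an even functional of $h_j$. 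Moreover $h_j$ is independent of $\xi_j$ given $\Gamma$, and $\xi_j$ itself is symmetric. The change of variables $h_j\mapsto -h_j$ therefore leaves the joint conditional law of the integrand invariant, giving
\begin{equation*}
\EE\!\left[\mathbb{I}_{\xi_j=2\lambda}\, e^{-\frac{1}{2}\int_{\Lj} m^2 :(h_j+\xi_j)^2:}\,\bigg|\,\Gamma\right] = \EE\!\left[\mathbb{I}_{\xi_j=-2\lambda}\, e^{-\frac{1}{2}\int_{\Lj} m^2 :(h_j+\xi_j)^2:}\,\bigg|\,\Gamma\right],
\end{equation*}
so that $\tilde\PP(\xi_j=2\lambda\mid\Gamma)=\tilde\PP(\xi_j=-2\lambda\mid\Gamma)=1/2$.

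The main obstacle I expect is making the product factorization of the Radon-Nikodym derivative fully rigorous at the level of an almost-sure pointwise identity, since the product runs over countably many loops and individual factors are only controlled through the collective $L^1$-convergence used in Proposition \ref{prop_rep_CLE}. As there, the identity should be extracted by combining the in-probability convergences established in Claim \ref{claim_cvg_Wick_term} and Claim \ref{claim_CR} with the conditional independence of $(h_j,\xi_j)_j$ given $\Gamma$ to pass through finite sub-products and take a limit.
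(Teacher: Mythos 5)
Your proposal is correct and follows essentially the same route as the paper: it uses the product factorization coming from Proposition \ref{prop_rep_CLE} (with the conformal-radius terms cancelling between numerator and the factorized $\mathcal{Z}(\Gamma)$), the conditional independence of the pairs $(h_j,\xi_j)_j$ given $\Gamma$ under $\PP$, and the joint sign symmetry of $h_j$ and $\xi_j$ to equate the two weighted conditional expectations. The only cosmetic difference is that you test against indicators and invoke the almost-sure pointwise identity for the Radon--Nikodym derivative, whereas the paper tests against $e^{\pm ia\xi_j}$ and re-runs the limiting argument of Proposition \ref{prop_rep_CLE} with the extra bounded factor inserted; both handle the countable product in the same way.
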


\begin{proof}
Observe that under $\PP$, almost surely, for any $j$, $\xi_j$ takes values in $\{-2\lambda, 2\lambda\}$. Since $\tilde \PP$ is absolutely continuous with respect to $\PP$, this also holds under $\tilde \PP$. To identify the law of $(\xi_j)_j$ given $\Gamma$ under $\tilde \PP$, it thus suffices to show that for any $a \in \mathbb{R}$, almost surely,
\begin{equation} \label{symmetry_tilde_PP}
    \tilde \EE[e^{ia\xi_j} \vert \Gamma] = \tilde \EE[e^{-ia\xi_j} \vert \Gamma].
\end{equation}
Indeed, it follows from this equality and the fact that almost surely $\xi_j \in \{-2\lambda, 2\lambda\}$ that, almost surely,
\begin{equation*}
    \tilde \PP(\xi_{j} = -2\lambda \vert \Gamma) = \tilde \PP(\xi_{j} = 2\lambda \vert \Gamma) = \frac{1}{2}.
\end{equation*}
Thus, let $a \in \mathbb{R}$. To establish \eqref{symmetry_tilde_PP}, we are going to make use of the symmetry properties of the Wick square and of the conditional laws of $(h_j)_j$ and $(\xi_{j})_j$ conditionally on $\Gamma$ under $\PP$. We first observe that, almost surely,
\begin{align*}
    \tilde \EE[e^{-ia\xi_j} \vert \Gamma] = \frac{1}{\mathcal{Z}(\Gamma)} \EE \bigg[ e^{-ia\xi_j} \exp \bigg(-\frac{1}{2} \int_{D} m^2(z) :h^2(z): dz \bigg) \vert \Gamma \bigg]
\end{align*}
where, as before, we have set
\begin{equation*}
    \mathcal{Z}(\Gamma) := \EE \bigg[ \exp \bigg(-\frac{1}{2} \int_{D} m^2(z) :h^2(z): dz \bigg) \vert \Gamma \bigg].
\end{equation*}
Proposition \ref{prop_rep_CLE} applied with $f \equiv 0$ shows that, almost surely,
\begin{equation} \label{prod_Z_Gamma_bis}
    \mathcal{Z}(\Gamma) = \prod_j \mathcal{Z}_j(\Gamma) \exp\bigg( \int_{\Lj} \frac{m^2(z)}{4\pi} \log \frac{\CR(z,\partial D)}{\CR(z,L_j)} dz \bigg)
\end{equation}
where the random variables $(\mathcal{Z}_{j}(\Gamma))_j$ are defined as 
\begin{equation*}
    \mathcal{Z}_{j}(\Gamma) := \EE \bigg[ \exp \bigg(-\frac{1}{2} \int_{\Lj} m^2(z) :(h_j+\xi_j)^2(z): dz \bigg) \vert \Gamma \bigg].
\end{equation*}
On the other hand, one can argue as in the proof of Proposition \ref{prop_rep_CLE} to show that, almost surely,
\begin{align} \label{numerator_xij}
    &\EE \bigg[ e^{-ia\xi_j} \exp \bigg(-\frac{1}{2} \int_{D} m^2(z) :h^2(z): dz \vert \Gamma \bigg] \nonumber \\
    &= \lim_{\eps \to 0} \EE \bigg[ e^{-ia\xi_j} \exp \bigg(-\frac{1}{2} \int_{D} m^2(z) :h_{\eps}^2(z): dz\bigg) \vert \Gamma \bigg] \nonumber \\
    &= \begin{aligned}[t]&\exp \bigg( \sum_j \int_{\Lj} \frac{m^2(z)}{4\pi} \log \frac{\CR(z,\partial D)}{\CR(z,L_j)} dz\bigg)\\
    &\times\EE \bigg[ e^{-ia\xi_j} \exp \bigg(-\frac{1}{2} \int_{\Lj} m^2(z) :(h+\xi_j)^2(z): dz \bigg) \vert \Gamma \bigg] \prod_{k:k\neq j} \mathcal{Z}_j(\Gamma).\end{aligned}
\end{align}
This yields that, almost surely,
\begin{equation} \label{charac_xij_simplified}
    \tilde \EE[e^{-ia\xi_j} \vert \Gamma] = \frac{1}{\mathcal{Z}_j(\Gamma)} \EE \bigg[ e^{-ia\xi_j} \exp \bigg(-\frac{1}{2} \int_{D} m^2(z) :(h_j+\xi_j)^2(z): dz \bigg) \vert \Gamma \bigg].
\end{equation}
Indeed, the terms involving the conformal radii stemming from the product decomposition \eqref{prod_Z_Gamma_bis} of $\mathcal{Z}(\Gamma)$ cancel those appearing in \eqref{numerator_xij}. Since under $\PP$, conditionally on $\Gamma$, $\xi_j$ has the same law as $-\xi_j$, we then obtain that, almost surely,
\begin{equation*}
     \tilde \EE[e^{-ia\xi_j} \vert \Gamma] = \frac{1}{\mathcal{Z}_j(\Gamma)} \EE \bigg[ e^{ia\xi_j} \exp \bigg(-\frac{1}{2} \int_{D} m^2(z) :(h_j-\xi_j)^2(z): dz \bigg) \vert \Gamma \bigg].
\end{equation*}
It is also easy to see that if $h$ is a GFF with Dirichlet boundary conditions and $c \in \mathbb{R}$, then
\begin{equation*}
    \int_{D} m^2(z) :(h-c)^2(z): dz \overset{\operatorname{law}}{=} \int_{D} m^2(z) :(-h+c)^2(z): dz.
\end{equation*}
This simply follows from the fact that $-h$ has the same law as $h$ and that we can write
\begin{align*}
    \int_{D} m^2(z) :(h-c)^2(z): dz &= \lim_{\eps \to 0} \int_{D} m^2(z) ((h_{\eps}(z)-c)^2 - \EE[h_{\eps}(z)^2]) dz \\
    &= \lim_{\eps \to 0} \int_{D} m^2(z) ((-h_{\eps}(z)+c)^2 - \EE[h_{\eps}(z)^2]) dz
    = \int_{D} m^2(z) :(-h+c)^2(z): dz
\end{align*}
where the limit is in $L^2(\PP)$. Therefore, we have that, almost surely,
\begin{equation*}
     \tilde \EE[e^{-ia\xi_j} \vert \Gamma] = \frac{1}{\mathcal{Z}_j(\Gamma)} \EE \bigg[ e^{ia\xi_j} \exp \bigg(-\frac{1}{2} \int_{D} m^2(z) :(-h_j+\xi_j)^2(z): dz \bigg) \vert \Gamma \bigg].
\end{equation*}
Moreover, under $\tilde \PP$, conditionally on $\Gamma$, $h_j$ has the same law as $-h_j$. This implies that, almost surely,
\begin{equation} \label{symmetry_xij_final}
    \tilde \EE[e^{-ia\xi_j} \vert \Gamma] = \frac{1}{\mathcal{Z}_j(\Gamma)} \EE \bigg[ e^{ia\xi_j} \exp \bigg(-\frac{1}{2} \int_{D} m^2(z) :(h_j+\xi_j)^2(z): dz \bigg) \vert \Gamma \bigg].
\end{equation}
On the other hand, the same arguments as those used to prove \eqref{charac_xij_simplified} show that, almost surely,
\begin{equation} \label{charac_xij_+a}
    \tilde \EE[e^{ia\xi_j} \vert \Gamma] = \frac{1}{\mathcal{Z}_j(\Gamma)} \EE \bigg[ e^{ia\xi_j} \exp \bigg(-\frac{1}{2} \int_{D} m^2(z) :(h_j+\xi_j)^2(z): dz \bigg) \vert \Gamma \bigg].
\end{equation}
We can deduce from this equality and \eqref{symmetry_xij_final} that \eqref{symmetry_tilde_PP} holds, thus identifying the conditional law of $\xi_j$ given $\Gamma$ under $\tilde \PP$. 

Let us now show that under $\tilde \PP$, the random variables $(\xi_j)_j$ are conditionally independent given $\Gamma$. To this end, conditionally on $\Gamma$, let $(a_j)_j$ be a collection of real numbers, one for each loop. We then have that, almost surely,
\begin{align*}
    \tilde \EE \big[ \prod_j \exp(ia_j\xi_j) \vert \Gamma \big] = \frac{1}{\mathcal{Z}(\Gamma)} \EE \bigg[ \big( \prod_j \exp(ia_j\xi_j) \big) \exp \bigg( -\frac{1}{2} \int_{D} m^2(z):h^2(z):dz \bigg) \vert \Gamma \bigg]
\end{align*}
where, as before, $\mathcal{Z}(\Gamma)$ is defined as in \eqref{def_ZGamma}. Arguing as in the proof of Proposition \ref{prop_rep_CLE} and using the conditional independence of $(\xi_j)_j$ given $\Gamma$ under $\PP$, we obtain that, almost surely,
\begin{align*}
    \tilde \EE \big[ \prod_j \exp(ia_j\xi_j) \vert \Gamma \big]
    &= \lim_{\eps \to 0} \frac{1}{\mathcal{Z}(\Gamma)} \EE \bigg[ \big( \prod_j \exp(ia_j\xi_j) \big) \exp \bigg( -\frac{1}{2} \int_{D} m^2(z)\big(h_{\eps}(z)^2 - \EE[h_{\eps}(z)^2]\big) dz \bigg) \vert \Gamma \bigg] \\
    &= \begin{aligned}[t]&\frac{1}{\mathcal{Z}(\Gamma)} \prod_j \EE \bigg[ e^{ia_j\xi_j} \exp \bigg( -\frac{1}{2} \int_{\Lj} m^2(z) :(h+\xi_j)^2(z): dz \bigg) \vert \Gamma \bigg] \\
    &\times \exp \bigg( \int_{\Lj} \frac{m^2(z)}{4\pi} \log \frac{\CR(z,\partial D)}{\CR(z,L_j)} dz \bigg)\end{aligned}
\end{align*}
where in the second equality, the limit is in $\operatorname{L}^1(\PP)$. The product decomposition \eqref{prod_Z_Gamma_bis} of $\mathcal{Z}(\Gamma)$ then yields that, almost surely,
\begin{align*}
    \tilde \EE \big[ \prod_j \exp(ia_j\xi_j) \vert \Gamma \big] 
    = \prod_j \frac{1}{\mathcal{Z}_j(\Gamma)} \EE \bigg[ e^{ia_j\xi_j} \exp \bigg( -\frac{1}{2} \int_{\Lj} m^2(z) :(h+\xi_j)^2(z): dz \bigg) \vert \Gamma \bigg].
\end{align*}
Together with \eqref{charac_xij_+a}, this implies that, almost surely, $\tilde \EE \big[ \prod_j \exp(ia_j\xi_j) \vert \Gamma \big]  = \prod_j \tilde \EE \big[ \exp(ia_j\xi_j) \vert \Gamma \big]$, which shows that under $\tilde \PP$, the random variables $(\xi_j)$ are conditionally independent given $\Gamma$.
\end{proof}

\subsubsection{The conditional law of $h$ under $\tilde \PP$}

Having identified the conditional law given $\Gamma$ of the random variables $(\xi_j)_j$ under $\tilde \PP$, let us now show that under $\tilde \PP$, conditionally on $\Gamma$, the field $h$ can be decomposed as claimed in Theorem \ref{theorem_mCLE_4}. We will then establish Theorem \ref{theorem_mCLE_4}.

\begin{proposition} \label{prop_h_tildeP}
With the same assumptions as in Theorem \ref{theorem_mCLE_4}, under $\tilde \PP$, conditionally on $\Gamma$,
\begin{equation*}
    h = \sum_j h_j+\xi_j
\end{equation*}
where the sum runs over the loops $(L_j)_j$ of $\Gamma$ and the fields $(h_j+\xi_j)_j$ are independent fields whose law is, for each $j$, that of a massive GFF with mass $m$ and boundary conditions $\xi_j$ in the interior of the loop $L_j$.
\end{proposition}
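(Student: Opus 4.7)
The plan is to follow the same blueprint as the proof of Proposition \ref{prop_law_hSLE}: compute the conditional characteristic function of the field, use the product decomposition of the weighted conditional expectation established in Proposition \ref{prop_rep_CLE} to separate the contributions of the different loops, and recognise each factor as a Radon-Nikodym derivative of a massive GFF with respect to a GFF in a loop interior.

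I would begin by fixing a smooth test function $f$ with compact support in $D$ and applying the change of measure \eqref{RN_mCLE4} to write, almost surely,
\[
\tilde{\EE}\big[\exp(i(h,f)) \,\vert\, \Gamma\big] = \frac{1}{\mathcal{Z}(\Gamma)}\, \EE\!\left[\exp(i(h,f))\exp\!\left(-\tfrac{1}{2}\int_D m^2(z):h^2(z):dz\right) \,\Big\vert\, \Gamma\right],
\]
with $\mathcal{Z}(\Gamma)$ as in \eqref{def_rdm_partCLE}. The key observation is that Proposition \ref{prop_rep_CLE} applies both to the numerator (with weight $e^{i(h,f)}$) and to the denominator (taking $f\equiv 0$), and in both cases produces the \emph{same} conformal-radius factor $\exp\!\big(\sum_j \int_{\Lj} \tfrac{m^2(z)}{4\pi}\log \tfrac{\CR(z,\partial D)}{\CR(z,L_j)}dz\big)$. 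These factors cancel between numerator and denominator, leaving a genuine product over loops of ratios of the form
\[
\frac{1}{\mathcal{Z}_j(\Gamma)}\, \EE\!\left[\exp(i(h_j+\xi_j,f))\exp\!\left(-\tfrac{1}{2}\int_{\Lj} m^2(z):(h_j+\xi_j)^2(z):dz\right)\,\Big\vert\,\Gamma\right].
\]
The product structure itself immediately delivers the conditional independence of the fields $(h_j+\xi_j)_j$ given $\Gamma$ under $\tilde{\PP}$, and the remaining task is to identify each individual factor.

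To do that, I would condition further on $\xi_j\in\{\pm 2\lambda\}$. Under $\PP$ given $\Gamma$ and the value of $\xi_j$, the field $h_j+\xi_j$ is a GFF in $\Lj$ with constant boundary value $\xi_j$, so the Radon-Nikodym identity \eqref{RN_mGFF_phi} applied in the domain $\Lj$ recognises $\mathcal{Z}_{j,\xi_j}(\Gamma)^{-1}\exp\!\big(-\tfrac{1}{2}\int_{\Lj} m^2(z):(h_j+\xi_j)^2(z):dz\big)$ as the density of a massive GFF in $\Lj$ with mass $m$ and boundary value $\xi_j$ with respect to the corresponding GFF, where $\mathcal{Z}_{j,\xi_j}(\Gamma)$ is the normalisation from \eqref{RN_mGFF_phi}. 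The same symmetry $-h_j\overset{\operatorname{law}}{=}h_j$ (conditionally on $\Gamma$) used in the proof of Proposition \ref{prop_xi_tildeP} gives $\mathcal{Z}_{j,+2\lambda}(\Gamma)=\mathcal{Z}_{j,-2\lambda}(\Gamma)$, and combined with the uniform conditional law of $\xi_j$ under $\PP$ this yields $\mathcal{Z}_j(\Gamma)=\mathcal{Z}_{j,\pm 2\lambda}(\Gamma)$. Averaging over $\xi_j$ with weight $1/2$ then turns the $j$-th factor into the characteristic function of $h_j^m+\xi_j$, where $h_j^m$ is a massive GFF in $\Lj$ with mass $m$ and Dirichlet boundary conditions, independently of $\xi_j$ uniform on $\{\pm 2\lambda\}$. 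This is exactly the claimed decomposition, and compatibility with the marginal law identified in Proposition \ref{prop_xi_tildeP} is automatic.

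The substantive difficulty has already been paid for inside the proof of Proposition \ref{prop_rep_CLE}: the Wick-square renormalisation and, more importantly, the vanishing of the various cross-loop correlations as $\eps\to 0$ (all depending on the CLE$_4$ gasket estimate $\Area(G_\eps)=O(\eps^{1/8})$) were handled there. Once that proposition is in hand, the present statement is essentially a bookkeeping argument combining the product decomposition with the Radon-Nikodym identity \eqref{RN_mGFF_phi} and the $\pm$-symmetry of the massless field, and I expect no further technical obstacle beyond checking that the $\PP$-almost-sure equalities transfer to $\tilde\PP$, which is immediate from absolute continuity.
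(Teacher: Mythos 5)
Your proposal is correct and follows essentially the same route as the paper's proof: compute the conditional characteristic function, apply Proposition \ref{prop_rep_CLE} to numerator and denominator so the conformal-radius factors cancel, read off each loop factor as the Radon--Nikodym derivative \eqref{RN_mGFF_phi} in $\Lj$, and get independence from the product structure by testing against functions supported in the separate loop interiors. Your extra step identifying $\mathcal{Z}_j(\Gamma)=\mathcal{Z}_{j,\pm 2\lambda}(\Gamma)$ via the $\pm$-symmetry is a welcome clarification of a point the paper leaves implicit, but it does not change the argument.
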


To prove Proposition \ref{prop_h_tildeP}, as discussed in the introduction, we use a rigorous conditional version of the path-integral formalism for the massive GFF. This strategy involves a lot of technicalities but most of them have been taken care of in the proof of Proposition \ref{prop_rep_CLE}.

\begin{proof}[Proof of Proposition \ref{prop_h_tildeP}]
To establish the conditional law of $h$ given $\Gamma$ under $\tilde \PP$, we are going to compute its characteristic function. To this end, let $f$ be a smooth function with compact support in $D$. We have that, almost surely,
\begin{align} \label{EE_Gamma_intro}
    \tilde \EE \big[ \exp(i(h,f)) \vert \Gamma \big]
    = \frac{1}{\mathcal{Z}(\Gamma)} \EE \bigg[ \exp(i(h,f)) \exp \bigg( -\frac{1}{2} \int_{D} m^2(z):h^2(z): dz \bigg) \vert \Gamma \bigg],
\end{align}
where, as before, we have set
\begin{equation} \label{def_ZGamma}
    \mathcal{Z}(\Gamma) := \EE \bigg[ \exp \bigg( -\frac{1}{2} \int_{D} m^2(z):h^2(z): dz \bigg) \vert \Gamma \bigg].
\end{equation}
By Proposition \ref{prop_rep_CLE}, almost surely,
\begin{align} \label{prod_EE_h}
    &\EE \bigg[ \exp(i(h,f)) \exp \bigg( -\frac{1}{2} \int_{D} m^2(z):h^2(z): dz \bigg) \vert \Gamma \bigg] \nonumber \\
    &= \begin{aligned}[t]\prod_{j} &\EE \bigg[ \exp(i(h_j+\xi_j, f) \exp \bigg( -\frac{1}{2} \int_{\Lj} m^2(z) :(h_j+\xi_j)^2(z): dz \bigg) \vert \Gamma \bigg]\\
    & \times \exp\bigg(\int_{\Lj} \frac{m^2(z)}{4\pi} \log \frac{\operatorname{CR}(z, \partial D)}{\operatorname{CR}(z, L_j)} dz \bigg).\end{aligned}
\end{align}
On the other hand, Proposition \ref{prop_rep_CLE} applied with $f \equiv 0$ shows that, almost surely,
\begin{align} \label{prod_Z_Gamma}
    \mathcal{Z}(\Gamma) = \prod_{j} \mathcal{Z}_j(\Gamma)\exp\bigg(\int_{\Lj} \frac{m^2(z)}{4\pi} \log \frac{\operatorname{CR}(z, \partial D)}{\operatorname{CR}(z, L_j)} dz \bigg)
\end{align}
where we have set
\begin{equation} \label{def_Zj_Gamma}
    \mathcal{Z}_j(\Gamma) = \EE \bigg[ \exp \bigg( -\frac{1}{2} \int_{\Lj} m^2(z) :(h_j+\xi_j)^2(z): dz \bigg) \vert \Gamma \bigg].
\end{equation}
In view of \eqref{EE_Gamma_intro}, we deduce from this that, almost surely,
\begin{align*}
    \tilde \EE \big[ \exp(i(h,f)) \vert \Gamma \big]
    =\prod_j \frac{1}{\mathcal{Z}_j(\Gamma)} \EE \bigg[ \exp(i(h_j+\xi_j, f) \exp \bigg( -\frac{1}{2} \int_{\Lj} m^2(z) :(h_j+\xi_j)^2(z): dz \bigg) \vert \Gamma \bigg].
\end{align*}
Indeed, the terms involving the conformal radii in the product decomposition \eqref{prod_Z_Gamma} of $\mathcal{Z}(\Gamma)$ cancel with those appearing in \eqref{prod_EE_h}. Now, observe that by \eqref{RN_mGFF_phi}, conditionally on $\Gamma$, for any $j$,
\begin{equation*}
    \frac{1}{\mathcal{Z}_j(\Gamma)} \exp\bigg( -\frac{1}{2}\int_{\Lj} m^2(z) :(h_j+\xi_j)^2(z): dz \bigg)
\end{equation*}
is almost surely equal to the Radon-Nikodym derivative of the massive GFF with mass $m$ and boundary conditions $\xi_j$ in $\Lj$ with respect to the GFF with boundary conditions $\xi_j$ in $\Lj$. Going back to \eqref{EE_Gamma_intro}, this implies that under $\tilde \PP$, conditionally on $\Gamma$, $h=\sum_j h_j+\xi_j$ where $h_{j}+\xi_j$ is a massive GFF in $\Lj$ with boundary conditions $\xi_j$. 

Let us now establish the conditional independence of the fields $(h_j+\xi_j)_j$ given $\Gamma$ under $\tilde \PP$. To this end, under $\tilde \PP$, conditionally on $\Gamma$, let $(f_j)_j$ be a collection of smooth functions such that for each $j$, $f_j$ has compact support in $\Lj$. We then have that, almost surely,
\begin{align*}
    \tilde \EE \big[ \prod_j \exp(i(h_j+\xi_j, f_j)) \vert \Gamma \big]
    = \frac{1}{\mathcal{Z}(\Gamma)} \EE \bigg[ \big( \prod_j \exp(i(h_j+\xi_j, f_j)) \big) \exp \bigg(-\frac{1}{2}\int_{D} m^2(z) :h^2(z): dz \bigg) \vert \Gamma \bigg]
\end{align*}
where $\mathcal{Z}(\Gamma)$ is as in \eqref{def_ZGamma}. Since, almost surely, $\vert \prod_j \exp(i(h_j+\xi_j, f_j)) \vert \leq 1$, for the same reasons as those explained in the first part of the proof, we have that, almost surely,
\begin{align*}
    \tilde \EE \big[ \prod_j \exp(i(h_j+\xi_j, f_j)) \vert \Gamma \big]
    = \prod_j \frac{1}{\mathcal{Z}_j(\Gamma)} \EE \bigg[ \exp(i(h_j+\xi_j, f_j)) \exp \bigg( -\frac{1}{2} \int_{\Lj} m^2(z) :(h_j+\xi_j)^2(z): dz \bigg)  \vert \Gamma \bigg].
\end{align*}
On the other hand, using once again the same reasoning as in the first part of the proof, it is easy to see that almost surely, for any $j$,
\begin{align} \label{prod_sum_EE}
    \tilde \EE \big[ \exp(i(h_j+\xi_j, f_j)) \vert \Gamma \big]
    = \frac{1}{\mathcal{Z}_j(\Gamma)} \EE \bigg[ \exp(i(h_j+\xi_j, f_j)) \exp \bigg( -\frac{1}{2} \int_{\Lj} m^2(z) :(h_j+\xi_j)^2(z): dz \bigg) \vert \Gamma \bigg].
\end{align}
Using this equality, we can then deduce from \eqref{prod_sum_EE} that under $\tilde \PP$, conditionally on $\Gamma$, the fields $(h_j+\xi_j)_j$ are independent. This completes the proof of Proposition \ref{prop_h_tildeP}.
\end{proof}

We can now conclude the proof of Theorem \ref{theorem_mCLE_4}.

\begin{proof}[Proof of Theorem \ref{theorem_mCLE_4}]
$\tilde \PP$ defined via the Radon-Nikodym derivative \eqref{RN_mCLE4} is a well-defined probability measure which is absolutely continuous with respect to $\PP$. Moreover, by definition, the marginal law under $\tilde \PP$ of $h$ is that of a massive GFF in $D$ with mass $m$ and Dirichlet boundary conditions. Proposition \ref{prop_h_tildeP} and Proposition \ref{prop_xi_tildeP} show the first three bullet points of the statement of Theorem \ref{theorem_mCLE_4} regarding the conditional law of $h$ given $\Gamma$ under $\tilde \PP$. To conclude the proof of Theorem \ref{theorem_mCLE_4}, it remains to prove the last bullet point. This simply follows from the fact that under $\PP$, conditionally on $\Gamma$, for each $j$, $\xi_j$ is almost surely measurable with respect to $h_j+\xi_j$. Since $\tilde \PP$ is absolutely continuous with respect to $\PP$, the same holds true under $\tilde \PP$. This gives the joint law of $(h_j+\xi_j, \xi_j)$ under $\tilde \PP$ conditionally on $\Gamma$. This also concludes the proof of Theorem \ref{theorem_mCLE_4}.
\end{proof}

 \subsection{Some properties of massive CLE$_4$} \label{sec_cor_CLE4}

In this section, we show two properties of massive CLE$_4$ that can be established thanks to Theorem \ref{theorem_mCLE_4}. The first one, stated as Corollary \ref{cor_RN_mCLE4} in the introduction, is an expression for the Radon-Nikodym derivative of the law massive CLE$_4$ with respect to that of CLE$_4$.

\begin{proof}[Proof of Corollary \ref{cor_RN_mCLE4}]
The proof is very similar to that of Corollary \ref{cor_RN_mSLE4}, using that under $\PP$, conditionally on $\Gamma$, $(h_j)_j$ and $(\xi_j)$ are independent. We leave the details to the reader.
\end{proof}

The second property of massive CLE$_4$ that follows from Theorem \ref{theorem_mCLE_4} is its conformal covariance, as stated in Corollary \ref{cor_conformal_cov_mCLE4} and shown below.

\begin{proof}[Proof of Corollary \ref{cor_conformal_cov_mCLE4}]
The proof of this corollary follows the same strategy as that of Lemma \ref{lemma_conformal_cov_mSLE} and we leave the details to the reader.
\end{proof}

\section{Massive CLE$_4$, the massive Brownian loop soup and the massive GFF}

In this section, we prove Theorem \ref{prop_mCLE4_mBLS}. In Section \ref{sec_mBLS}, we first define the massive Brownian loop soup and show that its law is absolutely continuous with respect to that of the Brownian loop soup. Then, in Section \ref{sec_mCLE4_mBLS}, we use this result together with Theorem \ref{theorem_mCLE_4} to deduce Theorem \ref{prop_mCLE4_mBLS}.

\subsection{The massive Brownian loop soup} \label{sec_mBLS}

The massive Brownian loop soup is a massive version of the Brownian loop soup introduced by Camia in \cite{Camia_BLS, Hausdorff_mBLS}. To define it, one first defines a massive version $\mu_{D}^{(m)}$ of the Brownian loop measure, called the massive Brownian loop measure, via the following change of measure:
\begin{equation*}
	\mu_{D}^{(m)}(d\ell):= \exp \bigg( -\int_{0}^{\tau(\ell)} m^2(\ell(t)) dt \bigg) \mu_{D}(d\ell).
\end{equation*}
Here, we assume that the domain $D$ and the mass function $m$ satify the assumptions of Theorem \ref{prop_mCLE4_mBLS} and we recall that $\mu_{D}$ was defined in Section \ref{sec_BLS}. The measure $\mu_{D}^{(m)}$ is then a measure on unrooted loops in $D$ and it enjoys the same restriction property as $\mu_{D}$. However, the presence of a mass breaks the conformal invariance of the measure: $\mu_{D}^{(m)}$ is conformally covariant, in the following sense. If $f: D \to \Omega$ is a conformal map, then $\mu_{\Omega}^{(\tilde m)} =  f \circ \mu_{D}^{(m)}$ where, for $z \in \Omega$, $\tilde m^2(z) = \vert (f^{-1})'(z)\vert^2m^2(f^{-1}(z))$.

The massive Brownian loop soup in $D$ with mass $m$ and intensity $\alpha > 0$ is then defined analogously to the Brownian loop soup in $D$ with intensity $\alpha$ (see Section \ref{sec_BLS}): this is the Poisson point process with intensity measure $\alpha \mu_{D}^{(m)}$. The massive Brownian loop soup is actually absolutely continuous with respect to the Brownian loop soup and the corresponding Radon-Nikodym can be explicitly computed, as shown below.

\begin{lemma} \label{lemma_abscont_mBLS}
Let $D \subset \mathbb{C}$ and $m:D \to \mathbb{R}_{+}$ be as in Theorem \ref{prop_mCLE4_mBLS}. Let $\alpha > 0$ and denote by $\PP_{\alpha}$, respectively $\PP_{\alpha}^{(m)}$, the law of the Brownian loop soup, respectively massive Brownian loop soup with mass $m$, in $D$ with intensity $\alpha$. Then
\begin{equation} \label{RN_mBLS_BLS}
\frac{\der \PP_{\alpha}^{(m)}}{\der \PP_{\alpha}}(\LL) = \frac{1}{\mathcal{Z}}\exp \bigg( -\int_{D} m^2(z) :L(z): dz \bigg)
\end{equation} 
where $\mathcal{Z}$ is a normalization constant and, under $\PP_{\alpha}$, $:L:$ is the renormalized occupation-time field of $\LL$ (see Section \ref{sec_BLS}). 
\end{lemma}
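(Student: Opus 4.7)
The plan is to approximate by truncating the loop soup to loops of lifetime at least $\eps$, where the intensity measure becomes finite, apply the standard change-of-measure formula for Poisson point processes at the truncation level, and then pass to the limit $\eps \to 0$.

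First, I set $\mathcal{L}^\eps := \{\ell \in \mathcal{L} : \tau(\ell) \geq \eps\}$. Under $\PP_\alpha$ (respectively $\PP_\alpha^{(m)}$), $\mathcal{L}^\eps$ is a Poisson point process with finite intensity measure $\alpha\mu_{D}\mathbb{I}_{\tau(\ell)\geq \eps}$ (respectively $\alpha\mu_{D}^{(m)}\mathbb{I}_{\tau(\ell)\geq \eps}$). Since $\mu_{D}^{(m)}$ has density $e^{-\langle \ell, m^2\rangle}$ with respect to $\mu_{D}$, the standard Radon-Nikodym formula for finite absolutely continuous Poisson point processes yields, $\PP_\alpha$-a.s.,
\[
L^\eps := \frac{\der \PP_\alpha^{(m)}}{\der\PP_\alpha}\bigg|_{\sigma(\mathcal{L}^\eps)} = \exp\bigl(\alpha\mu_D(\mathbb{I}_{\tau(\ell)\geq\eps}(1-e^{-\langle \ell, m^2\rangle}))\bigr)\prod_{\ell\in\mathcal{L}^\eps}e^{-\langle \ell, m^2\rangle}.
\]
Substituting the defining identity of the regularized occupation-time field recalled in Section~\ref{sec_BLS},
\(\sum_{\ell\in\mathcal{L}^\eps}\langle\ell,m^2\rangle = \int_D m^2(z):L_\eps(z):dz + \alpha\mu_D(\mathbb{I}_{\tau(\ell)\geq\eps}\langle\ell,m^2\rangle)\), I rewrite this as
\[
L^\eps = \exp\Bigl(-\int_D m^2(z):L_\eps(z):dz\Bigr)\exp\bigl(-\alpha\mu_D(\mathbb{I}_{\tau(\ell)\geq\eps}(e^{-\langle\ell,m^2\rangle}+\langle\ell,m^2\rangle-1))\bigr).
\]

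Next, I pass to the limit $\eps \to 0$. The first factor tends in $\PP_\alpha$-probability to $\exp(-\int_D m^2(z):L(z):dz)$ because $\int_D m^2(z):L_\eps(z):dz$ converges to $\int_D m^2(z):L(z):dz$ in $L^2(\PP_\alpha)$ by the very definition of $:L:$ from Section~\ref{sec_BLS}. For the second factor, the integrand $e^{-x}+x-1$ is nonnegative on $[0,\infty)$, so monotone convergence yields convergence to $\alpha\mu_D(e^{-\langle\ell,m^2\rangle}+\langle\ell,m^2\rangle-1)$, which is finite by Proposition~\ref{prop_RN_loopGFF} combined with Le Jan's isomorphism (Proposition~\ref{prop_LeJanIso}); the $\alpha=1/2$ statement there extends to general $\alpha>0$ either directly from Campbell's formula for the loop soup or by superposition. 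Setting $\mathcal{Z}:=\exp(\alpha\mu_D(e^{-\langle\ell,m^2\rangle}+\langle\ell,m^2\rangle-1))$, I deduce $L^\eps \to L := \mathcal{Z}^{-1}\exp(-\int_D m^2(z):L(z):dz)$ in $\PP_\alpha$-probability.

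Finally, I upgrade this to $L^1(\PP_\alpha)$ convergence via Scheffé's lemma: each $L^\eps$ has $\PP_\alpha$-expectation $1$, and Campbell's formula applied to $\mathcal{L}^\eps$ as above gives, for every $\eps>0$, the Laplace identity $\EE_{\PP_\alpha}[\exp(-\int_D m^2:L_\eps:dz)] = \exp(\alpha\mu_D(\mathbb{I}_{\tau\geq\eps}(e^{-\langle\ell,m^2\rangle}+\langle\ell,m^2\rangle-1)))$; letting $\eps\to 0$ on both sides (using Fatou's lemma on the left together with the matching value of the right) shows that $\EE_{\PP_\alpha}[L]=1$, from which Scheffé yields $L^\eps\to L$ in $L^1(\PP_\alpha)$. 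Then for any bounded observable $F$ measurable with respect to $\sigma(\mathcal{L}^{\eps_0})$ for some $\eps_0>0$, the identity $\EE_{\PP_\alpha^{(m)}}[F] = \EE_{\PP_\alpha}[L^\eps F]$ (valid for all $\eps<\eps_0$) passes to the limit to give $\EE_{\PP_\alpha^{(m)}}[F] = \EE_{\PP_\alpha}[LF]$, and a monotone-class argument extends this to all bounded $\sigma(\mathcal{L})$-measurable $F$, yielding \eqref{RN_mBLS_BLS}. The main obstacle I anticipate is this last step: carefully justifying that $L$ serves as the density on the full $\sigma$-algebra $\sigma(\mathcal{L})$ rather than merely on the truncated sub-$\sigma$-algebras, which boils down to verifying the $L^1(\PP_\alpha)$ convergence of $L^\eps$ to $L$ on the nose.
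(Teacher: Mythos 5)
Your overall strategy is sound and follows the same skeleton as the paper's proof: truncate to loops of lifetime at least $\eps$ (where the intensity measure is finite), write the explicit Radon--Nikodym derivative at the truncated level, recognise the centred sum $\sum_{\ell\in\LL^\eps}\langle\ell,m^2\rangle$ as $\int_D m^2(z):L_\eps(z):dz$ plus a deterministic counterterm, and pass to the limit in $L^1(\PP_\alpha)$. The difference is in the packaging. The paper tests the law of $\LL$ against compactly supported functionals on loop space, which forces it to prove compactness of the sets $K_n$ and hemicompactness of $(\mathcal{X}(D),d_{\mathcal{X}})$ (Claim \ref{claim_compactness_Kn}) so that such a functional lives on some $B_n$, and then obtains the $L^1$ convergence of the prefactor $\exp(Y_n(\LL))$ by exhibiting it as a nonnegative martingale and identifying its almost sure limit (Claim \ref{claim_cvg_Yn_COM}). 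You instead work with densities on the increasing sub-$\sigma$-algebras $\sigma(\LL^{\eps})$ and close the argument with a monotone-class step; this avoids the topological digression entirely, at the price of having to prove the $L^1(\PP_\alpha)$ convergence of $L^\eps$ by hand.

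That last point is where your argument has a gap as written. Scheff\'e requires $\EE_{\PP_\alpha}[L]=1$, and Fatou applied to $\EE_{\PP_\alpha}[\exp(-\int_D m^2(z):L_\eps(z):dz)]$ only yields $\EE_{\PP_\alpha}[\exp(-\int_D m^2(z):L(z):dz)]\le \mathcal{Z}$, i.e.\ $\EE_{\PP_\alpha}[L]\le 1$; the ``matching value of the right'' does not supply the reverse inequality, since nonnegative variables of unit mean can lose mass in the limit (the approximants are not uniformly bounded, because $\mu_D(\langle\ell,m^2\rangle)=\infty$). Two clean repairs are available. For $\alpha=1/2$, which is the only case needed for Theorem \ref{prop_mCLE4_mBLS}, the identity $\EE_{\PP_{1/2}}[\exp(-\int_D m^2(z):L(z):dz)]=\mathcal{Z}$ is exactly Proposition \ref{prop_RN_loopGFF} combined with Le Jan's isomorphism (Proposition \ref{prop_LeJanIso}). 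For general $\alpha$, compute the second moment: the Poisson exponential formula gives $\EE_{\PP_\alpha}[(L^\eps)^2]=\exp\big(\alpha\mu_D\big(\mathbb{I}_{\tau(\ell)\ge\eps}(1-e^{-\langle\ell,m^2\rangle})^2\big)\big)\le\exp\big(\alpha\mu_D(\langle\ell,m^2\rangle^2)\big)$, which is finite by the same citation of \cite[Lemma~2]{LeJan} that the paper uses in the proof of Claim \ref{claim_cvg_Yn_COM}. Uniform boundedness in $L^2(\PP_\alpha)$ gives uniform integrability, so your convergence in probability upgrades to $L^1(\PP_\alpha)$ directly and Scheff\'e becomes unnecessary. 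With that fix, the remaining steps (the exact Radon--Nikodym formula at the truncated level, the algebraic rewriting in terms of $:L_\eps:$, the monotone convergence of the counterterm, and the monotone-class extension from $\bigcup_{\eps>0}\sigma(\LL^\eps)$ to $\sigma(\LL)$) are all correct.
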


\begin{proof}
The proof goes along the same lines as that of the "if" part of the main theorem of \cite{Poisson_abs}. Let $\alpha >0$. As a preliminary remark, observe that the change of measure \eqref{RN_mBLS_BLS} is well-defined. Indeed, $\int_{D} m^2(z) :L(z): dz$ is a limit in $L^2(\PP_{\alpha})$ and is therefore measurable with respect to $\LL$ while  finiteness of the normalization constant $\mathcal{Z}$ is shown in \cite[Theorem~8]{LeJan}.

Let us denote by $\mathcal{X}(D)$ the space of unrooted loops contained in $D$. We turn $\mathcal{X}(D)$ into a metric space by endowing with the metric $d_{\mathcal{X}}$ induced by the norm, for $\ell \in \mathcal{X}(D)$,
\begin{equation*}
    \| \ell \|_{\mathcal{X}} := \tau(\ell) + \sup_{t \in [0,\tau(\ell)]} \vert \ell(t) \vert
\end{equation*}
where $\tau(\ell)$ is the lifetime of the loop $\ell$. See \cite[Section~5.1]{book_Lawler} for more details about these definitions. Let $f: \mathcal{X}(D) \to \mathbb{R}_{+}$ be a nonnegative function with compact support in $\mathcal{X}(D)$. To prove Lemma \ref{lemma_abscont_mBLS}, we must show that
\begin{equation} \label{Laplace_mBLS}
    \EE_{\alpha}^{(m)}[\exp(-\langle \LL, f \rangle)] = \exp \bigg( -\alpha \int  (1 - e^{-f(\ell)}) \mu_{D}^m(d\ell) \bigg)
\end{equation}
or, in words, that the Laplace transform of $\LL$ under $\PP_{\alpha}^{(m)}$ is the same as that of a massive Brownian loop in $D$ with mass $m$ and intensity $\alpha$. Above, the random variable $\langle \LL, f \rangle$ is defined as $\langle \LL, f \rangle := \sum_{\ell \in \LL} f(\ell)$. For ease of notations, let us set
\begin{equation*}
    \phi(\ell) := \exp \bigg( -\int_{0}^{\tau(\ell)} m^2(\ell(t)) dt \bigg)
\end{equation*}
and note that $\mu_{D}^{m}(d\ell) = \phi(\ell)\mu_{D}(d\ell)$. Next, we observe that in the integral on the right-hand side of \eqref{Laplace_mBLS}, the integrand is $0$ outside the support of $f$. We are going to use this to decompose this integral as a sum over well-chosen compact sets of loops. To this end, let us define the sets of loops, for $n \geq 1$,
\begin{equation*}
	K_n := \{\ell \in \mathcal{X}(D): \overline{\ell} \subset \overline{D}, \tau(\ell) \in [2^{-n}, 2^{n}] \}.
\end{equation*}
Observe that the set $(K_n)_n$ are such that $K_{n} \subset \operatorname{Int}(K_{n+1})$ and $\mathcal{X}(D) = \cup_n K_n$. Moreover, we claim the following.

\begin{claim} \label{claim_compactness_Kn}
The sets $(K_n)_n$ are compact sets of $(\mathcal{X}(D), d_{\mathcal{X}})$.
\end{claim}

We postpone the proof of Claim \ref{claim_compactness_Kn} to the end and continue the proof of the equality \eqref{Laplace_mBLS}. By \cite[Exercise~3.8.C]{Engelking}, Claim \ref{claim_compactness_Kn} and the covering property of the sets $(K_n)$ imply that $(\mathcal{X}(D), d_{\mathcal{X}})$ is hemicompact: for any compact set $K \subset \mathcal{X}(D)$, there exists $n \in \mathbb{N}$ such that $K \subset K_n$, see for example \cite[Section~17.I]{Willard}. Therefore, since $f$ has compact support, for $n$ large enough,
\begin{equation*}
    \int (1 - e^{-f(\ell)}) \mu_{D}^m(d\ell) = \int_{K_n} (1 - e^{-f(\ell)}) \phi(\ell)\mu_{D}(d\ell).
\end{equation*}
Note that this integral is finite since loops in $K_n$ have lifetime at least $2^{-n}$ (the divergence of the total mass of $\mu_D$ is only due to short loops).  For $n \geq 1$, let us further set $B_n = \{ \ell \in \mathcal{X}(D): \overline{\ell} \subset \overline{D}, \tau(\ell) \geq 2^{-n}\}$ and observe that $K_n \subset B_n$. Using the fact that $f$ is compactly supported in $K_n \subset B_n$,
\begin{align}
	&\exp \bigg( -\alpha \int_{K_n} (1 - e^{-f(\ell)}) \phi(\ell)\mu_{D}(d\ell) \bigg) \nonumber \\
	&= \exp \bigg( - \alpha \int_{B_n} (1 - e^{-f(\ell)}) \phi(\ell)\mu_{D}(d\ell) \bigg) \nonumber \\
	&= \EE_{\alpha}\bigg[ \exp(-\langle \mathcal{L}_{B_n}, f\rangle + Y_{n}(\LL))\bigg] \label{Laplace_final}
\end{align}
where for a set $B$ and a function $g$, $\langle \mathcal{L}_{B}, g \rangle := \sum_{\ell \in \mathcal{L}: \ell \in B} g(\ell)$ and where we have set, for $n \geq 1$,
\begin{equation*}
	Y_n(\LL) := \langle \LL_{B_n}, \log \phi \rangle + \alpha \int_{B_n} (1-\phi(\ell)) \mu_{D}(d\ell).
\end{equation*}
Note that in this definition, the integral $\int_{B_n} (1-\phi(\ell)) \mu_{D}(d\ell)$ is finite since it is bounded above by $2\mu_{D}(B_{n})$, which is finite because loops in $B_n$ have lifetime at least $2^{-n}$. The equality \eqref{Laplace_final} implies that to establish \eqref{Laplace_mBLS}, it suffices to show that the random variable $\exp(-\langle \mathcal{L}_{B_n}, f\rangle + Y_{n}(\LL))$ on the right-hand side of \eqref{Laplace_final} converges in $L^1(\PP_{\alpha})$ as $n \to \infty$ to $\exp(-\langle \mathcal{L}, f\rangle)R(\LL)$ where $R(\LL)$ denotes the random variable on the right-hand side of \eqref{RN_mBLS_BLS} (with the normalization constant $\mathcal{Z}$ included). 

To this end, observe that $\exp(-\langle \mathcal{L}_{B_n}, f \rangle)$ almost surely converges to $\exp(-\langle \mathcal{L}, f \rangle)$ and moreover, since $f$ is non-negative, almost surely, for any $n$, $\exp(-\langle \mathcal{L}_{B_n}, f \rangle) \leq 1$. By dominated convergence, this implies that $\exp(-\langle \mathcal{L}_{B_n}, f \rangle)$ converges in $L^1(\PP_{\alpha})$ to $\exp(-\langle \mathcal{L}, f \rangle)$. Therefore, using once again that, almost surely, for any $n$, $\exp(-\langle \mathcal{L}_{\tilde B_n}, f \rangle) \leq 1$, the following claim then yields convergence of $\exp(-\langle \mathcal{L}_{B_n}, f\rangle + Y_{n}(\LL))$ to $\exp(-\langle \mathcal{L}, f\rangle)R(\LL)$ in $L^1(\PP_{\alpha})$, and thus concludes the proof of Lemma \ref{lemma_abscont_mBLS}.

\begin{claim} \label{claim_cvg_Yn_COM}
The random variables $(\exp(Y_n(\LL)))_n$ converge in $L^1(\PP_{\alpha})$ to the random variable on the right-hand side of \eqref{RN_mBLS_BLS}.
\end{claim}
\end{proof}

We now turn to the proof of Claim \ref{claim_compactness_Kn} and Claim \ref{claim_cvg_Yn_COM}.

\begin{proof}[Proof of Claim \ref{claim_compactness_Kn}]
Let $n \geq 1$ and let $(\ell_k)_k$ be a sequence of loops in $K_n$. To show that $K_n$ is compact in $(\mathcal{X}(D), d_{\mathcal{X}})$, we are going to show that there exists a subsequence $(k(p))_{p}$ such that $(\ell_{k(p)})_p$ converges in $(\mathcal{X}(D), d_{\mathcal{X}})$ to a limit $\ell^{*}$ with $\ell^{*} \in K_n$. To this end, observe that, for any $k$, since $\ell_k \in K_n$ and $\ell_k$ is contained in the bounded domain $D$,
\begin{equation*}
    2^{-n} \leq \| \ell_k \|_{\mathcal{X}} = \tau(\ell_k) + \sup_{t} \vert \ell(t) \vert \leq 2^{n} + C_D
\end{equation*}
where $C_D > 0$ is a finite constant that depends only on $D$. Therefore, $(\ell_k)_k$ is bounded in $(\mathcal{X}(D), d_{\mathcal{X}})$, which implies that there exists a subsequence $(k(p))_p$ such $(\ell_{k(p)})_p$ converges in $(\mathcal{X}(D), d_{\mathcal{X}})$ to a limit $\ell^{*}$. To see that $\ell^{*}$ belongs to $K_n$, observe first that the function $\tau: \ell \mapsto \tau(\ell)$ is continuous in $(\mathcal{X}(D), d_{\mathcal{X}})$ and therefore, the fact that for any $p$, $2^{-n} \leq \tau(\ell_{k(p)}) \leq 2^n$ implies that $2^{-n} \leq \tau(\ell^{*}) \leq 2^n$. Moreover, we have that ${\ell^{*}} \subset \overline{D}$ since for any $p$, $\ell_{k(p)}$ belongs to $\mathcal{X}(D)$ and hence $\overline \ell^{*} \subset \overline{D}$.
\end{proof}

\begin{proof}[Proof of Claim \ref{claim_cvg_Yn_COM}]
Let us set, for $n \geq 1$,
\begin{equation*}
    \tilde Y_n(\LL) := \langle \LL_{B_n}, \log \phi \rangle - \alpha \mu_{D}(\mathbb{I}_{B_n}\log \phi).
\end{equation*}
Note that, in this definition, the loop measure term is finite. Indeed, we have that
\begin{equation} \label{finite_1stmoment_Bn}
	\mu_{D}(\vert \log \phi \vert \mathbb{I}_{B_n}) = \mu_{D} \bigg(\mathbb{I}_{\ell \in B_n} \int_{0}^{\tau(\ell)} m^2(\ell(t))dt \bigg) \leq \int_{2^{-n}}^{\infty} \int_{D} m^2(z)p_t(z,z) dz dt < \infty
\end{equation}
as $D$ is bounded (here, $p_t$ is the heat kernel in $D$ with Dirichlet boundary conditions on $\partial D$). Now, observe that, almost surely,
\begin{equation*}
    \exp(Y_n(\LL)) = \exp(\tilde Y_n(\LL))\exp\bigg(\alpha \int_{B_n} (1-\phi(\ell)+\log \phi(\ell))\mu_{D}(d\ell)\bigg)
\end{equation*}
where finiteness of the integral $\int_{B_n} (1-\phi(\ell)+\log \phi(\ell))\mu_{D}(d\ell)$ follows from \eqref{finite_1stmoment_Bn} and the fact that, for any $\ell$, $\vert 1-\phi(\ell)+\log \phi(\ell)\vert \leq 2\vert \log \phi(\ell)\vert$. Therefore, to prove the claim, it suffices to show that
\begin{align}
    &\lim_{n \to \infty} \exp(\tilde Y_n(\LL))\exp\bigg(\alpha \int_{\tilde B_n} (1-\phi(\ell)+\log \phi(\ell))\mu_{D}(d\ell)\bigg) \nonumber \\
    &= \frac{1}{\mathcal{Z}}\exp \bigg(-\int_{D} m^2(z) :L(z): dz \bigg) \label{lim_good_L}
\end{align}
where the limit is in $L^1(\PP_{\alpha})$. To establish this, we first show that $(\exp(Y_n(\LL)))_n$ is a bounded martingale in $L^1(\PP_{\alpha})$, which implies that it has a limit in $L^1(\PP_{\alpha})$. To identify this limit with the random variable on the right-hand side of \eqref{lim_good_L}, we then prove that $(\exp(Y_n(\LL)))_n$ almost surely converges to this random variable.

The fact that $(\exp(Y_n(\LL)))_n$ is a martingale follows from the Poissonian nature of $\LL$, as we now explain. For $n\geq 2$, let us set $B_1' = B_1$ and $B_n' = B_n \setminus B_{n-1}$. We then have that, for any $n \geq 2$, almost surely,
\begin{align*}
    &\EE_{\alpha}[\exp(Y_n(\LL)) \vert \exp(Y_{n-1}(\LL))] \\
	&= \exp(Y_{n-1}(\LL)) \EE_{\alpha}[\exp(Y_n(\LL) - Y_{n-1}(\LL)) \vert \exp(Y_{n-1}(\LL))]\\
    &= \exp(Y_{n-1}(\LL)) \exp\bigg(\alpha \int_{B_n'} (1- \phi(\ell)) \mu_{D}(d\ell) \bigg) \EE_{\alpha}[\exp(\langle \LL_{ B_n'}, \log \phi \rangle ) \vert \exp(Y_{n-1}(\LL))]\\
    &= \exp(Y_{n-1}(\LL)) \exp\bigg(\alpha \int_{B_n'} (1- \phi(\ell)) \mu_{D}(d\ell) \bigg) \EE_{\alpha}[\exp(\langle \LL_{B_n'}, \log \phi \rangle )]\\
    &=\exp(Y_{n-1}(\LL))
\end{align*}
where in the penultimate equality, we used the fact that $\exp(\langle \LL_{B_n'}, \log \phi \rangle)= \exp(\langle \LL_{B_n}, \log \phi \rangle - \langle \LL_{B_{n-1}}, \log \phi \rangle)$ is independent of $\exp(Y_{n-1}(\LL))$ since $\LL$ is a Poisson point process. This shows that $(\exp(Y_n(\LL)))_n$ is a martingale. Boundedness in $L^1(\PP_{\alpha})$ is straightforward to see since for each $n$, $\exp(Y_n(\LL))$ is almost surely a non-negative random variable and
\begin{equation*}
    \EE_{\alpha}[\exp(Y_n(\LL))] = \exp \bigg( \alpha \int_{B_n} (1-\phi(\ell)) \mu_{D}(d\ell) \bigg)\EE_{\alpha}[\exp(\langle \LL_{B_n}, \log \phi \rangle)] = 1.
\end{equation*}
Having established that $(\exp(Y_n(\LL))_n$ converges in $L^1(\PP_{\alpha})$, let us now show that it also converges almost surely to the random variable on the right-hand side of \eqref{lim_good_L}. To this end, we first apply the dominated convergence theorem to the loop measure term. Observe that, $\mu_{D}$-almost everywhere,
\begin{align*}
    \lim_{n \to \infty} \mathbb{I}_{B_n} (1-\phi(\ell)+\log \phi(\ell)) = 1-\phi(\ell)+\log \phi(\ell).
\end{align*}
Moreover, there exists a constant $C>0$ such that, for any loop $\ell$ with $\phi(\ell) \geq 1/2$, $\vert 1-\phi(\ell)+\log \phi(\ell) \vert \leq C(\sqrt{\phi(\ell)} - 1)^2$. Therefore, setting $E:= \{ \ell \in \mathcal{X}(D): \phi(\ell) \leq 1/2\}$, we have that, for any $n \geq 1$, $\mu_{D}$-almost everywhere,
\begin{equation*}
    \mathbb{I}_{B_n \setminus E} \vert 1-\phi(\ell)+\log \phi(\ell) \vert \leq \mathbb{I}_{B_n \setminus E} C(\sqrt{\phi(\ell)} - 1)^2.
\end{equation*}
Moreover, it is easy to see that for a loop $\ell$, $(\sqrt{\phi(\ell)} - 1)^2 \leq (\log \phi(\ell))^2/4$ and $\mu_{D}((\log \phi(\ell))^2)= \mu_{D}(\langle m^2, \ell \rangle^2) < \infty$ by \cite[Lemma~2]{LeJan} since $D$ is a bounded domain. This shows that, for any $n \geq 1$, $\mu_{D}$-almost everywhere,
\begin{align*}
    \mathbb{I}_{B_n} \vert 1-\phi(\ell)+\log \phi(\ell) \vert &\leq \mathbb{I}_{B_n \setminus E} C(\sqrt{\phi(\ell)} - 1)^2 + \mathbb{I}_{E} \vert 1-\phi(\ell)+\log \phi(\ell) \vert\\
    &\leq C(\log \phi(\ell))^2 + \mathbb{I}_{E} \vert 1-\phi(\ell)+\log \phi(\ell) \vert
\end{align*}
and the right-hand side is $\mu_D$-integrable (loops in $E$ have lifetime at least $\overline{m}^{-2}\log(2)$). Therefore, by dominated convergence, we obtain that
\begin{equation*}
    \lim_{n \to \infty} \exp\bigg(\alpha \int_{B_n} (1-\phi(\ell)+\log \phi(\ell))\mu_{D}(d\ell))\bigg) = \exp(\alpha \mu_{D}(1-\phi(\ell)+\log \phi(\ell))) = \frac{1}{\mathcal{Z}}
\end{equation*}
where the rightmost equality follows from \cite[Theorem~8]{LeJan}. On the other hand, as recalled in Section \ref{sec_BLS}, we have that $\tilde Y_n(\LL)$ converges $\PP_{\alpha}$-almost surely to $-\int_{D} m^2(z) :L(z):dz$ as $n \to \infty$. Therefore, by continuity of $x \mapsto e^{x}$, we have that, $\PP_{\alpha}$-almost surely,
\begin{equation*}
    \lim_{n \to \infty} \exp(\tilde Y_n(\LL)) = \exp \bigg( -\int_{D} m^2(z) :L(z): dz \bigg).
\end{equation*}
As explained above, this yields \eqref{lim_good_L} and concludes the proof of Claim \ref{claim_cvg_Yn_COM}.
\end{proof}

\subsection{Construction of massive CLE$_4$ via a massive Brownian loop soup} \label{sec_mCLE4_mBLS}

We now turn to the proof of Theorem \ref{prop_mCLE4_mBLS}. This theorem is in fact a consequence of the following result, which we show below.

\begin{proposition} \label{prop_mGFF_mBLS}
Let $D \subset \mathbb{C}$ and $m: D \to \mathbb{R}_{+}$ be as in Theorem \ref{prop_mCLE4_mBLS}. Denote by $\PP$ the law of the coupling $(h,\LL,\Gamma)$ of Proposition \ref{prop_coupling_GFF_BLS_CLE} between a GFF $h$ in $D$ with Dirichlet boundary conditions, a Brownian loop soup $\LL$ in $D$ with intensity $1/2$ and a CLE$_4$ $\Gamma$ in $D$. Define a new probability measure $\tilde \PP$ via
\begin{equation} \label{COM_mGFF_mBLS}
    \frac{\der \tilde \PP}{\der \PP}((h,\mathcal{L},\Gamma)) := \frac{1}{\mathcal{Z}}\exp \bigg( -\int_{D} m^2(z) :L(z): dz \bigg)
\end{equation}
where, under $\PP$, $\int_{D} m^2(z) :L(z): dz$ is the renormalized occupation-time field of $\LL$ tested against the function $m^2$ and $\mathcal{Z}$ is a normalization constant. Then, under $\tilde \PP$,
\begin{itemize}
	\item the marginal law of $h$ is that of a massive GFF in $D$ with mass $m$ and Dirichlet boundary conditions;
	\item the marginal law of $\Gamma$ is that of a massive CLE$_4$ in $D$ with mass $m$;
	\item the loops of $\Gamma$ are the level lines of $h$ as in Theorem \ref{theorem_mCLE_4};
	\item the marginal law of $\LL$ is that of a massive Brownian loop soup in $D$ with mass $m$ and intensity $1/2$;
	\item the loops of $\Gamma$ are almost surely equal to the outer boundaries of the outermost clusters of $\LL$.
\end{itemize}
\end{proposition}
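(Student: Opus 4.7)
The plan is to exploit the fact that, under the Qian--Werner coupling $\PP$ of Proposition \ref{prop_coupling_GFF_BLS_CLE}, the renormalized occupation-time field $:L:$ of $\LL$ is \emph{almost surely} equal to $\frac{1}{2}:h^2:$. Consequently, the Radon--Nikodym derivative appearing in \eqref{COM_mGFF_mBLS} can be rewritten, $\PP$-a.s., as
\begin{equation*}
    \frac{1}{\mathcal{Z}}\exp \bigg( -\int_{D} m^2(z) :L(z): dz \bigg) = \frac{1}{\mathcal{Z}}\exp \bigg( -\frac{1}{2}\int_{D} m^2(z) :h^2(z): dz \bigg).
\end{equation*}
In particular, by Le Jan's isomorphism (Proposition \ref{prop_LeJanIso}), the two normalization constants agree and equal the constant $\mathcal{Z}$ appearing in Theorem \ref{theorem_mCLE_4} and in \eqref{RN_mGFF_Dirichlet}. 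A first step is to verify carefully that this a.s.\ identification passes to the change-of-measure, i.e.\ that the two reweightings define the same probability measure on $(h,\LL,\Gamma)$.

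Once this reformulation is established, the first three bullet points are an immediate consequence of Theorem \ref{theorem_mCLE_4}. Indeed, the marginal law of $(h,\Gamma)$ under $\PP$ is exactly the GFF-CLE$_4$ coupling of Section \ref{sec_coupling_CLE}, and the Radon--Nikodym derivative depends on $(h,\LL,\Gamma)$ only through $h$. So the marginal law of $(h,\Gamma)$ under $\tilde \PP$ coincides with the law obtained by applying the change of measure \eqref{RN_mCLE4} to the coupling $(h,\Gamma)$; Theorem \ref{theorem_mCLE_4} and the definition of massive CLE$_4$ then give the first three bullet points.

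For the fourth bullet point, I use the other reformulation of the Radon--Nikodym derivative, namely $\frac{1}{\mathcal{Z}}\exp(-\int_D m^2(z):L(z):dz)$, which is measurable with respect to $\LL$. Thus the marginal law of $\LL$ under $\tilde \PP$ is the Brownian loop soup of intensity $1/2$ reweighted exactly as in Lemma \ref{lemma_abscont_mBLS}, which is the massive Brownian loop soup in $D$ with mass $m$ and intensity $1/2$. The only subtlety to check is that the normalization constants match up; this is an easy book-keeping check using $:L:=\frac{1}{2}:h^2:$ and Le Jan's isomorphism.

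Finally, the fifth bullet point is preserved by absolute continuity: under $\PP$, the event that the loops of $\Gamma$ coincide with the outer boundaries of the outermost clusters of $\LL$ has probability one, and since $\tilde\PP$ is absolutely continuous with respect to $\PP$ by construction, this event still has $\tilde\PP$-probability one. The main obstacle in this program is purely bookkeeping: reconciling the two formulations of the Radon--Nikodym derivative and the matching of normalization constants, which requires a careful application of Le Jan's isomorphism at the level of a.s.\ equality of random variables and not just equality in law. Theorem \ref{prop_mCLE4_mBLS} then follows immediately from Proposition \ref{prop_mGFF_mBLS} by reading off the joint law of $(\LL,\Gamma)$ under $\tilde \PP$.
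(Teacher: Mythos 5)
Your proposal is correct and follows essentially the same route as the paper: the a.s. identity $:L:=\frac{1}{2}:h^2:$ under the Qian--Werner coupling lets the single Radon--Nikodym derivative be read either as the reweighting of Theorem \ref{theorem_mCLE_4} (giving the first three items) or as that of Lemma \ref{lemma_abscont_mBLS} (giving the fourth), with the fifth item following by absolute continuity. The normalization-constant bookkeeping you flag is indeed the only point to check, and it is immediate from the same a.s. identity.
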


\begin{proof}
As noted in the proof of Lemma \ref{lemma_abscont_mBLS}, the change of measure \eqref{COM_mGFF_mBLS} is well-defined. The first, second and third items in the statement of Proposition \ref{prop_mGFF_mBLS} are consequences of the definition of $\tilde \PP$ and Theorem \ref{theorem_mCLE_4}. Indeed, under $\PP$, $\frac{1}{2}\int_{D} m^2(z) :h(z)^2: dz = \int_{D} m^2(z) :L(z): dz$ and therefore, by Theorem \ref{theorem_mCLE_4}, $(h,\Gamma)$ has the required joint law under $\tilde \PP$. The fourth item is in turn a direct consequnece of Lemma \ref{lemma_abscont_mBLS}. The final item follows by absolute continuity, since the equality holds almost surely under $\PP$.
\end{proof}

\section{Appendix} \label{sec_app_1}

In this section, we prove Lemma \ref{lemma_L2_cvg_Wick_boundary} and Lemma \ref{lemma_Lp_cvg_boundary}. We also show the almost sure strict positiveness of the random variables on the right-hand side of \eqref{RN_mGFF_Dirichlet} and \eqref{RN_mGFF_phi}.

In what comes next, we will repeatedly make use of the following estimate. Let $D \subset \mathbb{C}$ be a bounded and simply connected domain and let $h$ be a GFF in $D$ with Dirichlet boundary conditions. Then, there exists $C > 0$ such that for any $0<\eps<1/2$,
\begin{equation} \label{estimate_sup_variance}
    \sup_{z \in D} \EE[h_{\eps}(z)^2] \leq C\log(\eps^{-1}).
\end{equation}
This inequality follows from \cite[Lemma~3.5]{GMC_var}.

Let us now turn to the proof of Lemma \ref{lemma_L2_cvg_Wick_boundary}. To prove this result, we first show the following auxiliary lemma.

\begin{lemma} \label{lemma_EE_Wick_Markov}
Let $D \subset \mathbb{C}$ be a bounded and simply connected domain and let $h$ be a GFF in $D$ with Dirichlet boundary conditions. Let $0 < \delta < \eps <1/2$. Then, for any $z,w \in D$ such that $\max(\operatorname{dist}(z,\partial D), \operatorname{dist}(w,\partial D)) > \eps$ and $\vert z -w \vert > 2\eps$,
\begin{equation} \label{Wick_Markov}
    \EE[(:h_{\eps}(z)^2: - :h_{\delta}(z)^2:)(:h_{\eps}(w)^2: - :h_{\delta}(w)^2:)] = 0.
\end{equation}
Moreover, there exists $K>0$ such that for any $0 < \delta < \eps <1/2$ and any $z,w \in D$,
\begin{equation} \label{estimate_difference_wick}
    \EE[(:h_{\eps}(z)^2: - :h_{\delta}(z)^2:)(:h_{\eps}(w)^2: - :h_{\delta}(w)^2:)] \leq K (\log \delta)^2.
\end{equation}
\end{lemma}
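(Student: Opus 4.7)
The plan is to reduce both statements to a direct computation of second moments of products of Wick squares, and then to exploit the spatial Markov property of the GFF for the orthogonality claim \eqref{Wick_Markov}, and the variance estimate \eqref{estimate_sup_variance} for the bound \eqref{estimate_difference_wick}.

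First I would invoke Isserlis' theorem for the centered Gaussian pair $(h_\alpha(z), h_\beta(w))$, $\alpha,\beta \in \{\eps,\delta\}$, which yields $\EE[:h_\alpha(z)^2: :h_\beta(w)^2:] = 2\EE[h_\alpha(z)h_\beta(w)]^2$. Expanding by bilinearity, the left-hand side of both \eqref{Wick_Markov} and \eqref{estimate_difference_wick} becomes
\[
2\bigl(\EE[h_\eps(z)h_\eps(w)]^2 - \EE[h_\eps(z)h_\delta(w)]^2 - \EE[h_\delta(z)h_\eps(w)]^2 + \EE[h_\delta(z)h_\delta(w)]^2\bigr).
\]

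For \eqref{Wick_Markov}, I would assume without loss of generality that $\operatorname{dist}(z,\partial D) > \eps$, so $B(z,\eps) \subset D$. The main input is the standard Markov property of the GFF at the disc $B(z,\eps)$: writing $h = h^\circ + \tilde h$ on $B(z,\eps)$ with $h^\circ$ a zero-boundary GFF in $B(z,\eps)$ and $\tilde h$ the harmonic extension of $h|_{\partial B(z,\eps)}$, the field $h^\circ$ is independent of $h$ restricted to $D \setminus B(z,\eps)$. The mean value property of $\tilde h$ gives $h_\eps(z) = \tilde h(z)$ and $h_\delta(z) = \tilde h(z) + h^\circ_\delta(z)$, so the centered increment $h_\delta(z) - h_\eps(z) = h^\circ_\delta(z)$ is independent of this outside $\sigma$-field. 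Because $\vert z-w \vert > 2\eps$ forces $B(w,\eps) \subset D \setminus B(z,\eps)$, both $h_\eps(w)$ and $h_\delta(w)$ are measurable with respect to that field, and hence $\EE[h_\delta(z) h_\alpha(w)] = \EE[h_\eps(z) h_\alpha(w)]$ for $\alpha \in \{\eps,\delta\}$. Substituting these identities into the displayed expression makes the four terms cancel in pairs.

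For \eqref{estimate_difference_wick}, Cauchy--Schwarz gives $\vert \EE[h_\alpha(z) h_\beta(w)] \vert \leq \sqrt{\EE[h_\alpha(z)^2]\EE[h_\beta(w)^2]}$, and the estimate \eqref{estimate_sup_variance} together with $\delta \leq \alpha,\beta < 1/2$ bounds each variance by a constant multiple of $-\log\delta$. Squaring and summing the four covariances produces the claimed $K(\log\delta)^2$ upper bound. The only obstacle I anticipate is in stating the orthogonality argument cleanly when one of the balls touches $\partial D$, which is precisely why the hypothesis assumes the maximum of the two boundary distances exceeds $\eps$; the asymmetric use of this hypothesis is harmless since the whole expression is symmetric in $z$ and $w$.
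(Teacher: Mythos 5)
Your proof is correct and rests on the same two ingredients as the paper's: the Markov property of the GFF in a ball around $z$ (so that $h_{\delta}(z)-h_{\eps}(z)$ is a centered increment independent of the field outside, which forces the four covariance terms to cancel in pairs), and the variance estimate \eqref{estimate_sup_variance} combined with Cauchy--Schwarz for the $K(\log\delta)^2$ bound. The only cosmetic difference is that you first reduce everything to scalar covariances via Isserlis' theorem, whereas the paper carries out the equivalent computation directly on the product of the two Wick-square increments.
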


\begin{proof}
Let us first prove \eqref{Wick_Markov}. Let $0 < \delta < \eps <1/2$ and let $z,w \in D$ be such that $\vert z -w \vert > 2\eps$ and assume without loss of generality that $\operatorname{dist}(z,\partial D) > \eps$. Applying the Markov property in the ball $B(z,2\eps)$ and using that
\begin{equation*}
    (W_r)_{r \geq 0} := (h_{2\eps e^{-r}}(z) - h_{2\eps}(z))_{r \geq 0}
\end{equation*}
is a Brownian motion, we see that for some centered random variables $X, \tilde X$ (not independent but independent of $W$),
\begin{equation*}
    (:h_{\eps}(z)^2: - :h_{\delta}(z)^2:)(:h_{\eps}(w)^2: - :h_{\delta}(w)^2:) = \big( (W_t -t^2) - (W_s - s^2) + 2X(W_t-W_s)\big) \tilde X.
\end{equation*}
Taking expectation and using independence, we get zero, which completes the proof of \eqref{Wick_Markov}.

Let us now prove the estimate \eqref{estimate_difference_wick}. Let $0 < \delta < \eps < 1/2$ and let $z,w \in D$. Observe first that
\begin{equation} \label{upper_square_wick_difference}
    \EE [(:h_{\eps}(z)^2: - :h_{\delta}(z)^2:)^2] \leq 2 \EE[(:h_{\eps}(z)^2:)^2] + 2\EE[(:h_{\delta}(z)^2:)^2].
\end{equation}
Moreover, since $h_{\eps}(z)$ is a Gaussian random variable, we have that $\EE[(:h_{\eps}(z)^2:)^2] = \EE[h_{\eps}(z)^4] - 2\EE[h_{\eps}(z)^2]^2 + \EE[h_{\eps}(z)^2]^2 = 2\EE[h_{\eps}(z)^2]^2$. Going back to \eqref{upper_square_wick_difference}, we deduce from this and the estimate \eqref{estimate_sup_variance} that
\begin{equation*}
    \EE [(:h_{\eps}(z)^2: - :h_{\delta}(z)^2:)^2] \leq 4C^2 \log(\delta)^2.
\end{equation*}
The estimate \eqref{estimate_difference_wick} then simply follows from this bound and the Cauchy-Schwarz inequality.
\end{proof}

With Lemma \ref{lemma_EE_Wick_Markov} at hands, let us now prove Lemma \ref{lemma_L2_cvg_Wick_boundary}. We first deal with the convergence in $L^2(\PP)$ of $\int_{D} m^2(z) :h_{\eps}(z)^2:dz$, that is with the case $\phi \equiv 0$.

\begin{lemma} \label{lemma_L2_cvg_Wick}
Let $D \subset \mathbb{C}$ be a bounded, open and simply connected domain and let $m: D \to \mathbb{R}_{+}$ be a bounded and continuous function. Let $h$ be a GFF with Dirichlet boundary conditions in $D$. Then, as $\eps \to 0$, the random variable
\begin{equation*}
    \int_{D} m^2(z) :h_{\eps}(z)^2:dz
\end{equation*}
converges in $\operatorname{L}^2(\PP)$ to a limit denoted by $\int_{D} m^2(z):h(z)^2: dz$. Moreover, for any $b \in (0, 2-\operatorname{dim}_{H}(\partial D))$, there exists $C>0$ such that for any $0 < \eps < 1/2$,
\begin{equation} \label{rate_cvg_L2_Wick}
    \bigg \| \int_{D} m^2(z) :h(z)^2: dz - \int_{D} m^2(z) :h_{\eps}(z)^2: dz \bigg\|_{\operatorname{L}^2(\PP)} \leq C\eps^{b}.
\end{equation}
\end{lemma}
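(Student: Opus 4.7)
The plan is to show that $\{X_\eps := \int_D m^2(z) :h_\eps(z)^2: dz\}_{\eps > 0}$ is Cauchy in $L^2(\PP)$ with an explicit rate; the limit will be the desired $\int_D m^2(z):h^2(z): dz$. Using the joint H\"older continuity of $(z, \eps) \mapsto h_\eps(z)$ together with \eqref{estimate_sup_variance}, one first checks by Fubini that $X_\eps$ is a well-defined element of $L^2(\PP)$.

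For $0 < \delta < \eps < 1/2$, I would expand
\begin{equation*}
    \|X_\eps - X_\delta\|_{L^2}^2 = \int_{D \times D} m^2(z) m^2(w) F_{\eps, \delta}(z, w) \, dz \, dw,
\end{equation*}
where $F_{\eps, \delta}(z,w) := \EE[(:h_\eps(z)^2: - :h_\delta(z)^2:)(:h_\eps(w)^2: - :h_\delta(w)^2:)]$. By the cancellation \eqref{Wick_Markov} of Lemma \ref{lemma_EE_Wick_Markov}, $F_{\eps, \delta}$ vanishes off the ``bad set''
$$B_\eps := (D_\eps^\partial \times D_\eps^\partial) \cup \{(z,w) \in D \times D: |z - w| \leq 2\eps\},$$
where $D_\eps^\partial := \{z \in D: \operatorname{dist}(z, \partial D) \leq \eps\}$. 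On $B_\eps$, the uniform bound \eqref{estimate_difference_wick} gives $|F_{\eps, \delta}| \leq K(\log \delta)^2$. The diagonal strip has Lebesgue measure $O(\eps^2)$, and for any $b' \in (0, 2 - \operatorname{dim}_{H}(\partial D))$, a standard Minkowski-type estimate yields $\Area(D_\eps^\partial) \leq C \eps^{b'}$, so $(D_\eps^\partial)^2$ has area $O(\eps^{2b'})$. Writing $b^* := \min(b',1)$, these two pieces combine to give
\begin{equation*}
    \|X_\eps - X_\delta\|_{L^2}^2 \leq C (\log \delta)^2 \eps^{2 b^*}.
\end{equation*}

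The troublesome $(\log \delta)^2$ factor prevents a direct Cauchy conclusion, so I would argue dyadically. Setting $\eps_n := 2^{-n}$, the above gives $\|X_{\eps_n} - X_{\eps_{n+1}}\|_{L^2} \leq C(n+1) 2^{-n b^*}$, which is summable. Hence $(X_{\eps_n})_n$ is Cauchy in $L^2(\PP)$ with some limit $X$, and the tail sum yields $\|X - X_{\eps_n}\|_{L^2} \leq Cn \cdot 2^{-n b^*}$. For arbitrary $\eps \in (0,1/2)$ I then choose $n$ with $2^{-n-1} \leq \eps < 2^{-n}$, apply the Cauchy estimate to the pair $(\eps, \eps_n)$, and combine with the tail bound via the triangle inequality to obtain $\|X - X_\eps\|_{L^2} \leq C(\log \eps^{-1}) \eps^{b^*}$. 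Taking $b'$ slightly above $b$ inside $(0, 2 - \operatorname{dim}_{H}(\partial D))$ and using $\eps^{b^* - b} \log \eps^{-1} \leq C$ for $\eps \in (0, 1/2)$ absorbs the log and produces the claimed rate $\eps^b$.

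The main obstacle is precisely the $(\log \delta)^2$ in the pointwise bound on $F_{\eps,\delta}$: naively sending $\delta \to 0$ ruins the Cauchy estimate, so all the work is in ensuring that cancellation from \eqref{Wick_Markov} restricts the integration to a set of area that shrinks polynomially in $\eps$, and in using dyadic scales to keep $\log \delta$ comparable to $\log \eps^{-1}$. A side remark: the area bound for $D_\eps^\partial$ genuinely relies on an upper-Minkowski-type estimate, but the statement's use of $\operatorname{dim}_{H}$ is unproblematic since for bounded simply connected planar domains one always has $\operatorname{dim}_{H}(\partial D) \geq 1$, so $b^* = b'$ throughout the admissible range $b' < 2 - \operatorname{dim}_{H}(\partial D) \leq 1$.
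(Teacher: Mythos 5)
Your proposal is correct and follows essentially the same route as the paper: expand $\|X_\eps-X_\delta\|_{L^2}^2$ by Fubini--Tonelli, use the Markov-property cancellation \eqref{Wick_Markov} to restrict the integral to the diagonal strip $\{|z-w|\le 2\eps\}$ together with the boundary strip, bound the integrand there by \eqref{estimate_difference_wick}, and then run a dyadic/telescoping argument that trades an arbitrarily small piece of the exponent for the $\log\delta$ factor. The only cosmetic difference is that you phrase the decomposition via the complement of the "bad set" rather than the paper's three-region split $D\times D_\eps$, $D_\eps\times D$, $(D\setminus D_\eps)^2$, and both arguments share the same implicit step of bounding $\mathrm{Area}(D\setminus D_\eps)$ by $\eps^{2-\dim_H(\partial D)-o(1)}$.
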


\begin{proof}[Proof of Lemma \ref{lemma_L2_cvg_Wick}]
For conciseness, let us set $\alpha := 2-\operatorname{dim}_H(\partial D)$. We are going to show that the sequence
\begin{equation*}
    \bigg( \int_{D} m^2(z) :h_{\eps}(z)^2:dz, \eps > 0 \bigg)
\end{equation*}
is a Cauchy sequence in $L^{2}(\PP)$ and thus has a limit in $L^2(\PP)$ as $\eps$ tends to $0$. To this end, let $0< \delta < \eps < 1/2$. Then, by Fubini-Tonelli theorem, we have that
\begin{align} \label{difference_L2_k}
    &\EE \bigg[ \bigg \vert \int_{D} m^2(z) :h_{\eps}(z)^2: dz - \int_{D} m^2(z) :h_{\delta}(z)^2: dz \bigg \vert^2 \bigg] \nonumber \\
    &= \int_{D \times D} m^2(z)m^2(w) \EE[(:h_{\eps}(z)^2: - :h_{\delta}(z)^2:)(:h_{\eps}(w)^2: - :h_{\delta}(w)^2:)] dw dz.
\end{align}
Indeed, when we expand the product of the difference of the Wick squares, we obtain a sum of non-negative terms and we can apply Fubini-Tonelli theorem to each of these terms separately. With $D_{\eps} :=\{ z \in D: \operatorname{dist}(z, \partial D) \geq \eps \}$, we decompose the region of integration into $D \times D_{\eps}$, $D_{\eps} \times D$ and $(D \setminus D_{\eps}) \times (D_{\eps} \setminus D_{\eps})$. For the integral over $D \times D_{\eps}$, Lemma \ref{lemma_EE_Wick_Markov} shows that only the points $z,w \in D \times D_{\eps}$ such that $\vert z -w \vert \leq 2\eps$ contribute. Using the estimate \eqref{estimate_difference_wick} of Lemma \ref{lemma_EE_Wick_Markov}, this yields that
\begin{align} \label{bound_int_1}
    &\int_{D} m^2(z) \int_{B(z,2\eps) \cap D_{\eps}} m^2(w) \EE[(:h_{\eps}(z)^2: - :h_{\delta}(z)^2:)(:h_{\eps}(w)^2: - :h_{\delta}(w)^2:)] dw dz \nonumber \\
    &\leq \int_{D} m^2(z) \int_{B(z,2\eps) \cap D_{\eps}} m^2(w) K (\log\delta)^2 dw dz \nonumber \\
    &\leq O(\eps^2(\log \delta)^2)
\end{align}
where the implied constant depends only on $D$ and $m^2$. The integral over $D_{\eps} \times D$ can be bounded in a similar way. We are now left to control the integral over $(D \setminus D_{\eps}) \times (D \setminus D_{\eps})$. To do so, we can simply use the estimate \eqref{estimate_difference_wick} of Lemma \ref{lemma_EE_Wick_Markov} to obtain the bound
\begin{align*}
    \int_{(D \setminus D_{\eps}) \times (D \setminus D_{\eps})} m^2(z)m^2(w) \EE[(:h_{\eps}(z)^2: - :h_{\delta}(z)^2:)(:h_{\eps}(w)^2: - :h_{\delta}(w)^2:)] dw dz
    \leq O(\eps^{2\alpha} \log(\delta)^2).
\end{align*}
Going back to \eqref{difference_L2_k}, since $\alpha \in (0,1]$, this bound together with the bound \eqref{bound_int_1} imply that
\begin{align} \label{decay_L2}
    &\EE \bigg[ \bigg \vert \int_{D} m^2(z) :h_{\eps}(z)^2: dz - \int_{D} m^2(z) :h_{\delta}(z)^2: dz \bigg \vert^2 \bigg]
    \leq  O(\eps^{2\alpha}(\log \delta)^2).
\end{align}
This shows that $(\int_{D} m^2(z) :h_{\eps}(z)^2: dz, \eps > 0)$ is a Cauchy sequence in $L^2(\PP)$ and therefore has a limit in $L^2(\PP)$, that we denote by $\int_{D} m^2(z) :h(z)^2: dz$.

It now remains to prove the claim about the rate of convergence of $\int_{D} m^2(z) :h_{\eps}(z)^2: dz$ to $\int_{D} m^2(z) :h(z)^2: dz$. Let $0 < \eps <1/2$ and let $k \in \mathbb{N}$ be such that $\eps \in [2^{-(k+1)}, 2^{-k})$. For $n \in \mathbb{N}$, set $a_n = 2^{-n}$. Using the triangle inequality and the inequality \eqref{decay_L2}, we have that, for some constant $C > 0$,
\begin{align*}
    &\bigg \| \int_{D} m^2(z) :h(z)^2: dz - \int_{D} m^2(z) :h_{\eps}(z)^2: dz\bigg \|_{L^2(\PP)}\\
    &\leq \bigg \| \int_{D} m^2(z) :h_{a_{k+1}}(z)^2: dz - \int_{D} m^2(z) :h_{\eps}(z)^2: dz\bigg \|_{L^2(\PP)} \\
    &+\sum_{n=k+1}^{\infty}  \bigg \| \int_{D} m^2(z) :h_{a_{n+1}}(z)^2: dz - \int_{D} m^2(z) :h_{a_n}(z)^2: dz\bigg \|_{L^2(\PP)} \\
    &\leq C2^{-k\alpha}(k+1) + C\sum_{n=k+1}^{\infty} (a_{n}^{2\alpha} (\log(a_{n+1})^2))^{1/2}.
\end{align*}
Let $\eta \in (0, \alpha)$. Then, we have that
\begin{equation*}
    \sum_{n=k}^{\infty} 2^{-n\alpha}(n+1) \leq 2^{-(\alpha - \eta)k} \sum_{n=k}^{\infty} 2^{-n\eta}(n+1) \leq 2^{(\alpha - \eta)}\eps^{(\alpha - \eta)} \sum_{n=0}^{\infty} 2^{-n\eta}(n+1)
\end{equation*}
where the sum on the right-hand side is finite. This concludes the proof of \eqref{rate_cvg_L2_Wick} and thus of the lemma.
\end{proof}

Let us now turn to the proof of Lemma \ref{lemma_L2_cvg_Wick_boundary}.

\begin{proof}[Proof of Lemma \ref{lemma_L2_cvg_Wick_boundary}]
For conciseness, let us set $\alpha := 2 - \operatorname{dim}_{H}(\partial D) \in (0,1]$. Observe first that, almost surely, for any $0 <\eps <1/2$,
\begin{align*}
    \int_{D} m^2(z) :(h_{\eps}(z)+\phi_{\eps}(z))^2:dz
    = \int_{D} m^2(z) :h_{\eps}(z)^2: dz + \int_{D} 2m^2(z) h_{\eps}(z)\phi_{\eps}(z) dz + \int_{D} m^2(z) \phi_{\eps}(z)^{2} dz.
\end{align*}
Therefore, by Lemma \ref{lemma_L2_cvg_Wick}, it suffices to prove that
\begin{equation} \label{lim_L2_hphi}
    \lim_{\eps \to 0} \int_{D} m^2(z) h_{\eps}(z)\phi_{\eps}(z) dz = (h, m^2\phi)
\end{equation}
where the limit is in $L^{2}(\PP)$ and that
\begin{equation} \label{lim_phi_square}
    \lim_{\eps \to 0} \int_{D} m^2(z) \phi_{\eps}(z)^2 dz = \int_{D} m^2(z)\phi(z)^2dz.
\end{equation}
Let us start with \eqref{lim_phi_square}. We note that, by harmonicity of $\phi$,
\begin{equation*}
    \bigg \vert \int_{D} m^2(z) \phi(z)^2 dz - \int_{D} m^2(z) \phi_{\eps}(z)^2 dz \bigg \vert = \bigg \vert \int_{D \setminus D_{\eps}} m^2(z) (\phi(z)^2 - \phi_{\eps}(z)^2) dz \bigg \vert.
\end{equation*}
By assumption on $\phi$ and $\operatorname{dim}_{H}(\partial D)$, this yields that, for any $b \in (0,\alpha]$,
\begin{equation} \label{decay_harmonic_part}
     \bigg \vert \int_{D} m^2(z) \phi(z)^2 dz - \int_{D} m^2(z) \phi_{\eps}(z)^2 dz \bigg \vert \leq 2b_{\phi}^2\int_{D \setminus D_{\eps}} m^2(z) dz \leq O(\eps^b),
\end{equation}
from which \eqref{lim_phi_square} follows. Let us now turn to \eqref{lim_L2_hphi}. One could argue directly because for each $\eps > 0$, we have a Gaussian random variable. However, since we also want to prove the rate of decay \eqref{rate_cvg_L2_Wick_boundary}, we are in fact going to show that, for any $b \in (0,\alpha)$, there exists $C_1>0$ such that for any $0<\eps<1/2$,
\begin{equation} \label{decay_hphi_L2}
     \bigg \| 2(h, m^2\phi) - 2\int_{D} m^2(z) h_{\eps}(z)\phi_{\eps}(z) dz  \bigg \|_{L^2(\PP)} \leq C_{1} \eps^{b}.
\end{equation}
Let $0 < \eps < 1/2$ and let $k \in \mathbb{N}$ be such that $\eps \in [2^{-(k+1)}, 2^{-k})$. As before, for $n \in \mathbb{N}$, set $a_n=2^{-n}$. By the triangular inequality, we have that
\begin{align} \label{triang_ineq_hphi}
    &\bigg \| 2(h, m^2\phi) - 2\int_{D} m^2(z) h_{\eps}(z)\phi_{\eps}(z) dz  \bigg \|_{L^2(\PP)} \nonumber \\
    &\leq 2\bigg \| \int_{D} m^2(z) h_{a_{k+1}}(z)\phi_{a_{k+1}}(z) dz -  \int_{D} m^2(z) h_{\eps}(z)\phi_{\eps}(z) dz\bigg \|_{L^2(\PP)} \nonumber \\
    &+ 2\sum_{n=k+1}^{\infty} \bigg \| \int_{D} m^2(z) h_{a_{n+1}}(z)\phi_{a_{n+1}}(z) dz - \int_{D} m^2(z) h_{a_n}(z)\phi_{a_{n}}(z) dz  \bigg \|_{L^2(\PP)}.
\end{align}
To show \eqref{decay_hphi_L2}, we are going to upper bound each term in the above sum. To this end, let $n \geq k$. We have that
\begin{align*}
    &\bigg \| \int_{D} m^2(z) h_{a_{n+1}}(z)\phi_{a_{n+1}}(z) dz - \int_{D} m^2(z) h_{a_{n}}(z)\phi_{a_{n}}(z) dz  \bigg \|_{L^2(\PP)}^2 \\
    &=\EE \bigg[ \int_{D \times D} m^2(z)m^2(w) ( h_{a_{n+1}}(z)\phi_{a_{n+1}}(z) - h_{a_{n}}(z)\phi_{a_{n}}(z))(h_{a_{n+1}}(w)\phi_{a_{n+1}}(w) - h_{a_{n}}(w)\phi_{a_{n}}(w)) dw dz \bigg].
\end{align*}
Set $D_n := \{ z \in D: \operatorname{dist}(z, \partial D) > a_n \}$ and observe that, by harmonicity of $\phi$, for any $z \in D_n$, $\phi_{a_n}(z)=\phi(z)=\phi_{a_{n+1}}(z)$. We decompose the region of integration into $D_n \times D$, $D \times D_n$ and $(D \setminus D_n) \times (D \setminus D_n)$. To bound the integral over $D_n \times D$, if $(z,w) \in D_n \times D$ are such that $\vert z -w \vert > 2a_n$, then we can use the Markov property in $B(z,2a_n)$ and the harmonicity of $\phi$ to obtain that
\begin{align*}
    &\EE[( h_{a_{n+1}}(z)\phi_{a_{n+1}}(z) - h_{a_{n}}(z)\phi_{a_{n}}(z))(h_{a_{n+1}}(w)\phi_{a_{n+1}}(w) - h_{a_{n}}(w)\phi_{a_{n}}(w))] \\
    &= \phi(z) \EE[( h_{a_{n+1}}(z) - h_{a_{n}}(z))(h_{a_{n+1}}(w)\phi_{a_{n+1}}(w) - h_{a_{n}}(w)\phi_{a_{n}}(w))] = 0.
\end{align*}
Therefore, only points $z,w$ such that $\vert z - w \vert \leq 2a_n$ contribute to the integral over $D_n \times D$. Note also that, by harmonicity, for any $n \in \mathbb{N}$ and any $z \in D$, $\vert \phi_{a_n}(z)\vert \leq b_{\phi}$. Therefore, using the estimate \eqref{estimate_sup_variance}, Cauchy-Schwarz inequality and the boundedness of $m$, we obtain that
\begin{align*}
    \int_{D_n \times D} m^2(z)m^2(w)\phi(z) \EE[(h_{a_{n+1}}(z)- h_{a_{n}}(z))(h_{a_{n+1}}(w)\phi_{a_{n+1}}(w)- h_{a_{n}}(w)\phi_{a_n}(w))] dw dz
    \leq O(a_n^{2} \log a_{n+1}^{-1}).
\end{align*}
The integral over $D \times D_n$ can be treated similarly. To control the integral over $(D \setminus D_n) \times (D \setminus D_n)$, we simply use the estimate \eqref{estimate_sup_variance}, Cauchy-Schwarz inequality, the uniform boundedness of $\phi_{a_n}$ in $n$ and $z$ and the boundedness of $m$ and $D$ to get that
\begin{align*}
    &\int_{(D \setminus D_n) \times (D \setminus D_n)} m^2(z)m^2(w) \EE[(h_{a_{n+1}}(z)\phi_{a_{n+1}}(z)- h_{a_{n}}(z)\phi_{\eps_p}(z))(h_{a_{n+1}}(w)\phi_{a_{n+1}}(w)- h_{a_{n}}(w)\phi_{a_n}(w))] dw dz \\
    &\leq O(a_n^{2\alpha} \log(a_{n+1}^{-1})).
\end{align*} 
Going back to \eqref{triang_ineq_hphi}, we can deduce from the above bounds that for some constant $C>0$,
\begin{align*}
    \bigg \| 2(h, m^2\phi) - 2\int_{D} m^2(z) h_{\eps}(z)\phi_{\eps}(z) dz  \bigg \|_{L^2(\PP)} \leq 2C2^{-k\alpha}\sqrt{k+1} + 2C \sum_{n=k+1}^{\infty} a_n^{\alpha} \sqrt{(\log(a_{n+1}^{-1}))}.
\end{align*}
For the same reasons as those explained in the proof of Lemma \ref{lemma_L2_cvg_Wick}, this bound yields the inequality \eqref{decay_hphi_L2}. Moreover, by the triangular inequality, the statement of the lemma about the rate of convergence in $L^2(\PP)$ follows from Lemma \ref{lemma_L2_cvg_Wick}, \eqref{decay_hphi_L2} and \eqref{decay_harmonic_part}.
\end{proof}

As a corollary of Lemma \ref{lemma_L2_cvg_Wick}, we obtain an explicit expression for the variance of the Wick square of the GFF. This expression is as mentioned in Remark \ref{remark_variance_Wick}.

\begin{lemma} \label{lemma_variance_Wick}
Under the same assumptions as in Lemma \ref{lemma_L2_cvg_Wick},
\begin{equation*}
    \EE \bigg[ \bigg( \int_{D} m^2(z) :h(z)^2: dz \bigg)^2 \bigg] = 2\int_{D \times D} m^2(z)m^2(w)G_{D}(z,w)^2 dw dz.
\end{equation*}
\end{lemma}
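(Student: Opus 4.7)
The plan is to compute the second moment of the $\eps$-regularised object explicitly and then pass to the limit using the $L^2(\PP)$ convergence provided by Lemma \ref{lemma_L2_cvg_Wick}. I would set $X_\eps := \int_D m^2(z) :h_\eps(z)^2: dz$ and $X := \int_D m^2(z) :h(z)^2: dz$; Lemma \ref{lemma_L2_cvg_Wick} gives $X_\eps \to X$ in $L^2(\PP)$, and hence $\EE[X_\eps^2] \to \EE[X^2]$.

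At the regularised level, applying Fubini (justified since $h_\eps$ is a continuous Gaussian process with moments of $:h_\eps(z)^2:$ locally bounded uniformly in $z$) together with the Isserlis--Wick identity $\EE[:A^2::B^2:] = 2\EE[AB]^2$ valid for any centred jointly Gaussian pair $(A,B)$, one obtains
\begin{equation*}
    \EE[X_\eps^2] = 2 \int_{D \times D} m^2(z) m^2(w) G_{D,\eps}(z,w)^2 dz dw,
\end{equation*}
where $G_{D,\eps}(z,w) := \EE[h_\eps(z) h_\eps(w)]$.

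The remaining and main step is to show that the right-hand side converges to $2 \int_{D\times D} m^2(z) m^2(w) G_D(z,w)^2 dz dw$ as $\eps \to 0$. The key observation is that on the good set
\begin{equation*}
    A_\eps := \{(z,w) \in D \times D : |z-w| > 2\eps,\ \operatorname{dist}(z,\partial D) > \eps,\ \operatorname{dist}(w,\partial D) > \eps\},
\end{equation*}
one has $G_{D,\eps}(z,w) = G_D(z,w)$ exactly, because $y \mapsto G_D(y,w)$ is harmonic on $\overline{B(z,\eps)} \subset D \setminus \{w\}$, so its average over $\partial B(z,\eps)$ returns $G_D(z,w)$, and similarly in the other variable. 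Since $G_D$ has only a logarithmic singularity on the diagonal in 2D, $G_D^2$ is integrable on $D \times D$, hence $\int_{A_\eps^c} m^2 m^2 G_D^2 \to 0$. To control the $G_{D,\eps}^2$ contribution on $A_\eps^c$, I would apply Jensen's inequality $G_{D,\eps}(z,w)^2 \leq \int \int G_D(y_1,y_2)^2 \rho_\eps^z(dy_1) \rho_\eps^w(dy_2)$ and swap the order of integration; using the identity $\int_D m^2(z) \rho_\eps^z(dy) dz = m^2_\eps(y) dy$ with $m^2_\eps \leq \overline{m}^2$ (stemming from the symmetry of the uniform measure on a circle), the contribution is bounded by $\overline{m}^4 \int_{\tilde A_\eps} G_D(y_1,y_2)^2 dy_1 dy_2$ for $\tilde A_\eps$ an $O(\eps)$-enlargement of $A_\eps^c$ in $(y_1,y_2)$-coordinates, which again vanishes by integrability of $G_D^2$.

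The main obstacle is this last step, where the control of the bad region hinges on the $L^2$-integrability of $G_D$ on $D \times D$; the preceding steps are routine bookkeeping of the $L^2$ convergence from Lemma \ref{lemma_L2_cvg_Wick} and of Wick's formula for centred Gaussians.
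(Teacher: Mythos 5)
Your argument is correct and follows essentially the same route as the paper: pass the second moment through the $L^2(\PP)$ convergence of Lemma \ref{lemma_L2_cvg_Wick}, apply the Gaussian identity $\EE[:A^2::B^2:]=2\EE[AB]^2$ under Fubini, and then let $\eps\to 0$ in the covariance integral. The only difference is that the paper dismisses the final convergence of $\int m^2m^2\,G_{D,\eps}^2$ to $\int m^2m^2\,G_D^2$ as ``easy to see,'' whereas you supply the actual justification (exactness of the circle average off the diagonal by harmonicity of $G_D$, plus Jensen and integrability of $G_D^2$ on the bad set), which is a sound and welcome elaboration.
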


\begin{proof}
From the $L^2(\PP)$ convergence result established in Lemma \ref{lemma_L2_cvg_Wick}, we have that
\begin{align*}
    \EE \bigg[ \bigg( \int_{D} m^2(z) :h(z)^2: dz \bigg)^2 \bigg]
    &= \lim_{\eps \to 0} \EE \bigg[ \bigg( \int_{D} m^2(z) :h_{\eps}(z)^2: dz \bigg)^2 \bigg] \\
    &= \lim_{\eps \to 0}  \int_{D \times D} m^2(z) m^2(w) \EE[:h_{\eps}(z)^2::h_{\eps}(w)^2:] dw dz
\end{align*}
where in the last equality, the application of Fubini theorem can be justified as in the proof of Lemma \ref{lemma_L2_cvg_Wick}. Writing, for $z \in D$,
\begin{equation*}
    :h_{\eps}(z)^2: = \EE[h_{\eps}(z)^2]H_2\bigg( \frac{h_{\eps}(z)}{\EE[h_{\eps}(z)^2]^{1/2}}\bigg)
\end{equation*}
where $H_2(x)=x^2-1$ is the Hermite polynomial of degree 2, \cite[Theorem~I.3]{SimonEQFT} shows that $\EE[:h_{\eps}(z)^2::h_{\eps}(w)^2:] = 2 \EE[h_{\eps}(z)h_{\eps}(w)]^2$. This yields that
\begin{equation*}
    \EE \bigg[ \bigg( \int_{D} m^2(z) :h(z)^2: dz \bigg)^2 \bigg] = \lim_{\eps \to 0}  \int_{D \times D} m^2(z) m^2(w) 2 \EE[h_{\eps}(z)h_{\eps}(w)]^2 dwdz.
\end{equation*}
Moreover, it is easy to see that the integral on the above right-hand side converges to
\begin{equation*}
    \int_{D \times D} m^2(z) m^2(w) 2 G_{D}(z,w)^2 dwdz,
\end{equation*}
which completes the proof the lemma.
\end{proof}

Let us now establish Lemma \ref{lemma_Lp_cvg_boundary}. The main idea behind the proof is similar to \cite{Nelson_2, Nelson_1}, which show finiteness of the partition function of the $P(\Phi)_2$-theory, see also \cite[Chapter~V]{SimonEQFT}. Here, we must adapt these ideas to approximations of the GFF by circle-averages and consider the fact that we are dealing with a massless GFF in a domain with boundary instead of a massive GFF on the plane (with the interaction term being restricted to a finite subset).

\begin{proof}[Proof of Lemma \ref{lemma_Lp_cvg_boundary}]
Fix $p \in [1,\infty)$. To show the lemma, we are first going to show that
\begin{equation} \label{lim_proba_intro}
    \lim_{\eps \to 0} \exp \bigg(-\frac{p}{2} \int_{D} m^2(z) :(h_{\eps}(z)+\phi_{\eps}(z))^2:dz \bigg) = \exp \bigg(-\frac{p}{2} \int_{D} m^2(z) :(h+\phi)^2(z): dz \bigg)
\end{equation}
in probability. Then, we will prove that there exists $M>0$ such that for any $0 < \eps < 1/2$,
\begin{equation} \label{uniform_bound_intro}
    \EE \bigg[ \bigg(  \exp \bigg(-\frac{p}{2} \int_{D} m^2(z) :(h_{\eps}(z)+\phi_{\eps}(z))^2:dz \bigg) \bigg)^2 \bigg] \leq M.
\end{equation}
The lemma follows from these two facts: indeed, the upper bound \eqref{uniform_bound_intro} establishes that the sequence
\begin{equation*}
    \bigg( \exp \bigg(-\frac{p}{2} \int_{D} m^2(z) :(h_{\eps}(z)+\phi_{\eps}(z))^2:dz \bigg), 0< \eps < 1/2 \bigg)
\end{equation*}
is uniformly integrable and therefore that it converges in $L^1(\PP)$ to its limit in probability, which is given by \eqref{lim_proba_intro}. Noting that
\begin{equation*}
    \EE \bigg[ \exp \bigg(-\frac{p}{2} \int_{D} m^2(z) :(h_{\eps}(z)+\phi_{\eps}(z))^2:dz \bigg) \bigg] = \EE \bigg[ \bigg(  \exp \bigg(-\frac{1}{2} \int_{D} m^2(z) (h_{\eps}(z)+\phi_{\eps}(z))^2:dz \bigg) \bigg)^p \bigg]
\end{equation*}
and similarly for $\exp(-\int_{D} m^2(z):(h+\phi)^2(z):dz)$, we then obtain the lemma.

The fact that \eqref{lim_proba_intro} holds is a consequence of Lemma \ref{lemma_L2_cvg_Wick_boundary}. Indeed, if we consider the mass $z \mapsto (p/2)m^2(z)$, then this lemma shows that
\begin{equation*}
     \lim_{\eps \to 0} \frac{p}{2} \int_{D} m^2(z) :(h_{\eps}(z)+\phi_{\eps}(z))^2:dz = \frac{p}{2} \int_{D} m^2(z) :(h+\phi)^2(z): dz
\end{equation*}
where the limit is in $L^2(\PP)$. From this and the continuity of the function $x \mapsto e^{-x}$, we then easily deduce \eqref{lim_proba_intro}.

Let us now show the uniform bound \eqref{uniform_bound_intro}. Let $0<\eps < 1/2$ and for conciseness, set
\begin{align*}
    Y_{\eps} := p\int_{D} m^2(z) :(h_{\eps}(z)+\phi_{\eps}(z))^2: dz, \quad Y := p\int_{D} m^2(z) :(h+\phi)^2(z): dz.
\end{align*}
We first observe that for that almost surely, for any $0 <\eps < 1/2$ and any $z \in D$,
\begin{equation*}
    h_{\eps}(z)^2 + 2h_{\eps}(z)\phi_{\eps}(z) \geq h_{\eps}(z)^2 - 2\vert h_{\eps}(z) \vert b_{\phi} \geq -b_{\phi}^2.
\end{equation*}
The rightmost inequality is obtained by noting that the minimal value of the polynomial $x \mapsto x^2-2\vert x \vert b_{\phi}$ is $-b_{\phi}^2$ (reached at $x=\pm b_{\phi}$). It follows from this that, almost surely, for any $\eps > 0$,
\begin{equation*}
    \int_{D} m^2(z):(h_{\eps}(z)+\phi_{\eps}(z))^2: dz \geq -2b_{\phi}^2\int_{D} m^2(z) dz -\int_{D} m^2(z)\EE[h_{\eps}(z)^2] dz.
\end{equation*}
Together with the estimate \eqref{estimate_sup_variance}, this yields that there exists $K>0$ such that, almost surely, for any $0 < \eps < 1/2$,
\begin{equation*}
    \int_{D} m^2(z):(h_{\eps}(z)+\phi_{\eps}(z))^2: dz \geq -K\log(\eps^{-1}).
\end{equation*}
Above, the constant $K$ depends only on $D$, $m^2$ and $b_{\phi}$. Using this almost sure lower bound, we then have that, for some constant $\tilde C > 0$,
\begin{align} \label{bound_expectation_1}
    &\EE \bigg[ \bigg(  \exp \bigg(-\frac{p}{2} \int_{D} m^2(z) :(h_{\eps}(z)+\phi_{\eps}(z))^2:dz \bigg) \bigg)^2 \bigg] \nonumber \\
    &= \EE \big[ \exp \big(-(Y_{\eps}-Y) \big) \exp \big(-Y \big) \mathbb{I}_{\{Y_{\eps} - Y > -1\}} \big] + \EE \big[ \exp \big(-(Y_{\eps}-Y) \big) \exp \big(-Y \big) \mathbb{I}_{\{Y_{\eps} - Y \leq -1\}} \big] \nonumber \\
    &\leq \exp(1) \EE[\exp(-Y)] + \exp\big(\tilde C \|m^2\|_{L^1(D)} \log(\eps^{-1}) \big) \PP(Y_{\eps} - Y \leq -1) \nonumber \\
    &\leq \exp(1) \EE[\exp(-Y)] + \exp\big(\tilde C \|m^2\|_{L^1(D)} \log(\eps^{-1}) \big) \PP(\vert Y_{\eps} - Y \vert \geq 1),
\end{align}
where $\EE[\exp(-Y)]$ is finite by \cite{LeJan}. We thus see that to prove the uniform bound \eqref{uniform_bound_intro}, we must show that the probability $\PP(\vert Y_{\eps} - Y \vert \geq 1)$ decays fast enough as $\eps \to 0$. This will be achieved thanks to Lemma \ref{lemma_L2_cvg_Wick_boundary}. Indeed, by Markov inequality, for any $q \in [2,\infty)$,
\begin{equation*}
    \PP(\vert Y_{\eps} - Y \vert \geq 1) \leq \EE[\vert Y_{\eps} - Y \vert^q].
\end{equation*}
Moreover, as both $Y_{\eps}$ and $Y$ are random variables in $\Gamma_{0}(\mathcal{H}) \oplus \Gamma_{1}(\mathcal{H}) \oplus \Gamma_{2}(\mathcal{H})$ where $\mathcal{H}$ is the Gaussian Hilbert space corresponding to the GFF $h$, by \cite[Theorem~I.22]{SimonEQFT}, we have that
\begin{equation*}
    \| Y_{\eps} - Y \|_{L^q(\PP)} \leq (q-1) \| Y_{\eps} - Y \|_{L^2(\PP)}.
\end{equation*}
Together with Lemma \ref{lemma_L2_cvg_Wick_boundary}, this yields that, for any $b \in (0,\alpha)$, there exists a constant $C>0$ such that for any $0 < \eps < 1/2$,
\begin{equation*}
    \PP(\vert Y_{\eps} - Y \vert \geq 1) \leq (q-1)^q \EE[\vert Y_{\eps} - Y \vert^2]^{q/2} \leq C^q(q-1)^q \eps^{qb}.
\end{equation*}
We are now going to choose $q$ in a $\eps$-dependent way. Namely, set
\begin{equation*}
    q= \eps^{-b/3}.
\end{equation*}
Then, $(q-1)^q\leq q^q = \eps^{-qb/3}$. Moreover, there exists $\tilde \eps_{1} >0$ such that for any  $0 < \eps < \tilde \eps_{1}$, $C^q \leq \eps^{-qb/3}$. Therefore, for any $0 < \eps < \tilde \eps_{1}$,
\begin{equation*}
    \PP(\vert Y_{\eps} - Y \vert \geq 1) \leq \eps^{-qb/3} \eps^{-qb/3} \eps^{bq} = \eps^{qb/3}.
\end{equation*}
Thanks to our choice of $q$, it then easy to see that there exists $\tilde \eps_{2} >0$ such that for any $0 < \eps < \tilde \eps_{2}$,
\begin{equation*}
    \PP(\vert Y_{\eps} - Y \vert \geq 1) \leq \exp(-\eps^{-b/3}).
\end{equation*}
Going back to \eqref{bound_expectation_1}, we obtain that for any $0 < \eps < \tilde \eps_{2}$,
\begin{equation*}
    \EE [ \exp (-Y_{\eps}) ] \leq \exp(1) \EE[\exp(-Y)] + \exp\big(\tilde C \|m^2\|_{L^1(D)} \log(\eps^{-1}) \big)\exp(-\eps^{-b/3}).
\end{equation*}
Since $b > 0$, the second summand on the right-hand side of the above inequality converges to $0$ as $\eps \to 0$ and is therefore bounded by some constant $C>0$ independent of $\eps$. This shows that, for any $0 < \eps < \tilde \eps_{2}$,
\begin{equation*}
    \EE [ \exp (-Y_{\eps}) ] \leq \exp(1) \EE[\exp(-Y)] + C.
\end{equation*}
We deduce from this the uniform bound \eqref{uniform_bound_intro}, which as explained above, concludes the proof of the lemma.
\end{proof}

Finally, let us show the almost sure strict positiveness of the random variable on the right-hand side of \eqref{RN_mGFF_phi}. Note that by taking $\phi \equiv 0$, the lemma below implies that the random variable on the right-hand side of \eqref{RN_mGFF_Dirichlet} is also almost surely strictly positive.

\begin{lemma} \label{lemma_non0}
Almost surely,
\begin{equation*}
    \exp \bigg(-\frac{1}{2} \int_{D} m^2(z) :(h+\phi)^2(z): dz \bigg) > 0.
\end{equation*}
\end{lemma}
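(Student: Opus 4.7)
The plan is to deduce the lemma directly from the $L^2$-construction provided by Lemma \ref{lemma_L2_cvg_Wick_boundary}. Write $X := \int_{D} m^2(z) :(h+\phi)^2(z): dz$ for the random variable appearing in the exponent. That lemma asserts that $X$ is the $L^2(\PP)$-limit, as $\eps \downarrow 0$, of the $\eps$-regularized quantities $X_{\eps} := \int_{D} m^2(z) :(h_{\eps}+\phi_{\eps})^2(z): dz$. In particular $X$ belongs to $L^2(\PP)$, so it is a genuine real-valued random variable: $|X| < \infty$ almost surely.

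Once this is in hand, the conclusion is immediate. The map $x \mapsto \exp(-x/2)$ sends $\mathbb{R}$ into $(0,\infty)$, and since $X(\omega) \in \mathbb{R}$ for $\PP$-almost every $\omega$, we have $\exp(-X(\omega)/2) > 0$ almost surely. This is precisely the content of the lemma.

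There is no real obstacle to this argument: the substance is entirely contained in Lemma \ref{lemma_L2_cvg_Wick_boundary}, which ensures that the formal expression \eqref{notation_Wick_boundary} is interpreted as a bona fide finite-valued random variable rather than a potentially $+\infty$-valued object. If one wishes to avoid invoking the $L^2$-construction abstractly, an equivalent route is to pass to a subsequence $\eps_n \downarrow 0$ along which $X_{\eps_n} \to X$ almost surely (which exists by Lemma \ref{lemma_L2_cvg_Wick_boundary}); each $X_{\eps_n}$ is a.s.\ finite, being a continuous functional of the jointly H\"older continuous circle averages $(h_{\eps_n},\phi_{\eps_n})$, so the almost sure limit $X$ is also a.s.\ finite, and the strict positivity of $\exp(-X/2)$ follows.
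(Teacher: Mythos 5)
Your argument is correct and is essentially the paper's own proof: the random variable $\int_D m^2(z):(h+\phi)^2(z):dz$ lies in $L^2(\PP)$ by Lemma \ref{lemma_L2_cvg_Wick_boundary}, hence is almost surely finite, and the exponential of a finite real number is strictly positive. (Incidentally, your version is stated more cleanly than the paper's, whose displayed identity contains an evident typo — the probabilities there should equal $1$, not $0$.)
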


\begin{proof}
This lemma follows from the fact that
\begin{equation*}
    \PP \bigg( \exp \bigg(-\frac{1}{2} \int_{D} m^2(z) :(h+\phi)^2(z): dz \bigg) > 0\bigg) = \PP \bigg(\frac{1}{2} \int_{D} m^2(z) :(h+\phi)^2(z): dz < \infty \bigg) = 0
\end{equation*}
since the random variable $\int_{D} m^2(z) :(h+\phi)^2(z): dz$ is in $L^2(\PP)$.
\end{proof}

\bibliography{massiveSLE4_arxiv}
\end{document}